\theoremstyle{plain}
\newtheorem{theorem}{Theorem}[section]
\newtheorem{proposition}[theorem]{Proposition}
\newtheorem{lemma}[theorem]{Lemma}
\newtheorem{claim}[theorem]{Claim}
\theoremstyle{definition}
\newtheorem{tab}[theorem]{Table}
\begin{document}

\title[Exceptional points]{Optimal quality of~exceptional points for~the~Lebesgue density theorem}
\author{Ond\v{r}ej Kurka}
\thanks{This research was supported in part by the grant GA\v{C}R 201/09/0067 and in part by the grant SVV-2010-261316.}
\address{Department of Mathematical Analysis, Faculty of Mathematics and Physics,
Charles University, Sokolovsk\'a 83, 186 75 Prague 8, Czech Republic}
\email{kurka.ondrej@seznam.cz}
\keywords{Lebesgue density theorem, exceptional point, interval configuration}
\subjclass[2010]{28A99}
\begin{abstract}
In~spite of~the~Lebesgue density theorem, there is a~positive $ \delta $ such that, for~every non-trivial measurable set $ S \subset \mathbb{R} $, there is a~point at~which both the~lower densities of~$ S $ and of~$ \mathbb{R} \setminus S $ are at~least $ \delta $. The~problem of~determining the~supremum of~possible values of~this $ \delta $ was studied in~a~paper of~V.~I.~Kolyada, as well as in~some recent papers. We solve this problem in~the~present work.
\end{abstract}
\maketitle

\section{Introduction and main result}

Let $ S $ be a~Lebesgue measurable subset of~$ \mathbb{R} $. The~well-known Lebesgue density theorem, in~a~weakened form, states that almost every point is either a~density point of~$ S $ or a~density point of~$ \mathbb{R} \setminus S $. We say that a~point is an~\emph{exceptional point} for~$ S $ if it is neither a~density point of~$ S $ nor a~density point of~$ \mathbb{R} \setminus S $. The~Lebesgue theorem states, in~other words, that the~set of~exceptional points for~$ S $ is null. The~question whether this set can be empty arises. Of~course, there is no exceptional point for~$ S $ if either $ S $ or $ \mathbb{R} \setminus S $ is null.

V.~I.~Kolyada \cite{kolyada} showed that, if $ S $ and $ \mathbb{R} \setminus S $ are of~positive measure, then an~exceptional point exists for~$ S $. Moreover, it is possible to~find a~point at~which both the~lower densities of~$ S $ and of~$ \mathbb{R} \setminus S $ are at~least $ 1/4 $. It was not clear whether the~constant $ 1/4 $ is the~best possible. Maybe unexpectedly, A.~Szenes \cite{szenes} proved that this constant can be improved, but the~problem of~determining the~supremum of~possible values remained open. This problem is solved in~the~present paper.

Let $ 0 \leq \delta \leq 1/2 $ and let $ S \subset \mathbb{R} $ be a Lebesgue measurable set. We will denote the Lebesgue measure by $ \lambda $. We say that $ S $ is \emph{non-trivial} if $ \lambda S > 0 $ and $ \lambda (\mathbb{R} \setminus S) > 0 $. We say that $ s \in \mathbb{R} $ is a \emph{$ \delta $-exceptional point} for $ S $ if
$$ \delta \leq \liminf _{\omega \rightarrow 0+} \frac{\lambda (S \cap (s - \omega , s + \omega ))}{2\omega } \leq \limsup _{\omega \rightarrow 0+} \frac{\lambda (S \cap (s - \omega , s + \omega ))}{2\omega } \leq 1 - \delta . $$
We will be studying the statement
$$ \mathcal{H}(\delta ) : \quad \textrm{There is a $ \delta $-exceptional point for every non-trivial $ S $.} $$
The main problem is to find the constant
$$ \delta _{\mathcal{H}} = \sup \{ \delta : \mathcal{H}(\delta ) \} . $$

It was shown by V.~I.~Kolyada \cite{kolyada} that
$$ 1/4 \leq \delta _{\mathcal{H}} \leq (\sqrt{17} - 3)/4 = 0,2807... \; . $$
Later, A.~Szenes \cite{szenes} improved both bounds and proved that
$$ 0,2629... \leq \delta _{\mathcal{H}} \leq 0,2718... $$
where the exact lower bound is a root of $ 4x^{3} + 2x^{2} + 3x - 1 $ and the exact upper bound is a root of $ 8x^{3} + 4x^{2} + 2x - 1 $. He conjectured that $ \delta _{\mathcal{H}} $ in fact equals to his upper bound. M.~Cs\"ornyei, J.~Grahl and T.~C.~O'Neil \cite{cgo} reduced the upper bound, showing that Szenes's conjecture is false. They proved that
$$ \delta _{\mathcal{H}} \leq 0,2710... $$
where the exact upper bound is a root of $ 2x^{3} + 2x^{2} + 3x - 1 $.

In this paper, we find the exact value of $ \delta _{\mathcal{H}} $.

\begin{theorem}
$ \delta _{\mathcal{H}} $ is the only real root of the polynomial $ 8x^{3} + 8x^{2} + x - 1 $. 
\end{theorem}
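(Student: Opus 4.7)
Let $\delta^{*}$ denote the unique real root of $p(x) = 8x^{3} + 8x^{2} + x - 1$, so that the theorem asserts $\delta_{\mathcal{H}} = \delta^{*}$. The plan is to prove two matching inequalities: the upper bound $\delta_{\mathcal{H}} \leq \delta^{*}$ by exhibiting extremal non-trivial sets, and the lower bound $\delta_{\mathcal{H}} \geq \delta^{*}$ by a structural analysis of any non-trivial $S$ admitting no $\delta^{*}$-exceptional point.

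For the upper bound I would construct, for each $\varepsilon > 0$, a non-trivial $S_{\varepsilon} \subset \mathbb{R}$ admitting no $(\delta^{*} + \varepsilon)$-exceptional point. Since $\delta^{*}$ satisfies a cubic, one naturally looks for a self-similar construction built from three characteristic length ratios: partition a model interval into blocks whose relative lengths are dictated by demanding that the densities iterate consistently under rescaling, the consistency condition being precisely $p(x) = 0$. Either a nested Cantor-type limit of such a partitioning or a periodic tiling of $\mathbb{R}$ should then yield a set for which at every $x$ there is a sequence $\omega_{n} \rightarrow 0+$ along which $F(x, \omega_{n}) := \lambda(S \cap (x - \omega_{n}, x + \omega_{n}))/(2\omega_{n})$ is pushed below $\delta^{*} + \varepsilon$ or above $1 - \delta^{*} - \varepsilon$. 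This part is essentially tuning an ansatz to realize the algebraic equation.

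For the lower bound, fix a non-trivial $S$ and suppose for contradiction that for some $\delta > \delta^{*}$ no point is $\delta$-exceptional. Then at every $x \in \mathbb{R}$ either $\liminf_{\omega \rightarrow 0+} F(x, \omega) < \delta$ or $\limsup_{\omega \rightarrow 0+} F(x, \omega) > 1 - \delta$. The key soft ingredient is the continuity of $x \mapsto F(x, \omega)$ for each fixed $\omega$: combined with the existence of density points of both $S$ and $\mathbb{R} \setminus S$, the intermediate value theorem produces, for each $\omega$, a ``balancing'' point $x_{\omega}$ with $F(x_{\omega}, \omega) = 1/2$. A subsequential limit $x^{*}$ of $x_{\omega}$ as $\omega \rightarrow 0+$ is a candidate near which one must be able to extract a short nested sequence of intervals, at three successive scales, whose densities alternate between the low and the high regime. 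Writing down the mass-bookkeeping between these nested intervals yields a finite-dimensional system of inequalities admitting a feasible solution only when $p(\delta) \leq 0$, i.e.\ when $\delta \leq \delta^{*}$, contradicting our assumption.

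The main obstacle is undoubtedly this last step: classifying exactly the admissible ``interval configurations'' of three nested scales at a hypothetical exceptional-free point and solving the resulting optimization so that its threshold is precisely $\delta^{*}$. This is where the previous works left a gap (Szenes's $0.2629\ldots$ versus Csörnyei--Grahl--O'Neil's $0.2710\ldots$), and the cubic $p$ must emerge exactly as the extremality condition. Any coarser bookkeeping, or any mismatch between the number of scales used in the configuration and the three-step iteration underlying the extremal construction, would yield a strictly weaker bound; aligning the two sides so that the same cubic governs both is the heart of the argument.
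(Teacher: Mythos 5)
There is a genuine gap: your lower-bound strategy would not work as described, and it bears no resemblance to what actually makes the argument go through. You propose to work directly with an arbitrary measurable $S$, produce a balancing point $x^*$ via the intermediate value theorem, and then close the argument by a finite-dimensional optimization over ``three nested scales.'' But the threshold $\zeta_1$ does not arise from three nested scales around a single point; it arises from comparing two global measure estimates of opposite directions on an entire periodic set (the paper's $\delta$-good set $H = G + \mathbb{Z}$): the Szenes-type bound $\lambda G \leq \tfrac{4\delta^2}{1-2\delta}$ and the new bound $\lambda G \geq \tfrac{(1-\delta)(1+2\delta)}{1+3\delta}$, the latter obtained by summing a carefully matched system of inequalities over an unbounded number of ``periods'' (Lemma \ref{lemmaxy}, Claims \ref{clb}--\ref{clxy}). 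No local three-scale bookkeeping at a single balancing point extracts that quantity. Moreover, your sketch omits the two ingredients without which the lower bound has no teeth: (i) the reduction $\delta_\mathcal{H} = \delta_\mathcal{K}$ to interval configurations (Proposition \ref{restatement}), which turns the problem into a finite combinatorial one in the first place, and (ii) the minimality of the counterexample configuration, which is used repeatedly (e.g.\ in Lemmas \ref{lemmab}, \ref{goodset}) to rule out ``shrinking'' a bad configuration into another bad one -- without minimality there is simply no lever. The paper explicitly warns that no short or simple proof is in sight, and the ``essentially tuning an ansatz'' framing underestimates the difficulty by an order of magnitude.

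The upper-bound sketch is also off target: you reach for a self-similar or Cantor-type construction governed by the cubic as a ``consistency under rescaling'' condition, but the actual extremal configuration (and all its predecessors, Kolyada's and Szenes's and Cs\"ornyei--Grahl--O'Neil's) is a single-scale, non-iterated periodic block pattern (the set $S_N$ with lengths $\varphi, \psi, \alpha, \beta, \alpha, \psi$) affinely embedded into $(m, 1)$ and appended to $(-\infty, 0)$; the cubic appears because of the interplay between the interior periodicity and the boundary behaviour at $0$, $m$, $1$, not from any self-similar iteration. So while your high-level two-sided plan is of course the right shape, both the construction you gesture at and the mechanism you propose for the converse bound are not the ones that work, and the key structural notions (configurations, minimality, $\delta$-good sets) are absent.
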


So, $ \delta _{\mathcal{H}} = 0,268486... $ . To prove this, we just combine Propositions \ref{restatement}, \ref{ubound} and \ref{lbound} below.

A.~Szenes formulated, based on an idea of M.~Laczkovich, a discrete restatement of the problem. We will need this restatement, as well as the authors needed in the articles \cite{szenes} and \cite{cgo}. By a \emph{configuration} we call a set
$$ C = (-\infty , 0) \cup (a_{1}, b_{1}) \cup (a_{2}, b_{2}) \cup \dots \cup (a_{r}, b_{r}), $$
where $ 0 < a_{1} < b_{1} < a_{2} < b_{2} < \dots < a_{r} < b_{r} $. The points $ 0, a_{1}, b_{1}, a_{2}, b_{2}, \dots , a_{r}, b_{r} $ are called the \emph{endpoints} of $ C $. Generally, by an endpoint of a set we mean a point from its boundary. By $ \mathcal{K}(\delta ) $ we denote the statement
\smallskip
\begin{itemize}
\item[$ \mathcal{K}(\delta ) : $] $ \quad $ For every configuration $ C $, there is an endpoint $ c $ of $ C $ such that
$$ \delta < \frac{\lambda (C \cap (c - \omega , c + \omega ))}{2\omega } < 1 - \delta \quad \textrm{for all $ \omega > 0 $.} $$
\end{itemize}
Analogously, we put
$$ \delta _{\mathcal{K}} = \sup \{ \delta : \mathcal{K}(\delta ) \} . $$

The problem of finding $ \delta _{\mathcal{H}} $ can be restated as the problem of finding $ \delta _{\mathcal{K}} $.

\begin{proposition}[{\cite[Proposition 3]{szenes}}] \label{restatement}
We have $ \delta _{\mathcal{H}} = \delta _{\mathcal{K}} $.
\end{proposition}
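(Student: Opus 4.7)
The plan is to establish the two inequalities $\delta_{\mathcal{H}} \leq \delta_{\mathcal{K}}$ and $\delta_{\mathcal{K}} \leq \delta_{\mathcal{H}}$ separately. The central tool in both directions will be a dictionary between a non-trivial measurable set $S$ and a configuration $C$: in one direction, $C$ is realised as a scale-limit (``tangent'') of $S$ at a well-chosen point; in the reverse direction, $S$ is constructed so that its local density profile at a distinguished point reproduces, at every scale, the global density profile of $C$ around one of its endpoints.

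For $\delta_{\mathcal{H}} \leq \delta_{\mathcal{K}}$, I fix $\delta < \delta_{\mathcal{H}}$ and a configuration $C$, and build from $C$ a bounded non-trivial measurable set $S_C$ by a self-similar construction: I nest affinely rescaled copies of $C$ (suitably truncated to fit a bounded window) at geometrically decreasing scales accumulating at a distinguished point $s_0$. The construction is arranged so that, as $\omega$ runs through a cofinal sequence tending to $0$, the ratio $\lambda(S_C\cap(s_0-\omega,s_0+\omega))/(2\omega)$ sweeps through the same values as the ratio $\lambda(C\cap(c-\omega',c+\omega'))/(2\omega')$ as $\omega'$ ranges over $(0,\infty)$, for some fixed endpoint $c$ of $C$. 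Choosing $\delta < \delta' < \delta_{\mathcal{H}}$ and applying $\mathcal{H}(\delta')$ to $S_C$ produces a $\delta'$-exceptional point $s^*$. Since interior points of $S_C$ or its complement are density-$1$ or density-$0$ points, $s^*$ must sit at an accumulation locus of the nested structure, which by the self-similarity corresponds to some endpoint of $C$. The $\delta'$-exceptionality at $s^*$ translates into $\delta<\delta'\leq \lambda(C\cap(c-\omega,c+\omega))/(2\omega)\leq 1-\delta'<1-\delta$ for all $\omega>0$, establishing $\mathcal{K}(\delta)$.

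For $\delta_{\mathcal{K}} \leq \delta_{\mathcal{H}}$, I fix $\delta<\delta_{\mathcal{K}}$ and a non-trivial $S$, and exhibit a $\delta$-exceptional point by a tangent-set argument. I select a point $s_0$ on the essential topological boundary between the density-$1$ region of $S$ and its complement, and a sequence $\omega_n\to 0^+$ along which the local density ratios attain their extremal behaviour. By weak compactness of indicator functions on a fixed bounded window, the rescaled sets $(S-s_0)/\omega_n$ subconverge; a further approximation on a long enough interval represents the limit, up to a half-line at $-\infty$, as a configuration $C'$. Applying $\mathcal{K}(\delta)$ to $C'$ picks out an endpoint $c'$ with strict density bounds for all $\omega>0$. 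A diagonal passage then transfers $c'$ back to a point $s^*\in\mathbb{R}$ whose $\liminf$ and $\limsup$ of the local density ratios with respect to $S$ lie in $[\delta,1-\delta]$, thereby giving a $\delta$-exceptional point.

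The main technical obstacle in both directions is the uniform control of the density profile across scales. In $\delta_{\mathcal{H}}\leq\delta_{\mathcal{K}}$, the nested rescaled copies of $C$ in $S_C$ must be glued without introducing spurious density that would corrupt the profile at $s_0$, and one must verify that every possible $\delta'$-exceptional point of $S_C$ unambiguously encodes an endpoint of the original $C$. In $\delta_{\mathcal{K}}\leq\delta_{\mathcal{H}}$, the tangent configuration must be extracted along scales that simultaneously capture the extremal values of the local density at $s^*$, which requires coordinating the choices of $s_0$, the scale sequence $\omega_n$, and the ambient window. Handling these measure-theoretic subtleties forms the technical heart of the argument.
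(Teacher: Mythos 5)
The paper itself gives no proof of this proposition: it is quoted directly from \cite[Proposition~3]{szenes} and used as a black box, so there is no in-paper argument to compare against. I therefore assess your sketch on its own.

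Your two-sided plan is a sensible shape, but both halves have gaps that the sketch does not close. For $\delta_{\mathcal K}\le\delta_{\mathcal H}$: the $L^{\infty}$-weak-$*$ limit of the rescaled indicator functions of $(S-s_0)/\omega_n$ is in general a density taking values in $[0,1]$, not the indicator of a set, and certainly not a configuration; the step ``a further approximation \dots represents the limit \dots as a configuration $C'$'' skips the real difficulty. If you instead approximate $S$ at each scale $\omega_n$ by a finite union of intervals (possible by regularity of Lebesgue measure), the endpoint $c'_n$ that $\mathcal K(\delta)$ returns depends on $n$, and turning that sequence of endpoints into a single honest $\delta$-exceptional point of $S$ is exactly the hard step you name but do not carry out. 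For $\delta_{\mathcal H}\le\delta_{\mathcal K}$: you assert, but do not show, that every $\delta'$-exceptional point of $S_C$ encodes an endpoint of $C$; the gluing loci of the nested rescaled copies are natural candidates for new exceptional behaviour unless the construction is tuned, and nothing in the sketch tunes it. Moreover, a self-similar gluing at a fixed ratio makes the density profile at the accumulation point sweep only a bounded window of scales $\omega'$ of $C$, whereas $\mathcal K(\delta)$ requires the strict inequalities for all $\omega'>0$; you would also need the observation that $\lambda(C | I_{\omega'}(c))\to 1/2$ both as $\omega'\to 0$ and as $\omega'\to\infty$, so that the bounded window actually controls every radius. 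These are precisely the ``technical hearts'' you flag but leave unresolved, so as written the proposal is an outline rather than a proof.
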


Let us introduce some notation now. Throughout the work, $ I_{\omega }(p) $ means the open interval $ (p - \omega , p + \omega ) $ and $ \lambda (A | B) $ means $ \lambda(A \cap B)/\lambda (B) $. By
$$ \lambda (A | I_{\gamma }(c)) \unrhd 1 - \delta $$
we mean
$$ \lambda (A | I_{\gamma }(c)) \geq 1 - \delta \quad \textrm{and} \quad 0 < \varepsilon < \gamma \Rightarrow \lambda (A | I_{\varepsilon }(c)) < 1 - \delta . $$
If $ a \leq b $, then $ (a, b) $ denotes $ \{ x \in \mathbb{R} : a < x < b \} $ (so, if $ a < b $, then $ (a, b) $ denotes the open interval as usual, and if $ a = b $, then $ (a, b) $ is empty).

\section{Upper bound}

In this section, we give an upper bound on $ \delta _{\mathcal{K}} $. Our method is based on the method given by Szenes in \cite{szenes} and improved by Cs\"ornyei, Grahl and O'Neil in \cite{cgo}. Let us recall the idea of these methods.

Szenes constructed a configuration depending on a couple of parameters. This configuration consists of $ (-\infty , 0) $ and a number of intervals of the same length uniformly distributed in the interval $ (m, 1) $, where $ m $ is a parameter with $ 0 < m < 1 $.
$$
\unitlength=1.00mm
\linethickness{1pt}
\begin{picture}(100,11)(0,-6)
\put(0.00,0.00){\line(1,0){32.77}}
\put(43.30,0.00){\line(1,0){9.77}}
\put(58.38,0.00){\line(1,0){9.77}}
\put(73.46,0.00){\line(1,0){9.77}}
\put(88.54,0.00){\line(1,0){9.77}}
\put(32.77,-3.00){\makebox(0,0){$ 0 $}}
\put(43.30,-3.00){\makebox(0,0){$ m $}}
\put(98.31,-3.00){\makebox(0,0){$ 1 $}}
\end{picture}
$$
He showed that, for any $ \delta $ with $ 8\delta ^{3} + 4\delta ^{2} + 2\delta > 1 $, a suitable choice of parameters gives a counterexample to $ \mathcal{K}(\delta ) $. This provides the bound $ \delta _{\mathcal{K}} \leq 0,271844... $ .

Szenes conjectured that his construction is optimal. This conjecture was disproved by Cs\"ornyei, Grahl and O'Neil. They modified Szenes's configuration by inserting a small gap into the intervals.
$$
\unitlength=1.00mm
\linethickness{1pt}
\begin{picture}(100,11)(0,-6)
\put(0.00,0.00){\line(1,0){32.77}}
\put(42.96,0.00){\line(1,0){4.54}}
\put(48.27,0.00){\line(1,0){4.54}}
\put(58.12,0.00){\line(1,0){4.54}}
\put(63.44,0.00){\line(1,0){4.54}}
\put(73.29,0.00){\line(1,0){4.54}}
\put(78.61,0.00){\line(1,0){4.54}}
\put(88.46,0.00){\line(1,0){4.54}}
\put(93.77,0.00){\line(1,0){4.54}}
\put(32.77,-3.00){\makebox(0,0){$ 0 $}}
\put(42.96,-3.00){\makebox(0,0){$ m $}}
\put(98.31,-3.00){\makebox(0,0){$ 1 $}}
\end{picture}
$$
Their method gives a counterexample to $ \mathcal{K}(\delta ) $ for any  $ \delta $ with $ 2\delta ^{3} + 2\delta ^{2} + 3\delta > 1 $. This provides the bound $ \delta _{\mathcal{K}} \leq 0,271069... $ .

Our method is very similar to the method of Cs\"ornyei, Grahl and O'Neil. The difference is in that we insert a gap into every other interval only. One may expect that all these methods are too weak and it will be necessary to construct more and more intricate configurations to obtain better and better upper bounds on $ \delta _{\mathcal{K}} $. In spite of this expectation, we will see later that our method gives the best possible upper bound (Proposition \ref{lbound}).

\begin{proposition} \label{ubound}
We have $ \delta _{\mathcal{K}} \leq \zeta _{1} $ where $ \zeta _{1} $ is the only real root of the polynomial $ 8x^{3} + 8x^{2} + x - 1 $. 
\end{proposition}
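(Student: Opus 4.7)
The strategy is to exhibit, for every $\delta > \zeta_1$, a configuration that violates $\mathcal{K}(\delta)$; by definition of $\delta_{\mathcal{K}}$ this gives the proposition. Following the outline sketched in the introduction, the counterexample will be a modification of the Cs\"ornyei--Grahl--O'Neil construction in which a small gap is inserted into only \emph{every other} large interval, leaving the intermediate intervals intact. The remaining work is to tune the parameters so that no endpoint of $C$ can serve as a witness for $\mathcal{K}(\delta)$.

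I would parametrize the configuration by a few numbers: the position $m$ where the large intervals begin, the common length $L$ of the large intervals, the common length $G$ of the gaps between them, the length $g$ of the inserted inner gap, and the placement of that inner gap inside a gapped interval (one further parameter unless symmetry is imposed). The number $n$ of large intervals will be taken large, so that far-field averages are governed by the ratio $L/(L+G)$ and the semi-infinite component $(-\infty,0)$ behaves like a full-density reservoir on the left.

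The next step is to enumerate the distinct endpoint types -- the origin $0$, the left and right endpoints of a ``plain'' large interval, the left and right endpoints of a ``gapped'' large interval, and the two endpoints of a small inner gap -- and, at each, to identify a critical radius $\omega$ at which $\lambda(C \mid I_{\omega}(c))$ attains either $1-\delta$ (from above) or $\delta$ (from below). In an optimal configuration one expects every endpoint to sit exactly at the boundary of failing $\mathcal{K}(\delta)$, which yields a system of equations in the parameters. A clean choice -- likely placing the inner gap symmetrically -- collapses the system, and after eliminating the parameters one should arrive at the single polynomial relation $8\delta^3 + 8\delta^2 + \delta = 1$. For $\delta$ slightly above $\zeta_1$, a continuity/perturbation argument turns the critical equalities into strict inequalities of the correct sign, producing the required counterexample.

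The main obstacle is verifying, at each endpoint $c$ and for \emph{every} radius $\omega > 0$, that $\lambda(C \mid I_{\omega}(c))$ really lies outside $(\delta, 1-\delta)$ -- not merely at the distinguished critical radius. This demands a case analysis of the local structure of $C$ near each endpoint, with a monotonicity or piecewise-linearity argument showing that between successive ``kinks'' of $\omega \mapsto \lambda(C \mid I_{\omega}(c))$ the density evolves in the right direction. A secondary bookkeeping issue is the boundary effect at the rightmost intervals, where the configuration loses its periodicity; taking $n$ large enough, and terminating the configuration at a convenient endpoint, should ensure that those boundary endpoints are controlled by the same estimates as the generic interior ones.
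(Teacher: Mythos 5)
Your overall strategy is the same as the paper's: modify the Cs\"ornyei--Grahl--O'Neil configuration by inserting a gap into only every other large interval, take the number of periods large so the far-field density is controlled, and tune the parameters so that every endpoint is (barely) witnessed as a counterexample, with the threshold equation reducing to $8\delta^3+8\delta^2+\delta=1$.

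There is, however, a quantifier error that makes your verification plan both unnecessary and impossible. You say the main obstacle is to check, at each endpoint $c$, that $\lambda(C\mid I_\omega(c))\notin(\delta,1-\delta)$ for \emph{every} $\omega>0$. That is not what a counterexample to $\mathcal{K}(\delta)$ requires: the negation of $\mathcal{K}(\delta)$ is that for every endpoint $c$ there \emph{exists} one radius $\omega>0$ with $\lambda(C\mid I_\omega(c))\notin(\delta,1-\delta)$. The ``for all $\omega$'' version is in fact false at every endpoint, since $\lambda(C\mid I_\omega(c))\to 1/2$ as $\omega\to 0^+$ there, so this part of your plan would stall. Once the quantifier is fixed, the verification is much lighter than you anticipate: the paper simply records one explicit radius for each of the three interior endpoint types (namely $\varphi$, $\alpha$, and $2\alpha+\beta$ in the notation of Section~2) and evaluates the density there, plus a separate far-field radius for the three boundary endpoints $0$, $m$, $1$.

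The second, larger gap is that the construction is left generic: you introduce parameters $m,L,G,g$ and assert that eliminating them ``should arrive at'' the polynomial $8\delta^3+8\delta^2+\delta-1$, but you do not produce the parameter values or carry out the elimination. The paper does exactly that step explicitly, setting
$\alpha=\tfrac{1+2\delta-4\delta^2}{4(1+3\delta)}$, $\beta=\tfrac{2\delta^2}{1+3\delta}$, $\varphi=\tfrac{1}{2(1+3\delta)}$, $\psi=\tfrac{\delta}{1+3\delta}$, $m=\tfrac{4\delta^2}{1+3\delta+4\delta^2}$,
checking each endpoint, and reducing the boundary inequality to $(1-\delta)(1+2\delta)/(1+3\delta+4\delta^2)<2\delta$, which is equivalent to $8\delta^3+8\delta^2+\delta>1$. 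Without those choices and that computation, your write-up is a plausible blueprint rather than a proof.
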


The remainder of the section is devoted to the proof of this proposition.

We show that there is a counterexample to $ \mathcal{K}(\delta ) $ for any $ \delta $ with $ 8\delta ^{3} + 8\delta ^{2} + \delta > 1 $ (i.e., $ \delta > \zeta _{1} = 0,2684... $). We will use the assumption $ 8\delta ^{3} + 8\delta ^{2} + \delta > 1 $ in the form
\begin{equation} \label{in00}
\frac{(1 - \delta )(1 + 2\delta )}{1 + 3\delta + 4\delta ^{2}} < 2\delta .
\end{equation}
Since a counterexample to $ \mathcal{K}(\delta ) $ is a counterexample to $ \mathcal{K}(\delta ') $ for every $ \delta ' > \delta $, we may assume that $ \delta \leq \frac{1}{4}(\sqrt{5} - 1) = 0,3090... $ .

We define
$$ \alpha = \frac{1 + 2\delta - 4\delta ^{2}}{4(1 + 3\delta )}, \quad \beta = \frac{2\delta ^{2}}{1 + 3\delta }, \quad \varphi = \frac{1}{2(1 + 3\delta )}, \quad \psi = \frac{\delta }{1 + 3\delta }, $$
\begin{align*}
k_{1} = 0, & \quad l_{1} = \varphi , \\
k_{2} = \varphi + \psi , & \quad l_{2} = \varphi + \psi + \alpha , \\
\quad k_{3} = \varphi + \psi + \alpha + \beta , & \quad l_{3} = \varphi + \psi + \alpha + \beta + \alpha ,
\end{align*}
$$ S_{N} = \bigcup_{n=0}^{N-1} \big\{ n + \big[ (k_{1}, l_{1}) \cup (k_{2}, l_{2}) \cup (k_{3}, l_{3}) \big] \big\} \cup \big\{ N + (k_{1}, l_{1}) \big\} , \quad N \in \mathbb{N}. $$
Note that $ \alpha > 0 $ and that
$$ \varphi + \psi + \alpha + \beta + \alpha + \psi = 1, $$
so the distance of $ l_{3} $ to $ 1 $ is $ \psi $. Let us look how the set $ S_{N} $ looks like for $ N = 3 $.
$$
\unitlength=1.00mm
\linethickness{1pt}
\begin{picture}(100,13)(0,-6)
\put(0.00,0.00){\line(1,0){8.31}}
\put(12.77,0.00){\line(1,0){5.19}}
\put(20.35,0.00){\line(1,0){5.19}}
\put(4.15,3.00){\makebox(0,0){$ \varphi $}}
\put(10.54,3.00){\makebox(0,0){$ \psi $}}
\put(15.36,3.00){\makebox(0,0){$ \alpha $}}
\put(19.15,3.00){\makebox(0,0){$ \beta $}}
\put(22.95,3.00){\makebox(0,0){$ \alpha $}}
\put(27.77,3.00){\makebox(0,0){$ \psi $}}
\put(30.00,0.00){\line(1,0){8.31}}
\put(42.77,0.00){\line(1,0){5.19}}
\put(50.35,0.00){\line(1,0){5.19}}
\put(34.15,3.00){\makebox(0,0){$ \varphi $}}
\put(40.54,3.00){\makebox(0,0){$ \psi $}}
\put(45.36,3.00){\makebox(0,0){$ \alpha $}}
\put(49.15,3.00){\makebox(0,0){$ \beta $}}
\put(52.95,3.00){\makebox(0,0){$ \alpha $}}
\put(57.77,3.00){\makebox(0,0){$ \psi $}}
\put(60.00,0.00){\line(1,0){8.31}}
\put(72.77,0.00){\line(1,0){5.19}}
\put(80.35,0.00){\line(1,0){5.19}}
\put(64.15,3.00){\makebox(0,0){$ \varphi $}}
\put(70.54,3.00){\makebox(0,0){$ \psi $}}
\put(75.36,3.00){\makebox(0,0){$ \alpha $}}
\put(79.15,3.00){\makebox(0,0){$ \beta $}}
\put(82.95,3.00){\makebox(0,0){$ \alpha $}}
\put(87.77,3.00){\makebox(0,0){$ \psi $}}
\put(90.00,0.00){\line(1,0){8.31}}
\put(94.15,3.00){\makebox(0,0){$ \varphi $}}
\put(0.00,-3.00){\makebox(0,0){$ 0 $}}
\put(99.00,-3.00){\makebox(0,0){$ N \! + \! \varphi $}}
\end{picture}
$$
One \textquotedblleft period\textquotedblright {} of $ S_{N} $ consists of two smaller intervals of length $ \alpha $ and one larger interval of length $ \varphi $. The gap between two smaller intervals has length $ \beta $ and the gap between a smaller and the neighbouring larger interval has length $ \psi $.

\begin{claim} \label{uboundclaim}
Every endpoint $ p $ of $ S_{N} $ with $ 0 < p < N + \varphi $ has a radius $ \varepsilon > 0 $ such that $ \lambda(S_{N} | I_{\varepsilon }(p)) \geq 1 - \delta $.
\end{claim}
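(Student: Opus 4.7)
The plan is to exploit the palindromic structure of~$ S_{N} $: a~single period has length~$ 1 $ and reads $ \varphi $-interval, $ \psi $-gap, $ \alpha $-interval, $ \beta $-gap, $ \alpha $-interval, $ \psi $-gap. The~defining formulas immediately yield
$$ \varphi + \psi \;=\; 2\alpha + \beta \;=\; \frac{1+2\delta }{2(1+3\delta )}, \qquad \varphi + 2\psi + 2\alpha + \beta = 1, $$
together with the~strict inequalities $ \beta < \psi < \alpha < \varphi $ over the~relevant range of~$ \delta $ (the~bound $ \psi \leq \alpha $ is~exactly where the~earlier assumption $ \delta \leq \frac{1}{4}(\sqrt{5}-1) $ is~used). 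Consequently $ S_{N} $ is~invariant, locally, under reflection through the~midpoint of~any large interval and through the~midpoint of~any $ \beta $-gap, and these two symmetries collapse the~six endpoints of~each period to~three representatives, which I~take to~be $ p_{1} = n + \varphi $, $ p_{2} = n + k_{2} $, and~$ p_{3} = n + l_{2} $.

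For each representative I~would guess a~critical radius $ \varepsilon $ placing $ p\pm \varepsilon $ on~other endpoints of~$ S_{N} $, so~that the~density becomes a~clean ratio of~parameters. The~choices are $ \varepsilon = \varphi $ at~$ p_{1} $, $ \varepsilon = \varphi + \psi $ at~$ p_{2} $, and $ \varepsilon = \psi $ at~$ p_{3} $.

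At~$ p_{1} = n + \varphi $, the~inequality $ \psi < \varphi < \psi + \alpha $ shows that $ I_{\varphi }(p_{1}) = (n, n+2\varphi ) $ decomposes as the~full large interval, the~$ \psi $-gap, and a~length-$ (\varphi - \psi ) $ fragment of~the~first small interval, giving density $ 1 - \psi /(2\varphi ) = 1 - \delta $. At~$ p_{2} = n + k_{2} $, the~identity $ \varphi + \psi = 2\alpha + \beta $ forces $ I_{\varphi + \psi }(p_{2}) = (n, n+l_{3}) $, which is~exactly one period minus its~trailing $ \psi $-gap; its~$ S_{N} $-mass $ \varphi + 2\alpha $ and length $ 2(\varphi + \psi ) $ simplify, via $ 2 + 2\delta - 4\delta ^{2} = 2(1-\delta )(1+2\delta ) $, to~the~ratio $ 1 - \delta $. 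At~$ p_{3} = n + l_{2} $, the~inequalities $ \beta < \psi < \alpha $ keep $ I_{\psi }(p_{3}) $ inside one period and split it into a~length-$ \psi $ piece of~the~first small interval, the~full $ \beta $-gap, and a~length-$ (\psi - \beta ) $ piece of~the~second small interval, giving density $ 1 - \beta /(2\psi ) = 1 - \delta $. The~remaining endpoints $ n $, $ n + k_{3} $, $ n + l_{3} $ are reflected images of~$ p_{1} $, $ p_{3} $, $ p_{2} $, so~the~corresponding reflected radii work verbatim.

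The~main bookkeeping obstacle is~confirming that the~chosen $ \varepsilon $'s really place $ p \pm \varepsilon $ at~the~intended endpoints of~$ S_{N} $ without overshooting its~support. The~first is~just the~content of~the~linear identities above. The~second is~automatic, because $ \varepsilon \leq \varphi + \psi < 1 $ and $ p \in (0, N+\varphi ) $; in~particular, the~boundary endpoint $ p = N $ is~handled exactly like any interior $ p = n $, since the~piece $ (N, N+\varphi ) $ is~appended precisely as~the~large interval expected to~lie immediately to~the~right of~$ N $.
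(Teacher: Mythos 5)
Your proof is correct and essentially the same as the paper's: a case-by-case computation of the density at representative endpoints, handling the symmetry explicitly where the paper just lists the three cases. The only variation is at $p = n + l_{2}$, where you take $\varepsilon = \psi$ (yielding density exactly $1 - \beta/(2\psi) = 1 - \delta$) while the paper takes $\varepsilon = \alpha$ (yielding $1 - \beta/(2\alpha) \geq 1 - \delta$); both choices rest on the same parameter inequalities stemming from $\zeta_{1} < \delta \leq \frac{1}{4}(\sqrt{5}-1)$.
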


Suitable intervals for each of the three possible types of endpoints are shown in the picture.
$$
\unitlength=1.00mm
\begin{picture}(100,13)(0,-4)
\linethickness{0.4pt}
\put(0.00,-2.4){\line(0,1){4.8}}
\put(0.00,-2.4){\line(1,0){1}}
\put(0.00,2.4){\line(1,0){1}}
\put(16.62,-2.4){\line(0,1){4.8}}
\put(16.62,-2.4){\line(-1,0){1}}
\put(16.62,2.4){\line(-1,0){1}}
\put(42.77,-2.4){\line(0,1){4.8}}
\put(42.77,-2.4){\line(1,0){1}}
\put(42.77,2.4){\line(1,0){1}}
\put(53.15,-2.4){\line(0,1){4.8}}
\put(53.15,-2.4){\line(-1,0){1}}
\put(53.15,2.4){\line(-1,0){1}}
\put(72.77,-2.4){\line(0,1){4.8}}
\put(72.77,-2.4){\line(1,0){1}}
\put(72.77,2.4){\line(1,0){1}}
\put(98.31,-2.4){\line(0,1){4.8}}
\put(98.31,-2.4){\line(-1,0){1}}
\put(98.31,2.4){\line(-1,0){1}}
\linethickness{0.6pt}
\put(8.31,-2){\line(0,1){4}}
\put(47.96,-2){\line(0,1){4}}
\put(85.54,-2){\line(0,1){4}}
\linethickness{1pt}
\put(0.00,0.00){\line(1,0){8.31}}
\put(12.77,0.00){\line(1,0){5.19}}
\put(20.35,0.00){\line(1,0){5.19}}
\put(30.00,0.00){\line(1,0){8.31}}
\put(42.77,0.00){\line(1,0){5.19}}
\put(50.35,0.00){\line(1,0){5.19}}
\put(60.00,0.00){\line(1,0){8.31}}
\put(72.77,0.00){\line(1,0){5.19}}
\put(80.35,0.00){\line(1,0){5.19}}
\put(90.00,0.00){\line(1,0){8.31}}
\put(8.31,6.00){\makebox(0,0){I.}}
\put(47.96,6.00){\makebox(0,0){II.}}
\put(85.54,6.00){\makebox(0,0){III.}}
\end{picture}
$$

\begin{proof}
I. Let $ p = l_{1} + n $ or $ p = k_{1} + n $ for some $ n \in \mathbb{N} \cup \{ 0 \} $. We show that the radius
$$ \varepsilon = \varphi $$
works. One may easily check that $ \psi \leq \varphi \leq \psi + \alpha $ due to $ \frac{1}{4}(3 - \sqrt{5}) \leq \zeta _{1} < \delta $. So, the intersection $ I_{\varepsilon }(p) \cap S_{N} $ consists of one interval of length $ \varphi $ and one interval of length $ \varphi - \psi $. We obtain
$$ \lambda(S_{N} | I_{\varepsilon }(p)) = \frac{\varphi + (\varphi - \psi )}{2\varphi } = 1 - \frac{\psi }{2\varphi } = 1 - \delta . $$

II. Let $ p = l_{2} + n $ or $ p = k_{3} + n $ for some $ n \in \mathbb{N} \cup \{ 0 \} $. We show that the radius
$$ \varepsilon = \alpha $$
works. One may easily check that $ \beta /(2\alpha ) \leq \delta $ due to $ \delta \leq \frac{1}{4}(\sqrt{5} - 1) $. In particular, $ \beta \leq 2\delta \alpha \leq \alpha $. Hence,
$$ \lambda(S_{N} | I_{\varepsilon }(p)) = \frac{\alpha + (\alpha - \beta )}{2\alpha } = 1 - \frac{\beta }{2\alpha } \geq 1 - \delta . $$

III. Let $ p = l_{3} + n $ or $ p = k_{2} + n $ for some $ n \in \mathbb{N} \cup \{ 0 \} $. Then the radius
$$ \varepsilon = 2\alpha + \beta = \psi + \varphi $$
works, as
$$ \lambda(S_{N} | I_{\varepsilon }(p)) = \frac{2\alpha + \varphi }{2(\psi + \varphi )} = \frac{\frac{1}{2}(1 + 2\delta - 4\delta ^{2}) + \frac{1}{2} }{1 + 2\delta } = 1 - \delta . $$
\end{proof}

Now, we define
$$ m = \frac{4\delta ^{2}}{1 + 3\delta + 4\delta ^{2}}. $$
For every $ N \in \mathbb{N} $, let $ u_{N} : \mathbb{R} \rightarrow \mathbb{R} $ be the affine transformation which maps $ 0 $ onto $ m $ and $ N + \varphi $ onto $ 1 $. We define
$$ C_{N} = (-\infty , 0) \cup u_{N}(S_{N}), \quad N \in \mathbb{N}. $$
We see $ C_{3} $ on the picture.
$$
\unitlength=1.00mm
\linethickness{1pt}
\begin{picture}(100,11)(0,-6)
\put(0.00,0.00){\line(1,0){32.77}}
\put(41.79,0.00){\line(1,0){4.78}}
\put(49.14,0.00){\line(1,0){2.98}}
\put(53.49,0.00){\line(1,0){2.98}}
\put(59.04,0.00){\line(1,0){4.78}}
\put(66.38,0.00){\line(1,0){2.98}}
\put(70.74,0.00){\line(1,0){2.98}}
\put(76.29,0.00){\line(1,0){4.78}}
\put(83.63,0.00){\line(1,0){2.98}}
\put(87.99,0.00){\line(1,0){2.98}}
\put(93.53,0.00){\line(1,0){4.78}}
\put(32.77,-3.00){\makebox(0,0){$ 0 $}}
\put(41.79,-3.00){\makebox(0,0){$ m $}}
\put(98.31,-3.00){\makebox(0,0){$ 1 $}}
\end{picture}
$$

\begin{claim}
$ C_{N} $ is a counterexample to $ \mathcal{K}(\delta ) $ for a large enough $ N $.
\end{claim}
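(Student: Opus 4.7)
I plan to verify the claim by splitting the endpoints of $C_N$ into two classes: the three \emph{boundary} endpoints $0$, $m = u_N(0)$, and $1 = u_N(N+\varphi)$ that arise from gluing $(-\infty,0)$ to $u_N(S_N) \subset (m,1)$, and the \emph{interior} endpoints $u_N(p)$ for $p$ a non-outer endpoint of $S_N$. For each, I will exhibit a single radius $\omega$ at which $\lambda(C_N \mid I_\omega(c)) \notin (\delta, 1-\delta)$.

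For an interior endpoint $u_N(p)$, I would push Claim \ref{uboundclaim} forward by the affine map $u_N$, which preserves density ratios on intervals: if $r_p \in \{\varphi, \alpha, 2\alpha + \beta\}$ is the radius provided by the claim, then $\lambda(u_N(S_N) \mid I_{u_N' r_p}(u_N(p))) \geq 1 - \delta$, where $u_N' = (1-m)/(N+\varphi)$. For $N$ sufficiently large, $u_N' r_p$ is small enough that $I_{u_N' r_p}(u_N(p)) \subset (0, 1)$, and inside $(0,1)$ the sets $C_N$ and $u_N(S_N)$ agree, so the same bound carries over to $C_N$.

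For the three boundary endpoints I would use radii chosen to give clean expressions in $L_N := \lambda(u_N(S_N)) = (1-m) \cdot \frac{N(\varphi+2\alpha) + \varphi}{N+\varphi}$. A short calculation shows $L_N$ decreases monotonically in $N$ with limit $(1-m)(\varphi+2\alpha) = \frac{(1-\delta)(1+2\delta)}{1+3\delta+4\delta^2}$. Taking $\omega = 1$ at $c = 0$ yields density $(1+L_N)/2$; taking $\omega = 1$ at $c = 1$ yields density $L_N/2$; and taking $\omega = 1 - m$ at $c = m$, which is legitimate since $m < 1/2$, yields density $\frac{1-2m+L_N}{2(1-m)}$. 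Two identities then close the argument:
\[
\frac{(1-\delta)(1+2\delta)}{1+3\delta+4\delta^2} - (1-2\delta) = \frac{8\delta^3}{1+3\delta+4\delta^2} > 0, \qquad \frac{1-2m+(1-m)(\varphi+2\alpha)}{2(1-m)} = 1-\delta.
\]
The first forces $L_N > 1 - 2\delta$ for every $N$, so the densities at $c = 0$ and (combined with the second identity) at $c = m$ exceed $1-\delta$; the hypothesis \eqref{in00} makes the limit of $L_N$ strictly smaller than $2\delta$, so for $N$ large the density at $c = 1$ drops below $\delta$.

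The principal obstacle is discovering the radius $\omega = 1 - m$ at the middle boundary point $c = m$: its appropriateness is equivalent to the identity $\frac{1-2m+(1-m)(\varphi+2\alpha)}{2(1-m)} = 1 - \delta$, and this identity is exactly what pins down the chosen value $m = 4\delta^2/(1+3\delta+4\delta^2)$ and makes the threshold condition \eqref{in00} emerge as the critical one. The monotonicity of $L_N$, the reduction for interior endpoints via Claim \ref{uboundclaim}, and the choices at $c = 0$ and $c = 1$ are routine.
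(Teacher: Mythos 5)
Your proposal is correct and essentially coincides with the paper's argument: same three-boundary/interior decomposition, same reduction to Claim \ref{uboundclaim} via $u_N$, same radius $1-m$ at $c=m$ verified by the identity pinning down $m$, same radius $1$ at $c=1$ with the hypothesis (\ref{in00}) forcing $L_N \leq 2\delta$ for large $N$. Two cosmetic differences: the paper uses radius $1-m$ at $c=0$ and compares with the density at $m$, while you use radius $1$ directly (both work, yours is if anything a hair more transparent via the lower bound $L_N > 1-2\delta$); and the caveat \textquotedblleft for $N$ sufficiently large\textquotedblright\ on the interior endpoints is harmless but unnecessary, since the intervals $I_{r_p}(p)$ of Claim \ref{uboundclaim} already lie in $[0, N+\varphi]$, so their $u_N$-images lie in $[m,1]\subset(0,1)$ for every $N$.
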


\begin{proof}
For $ N \in \mathbb{N} $, we have
\begin{eqnarray*}
\frac{1}{N + \varphi } \lambda S_{N} & = & \frac{N(\varphi + 2\alpha ) + \varphi }{N + \varphi } = \varphi + 2\alpha + \frac{\varphi (1 - \varphi - 2\alpha )}{N + \varphi } \\
 & \geq & \varphi + 2\alpha = \frac{(1 - \delta )(1 + 2\delta )}{1 + 3\delta },
\end{eqnarray*}
and so
$$ \lambda (u_{N}(S_{N})) = \frac{1 - m}{N + \varphi } \lambda S_{N} \geq \frac{1 + 3\delta }{1 + 3\delta + 4\delta ^{2}} \frac{(1 - \delta )(1 + 2\delta )}{1 + 3\delta } = \frac{(1 - \delta )(1 + 2\delta )}{1 + 3\delta + 4\delta ^{2}}. $$
Moreover,
\begin{eqnarray*}
\lim _{N \rightarrow \infty } \lambda (u_{N}(S_{N})) & = & \lim _{N \rightarrow \infty } \frac{1 - m}{N + \varphi } \lambda S_{N} = \lim _{N \rightarrow \infty } (1 - m) \frac{N(\varphi + 2\alpha ) + \varphi }{N + \varphi } \\
 & = & (1 - m)(\varphi + 2\alpha ) = \frac{1 + 3\delta }{1 + 3\delta + 4\delta ^{2}} \frac{(1 - \delta )(1 + 2\delta )}{1 + 3\delta } \\
 & = & \frac{(1 - \delta )(1 + 2\delta )}{1 + 3\delta + 4\delta ^{2}}.
\end{eqnarray*}
It follows from (\ref{in00}) that, for a large enough $ N $,
$$ \lambda (u_{N}(S_{N})) \leq 2\delta . $$
We show that $ C_{N} $ is a counterexample to $ \mathcal{K}(\delta ) $ for such an $ N $.

By Claim \ref{uboundclaim}, every endpoint $ p $ of $ C_{N} $ with $ m < p < 1 $ has a radius $ \varepsilon > 0 $ such that $ \lambda(C_{N} | I_{\varepsilon }(p)) \geq 1 - \delta $. It remains to find a radius $ \varepsilon > 0 $ such that $ \lambda(C_{N} | I_{\varepsilon }(p)) \notin (\delta , 1 - \delta ) $ for $ p = 0 $, $ p = m $ and $ p = 1 $. Note that it is easy to show that $ m \leq 1/2 $. The radius $ 1 - m $ works for $ m $, as
\begin{eqnarray*}
\lambda (C_{N} | I_{1 - m}(m)) & = & \frac{1}{2(1 - m)} \Big( \lambda (m - (1 - m), 0) + \lambda (u_{N}(S_{N})) \Big) \\
 & \geq & \frac{1}{2(1 - m)} \Big( - m + (1 - m) + \frac{(1 - \delta )(1 + 2\delta )}{1 + 3\delta + 4\delta ^{2}} \Big) \\
 & = & 1 - \frac{1}{2(1 - m)} \Big( 1 - \frac{(1 - \delta )(1 + 2\delta )}{1 + 3\delta + 4\delta ^{2}} \Big) \\
 & = & 1 - \delta ,
\end{eqnarray*}
and the same radius works for $ 0 $ because clearly $ \lambda (C_{N} | I_{1 - m}(0)) \geq \lambda (C_{N} | I_{1 - m}(m)) $. Finally, the radius $ 1 $ works for $ 1 $, as
$$ \lambda (C_{N} | I_{1}(1)) = \frac{1}{2} \lambda (u_{N}(S_{N})) \leq \frac{1}{2} 2\delta = \delta . $$
\end{proof}

\section{Informal notes concerning lower bound}

The proof of the optimal lower bound on $ \delta _{\mathcal{K}} $, presented in Sections 4--7, is quite long and technical. We decided to write these notes with hope to make the proof more accessible. The notes may perhaps interest also a reader who will not read the proof but wants to know its main ideas.

We point out that the sections of the proof are ordered in the way that the auxiliary results are proved first. It does not mean that the proofs of the auxiliary results must be read first. It is possible that the reader will rather read Section 5 earlier than Section 4 and Section 7 earlier than Section 6.

It is natural to ask if there are some simplifications of the proof. It is possible that a smart idea can simplify a part of the proof or make a part unnecessary, respectively. Nevertheless, we do not see any way how to come to such an idea, so we can not give a hint. At the same time, we are quite sceptical of finding a simple or short proof. Such a proof, as well as a new and completely different proof, would be very surprising.

The proof consists of two parts, let us call them (P1) and (P2). In the part (P1), we make an inspection of Szenes's methods from \cite{szenes}. Assuming that $ \mathcal{K}(\delta ) $ does not hold for a $ \delta $, we construct so-called $ \delta $-good set $ G $ with an upper bound on its measure (Lemma \ref{goodset}). In the part (P2), we find the best possible lower bound on the measure of a $ \delta $-good set (Proposition \ref{goodsetbound}). In the end, confronting the bounds on the measure of $ G $ leads to a lower bound on $ \delta $ (proof of Proposition \ref{lbound}). The same bound is true for $ \delta _{\mathcal{K}} $, as $ \delta > \delta _{\mathcal{K}} $ can be chosen arbitrarily.

In the matter of the omnipresent number $ \delta $, imagine $ \delta $ located close to $ \delta _{\mathcal{K}} $. As $ \delta $ increases, the things are running out of control. For this reason, we often meet the strange assumptions $ \delta < \zeta _{i} $ where $ \zeta _{i} $ are defined in Table \ref{roots}.

Let us briefly outline the two parts of the proof.

(P1) Let $ \delta $ be such that $ \mathcal{K}(\delta ) $ does not hold. There is a counterexample to $ \mathcal{K}(\delta ) $, i.e., a configuration
$$ C = (-\infty , 0) \cup (a_{1}, b_{1}) \cup \dots \cup (a_{r}, b_{r} = 1) $$
such that, for every its endpoint $ c $, there is $ \omega > 0 $ with $ \lambda (C | I_{\omega }(c)) \notin (\delta , 1 - \delta ) $. Let us moreover assume that $ C $ is a counterexample to $ \mathcal{K}(\delta ) $ with the least possible number of intervals in it.

Szenes \cite{szenes} found [after an eventual changing of $ C $ with its \textquotedblleft inverse\textquotedblright {} configuration $ 1 - (\mathbb{R} \setminus (C \cup \partial C)) $] an interval $ J_{\mathcal{B}} \subset (0, 1) $ and points $ v_{\mathcal{B}}, v_{\mathcal{W}} \in \overline{J_{\mathcal{B}}} $ such that the set
$$ D = C \cap J_{\mathcal{B}} $$
has properties (among other ones)
\begin{itemize}
\item[{(a)}] for every endpoint $ c $ of $ D $ with $ v_{\mathcal{B}} < c < v_{\mathcal{W}} $, there is a radius $ \omega > 0 $ such that $ I_{\omega }(c) \subset J_{\mathcal{B}} $ and $ \lambda (D | I_{\omega }(c)) \notin (\delta , 1 - \delta ) $,
\item[{(b)}] $ J_{\mathcal{B}} $ is covered by a finite number of intervals $ I $ with $ \lambda (D | I) \geq 1 - \delta $,
\item[{(c)}] $ \lambda D \leq \frac{4\delta ^{2}}{1 - 2\delta } (v_{\mathcal{W}} - v_{\mathcal{B}}) $.
\end{itemize}
It should be mentioned that, by \cite[Lemma 13]{szenes}, property (b) implies
$$ \lambda (D | J_{\mathcal{B}}) \geq \frac{1 - \delta }{1 + \delta }, \leqno (*) $$
so, using (c), we can compute
$$ \frac{4\delta ^{2}}{1 - 2\delta } (v_{\mathcal{W}} - v_{\mathcal{B}}) \geq \lambda D \geq \frac{1 - \delta }{1 + \delta } \lambda (J_{\mathcal{B}}) \geq \frac{1 - \delta }{1 + \delta } (v_{\mathcal{W}} - v_{\mathcal{B}}). $$
Since $ \delta > \delta _{\mathcal{K}} $ could be chosen arbitrarily, we obtain
$$ \frac{4\delta _{\mathcal{K}}^{2}}{1 - 2\delta _{\mathcal{K}}} \geq \frac{1 - \delta _{\mathcal{K}}}{1 + \delta _{\mathcal{K}}}, $$ 
which is nothing else than the Szenes bound $ \delta _{\mathcal{K}} \geq 0,2629... $ \cite[Theorem 5]{szenes}.

Szenes himself suggested to improve the lower bound on $ \delta _{\mathcal{K}} $ by improving the inequality ($ * $). We follow this idea in a manner. Imagine that we are able to prove
$$ \lambda D \geq \frac{(1 - \delta )(1 + 2\delta )}{1 + 3\delta } (v_{\mathcal{W}} - v_{\mathcal{B}}). \leqno (**) $$
Analogously as above, we write
$$ \frac{4\delta ^{2}}{1 - 2\delta } (v_{\mathcal{W}} - v_{\mathcal{B}}) \geq \lambda D \geq \frac{(1 - \delta )(1 + 2\delta )}{1 + 3\delta } (v_{\mathcal{W}} - v_{\mathcal{B}}) $$
and obtain
$$ \frac{4\delta _{\mathcal{K}}^{2}}{1 - 2\delta _{\mathcal{K}}} \geq \frac{(1 - \delta _{\mathcal{K}})(1 + 2\delta _{\mathcal{K}})}{1 + 3\delta _{\mathcal{K}}}. $$
This is nothing else than the desired bound from Proposition \ref{lbound}. So, proving ($ ** $) would confirm that the construction in Section 2 was optimal. Note that this is not the first time we meet the quantity $ (1 - \delta )(1 + 2\delta )/(1 + 3\delta ) $. It equals to the measure of one \textquotedblleft period\textquotedblright {} of the set $ S_{N} $ from Section 2.

We choose a bit different way from proving ($ ** $). The set $ D $ is a complicated object with a complicated history of construction. Moreover, property (a), which should be one of its fundamental properties, deals only with the endpoints $ c $ with $ v_{\mathcal{B}} < c < v_{\mathcal{W}} $, and not with all the endpoints from the interior of $ J_{\mathcal{B}} $. Therefore, in several steps, we modify the set $ D $ and obtain a relatively easily definable set where property (a) is \textquotedblleft repaired\textquotedblright {} and the other important properties of $ D $ are preserved. The result of our construction is so-called $ \delta $-good set obtained in Lemma \ref{goodset}. The inequality corresponding to ($ ** $) occurs in Proposition \ref{goodsetbound} which is the main result of the part (P2) of our proof, discussed below.

We do not assert that proving ($ ** $) is not the right way. Lemma \ref{lemmaxy} says exactly what has to be done. The only problem is that we are not able to prove property (C) from the lemma (it is possible that a simplification of the proof of Lemma \ref{keyprop} will enable to prove ($ ** $) and make the notion of a $ \delta $-good set unnecessary).

As we indicated, we would be satisfied with property (a) if $ (v_{\mathcal{B}}, v_{\mathcal{W}}) = J_{\mathcal{B}} $ (although it is a plausible hypothesis that always $ (v_{\mathcal{B}}, v_{\mathcal{W}}) = J_{\mathcal{B}} $ for the objects $ v_{\mathcal{B}}, v_{\mathcal{W}}, J_{\mathcal{B}} $ how Szenes defined them, we were not able to prove it). Motivated by this, we consider the following idea. Let $ E $ be given by
$$ E = (q, p) \cup (D \cap [p, p']) \cup (p', q') $$
for some $ q < p < p' < q' $ with $ v_{\mathcal{B}} < p $ and $ p' < v_{\mathcal{W}} $. One may hope that a suitable choice of the parameters will guarantee that, analogously as above,
\begin{itemize}
\item[{(a)}] for every endpoint $ c $ of $ E $ with $ q < c < q' $, there is a radius $ \omega > 0 $ such that $ I_{\omega }(c) \subset (q, q') $ and $ \lambda (E | I_{\omega }(c)) \notin (\delta , 1 - \delta ) $,
\item[{(b)}] $ (q, q') $ is covered by a finite number of intervals $ I $ with $ \lambda (E | I) \geq 1 - \delta $,
\item[{(c)}] $ \lambda E \leq \frac{4\delta ^{2}}{1 - 2\delta } (q' - q) $
\end{itemize}
(the properties correspond to the situation $ (v_{\mathcal{B}}, v_{\mathcal{W}}) = J_{\mathcal{B}} $).

In our proof, we introduce numbers $ q < p < p' < q' $ such that (a) is satisfied for $ E $. We were not able to prove (c) for these numbers. Nevertheless, we discovered that a more relaxed condition
\begin{itemize}
\item[{(c')}] $ \lambda (E | [\frac{1}{2}(q + p), \frac{1}{2}(p' + q')]) \leq \frac{4\delta^{2}}{1 - 2\delta} $
\end{itemize}
is satisfied and that this condition is sufficient for our purposes. We use a little trick how to deal with the excessive pieces $ (q, \frac{1}{2}(q + p)) $ and $ (\frac{1}{2}(p' + q'), q') $. Imagine that we place two mirrors to the points $ \frac{1}{2}(q + p) $ and $ \frac{1}{2}(p' + q') $. We obtain a periodic set with the measure of one period less or equal to $ \frac{4\delta^{2}}{1 - 2\delta} $. A \textquotedblleft global\textquotedblright {} analogue of property (a) holds. Such a periodic set is the result of the first part of our proof (see Lemma \ref{goodset}).

It should be pointed out that the minimality of $ C $ is a fundamental aspect of our proof. Recall that the configuration $ C $ was chosen so that
\begin{itemize}
\item[{(i)}] $ C $ is a counterexample to $ \mathcal{K}(\delta ) $, i.e., for every endpoint $ c $ of $ C $, there is $ \omega > 0 $ with $ \lambda (C | I_{\omega }(c)) \notin (\delta , 1 - \delta ) $,
\item[{(ii)}] $ C $ is a counterexample to $ \mathcal{K}(\delta ) $ with the minimal possible number of intervals in it, i.e., every configuration $ C' $ which consists of less intervals than $ C $ has an endpoint $ c' $ such that $ \lambda (C' | I_{\omega }(c')) \in (\delta , 1 - \delta ) $ for every $ \omega > 0 $.
\end{itemize}
Notice that these conditions are coherent with the conditions (i) and (ii) from the definition of a $ \delta $-good set.

(P2) The key argument of the proof is Lemma \ref{lemmaxy} which is an abstract version of our lower bound on the measure of a $ \delta $-good set (Proposition \ref{goodsetbound}). It is possible to say that the difference between Szenes's and our method inheres in the difference between \cite[Lemma 13]{szenes} and Lemma \ref{lemmaxy}. Incidentally, these two methods correspond to the inequalities ($ * $) and ($ ** $) discussed above.

Let us make a little confrontation of the methods. Let $ I $ be an open interval and $ D \subset \mathbb{R} $ be a measurable set (typically, $ D $ is a finite union of intervals). Assume that $ I $ satisfies
$$ I = \bigcup _{k=1}^{n} I_{k} $$
where $ I_{k} $ are open intervals with
$$ \lambda (D | I_{k}) \geq 1 - \delta . $$
Then \cite[Lemma 13]{szenes} says that
$$ \lambda (D | I) \geq \frac{1 - \delta }{1 + \delta } \quad \textrm{(cf. $ (*) $)}. \leqno (+) $$
Without loss of generality, we will assume that the intervals are ordered from left to right so that $ I_{k} \cap I_{k+1} \neq \emptyset $ but $ \overline{I_{k}} \cap \overline{I_{k+2}} = \emptyset $. In other words,
$$ v_{0} < v_{1} < u_{2} < v_{2} < \dots < u_{k} < v_{k} < \dots < u_{n-1} < v_{n-1} < u_{n} < u_{n+1} $$
where $ u_{k}, v_{k} $ denote the endpoints of the intervals in the manner that
$$ I_{k} = (v_{k-1}, u_{k+1}). $$
We can imagine that the relative measures $ \lambda (D | (u_{k}, v_{k})) $ and $ \lambda ((\mathbb{R} \setminus D) | (v_{k}, u_{k+1})) $ are small.

We would like to improve the inequality $ (+) $. Realize that it can not be improved for a general $ D $. It is sufficient to consider the set $ D $ consisting of a large number of uniformly distributed intervals of length $ 1 - \delta $ such that the gap between two neighbouring intervals has length $ 2\delta $. Nevertheless, such an example is far away from the objects we work with.

We deal with a more sophisticated system of intervals than the system $ I_{k} $ above (see Claim \ref{clb}). We construct points
$$ v_{0} < u_{1} < v_{1} < u_{2} < v_{2} < \dots < u_{k} < v_{k} < \dots < u_{n-1} < v_{n-1} < u_{n} < v_{n} < u_{n+1} $$
and, for $ k = 1, 2, \dots , n $, two intervals $ I_{\alpha _{k}}(a_{k}), I_{\beta _{k}}(b_{k}) $ with
$$ I_{\alpha _{k}}(a_{k}) \cup I_{\beta _{k}}(b_{k}) = (v_{k-1}, u_{k+1}). $$
We introduce here only the two most important properties of the system (for the complete list, see Claim \ref{clb}). Similarly as above, the intervals satisfy
$$ \lambda (D | I_{\alpha _{k}}(a_{k})) \geq 1 - \delta , \quad \lambda (D | I_{\beta _{k}}(b_{k})) \geq 1 - \delta . $$
At the same time, the differences of the centers fulfill
$$ b_{k} - a_{k} \geq v_{k} - u_{k} - \frac{2}{1 - \delta } \lambda (D \cap (u_{k}, v_{k})). $$
In particular, the intervals $ I_{\alpha _{k}}(a_{k}) $ and $ I_{\beta _{k}}(b_{k}) $ do not coincide in the expected case that $ \lambda (D | (u_{k}, v_{k})) < (1 - \delta )/2 $.

The computation presented in Claims \ref{clc}--\ref{clxy} shows that the existence of such a system of intervals implies the desired inequality
$$ \lambda D \geq \frac{(1 - \delta )(1 + 2\delta )}{1 + 3\delta } \lambda (u_{1}, v_{n}) \quad \textrm{(cf. $ (**) $)}. \leqno (++) $$
In Lemma \ref{lemmaxy}, we introduce a property (C) from which the existence of the system can be derived. We note that, unfortunately, the proof that a $ \delta $-good set has this property (Lemma~\ref{keyprop}) is not an easy task.

\section{The Szenes method and its consequences}

The remainder of the paper is devoted to giving a lower bound on $ \delta _{\mathcal{K}} $. Although the lower bound given by Szenes in \cite{szenes} is not the best possible, we found his methods strongly useful and his results became a starting point of our proof. Let us recall some notation from \cite[Section 4]{szenes} first.

Let $ \delta \in (0, 1/2) $ be such that $ \mathcal{K}(\delta ) $ does not hold. Let
$$ C = (-\infty , 0) \cup (a_{1}, b_{1}) \cup \dots \cup (a_{r}, b_{r} = 1), $$
where $ 0 < a_{1} < b_{1} < a_{2} < b_{2} < \dots < a_{r} < b_{r} = 1 $, be a configuration which is a counterexample to $ \mathcal{K}(\delta ) $ with the least possible number of intervals in it.

For every endpoint $ p $ of $ C $, we define $ \omega (p) $ as the greatest radius such that $ \lambda (C | I_{\omega (p)}(p)) \notin (\delta , 1 - \delta ) $. We put
$$ \mathcal{B} = \{ p : \lambda (C | I_{\omega (p)}(p)) \geq 1 - \delta \} , \quad \mathcal{W} = \{ p : \lambda (C | I_{\omega (p)}(p)) \leq \delta \} , $$
$$ v_{\mathcal{B}} = \max \{ p \in \mathcal{B} : p \leq 1/2, \omega (p) \geq p \} , \quad v_{\mathcal{W}} = \min \{ p \in \mathcal{W} : p \geq 1/2, \omega (p) \geq 1 - p \} , $$
$$ \rho = \lambda (C \cap (0, 1)). $$

We resume some results from \cite{szenes} in the following proposition (see \cite[Corollaries 8 and 9]{szenes}).

\begin{proposition}[Szenes] \label{propszenes}
We have
$$ 1 - \rho \leq 2\delta (1 - v_{\mathcal{B}}), \quad \rho \leq 2\delta v_{\mathcal{W}} \quad \textrm{and} \quad v_{\mathcal{W}} - v_{\mathcal{B}} \geq \frac{1}{2\delta } - 1. $$
\end{proposition}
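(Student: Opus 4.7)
The plan is to dispatch the third inequality as an immediate consequence of the first two, then collapse the first two into a single inequality via symmetry, and finally attack that inequality by case analysis on $\omega(v_{\mathcal{W}})$.

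The third inequality is immediate: summing the first two gives $1 \leq 2\delta(1 + v_{\mathcal{W}} - v_{\mathcal{B}})$, which rearranges to $v_{\mathcal{W}} - v_{\mathcal{B}} \geq \frac{1}{2\delta} - 1$. For the first two, I would exploit the natural involution $C \mapsto C^{*} := 1 - (\mathbb{R} \setminus (C \cup \partial C))$ that the paper already mentions. This map sends a configuration with $r$ intervals to one with $r$ intervals and preserves the property of being a minimal counterexample to $\mathcal{K}(\delta)$. It sends each endpoint $p$ of $C$ to the endpoint $1-p$ of $C^{*}$ with the same value of $\omega$, and satisfies $\lambda(C^{*} \mid I_{\omega}(1-p)) = 1 - \lambda(C \mid I_{\omega}(p))$, so $\mathcal{B}$ and $\mathcal{W}$ interchange, $v_{\mathcal{B}}$ corresponds to $1 - v_{\mathcal{W}}^{*}$, and $\rho$ corresponds to $1 - \rho^{*}$. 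Hence the first inequality for $C$ is exactly the second for $C^{*}$, and it suffices to prove $\rho \leq 2\delta v_{\mathcal{W}}$.

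For this, I would set $\omega_{0} = \omega(v_{\mathcal{W}})$, noting $\omega_{0} \geq 1 - v_{\mathcal{W}}$ (so $v_{\mathcal{W}} + \omega_{0} \geq 1$) and $\lambda(C \cap I_{\omega_{0}}(v_{\mathcal{W}})) \leq 2\delta \omega_{0}$. The easy case is $\omega_{0} \geq v_{\mathcal{W}}$: the interval $I_{\omega_{0}}(v_{\mathcal{W}})$ then reaches into $(-\infty, 0) \subset C$, contributing a slab of length $\omega_{0} - v_{\mathcal{W}}$, and also covers all of $C \cap (0, 1)$. This yields $(\omega_{0} - v_{\mathcal{W}}) + \rho \leq 2\delta \omega_{0}$, i.e., $\rho \leq \omega_{0}(2\delta - 1) + v_{\mathcal{W}}$, which collapses to $\rho \leq 2\delta v_{\mathcal{W}}$ using $2\delta - 1 < 0$ and $\omega_{0} \geq v_{\mathcal{W}}$.

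The main obstacle is the remaining case $\omega_{0} < v_{\mathcal{W}}$: here $I_{\omega_{0}}(v_{\mathcal{W}})$ sits entirely in $(0, \infty)$, and the bound gives only $\lambda(C \cap (v_{\mathcal{W}} - \omega_{0}, 1)) \leq 2\delta \omega_{0}$, controlling nothing on $(0, v_{\mathcal{W}} - \omega_{0})$. To close the estimate I would invoke the two extremality properties that have not been used so far: $v_{\mathcal{W}}$ is the \emph{smallest} admissible point of $\mathcal{W}$, and $C$ has the \emph{fewest} intervals among counterexamples to $\mathcal{K}(\delta)$. Any substantial mass of $C$ inside $(0, v_{\mathcal{W}} - \omega_{0})$ would either create a qualifying point of $\mathcal{W}$ below $v_{\mathcal{W}}$ (contradicting the minimality of $v_{\mathcal{W}}$) or permit a surgery that excises intervals while preserving the counterexample property (contradicting the minimality of $C$). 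This structural surgery argument, which is the substance of Szenes's Corollaries~8 and 9 in \cite{szenes}, is the technical heart of the proof and is qualitatively different from the arithmetic of the easy case.
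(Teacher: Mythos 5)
First, note that the paper itself does not prove this proposition: it is stated as a restatement of results from \cite[Corollaries 8 and 9]{szenes} and used as a black box, so there is no argument in the paper to compare against. Your reductions are sound as far as they go: adding the first two inequalities does yield the third; the involution $C \mapsto 1 - (\mathbb{R} \setminus (C \cup \partial C))$ does swap $\mathcal{B}$ with $\mathcal{W}$, takes $v_{\mathcal{B}}$ to one minus the new $v_{\mathcal{W}}$ and $\rho$ to one minus the new $\rho$, and preserves both the interval count and the minimal-counterexample property, so the first inequality reduces to the second; and the case $\omega(v_{\mathcal{W}}) \geq v_{\mathcal{W}}$ is correctly dispatched by the computation $(\omega_{0} - v_{\mathcal{W}}) + \rho \leq 2\delta\omega_{0}$.

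The genuine gap is the case $1 - v_{\mathcal{W}} \leq \omega(v_{\mathcal{W}}) < v_{\mathcal{W}}$, which you acknowledge but do not close. There the missing step is precisely $\lambda(C \cap (0, v_{\mathcal{W}} - \omega_{0})) \leq 2\delta(v_{\mathcal{W}} - \omega_{0})$, which does not follow from the data you have accumulated and is the entire substance of Szenes's Corollary 8. The heuristic you offer for why it should hold is also pointed in a suspect direction: excess $C$-mass in $(0, v_{\mathcal{W}} - \omega_{0})$ means high density of $C$, which would tend to produce qualifying endpoints in $\mathcal{B}$, not in $\mathcal{W}$, so the claim that such mass forces a smaller admissible point of $\mathcal{W}$ cannot be taken at face value; whatever argument works must use the minimality of $C$ in a more structural way. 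Since the paper cites Szenes for exactly this, you should either carry his argument out in full or do as the paper does and cite it.
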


We apply these results in the proof of the following lemma.

\begin{lemma} \label{lemmaa}
Let $ 0 < \delta < 1/2 $. If $ \mathcal{K}(\delta ) $ does not hold, then there are numbers $ a, b, 0 \leq a < b \leq 1, $ and a set
$$ D = (r_{1}, s_{1}) \cup (r_{2}, s_{2}) \cup \dots \cup (r_{n}, s_{n} = 1), $$
where $ 0 < r_{1} < s_{1} < r_{2} < s_{2} < \dots < r_{n} < s_{n} = 1 $, such that the following conditions are satisfied:

{\rm (a)} for every endpoint $ p $ of $ D $ with $ a < p < b $, there is a radius $ \mu > 0 $ such that $ I_{\mu }(p) \subset (0, 1) $ and $ \lambda (D | I_{\mu }(p)) \notin (\delta , 1 - \delta ) $,

{\rm (b)} $ D $ consists of less intervals than any counterexample to $ \mathcal{K}(\delta ) $,

{\rm (c)} $ \lambda D \leq \frac{4\delta ^{2}}{1 - 2\delta } (b - a) $ and $ \lambda ((0, 1) \setminus D) \leq \frac{4\delta ^{2}}{1 - 2\delta } (b - a) $.
\end{lemma}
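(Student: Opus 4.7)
My plan is to take a counterexample $C$ to $\mathcal{K}(\delta)$ with the fewest intervals and exhibit $D$ as essentially $C$ restricted to $(0,1)$, with the Szenes points $v_\mathcal{B}$ and $v_\mathcal{W}$ playing the role of $a$ and $b$. So I fix $C = (-\infty, 0) \cup (a_1, b_1) \cup \ldots \cup (a_r, b_r = 1)$ minimal in that sense, allowing myself first to pass to the inverse configuration $1 - (\mathbb{R} \setminus (C \cup \partial C))$ if needed (this swaps $\rho$ with $1 - \rho$, swaps $\mathcal{B}$ with $\mathcal{W}$, and preserves minimality), and then apply Proposition \ref{propszenes} to record the three key inequalities
$$ \rho \le 2\delta v_\mathcal{W}, \quad 1 - \rho \le 2\delta(1 - v_\mathcal{B}), \quad v_\mathcal{W} - v_\mathcal{B} \ge \frac{1 - 2\delta}{2\delta}. $$

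I would then take $D = C \cap (0,1) = (a_1, b_1) \cup \ldots \cup (a_{r-1}, b_{r-1}) \cup (a_r, 1)$, $a = v_\mathcal{B}$, and $b = v_\mathcal{W}$. Condition (b) is automatic: $D$ lacks the $(-\infty, 0)$ component of $C$, so it has one fewer connected piece than any counterexample. Condition (c) is a clean algebraic consequence of the three inequalities above: the third gives $\frac{4\delta^2}{1-2\delta}(b-a) \ge 2\delta$, while the first two give $\lambda D = \rho \le 2\delta v_\mathcal{W} \le 2\delta$ and $\lambda((0,1) \setminus D) = 1 - \rho \le 2\delta(1 - v_\mathcal{B}) \le 2\delta$, so both halves of (c) drop out at once.

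The real work is condition (a). For an endpoint $p$ of $D$ with $v_\mathcal{B} < p < v_\mathcal{W}$ the obvious candidate is $\mu = \omega(p)$; this transfers the density condition from $C$ to $D$ provided $I_\mu(p) \subset (0,1)$. I would split into four cases according to whether $p \in \mathcal{B}$ or $p \in \mathcal{W}$ and whether $p \le 1/2$ or $p > 1/2$. The two ``aligned'' cases ($p \in \mathcal{B}$ with $p \le 1/2$, and $p \in \mathcal{W}$ with $p \ge 1/2$) are immediate: the maximality defining $v_\mathcal{B}$ forces $\omega(p) < p$ in the first, and the minimality defining $v_\mathcal{W}$ forces $\omega(p) < 1 - p$ in the second, from which $I_{\omega(p)}(p) \subset (0, 2p) \subset (0,1)$ and $I_{\omega(p)}(p) \subset (2p-1, 1) \subset (0,1)$ respectively.

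The main obstacle is the two ``crossed'' cases, $p \in \mathcal{B}$ with $p > 1/2$ and $p \in \mathcal{W}$ with $p < 1/2$, where nothing in the definitions of $v_\mathcal{B}$ and $v_\mathcal{W}$ directly restricts $\omega(p)$, and $I_{\omega(p)}(p)$ could extend past $1$ or below $0$. My plan is to use the finer structural facts about minimal counterexamples from \cite[Section 4]{szenes} — in particular combining the minimality of $C$ with the upper bound $\omega(p) \le (1-p)/(1-2\delta)$ that the $\mathcal{B}$-condition forces — either to rule out such endpoints inside $(v_\mathcal{B}, v_\mathcal{W})$ or to produce a smaller admissible radius $\mu < \omega(p)$ with $I_\mu(p) \subset (0,1)$ for which $\lambda(C | I_\mu(p)) \notin (\delta, 1-\delta)$ still holds. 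If that breaks, the fallback is to shrink $(a,b)$ to exclude the offending endpoints while keeping $b - a \ge (1 - 2\delta)/(2\delta)$, which is exactly the margin needed for (c) to survive.
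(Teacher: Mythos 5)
Your choice of objects is exactly the paper's: take $C$ a minimal counterexample, set $D = C \cap (0,1)$, $a = v_{\mathcal{B}}$, $b = v_{\mathcal{W}}$, and derive (b) and (c) from Proposition \ref{propszenes} (the inverse-configuration preamble is harmless but unnecessary; the lemma is symmetric as stated). Your treatment of the two aligned cases of (a) is also correct and matches the paper. The gap is that you have not actually proved (a) in the crossed cases, and the plan you sketch for them is not the right one.

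The paper's handling of the crossed case is elementary and does not use any ``finer structural facts about minimal counterexamples.'' Take $p \in \mathcal{B}$ with $p > 1/2$. If $1 \notin I_{\omega(p)}(p)$, then $\omega(p) \le 1-p < p$, so $I_{\omega(p)}(p) \subset (0,1)$ and $\mu = \omega(p)$ works. If $1 \in I_{\omega(p)}(p)$, take $\mu = 1-p$; then $I_{\mu}(p) = (2p-1, 1) \subset (0,1)$. The annulus $I_{\omega(p)}(p)\setminus I_{1-p}(p)$ consists of two pieces of equal length, and the right-hand piece $(1, p+\omega(p))$ is disjoint from $C$, so $\lambda(C\mid I_{\omega(p)}(p)\setminus I_{1-p}(p)) \le 1/2 \le 1-\delta \le \lambda(C\mid I_{\omega(p)}(p))$; it follows that $\lambda(C\mid I_{1-p}(p)) \ge \lambda(C\mid I_{\omega(p)}(p)) \ge 1-\delta$. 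The case $p\in\mathcal{W}$, $p<1/2$ is the mirror image, using that $(-\infty,0)\subset C$. This one-line density observation is what you are missing; your fallback of shrinking $(a,b)$ is not a safe option, since $v_{\mathcal{W}} - v_{\mathcal{B}}$ can equal $(1-2\delta)/(2\delta)$ exactly, leaving no room to shrink without destroying (c).
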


\begin{proof}
We work with the notation from \cite{szenes} mentioned above. Let us verify that the choice
$$ D = C \cap (0, 1), \quad a = v_{\mathcal{B}}, \quad b = v_{\mathcal{W}} $$
works. We verify (a) only for $ p \in \mathcal{B} $, the proof for $ p \in \mathcal{W} $ is similar. So, let $ p \in \mathcal{B} $ and $ a < p < b $. If $ p \leq 1/2 $, then, by $ p > a = v_{\mathcal{B}} $ and the definition of $ v_{\mathcal{B}} $, we have $ \omega (p) < p $, and thus $ I_{\omega (p)}(p) \subset (0, 1) $. So, we can take $ \mu = \omega (p) $. If $ p > 1/2 $, then we consider two cases. In the case $ 1 \notin I_{\omega (p)}(p) $, we can take $ \mu = \omega (p) $ (as $ I_{\omega (p)}(p) \subset (0, 1) $). In the case $ 1 \in I_{\omega (p)}(p) $, we can take $ \mu = 1 - p $ (as $ I_{1 - p}(p) \subset (0, 1) $ and $ \lambda (C | (I_{\omega (p)}(p) \setminus I_{1 - p}(p))) \leq 1/2 \leq 1 - \delta \leq \lambda (C | I_{\omega (p)}(p)) $).

Further, (b) is clear, as $ D $ consists of less intervals than $ C $ and $ C $ is a counterexample to $ \mathcal{K}(\delta ) $ with the least possible number of intervals in it. Using Proposition \ref{propszenes}, we can compute
$$ \lambda D = \rho \leq 2\delta v_{\mathcal{W}} \leq 2\delta , $$
$$ \lambda ((0, 1) \setminus D) = 1 - \rho \leq 2\delta (1 - v_{\mathcal{B}}) \leq 2\delta $$
and
$$ 2\delta = \frac{4\delta ^{2}}{1 - 2\delta } \Big( \frac{1}{2\delta } - 1 \Big) \leq \frac{4\delta ^{2}}{1 - 2\delta } (v_{\mathcal{W}} - v_{\mathcal{B}}) = \frac{4\delta ^{2}}{1 - 2\delta } (b - a). $$
This gives (c).
\end{proof}

We proceed to a more thoroughful look on the set from Lemma \ref{lemmaa}.

\begin{lemma} \label{lemmab}
Let $ 0 < \delta < \frac{1}{4}(\sqrt{5} - 1) $. If $ \mathcal{K}(\delta ) $ does not hold, then there are numbers $ a, b $, intervals $ I_{\varepsilon (a')}(a'), I_{\varepsilon (b')}(b') $ and a set
$$ D = (r_{1}, s_{1}) \cup (r_{2}, s_{2}) \cup \dots \cup (r_{n}, s_{n}), $$
where $ r_{1} < s_{1} < r_{2} < s_{2} < \dots < r_{n} < s_{n} $, such that the following conditions are satisfied:

{\rm (i)} $ r_{1} \leq a < b \leq s_{n} $ and $ a', b' $ are endpoints of $ D $ with $ a < a' < b $, $ a < b' < b $,

{\rm (ii)} $ D $ consists of less intervals than any counterexample to $ \mathcal{K}(\delta ) $,

{\rm (iii)} for every endpoint $ p $ of $ D $ with $ a < p < b $, there is a radius $ \omega > 0 $ such that $ I_{\omega }(p) \subset (r_{1}, s_{n}) $ and $ \lambda (D | I_{\omega }(p)) \notin (\delta , 1 - \delta ) $,

{\rm (iv)} $ I_{\varepsilon (a')}(a') \subset (r_{1}, s_{n}) $ and $ I_{\varepsilon (b')}(b') \subset (r_{1}, s_{n}) $,

{\rm (v)} $ \lambda (D | I_{\varepsilon (a')}(a')) \geq 1 - \delta $ and $ \lambda (D | I_{\varepsilon (a')}(a')) \geq \lambda (D | I_{\varepsilon }(a')) $ whenever $ 0 < \varepsilon < \varepsilon (a') $, $ \lambda (D | I_{\varepsilon (b')}(b')) \geq 1 - \delta $ and $ \lambda (D | I_{\varepsilon (b')}(b')) \geq \lambda (D | I_{\varepsilon }(b')) $ whenever $ 0 < \varepsilon < \varepsilon (b') $,

{\rm (vi)} $ I_{\varepsilon (a')}(a') \cap (r_{1}, s_{1}) \neq \emptyset \neq I_{\varepsilon (b')}(b') \cap (r_{n}, s_{n}) $,

{\rm (vii)} $ \lambda D \leq \frac{4\delta ^{2}}{1 - 2\delta } (b - a) $.
\end{lemma}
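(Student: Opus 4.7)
The plan is to refine the output $(D_0, a_0, b_0)$ of Lemma \ref{lemmaa} by picking two designated black endpoints $a', b'$ in the interior of $(a_0, b_0)$ and, if necessary, truncating $D_0$ to a subset $D$. Lemma \ref{lemmaa} already supplies properties (ii) and a variant of (vii), as well as good radii contained in $(0,1)$; the new content of Lemma \ref{lemmab} is twofold: producing specific witnesses $a', b'$ satisfying the dichotomy between (iv) (good intervals contained in the convex hull of $D$) and (vi) (good intervals meeting the outermost components of $D$), and upgrading the containment from $(0,1)$ to $(r_1, s_n)$ in (iii).

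First I would partition the endpoints of $D_0$ in $(a_0, b_0)$ into the black class $\mathcal{B}$ (existence of a witness $\mu$ with $\lambda(D_0|I_\mu(p)) \geq 1-\delta$ and $I_\mu(p) \subset (0,1)$) and the white class $\mathcal{W}$ (the remaining endpoints, which instead admit a witness with density $\leq \delta$). For $p \in \mathcal{B}$, associate a radius $\varepsilon(p)$ satisfying the ``first local maximum'' condition (v); a natural choice is the smallest $\varepsilon$ at which $\lambda(D_0|I_\varepsilon(p)) \geq 1-\delta$. Pick $a' \in \mathcal{B} \cap (a_0, b_0)$ with $a' - \varepsilon(a')$ as small as possible, so that the left tail of $I_{\varepsilon(a')}(a')$ reaches as far left as any black witness in the interior, and $b'$ symmetrically on the right. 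Next define $D$ by retaining those intervals of $D_0$ whose closures meet $[a' - \varepsilon(a'), b' + \varepsilon(b')]$, labelling them $(r_1, s_1), \dots , (r_n, s_n)$. Then (vi) and (iv) hold together provided $a' - \varepsilon(a') \in [r_1, s_1)$ and $b' + \varepsilon(b') \in (r_n, s_n]$; a minor adjustment of $\varepsilon(a'), \varepsilon(b')$, using the piecewise-linear structure of the density function and the flexibility left by (v), can be arranged to achieve this. Minimality (ii) is inherited from Lemma \ref{lemmaa}(b); the bound (vii) follows from $\lambda D \leq \lambda D_0 \leq \frac{4\delta^2}{1-2\delta}(b_0 - a_0) = \frac{4\delta^2}{1-2\delta}(b - a)$ upon keeping $a = a_0, b = b_0$; and condition (i) holds because $a_0, b_0$ remain in $[r_1, s_n]$.

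The main obstacle is condition (iii). For a black endpoint $p \in (a, b)$, the extremal choices of $a', b'$ ensure $p - \varepsilon(p) \geq a' - \varepsilon(a') \geq r_1$ and $p + \varepsilon(p) \leq s_n$, and since $D$ and $D_0$ agree on $I_{\varepsilon(p)}(p)$ the density hypothesis is preserved. For a white endpoint $p \in (a, b)$, its witnessing radius $\mu$ in $D_0$ may protrude past $r_1$ or $s_n$, so I must produce a replacement $\omega \leq \mu$ with $I_\omega(p) \subset (r_1, s_n)$ and $\lambda(D|I_\omega(p)) \leq \delta$. The cleanest approach is to shrink $\mu$ on the protruding side and show that the protrusions lie entirely in gaps of $D_0$ outside $[r_1, s_n]$, so that removing them decreases the complement of $D$ much more than it decreases $D$ and keeps the density below $\delta$. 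Pinning down this shrinking mechanism rigorously, together with confirming the adjustment of $\varepsilon(a'), \varepsilon(b')$ mentioned above to guarantee simultaneous (iv)--(vi), is the most technically demanding part of the proof.
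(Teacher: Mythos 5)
Your proposal misses the key idea that makes Lemma~\ref{lemmab} nontrivial, and the proposed workaround for condition~(iii) is logically backwards.

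In the paper's proof, after partitioning the interior endpoints into black ($\mathcal{B}$) and white ($\mathcal{W}$) classes, one cannot simply take $a', b'$ to be the black endpoints whose witness intervals reach furthest left and right and hope that every white witness is automatically contained between them. The heart of the argument is Claim~\ref{claimszenes}: if, say, $\inf \mathcal{I}_{\mathcal{B}} \leq \inf \mathcal{I}_{\mathcal{W}}$ and $\sup \mathcal{I}_{\mathcal{B}} \leq \sup \mathcal{I}_{\mathcal{W}}$, then $C = (-\infty, v] \cup D \setminus [w, \infty)$ (with $v = \inf \mathcal{I}_{\mathcal{B}}$, $w = \sup \mathcal{I}_{\mathcal{W}}$) is, after an affine transformation, a smaller counterexample to $\mathcal{K}(\delta)$, contradicting the minimality property inherited from Lemma~\ref{lemmaa}(b). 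Only this contradiction, plus a possible flip to the ``inverse'' configuration, guarantees that \emph{all} the witness intervals $I_{\mu(p)}(p)$ with $a < p < b$ are contained in $(v, w) = (a' - \mu(a'),\, b' + \mu(b'))$, so that no shrinking is ever needed for (iii). Your proposal makes no use of the minimality hypothesis except to pass (ii) through, and it never rules out the case where a white witness protrudes past both extremal black witnesses.

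The suggested workaround compounds the problem: you propose to shrink a white witness $I_\mu(p)$ so it fits inside $(r_1, s_n)$, and to argue the protrusion ``lies entirely in gaps of $D_0$ outside $[r_1, s_n]$'' so that discarding it ``keeps the density below $\delta$.'' But if the protrusion lies in a gap, cutting it off leaves $\lambda(D \cap I)$ unchanged while shrinking $\lambda(I)$, so the density $\lambda(D \mid I)$ goes \emph{up}, not down — exactly the wrong direction for a white witness. Finally, the ``minor adjustment'' of $\varepsilon(a'), \varepsilon(b')$ to force (iv)--(vi), and the claim that (i) survives your truncation rule (retaining intervals whose closures meet $[a' - \varepsilon(a'), b' + \varepsilon(b')]$, which need not include anything near $a_0$), are both left unjustified and do not in fact hold in general. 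In the paper's construction these points follow cleanly from taking $D \cap ((v,w) \cup (a,b))$ and observing that $D$ has no endpoint in $(a, v]$ or $[w, b)$.
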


To prove the lemma, we need some notation and a claim first. Let $ a, b $ and $ D $ be as in Lemma \ref{lemmaa}. By (a), there is, for every endpoint $ p $ of $ D $ with $ a < p < b $, a radius $ \mu (p) > 0 $ such that $ I_{\mu (p)}(p) \subset (0, 1) $ and $ \lambda (D | I_{\mu (p)}(p)) \notin (\delta , 1 - \delta ) $. We will assume that this radius is chosen so that $ \lambda (D | I_{\mu (p)}(p)) $ is maximal possible when $ \lambda (D | I_{\mu (p)}(p)) \geq 1 - \delta $ and minimal possible when $ \lambda (D | I_{\mu (p)}(p)) \leq \delta $. We put
$$ \mathcal{I}_{\mathcal{B}} = \bigcup \big\{ I_{\mu (p)}(p) : \lambda (D | I_{\mu (p)}(p)) \geq 1 - \delta \big\} , $$
$$ \mathcal{I}_{\mathcal{W}} = \bigcup \big\{ I_{\mu (p)}(p) : \lambda (D | I_{\mu (p)}(p)) \leq \delta \big\} . $$

\begin{claim} \label{claimszenes}
If $ \mathcal{I}_{\mathcal{B}} $ and $ \mathcal{I}_{\mathcal{W}} $ are non-empty, then one of the two following possibilities takes place:
$$ \inf \mathcal{I}_{\mathcal{B}} < \inf \mathcal{I}_{\mathcal{W}} \quad \textrm{and} \quad \sup \mathcal{I}_{\mathcal{W}} < \sup \mathcal{I}_{\mathcal{B}}, $$
$$ \inf \mathcal{I}_{\mathcal{W}} < \inf \mathcal{I}_{\mathcal{B}} \quad \textrm{and} \quad \sup \mathcal{I}_{\mathcal{B}} < \sup \mathcal{I}_{\mathcal{W}}. $$
\end{claim}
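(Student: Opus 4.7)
I would prove the claim by contradiction, using in an essential way the minimality of $C$ (inherited by $D$ through property (b) of Lemma~\ref{lemmaa}). Suppose that neither listed possibility holds. By the natural symmetry that swaps the roles of $\mathcal{B}$ and $\mathcal{W}$—realised, as in the overview of Section~3, by replacing $D$ by its complement inside a sufficiently large ambient interval (passing to the ``inverse'' configuration)—I may reduce to the single scenario
$$ \inf \mathcal{I}_{\mathcal{W}} \le \inf \mathcal{I}_{\mathcal{B}} \quad \textrm{and} \quad \sup \mathcal{I}_{\mathcal{W}} \le \sup \mathcal{I}_{\mathcal{B}}, $$
in which $\mathcal{I}_{\mathcal{B}}$ sits non-strictly ``to the right'' of $\mathcal{I}_{\mathcal{W}}$ (at least one of these inequalities being such that the first possibility fails).

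The next step is a polarization observation on the endpoints of $D$. Any endpoint $p$ of $D$ with $a<p<b$ and $p<\inf\mathcal{I}_{\mathcal{B}}$ must satisfy $p\in\mathcal{W}$, for if instead $p\in\mathcal{B}$, then $I_{\mu(p)}(p)\subseteq\mathcal{I}_{\mathcal{B}}$ forces $p-\mu(p)\ge\inf\mathcal{I}_{\mathcal{B}}>p$, which is absurd. The symmetric statement—every endpoint of $D$ in $(a,b)$ strictly to the right of $\sup\mathcal{I}_{\mathcal{W}}$ lies in $\mathcal{B}$—is proved in the same way. Thus on the far left the endpoints of $D$ are uniformly of $\mathcal{W}$-type.

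With the polarization in hand, I would manufacture a configuration $C'$ with strictly fewer intervals than $C$ that is still a counterexample to $\mathcal{K}(\delta)$, contradicting the minimality of $C$. Concretely, let $q_{L}$ be the $\mathcal{W}$-endpoint realising $\inf\mathcal{I}_{\mathcal{W}}=q_{L}-\mu(q_{L})$. For an appropriately chosen cutoff $t$ in the closure of $I_{\mu(q_{L})}(q_{L})$, I would discard all intervals of $C$ lying to the left of $t$ and replace the entire left piece by the single half-line $(-\infty,t)$. Polarization guarantees that every surviving endpoint of $D$ is either a $\mathcal{B}$-endpoint whose original witness $I_{\mu(p)}(p)$ is unaffected by the surgery, or an endpoint outside $(a,b)$ whose exceptional status was already provided by $C$. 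The new boundary point $t$ itself is exceptional because $I_{\mu(q_{L})}(q_{L})$ has $D$-density $\le\delta$ and the half-line $(-\infty,t)$ contributes full density on the other side.

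The main obstacle is the delicate calibration of $t$ so that $t$ is genuinely an exceptional point of $C'$. The natural tool is a continuity/mean-value argument as $t$ sweeps through $I_{\mu(q_{L})}(q_{L})$: at one extreme a symmetric interval around $t$ has $C'$-density tending to $1$ (the half-line dominates), while at the other it tends to the value $\lambda(D\,|\,I_{\mu(q_{L})}(q_{L}))\le\delta$, so some intermediate $t$ produces a boundary density outside $(\delta,1-\delta)$. The remaining bookkeeping is to verify that the surgery actually removes at least one interval of $D$—which is exactly what the hypothesis $\inf\mathcal{I}_{\mathcal{W}}\le\inf\mathcal{I}_{\mathcal{B}}$ ensures—so that the interval count of $C'$ strictly drops and the minimality of $C$ is genuinely violated.
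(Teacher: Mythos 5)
Your overall strategy --- exploit the minimality of $C$ via a surgery that produces a smaller counterexample --- matches the paper's, but your cut is made at the wrong kind of point and on the wrong side, and this leaves three genuine gaps. You attach the half-line near $t = \inf\mathcal{I}_{\mathcal{W}} = q_L - \mu(q_L)$. But because $\mu(q_L)$ was chosen to \emph{minimize} $\lambda(D | I_{\mu(q_L)}(q_L))$, one has $(q_L - \mu(q_L),\, q_L - \mu(q_L) + \varepsilon) \cap D = \emptyset$ for small $\varepsilon>0$: the left end of a minimum-density interval abuts a $D$-gap. Hence $(-\infty,t)$ does not merge with any component of $D$, a new endpoint $t$ appears in $C'$, and the interval count of $C'$ may well be one \emph{more} than that of $D$ rather than fewer; the inequality $\inf\mathcal{I}_{\mathcal{W}} \le \inf\mathcal{I}_{\mathcal{B}}$ does not force $D$ to have a component entirely left of $\inf\mathcal{I}_{\mathcal{W}}$, so nothing need be discarded. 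Your mean-value sweep does not repair this: the density $\lambda(C'_t | I_\omega(t)) = \frac{1}{2} + \frac{\lambda(D \cap (t,\,t+\omega))}{2\omega}$ tends to $\frac{1}{2}$ both as $\omega\to 0$ (by the gap) and as $\omega\to\infty$ (since $D$ is bounded), never to $1$ as you claim, and there is no reason it ever leaves $(\delta,1-\delta)$. Finally, you modify only the left side, leaving every endpoint $p\ge b$ without a verified witness; replacing $(-\infty,0)$ by $(-\infty,t)$ with $t>0$ only raises the $C'$-density near such a $p$, which can destroy a $\mathcal{W}$-type witness inherited from the original configuration.

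The paper avoids all of this by cutting where $D$ has \emph{interior}. In the case it writes out, it attaches $(-\infty,v]$ at $v=\inf\mathcal{I}_{\mathcal{B}}$, where \emph{maximality} of $\mu(a')$ gives $(v,v+\varepsilon)\subset D$, so the half-line merges and no new endpoint is created; and it simultaneously truncates at $w=\sup\mathcal{I}_{\mathcal{W}}$, which provides a low-density witness for every $p\ge b$ because the new set is empty beyond $w$. In the scenario you reduced to, the analogue is a right half-line at $\sup\mathcal{I}_{\mathcal{B}}$ combined with a left truncation at $\inf\mathcal{I}_{\mathcal{W}}$. Your ``polarization'' observation, though correct (indeed it is immediate from $p>\inf I_{\mu(p)}(p)$), does not compensate for cutting at the gap side of a $\mathcal{W}$-interval instead.
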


\begin{proof} (cf. proof of \cite[Proposition 12]{szenes})
We need to show that the cases
$$ \inf \mathcal{I}_{\mathcal{B}} \leq \inf \mathcal{I}_{\mathcal{W}} \quad \textrm{and} \quad \sup \mathcal{I}_{\mathcal{B}} \leq \sup \mathcal{I}_{\mathcal{W}} $$
and
$$ \inf \mathcal{I}_{\mathcal{W}} \leq \inf \mathcal{I}_{\mathcal{B}} \quad \textrm{and} \quad \sup \mathcal{I}_{\mathcal{W}} \leq \sup \mathcal{I}_{\mathcal{B}} $$
are not possible. We give the proof for the first case only, the proof for the second case is similar. Assume that $ \inf \mathcal{I}_{\mathcal{B}} \leq \inf \mathcal{I}_{\mathcal{W}} $ and $ \sup \mathcal{I}_{\mathcal{B}} \leq \sup \mathcal{I}_{\mathcal{W}} $ . Denote
$$ v = \inf \mathcal{I}_{\mathcal{B}}, \quad w = \sup \mathcal{I}_{\mathcal{W}}. $$
There are $ a', b' $, endpoints of $ D $ with $ a < a' < b, a < b' < b $, such that
$$ v = \inf I_{\mu (a')}(a') = a' - \mu (a'), \quad \lambda (D | I_{\mu (a')}(a')) \geq 1 - \delta , $$
$$ w = \sup I_{\mu (b')}(b') = b' + \mu (b'), \quad \lambda (D | I_{\mu (b')}(b')) \leq \delta . $$
Let us realize that $ (v, v + \varepsilon ) \subset D $ for a small enough $ \varepsilon > 0 $. In the opposite case, we have $ (v, v + \varepsilon ) \cap D = \emptyset $ for a small enough $ \varepsilon > 0 $. But it is easy to check that, in such a case, $ \lambda (D | I_{\mu (a') - \varepsilon }(a')) > \lambda (D | I_{\mu (a')}(a')) $, which is not possible due to the choice of $ \mu (a') $. Similarly, it can be shown that $ (w - \varepsilon , w) \cap D = \emptyset $ for a small enough $ \varepsilon > 0 $. For this reason, $ v $ and $ w $ are not endpoints of the set $ C $ defined by
$$ C = (-\infty , v] \cup D \setminus [w, \infty ). $$
Thus, every endpoint of $ C $ is an endpoint of $ D $ at the same time.

We claim that every endpoint $ p $ of $ C $ has a radius $ \omega > 0 $ such that $ \lambda (C | I_{\omega }(p)) \notin (\delta , 1 - \delta ) $. If $ a < p < b $, then $ \omega = \mu (p) $ works, as $ I_{\mu (p)}(p) \subset (v, w) $ by our assumption (and thus $ C \cap I_{\mu (p)}(p) = D \cap I_{\mu (p)}(p) $). If $ p \leq a $, then $ p \leq a' $, and we can take $ \omega = \mu (a') $ (we have $ \lambda (C | I_{\mu (a')}(p)) \geq \lambda (C | I_{\mu (a')}(a')) $, as $ (p - \mu (a'), v) \subset C $). Similarly, if $ p \geq b $, then $ p \geq b' $, and we can take $ \omega = \mu (b') $ (we have $ \lambda (C | I_{\mu (b')}(p)) \leq \lambda (C | I_{\mu (b')}(b')) $, as $ (w, p + \mu (b')) \cap C = \emptyset $).

So, after an affine transformation, $ C $ will be a counterexample to $ \mathcal{K}(\delta ) $. This contradicts property (b) of $ D $, as $ C $ does not consist of more intervals than $ D $ (recall that $ (v, v + \varepsilon ) \subset D $ for a small enough $ \varepsilon > 0 $).
\end{proof}

\begin{proof}[Proof of Lemma \ref{lemmab}]
Let us show first that an endpoint $ p $ of $ D $ with $ a < p < b $ exists. Assume that $ D $ has no such an endpoint. Then $ (a, b) \subset D $ or $ (a, b) \subset (0, 1) \setminus D $. If $ (a, b) \subset D $, then, by (c),
$$ b - a = \lambda (a, b) \leq \lambda D \leq \frac{4\delta ^{2}}{1 - 2\delta } (b - a). $$
If $ (a, b) \subset (0, 1) \setminus D $, then, by (c),
$$ b - a = \lambda (a, b) \leq \lambda ((0, 1) \setminus D) \leq \frac{4\delta ^{2}}{1 - 2\delta } (b - a). $$
We obtain $ 1 \leq \frac{4\delta ^{2}}{1 - 2\delta } $, which means that $ \delta \geq \frac{1}{4}(\sqrt{5} - 1) $. This contradicts the assumption $ \delta < \frac{1}{4}(\sqrt{5} - 1) $.

So, at least one of the sets $ \mathcal{I}_{\mathcal{B}} $, $ \mathcal{I}_{\mathcal{W}} $ is not empty. Notice that, if we take
$$ D' = 1 - [(0, 1) \setminus (D \cup \partial D)] = (1 - r_{n},1 - s_{n-1}) \cup \dots \cup (1 - r_{2}, 1 - s_{1}) \cup (1 - r_{1}, 1), $$ 
$$ a' = 1 - b, \quad b' = 1 - a, \quad \mu '(1 - p) = \mu (p), $$
then we obtain objects with the same properties and $ \mathcal{I}_{\mathcal{B}}' = 1 - \mathcal{I}_{\mathcal{W}}, \mathcal{I}_{\mathcal{W}}' = 1 - \mathcal{I}_{\mathcal{B}} $. Therefore, without loss of generality,
\begin{itemize}
\item when one of the sets $ \mathcal{I}_{\mathcal{B}} $, $ \mathcal{I}_{\mathcal{W}} $ is empty, then we may assume that $ \mathcal{I}_{\mathcal{B}} \neq \emptyset = \mathcal{I}_{\mathcal{W}} $,
\item when both the sets $ \mathcal{I}_{\mathcal{B}} $, $ \mathcal{I}_{\mathcal{W}} $ are non-empty, then we may assume, due to Claim \ref{claimszenes}, that
$$ \inf \mathcal{I}_{\mathcal{B}} < \inf \mathcal{I}_{\mathcal{W}} \quad \textrm{and} \quad \sup \mathcal{I}_{\mathcal{W}} < \sup \mathcal{I}_{\mathcal{B}}. $$
\end{itemize}
In other words, for every endpoint $ p $ of $ D $ with $ a < p < b $,
$$ I_{\mu (p)}(p) \subset (v, w) $$
where
$$ v = \inf \mathcal{I}_{\mathcal{B}}, \quad w = \sup \mathcal{I}_{\mathcal{B}}. $$
We may also assume that
$$ D \subset (v, w) \cup (a, b). $$
Indeed, if we take $ D \cap ((v, w) \cup (a, b)) $ instead of $ D $, then no of the properties can be disrupted except the property $ \lambda ((0, 1) \setminus D) \leq \frac{4\delta ^{2}}{1 - 2\delta } (b - a) $ which interests us no more (by \textquotedblleft properties\textquotedblright {} we mean properties (a)--(c) from Lemma \ref{lemmaa}, endpoints $ p \in (a, b) $ and the relative measures $ \lambda (D | I_{\varepsilon }(p)) $ for $ 0 < \varepsilon \leq \mu (p) $).

There are $ a', b' $, endpoints of $ D $ with $ a < a' < b, a < b' < b $, such that
$$ v = \inf I_{\mu (a')}(a') = a' - \mu (a'), \quad \lambda (D | I_{\mu (a')}(a')) \geq 1 - \delta , $$
$$ w = \sup I_{\mu (b')}(b') = b' + \mu (b'), \quad \lambda (D | I_{\mu (b')}(b')) \geq 1 - \delta . $$
It is easy to verify that $ (v, v + \varepsilon ) \subset D $ and $ (w - \varepsilon , w) \subset D $ for a small enough $ \varepsilon > 0 $ (we delt with exactly the same thing in the proof of Claim \ref{claimszenes}). Even, we have $ (\min \{ v, a \}, v + \varepsilon ) \subset D $ and $ (w - \varepsilon , \max \{ w, b\} ) \subset D $. Indeed, to show that, e.g., $ (\min \{ v, a \}, v + \varepsilon ) \subset D $, it is sufficient to realize that $ D $ has no endpoint $ p $ such that $ a < p \leq v $ (such an endpoint would satisfy $ v \leq p - \mu (p) < p \leq v $). Therefore, if we express $ D $ in the form
$$ D = (r_{1}, s_{1}) \cup (r_{2}, s_{2}) \cup \dots \cup (r_{n}, s_{n}), $$
then
$$ r_{1} = \min \{ v, a \} , \quad v < s_{1}, $$
$$ s_{n} = \max \{ w, b \} , \quad r_{n} < w. $$

We define $ \varepsilon (a') = \mu (a'), \varepsilon (b') = \mu (b') $. The verification of the conditions (i)--(vii) is straightforward now.
\end{proof}

\begin{lemma} \label{lemmabwlog}
Let $ 0 < \delta < \frac{1}{4}(\sqrt{5} - 1) $. If $ \mathcal{K}(\delta ) $ does not hold, then there are numbers $ a, b $, intervals $ I_{\varepsilon (a')}(a'), I_{\varepsilon (b')}(b') $ and a set
$$ D = (r_{1}, s_{1}) \cup (r_{2}, s_{2}) \cup \dots \cup (r_{n}, s_{n}) $$
satisfying all the properties from Lemma \ref{lemmab} and, moreover,

{\rm (viii)} $ a' - a > \frac{4\delta - 1}{1 - 2\delta }\varepsilon (a') $,

{\rm (ix)} $ b - b' > \frac{4\delta - 1}{1 - 2\delta }\varepsilon (b') $.
\end{lemma}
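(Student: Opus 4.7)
When $\delta \leq 1/4$, the right-hand sides of (viii) and (ix) are non-positive while $a' - a$ and $b - b'$ are strictly positive by (i); so (viii), (ix) hold automatically and Lemma \ref{lemmab} itself supplies the required objects. From here on assume $1/4 < \delta < (\sqrt{5}-1)/4$.

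My plan is an argument by contradiction. Start from the objects $a, b, D, a', b', \varepsilon(a'), \varepsilon(b')$ given by Lemma \ref{lemmab}, and suppose (viii) fails; the case of (ix) failing is symmetric via the inverse-configuration trick already used in the proof of Lemma \ref{lemmab}. Writing $v = a' - \varepsilon(a')$, the failure of (viii) rearranges to $a - v \geq \tfrac{2 - 6\delta}{1 - 2\delta}\,\varepsilon(a') > 0$ (where I use $\delta < 1/3$), hence $v < a$. By the construction inside Lemma \ref{lemmab} this forces $r_1 = v$ and $v < s_1$, so $D$ has a leftmost interval $(r_1, s_1)$ beginning strictly to the left of $a$.

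Next, I collapse this leftmost interval into the ambient half-line:
\[
\widetilde{C} = (-\infty, s_1) \cup \bigl(D \cap (s_1, s_n)\bigr),
\]
and then apply the affine transformation sending $(r_1, s_n)$ onto $(0, 1)$. As a configuration $\widetilde{C}$ has strictly fewer components than the minimal counterexample $C$ from which $D$ was built, so exhibiting $\widetilde{C}$ as a counterexample to $\mathcal{K}(\delta)$ will contradict property (ii) and finish the argument. For endpoints $p \in \{r_j, s_j\}_{j \geq 2}$ lying in $(a, b)$, property (iii) of $D$ supplies a radius $\omega$ with $I_\omega(p) \subset (r_1, s_n)$; since $\widetilde{C}$ and $D$ agree on $(r_1, s_n)$ (the interval $(r_1, s_1) \subset D$ was precisely what got absorbed into the half-line), the same $\omega$ transfers verbatim. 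Endpoints of $\widetilde{C}$ lying outside $(a, b)$ are handled just as at the end of the proof of Claim \ref{claimszenes}: choose radii pushing the test interval entirely to one side of $a'$ or $b'$, using that the added half-line can only raise ``$\mathcal{B}$-type'' densities and that ``$\mathcal{W}$-type'' densities can be kept $\leq \delta$ by shrinking the radius.

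The delicate step, and the principal obstacle, is the density at the new endpoint $s_1$: I must exhibit a radius $\omega$ with $\lambda(\widetilde{C} \mid I_\omega(s_1)) \geq 1 - \delta$. Combining $\lambda(D \mid I_{\varepsilon(a')}(a')) \geq 1 - \delta$, the full occupancy $(v, s_1) \subset D$, and the overhang bound $a - v \geq \tfrac{2-6\delta}{1-2\delta}\varepsilon(a')$ coming from the failure of (viii), a short density computation at a carefully chosen radius around $s_1$ is expected to close the inequality. The threshold $\tfrac{4\delta - 1}{1 - 2\delta}$ in (viii) is precisely calibrated so that this density would equal $1 - \delta$ in the borderline case, so the strict failure of (viii) is exactly what forces the calculation through. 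A secondary bookkeeping difficulty is the handful of ``$\mathcal{W}$-type'' inherited endpoints whose original test interval for $D$ reaches below $s_1$: for these, a shrinking-radius trick analogous to the one inside the proof of Claim \ref{claimszenes} is needed to compensate for the mass added by the half-line.
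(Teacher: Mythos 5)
Your approach is genuinely different from the paper's: you derive a contradiction by collapsing the leftmost interval of $D$ into the ambient half-line, whereas the paper works \emph{forward}, extending the leftmost interval to the left by $a'_0 - r_1$ and redefining $a$ to be the new left endpoint, then verifying (i)--(viii) directly (with (viii)'s failure used specifically to keep the measure bound (vii) from breaking under the extension). Unfortunately, the sketch as written has a gap that is more serious than the "delicate step" you flag. The density at the new endpoint $s_1$ actually works out unconditionally: taking $\omega = a' + \varepsilon(a') - s_1$, one finds $\lambda(\widetilde{C} \mid I_\omega(s_1)) \geq 1-\delta$ precisely because $a' \geq s_1$, and this always holds (since $a'$ is an endpoint of $D$ with $a' > a \geq r_1$, it cannot lie in the interior of the first interval $(r_1, s_1)$). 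So the failure of (viii) plays no role there, and your claim that the threshold $\tfrac{4\delta-1}{1-2\delta}$ is "precisely calibrated" to make that density land at $1-\delta$ is a misreading. This is a red flag: if the sketch went through as stated, it would produce the contradiction whether or not (viii) failed, forcing $\mathcal{K}(\delta)$ to hold outright, which is false.

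The real obstruction is the opposite end. Lemma \ref{lemmab}'s conditions (i)--(vii) only constrain endpoints in $(a,b)$, but $\widetilde{C}$ still has the endpoints $s_n = \max\{w,b\}$ and possibly $r_n, s_{n-1}, \ldots$ when $w > b$, none of which are guaranteed a good radius relative to $D$. Your appeal to "just as at the end of the proof of Claim \ref{claimszenes}" does not transfer: in Claim \ref{claimszenes} the configuration is cut off at $w = \sup \mathcal{I}_{\mathcal{W}}$ and $b'$ is $\mathcal{W}$-type ($\lambda(D \mid I_{\mu(b')}(b')) \leq \delta$), so shifting the test interval past $w$ only deletes complement and the density stays $\leq \delta$; but after the normalization in Lemma \ref{lemmab} the endpoint $b'$ is $\mathcal{B}$-type ($\lambda(D \mid I_{\varepsilon(b')}(b')) \geq 1-\delta$), and for $p$ between $b'$ and $w$ the shifted interval $I_{\varepsilon(b')}(p)$ can easily give a density strictly between $\delta$ and $1-\delta$. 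Nothing in (i)--(vii) rules this out, and the failure of (viii) — a statement about the $a$-side — does not help here. By contrast, the paper's constructive modification never needs $\widetilde{C}$ (or anything) to be a counterexample, so it never has to control those outer endpoints; this is exactly why the paper extends rather than collapses. (A minor slip: you say the affine map sends $(r_1, s_n)$ onto $(0,1)$, but for $\widetilde{C}$ to be a configuration you need $s_1$, not $r_1$, mapped to $0$.)
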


\begin{proof}
1) Lemma \ref{lemmab} gives objects satisfying (i)--(vii), and we construct now objects satisfying (i)--(viii). The condition (viii) is automatically satisfied when $ \delta < 1/4 $, so suppose that $ \delta \geq 1/4 $. Let (i)--(vii) be satisfied for
$$ D_{0} = (r, s_{1}) \cup (r_{2}, s_{2}) \cup \dots \cup (r_{n}, s_{n}) $$
and $ a_{0}, b_{0}, I_{\varepsilon (a'_{0})}(a'_{0}), I_{\varepsilon (b'_{0})}(b'_{0}) $. We need to construct $ D, a, b, I_{\varepsilon (a')}(a'), I_{\varepsilon (b')}(b') $ so that (i)--(viii) are satisfied.

We may assume that (viii) is not satisfied, i.e., that
$$ a'_{0} - a_{0} \leq \frac{4\delta - 1}{1 - 2\delta } \varepsilon (a'_{0}). $$
We define
$$ D = \big( (r - (a'_{0} - r), r] \cup D_{0} \big) = (r - (a'_{0} - r), s_{1}) \cup (r_{2}, s_{2}) \cup \dots \cup (r_{n}, s_{n}), $$
$$ a = r - (a'_{0} - r), \quad b = b_{0}, \quad I_{\varepsilon (a')}(a') = I_{\varepsilon (a'_{0})}(a'_{0}), \quad I_{\varepsilon (b')}(b') = I_{\varepsilon (b'_{0})}(b'_{0}). $$
The conditions (i), (ii), (iv)--(vi) for $ D, a $ etc. are immediate consequences of the conditions (i), (ii), (iv)--(vi) for $ D_{0}, a_{0} $ etc. and to prove that (iii) holds, it remains to show that, for every endpoint $ p $ of $ D $ with $ a < p \leq a_{0} $, there is a radius $ \omega > 0 $ such that $ I_{\omega }(p) \subset (r - (a'_{0} - r), s_{n}) $ and $ \lambda (D | I_{\omega }(p)) \notin (\delta , 1 - \delta ) $. Let $ p $ be such an endpoint. Then $ p \geq r $, as $ D $ has no endpoints in $ (r - (a'_{0} - r), r) $. The choice $ \omega = \varepsilon (a'_{0}) $ works. Indeed, since $ a'_{0} - \varepsilon (a'_{0}) \geq r $, we have $ p - \varepsilon (a'_{0}) = p - a'_{0} + a'_{0} - \varepsilon (a'_{0}) \geq r - a'_{0} + r = r - (a'_{0} - r) $, and so $ I_{\varepsilon (a'_{0})}(p) \subset (r - (a'_{0} - r), s_{n}) $. Using (vi) for $ D_{0}, a_{0} $ etc., we obtain $ (p - \varepsilon (a'_{0}), a'_{0} - \varepsilon (a'_{0})) \subset (r - (a'_{0} - r), s_{1}) \subset D $, and so $ \lambda (D | I_{\varepsilon (a'_{0})}(p)) \geq \lambda (D | I_{\varepsilon (a'_{0})}(a'_{0})) \geq 1 - \delta $.

Since $ \delta < \frac{1}{4}(\sqrt{5} - 1) \leq 1/3 $, we obtain (viii) from
$$ a' - a \geq r - a = r - (r - (a'_{0} - r)) = a'_{0} - r \geq \varepsilon (a'_{0}) = \varepsilon (a') > \frac{4\delta - 1}{1 - 2\delta }\varepsilon (a'). $$
It remains to prove (vii). By our assumption, we have
$$ a'_{0} - a_{0} \leq \frac{4\delta - 1}{1 - 2\delta } \varepsilon (a'_{0}) \leq \frac{4\delta - 1}{1 - 2\delta } (a'_{0} - r), $$
and so, since (vii) is satisfied for $ D_{0}, a_{0} $ etc., we have, using $ 1/4 \leq \delta < \frac{1}{4}(\sqrt{5} - 1) \leq 1/\sqrt{8} $,
\begin{eqnarray*}
\lambda D & = & \lambda (r - (a'_{0} - r), r] + \lambda D_{0} \\
 & \leq & a'_{0} - r + \frac{4\delta ^{2}}{1 - 2\delta } (b_{0} - a_{0})
\end{eqnarray*}
\begin{eqnarray*} 
 & = & a'_{0} - r + \frac{4\delta ^{2}}{1 - 2\delta } (a'_{0} - a_{0}) + \frac{4\delta ^{2}}{1 - 2\delta } (a - a'_{0}) + \frac{4\delta ^{2}}{1 - 2\delta } (b - a) \\
 & = & a'_{0} - r + \frac{4\delta ^{2}}{1 - 2\delta } (a'_{0} - a_{0}) - \frac{4\delta ^{2}}{1 - 2\delta } 2(a'_{0} - r) + \frac{4\delta ^{2}}{1 - 2\delta } (b - a) \\
 & = & - \frac{(4\delta - 1)(1 + 2\delta )}{1 - 2\delta } (a'_{0} - r) + \frac{4\delta ^{2}}{1 - 2\delta } (a'_{0} - a_{0}) + \frac{4\delta ^{2}}{1 - 2\delta } (b - a) \\
 & \leq & - (1 + 2\delta )(a'_{0} - a_{0}) + \frac{4\delta ^{2}}{1 - 2\delta } (a'_{0} - a_{0}) + \frac{4\delta ^{2}}{1 - 2\delta } (b - a) \\
 & = & - \frac{1 - 8\delta ^{2}}{1 - 2\delta } (a'_{0} - a_{0}) + \frac{4\delta ^{2}}{1 - 2\delta } (b - a) \\
 & \leq & \frac{4\delta ^{2}}{1 - 2\delta } (b - a).
\end{eqnarray*}

2) We have constructed objects satisfying (i)--(viii), and we construct finally objects satisfying (i)--(ix). The condition (ix) is automatically satisfied when $ \delta < 1/4 $, so suppose that $ \delta \geq 1/4 $. Let (i)--(viii) be satisfied for
$$ D_{0} = (r_{1}, s_{1}) \cup (r_{2}, s_{2}) \cup \dots \cup (r_{n-1}, s_{n-1}) \cup (r_{n}, s) $$
and $ a_{0}, b_{0}, I_{\varepsilon (a'_{0})}(a'_{0}), I_{\varepsilon (b'_{0})}(b'_{0}) $. We need to construct $ D, a, b, I_{\varepsilon (a')}(a'), I_{\varepsilon (b')}(b') $ so that (i)--(ix) are satisfied.

We may assume that (ix) is not satisfied, i.e., that
$$ b_{0} - b'_{0} \leq \frac{4\delta - 1}{1 - 2\delta } \varepsilon (b'_{0}). $$
We define
$$ D = \big( D_{0} \cup [s, s + (s - b'_{0})) \big) = (r_{1}, s_{1}) \cup (r_{2}, s_{2}) \cup \dots \cup (r_{n-1}, s_{n-1}) \cup (r_{n}, s + (s - b'_{0})), $$
$$ a = a_{0}, \quad b = s + (s - b'_{0}), \quad I_{\varepsilon (a')}(a') = I_{\varepsilon (a'_{0})}(a'_{0}), \quad I_{\varepsilon (b')}(b') = I_{\varepsilon (b'_{0})}(b'_{0}). $$
The condition (viii) is an immediate consequence of the condition (viii) for $ D_{0}, a_{0} $ etc. and the conditions (i)--(vii) and (ix) can be proved in the same way as the conditions (i)--(viii) in the previous part.
\end{proof}

The following simple lemma will be useful in many situations.

\begin{lemma} \label{lemmac}
Let $ A \subset \mathbb{R} $ be measurable and let $ I_{\gamma }(c) $ be an interval such that
$$ \lambda (A | I_{\gamma }(c)) \geq 1 - \delta , \quad 0 < \varepsilon < \gamma \Rightarrow \lambda (A | I_{\varepsilon }(c)) \leq \lambda (A | I_{\gamma }(c)). $$
(This is fulfilled in particular when $ \lambda (A | I_{\gamma }(c)) \unrhd 1 - \delta $).

Then
$$ c - \gamma \leq s < c + \gamma \quad \Rightarrow \quad \lambda (A | (s, c + \gamma )) \geq 1 - 2\delta , $$
$$ c - \gamma < t \leq c + \gamma \quad \Rightarrow \quad \lambda (A | (c - \gamma , t)) \geq 1 - 2\delta . $$
\end{lemma}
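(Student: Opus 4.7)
The plan is to prove the first implication directly, split into two cases according to the sign of $s - c$, and then derive the second implication by a symmetric reflection argument. Set $\alpha = \lambda(A | I_\gamma(c)) \geq 1 - \delta$, so that $\lambda(I_\gamma(c) \setminus A) = 2\gamma(1 - \alpha) \leq 2\delta\gamma$. In the easy case $c - \gamma \leq s \leq c$, the interval $(s, c+\gamma)$ sits inside $I_\gamma(c)$ and has length $c+\gamma - s \geq \gamma$; hence $\lambda((s, c+\gamma) \setminus A) \leq 2\delta\gamma$, and dividing by $c+\gamma - s$ yields $\lambda(A | (s, c+\gamma)) \geq 1 - 2\delta\gamma/(c+\gamma - s) \geq 1 - 2\delta$. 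The monotonicity half of the hypothesis is not invoked here.

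In the harder case $c < s < c+\gamma$, I will set $\varepsilon = s - c \in (0, \gamma)$ and decompose $I_\gamma(c)$ as the disjoint union of three consecutive pieces $(c-\gamma, 2c-s)$, $I_\varepsilon(c) = (2c-s, s)$, and $(s, c+\gamma)$, of lengths $\gamma - \varepsilon$, $2\varepsilon$, and $\gamma - \varepsilon$ respectively. The monotonicity hypothesis $\lambda(A | I_\varepsilon(c)) \leq \alpha$ then gives $\lambda(A \cap I_\varepsilon(c)) \leq 2\varepsilon\alpha$, so the two outer legs together must contain at least $2\gamma\alpha - 2\varepsilon\alpha = 2(\gamma-\varepsilon)\alpha$ of $A$. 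Bounding the left-leg contribution trivially by its own length $\gamma - \varepsilon$, I obtain $\lambda(A \cap (s, c+\gamma)) \geq (\gamma-\varepsilon)(2\alpha - 1)$, and dividing by the length $\gamma - \varepsilon$ of $(s, c+\gamma)$ yields $\lambda(A | (s, c+\gamma)) \geq 2\alpha - 1 \geq 1 - 2\delta$, as required.

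The second implication is then obtained by applying the first to the reflected set $A' = 2c - A$ at the point $s' = 2c - t$: reflection about $c$ preserves both halves of the hypothesis on the symmetric interval $I_\gamma(c)$ and interchanges left-trimming with right-trimming, so the inequality $\lambda(A' | (s', c+\gamma)) \geq 1 - 2\delta$ transports to $\lambda(A | (c-\gamma, t)) \geq 1 - 2\delta$. I do not anticipate any serious obstacle; the only mildly subtle point is recognising that the monotonicity hypothesis is genuinely needed precisely in the range $s > c$, since in that range the naive ratio $2\delta\gamma/(c+\gamma - s)$ can exceed $2\delta$ and the direct covering argument of the easy case breaks down.
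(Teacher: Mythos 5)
Your proof is correct and follows essentially the same strategy as the paper's: split on whether $s \leq c$ (where $(s,c+\gamma) \subset I_\gamma(c)$ contains a full half of $I_\gamma(c)$ and the monotonicity hypothesis is not needed) or $s > c$ (where the monotonicity hypothesis forces density $\geq 1-\delta$ on the outer "donut" $I_\gamma(c)\setminus I_{s-c}(c)$, after which the left leg is subtracted crudely), with the second implication obtained by symmetry. Your computation in the hard case is a transparent unfolding of the paper's one-line mediant inequality, and your remark that monotonicity is used only for $s>c$ is a correct and nice observation.
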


\begin{proof}
It is enough to prove the first implication only, the second one can be proved in the same way. Let $ c - \gamma \leq s < c + \gamma $. We consider two cases.

1) Let $ s > c $. Since $ \lambda (A | I_{s - c}(c)) \leq \lambda (A | I_{\gamma }(c)) $, we have $ \lambda (A | (I_{\gamma }(c) \setminus I_{s - c}(c))) \geq \lambda (A | I_{\gamma }(c)) \geq 1 - \delta $. Thus,
\begin{eqnarray*}
\lambda (A \cap (s, c + \gamma )) & = & \lambda (A \cap (I_{\gamma }(c) \setminus I_{s - c}(c))) - \lambda (A \cap (c - \gamma, c - (s - c))) \\
 & \geq & (1 - \delta ) \lambda (I_{\gamma }(c) \setminus I_{c - s}(c)) - \lambda (c - \gamma, c - (s - c)) \\
 & = & (1 - 2\delta ) \lambda (s, c + \gamma ).
\end{eqnarray*}

2) Let $ s \leq c $. We compute
\begin{eqnarray*}
\lambda (A \cap (s, c + \gamma )) & = & \lambda (A \cap I_{\gamma }(c)) - \lambda (A \cap (c - \gamma , s)) \\
 & \geq & (1 - \delta ) \lambda I_{\gamma }(c) - \lambda (c - \gamma , s) \\
 & = & (1 - 2\delta )\gamma + c - s \\
 & = & (1 - 2\delta ) \lambda (s, c + \gamma ) + 2\delta (c - s) \\
 & \geq & (1 - 2\delta ) \lambda (s, c + \gamma ).
\end{eqnarray*}
\end{proof}

\begin{tab} \label{roots}
In the table, we introduce constants $ \zeta _{1}, \zeta _{2}, \dots , \zeta _{7} $. We will refer to this table quite often in order to use inequalities which are implied by the assumption that $ \delta < \zeta _{i} $. The introduced inequalities should be easily verified, so we skip the verification of them.
\end{tab}

\medskip

\centerline{
\begin{tabular}{|c|c|c|c|}
\hline
    $ \zeta _{i} $ &      equals to &                          root of &    $ 0 < \delta < \zeta _{i} $ implies that \\
\hline
\hline
    $ \zeta _{1} $ &    0,268486... &      $ 8x^{3} + 8x^{2} + x - 1 $ &    $ \frac{4\delta ^{2}}{1 - 2\delta } < \frac{(1 - \delta)(1 + 2\delta)}{1 + 3\delta} $ \\
\hline
    $ \zeta _{2} $ &    0,268700... &         $ 8x^{3} - 16x^{2} + 1 $ &    $ \frac{2}{1 - 2\delta } - \frac{(1 - 2\delta )^{2}}{1 - 3\delta - 2\delta ^{2}} > 0 $ \\
\hline
    $ \zeta _{3} $ &    0,270690... &             $ 4x^{2} + 10x - 3 $ &    $ 1 < \frac{1 - 2\delta }{4\delta ^{2}} \big( 1 - \frac{4}{3}\delta \big) $ \\
                   &                &                                  &    $ (-4\delta ^{2} + 6\delta - 1)(1 - \delta ) \geq 8\delta ^{3} + 4\delta - 1 $ \\
\hline
    $ \zeta _{4} $ &    0,273301... &     $ 4x^{3} - 6x^{2} + 5x - 1 $ &    $ 4\delta ^{3} < (1 - 2\delta )(1 - 3\delta ) $ \\
\hline
    $ \zeta _{5} $ &    0,275255... &             $ 4x^{2} - 12x + 3 $ &    $ 4\delta ^{2} - 12\delta + 3 > 0 $ \\
\hline
    $ \zeta _{6} $ &    0,277479... &     $ 8x^{3} - 8x^{2} - 2x + 1 $ &    $ 2\delta \geq \frac{2\delta }{1 - 2\delta } - \frac{1}{2} \frac{1}{1 - \delta } $ \\
\hline
    $ \zeta _{7} $ &    0,280776... &              $ 2x^{2} + 3x - 1 $ &    $ 1 - 3\delta - 2\delta ^{2} > 0 $ \\
                   &                &                                  &    $ \delta < \frac{1 - 2\delta }{1 + 2\delta }, \quad \frac{(1 - \delta )(1 - 2\delta )}{2\delta } > 2\delta $ \\
                   &                &                                  &    $ -14\delta ^{2} + 15\delta - 3 \leq 2(4\delta - 1)(1 - 2\delta ) $ \\
\hline
\end{tabular}  
}

\bigskip

We prove now a more transparent analogue of Lemma \ref{lemmab}.

\begin{lemma} \label{lemmad}
Let $ 0 < \delta < \zeta _{3} $ where $ \zeta _{3} $ is as in Table \ref{roots}. If there is a counterexample to $ \mathcal{K}(\delta ) $, then there is a set
$$ F = (u_{1}, v_{1}) \cup (u_{2}, v_{2}) \cup \dots \cup (u_{m}, v_{m}), $$
where $ u_{1} < v_{1} < u_{2} < v_{2} < \dots < u_{m} < v_{m} $, such that the following conditions are satisfied:

{\rm (I)} $ \frac{1}{2}(u_{1} + v_{1}) = 0 $, $ \frac{1}{2}(u_{m} + v_{m}) = 1 $,

{\rm (II)} $ F $ consists of less intervals than any counterexample to $ \mathcal{K}(\delta ) $,

{\rm (III)} for every endpoint $ c $ of $ F $ with $ u_{1} < c < v_{m} $, there is a radius $ \omega > 0 $ such that $ I_{\omega }(c) \subset (u_{1}, v_{m}) $ and $ \lambda (F | I_{\omega }(c)) \notin (\delta , 1 - \delta ) $,

{\rm (IV)} $ \lambda (F \cap [0, 1]) \leq \frac{4\delta^{2}}{1 - 2\delta} $.
\end{lemma}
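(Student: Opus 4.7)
The plan is to derive $F$ from the set $D$ provided by Lemma~\ref{lemmabwlog}. I first apply that lemma to obtain $D = (r_1, s_1) \cup \dots \cup (r_n, s_n)$, numbers $a, b$, and intervals $I_{\varepsilon(a')}(a')$, $I_{\varepsilon(b')}(b')$ satisfying (i)--(ix). Following the ``mirror'' idea sketched in Section~3, I choose points $q < p < p' < q'$ with $a \leq p \leq a'$ and $b' \leq p' \leq b$, form the auxiliary set
$$E = (q, p) \cup (D \cap (p, p')) \cup (p', q'),$$
and let $F$ be the image of $E$ under the affine transformation sending $M := \tfrac{1}{2}(q+p)$ to $0$ and $M' := \tfrac{1}{2}(p'+q')$ to $1$. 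Because $(q,p)$ is symmetric about $M$ and $(p', q')$ is symmetric about $M'$, their images are intervals centered at $0$ and $1$, which immediately gives property (I). Property (II) is immediate as well, since $F$ has no more intervals than $D$ and $D$ itself has fewer intervals than any counterexample to $\mathcal{K}(\delta)$ by (ii). The remaining freedom is in the choice of $q, q'$, which I take so that $p - q$ is a controlled multiple of $\varepsilon(a')$ and $q' - p'$ is a controlled multiple of $\varepsilon(b')$.

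Property (III) is the main verification. For an endpoint of $F$ coming from an endpoint $c$ of $D$ with $p < c < p'$ (so in particular $a < c < b$), I use the radius $\omega$ furnished by (iii) of Lemma~\ref{lemmabwlog}: it satisfies $I_\omega(c) \subset (r_1, s_n)$ and $\lambda(D | I_\omega(c)) \notin (\delta, 1 - \delta)$. The choice of $q, q'$ (large enough, via (vi)) ensures that $I_\omega(c)$ also sits in $(q, q')$, i.e.\ its $F$-image sits in $(u_1, v_m)$. If the radius intrudes into the solid boundary pieces $(q, p)$ or $(p', q')$, replacing $D$ by $E$ can only add points, preserving the ``$\geq 1 - \delta$'' case; the ``$\leq \delta$'' case must be handled separately by arranging that ``white'' endpoints use radii kept strictly inside $(p, p')$. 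The endpoints at $p$ and $p'$ themselves (when these are endpoints of $E$) are covered by radii of order $\varepsilon(a')$ (resp.\ $\varepsilon(b')$), using property (v) together with the solid bookend on the adjacent side.

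The final task is property (IV). A direct computation yields
$$\lambda(E \cap [M, M']) = (p - M) + \lambda(D \cap (p, p')) + (M' - p'),$$
and after dividing by $M' - M = \tfrac{1}{2}(p' + q' - p - q)$ this becomes $\lambda(F \cap [0, 1])$. I bound $\lambda(D \cap (p, p'))$ via (vii), and absorb the solid boundary terms using (viii) and (ix), which guarantee that $a' - a$ and $b - b'$ dominate $\varepsilon(a'), \varepsilon(b')$ by a factor $\tfrac{4\delta - 1}{1 - 2\delta}$. The assumption $\delta < \zeta_3$ provides the inequality $\tfrac{1 - 2\delta}{4\delta^2}\bigl(1 - \tfrac{4}{3}\delta\bigr) > 1$ recorded in Table~\ref{roots}, which is exactly what closes the estimate. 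The main obstacle is the simultaneous verification of (III) and (IV) for a single explicit choice of $q, q'$: these parameters must leave enough room for the endpoint radii at $p, p'$ while keeping the solid bookend measures $(p - M) + (M' - p')$ small enough to respect (IV). This delicate balance is precisely what the threshold $\zeta_3$ encodes, and I expect the bulk of the technical work to lie in pinning down admissible values for $p, p', q, q'$ in terms of $a', b', \varepsilon(a'), \varepsilon(b')$.
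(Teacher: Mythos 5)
Your high-level plan — take $D$ from Lemma~\ref{lemmabwlog}, form $E = (q,p)\cup(D\cap[p,p'])\cup(p',q')$ for suitable $q<p<p'<q'$, and rescale — is the same as the paper's. But the proposal leaves the crucial choices unspecified and has a real error in how you obtain property~(I), so as written it does not constitute a proof.

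Concerning~(I): you assert that since $(q,p)$ is symmetric about $M=\tfrac12(q+p)$, its image is an interval centered at $0$. That presupposes $(q,p)$ \emph{is} a connected component of $E$, which holds only if $p$ is a right endpoint of~$D$. But the construction (and indeed Claim~\ref{cl03} in the paper, which shows $(p-\varepsilon,p)\subset D$) allows $p$ to lie in the interior of an interval of $D$, in which case $(q,p)$ merges with the adjacent part of $D$ and the first component of $E$ is $(q,s_j)$ for some $s_j>p$. Sending $M\mapsto 0$ then produces a first interval of $F$ whose midpoint is strictly positive, so (I) fails. The paper avoids this by normalizing on the centers $o, o'$ of the \emph{actual} first and last components of $E$, and then bridges back to the $[M,M']$ computation via the observation that $(M,o)\subset E$ and $(o',M')\subset E$, which forces $\lambda(E\mid[o,o'])\le \lambda(E\mid[M,M'])$. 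Your version of~(IV) computes $\lambda(E\mid[M,M'])$ but never explains why that controls $\lambda(F\cap[0,1])=\lambda(E\mid[o,o'])$.

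Concerning the parameters: the paper's $p$ is defined precisely as the smallest point in $[a'-\varepsilon(a'),a'+\varepsilon(a')]$ from which the tail relative measure $\lambda(D\mid(x,a'+\varepsilon(a')))$ drops to $\le 2\delta$ (and analogously for $p'$), and $q,q'$ are then given by an explicit algebraic formula in terms of $\lambda(D\mid(r_1,p))$ chosen so that the boundary endpoint estimates in Claim~\ref{cl05} close exactly. Your stipulation ``$a\le p\le a'$, $p-q$ a controlled multiple of $\varepsilon(a')$'' is both different from the paper's choice (the paper's $p$ can exceed $a'$) and too vague to check~(III) or~(IV); you essentially concede this by deferring the ``delicate balance'' to later work. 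Finally, the two inequalities encoded by $\zeta_3$ in Table~\ref{roots} play distinct roles: the first ($1<\tfrac{1-2\delta}{4\delta^2}(1-\tfrac43\delta)$) is what forces the separation $a'+\varepsilon(a')<b'-\varepsilon(b')$ (Claim~\ref{cl02}), while the second is the one that closes the measure estimate of Claim~\ref{cl06}; attributing the (IV) estimate to the first is a conflation. Until $p,p',q,q'$ are pinned down and (III) is actually verified — in particular the $\le\delta$ case, which requires knowing that the minimizing radius stays inside $(p,p')$, a nontrivial consequence of how $p,p'$ are defined — there is a genuine gap.
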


The proof of the lemma is given in the form of a series of claims. Let
$$ D = (r_{1}, s_{1}) \cup (r_{2}, s_{2}) \cup \dots \cup (r_{n}, s_{n}) $$
and $ a, b, I_{\varepsilon (a')}(a'), I_{\varepsilon (b')}(b') $ be as in Lemma \ref{lemmabwlog}. We define
$$ p = \min \big\{ a' + \varepsilon (a') \big\} \cup \big\{ x \in [a' - \varepsilon (a'), a' + \varepsilon (a')) : \lambda (D | (x, a' + \varepsilon (a'))) \leq 2\delta \big\} , $$
$$ p' = \max \big\{ b' - \varepsilon (b') \big\} \cup \big\{ y \in (b' - \varepsilon (b'), b' + \varepsilon (b')] : \lambda (D | (b' - \varepsilon (b'), y)) \leq 2\delta \big\} , $$
$$ q = r_{1} + \frac{p - r_{1}}{2\delta } \big( 1 - \lambda (D | (r_{1}, p)) \big) , \quad q' = s_{n} - \frac{s_{n} - p'}{2\delta } \big( 1 - \lambda (D | (p', s_{n})) \big) , $$
$$ E = (q, p) \cup (D \cap [p, p']) \cup (p', q'). $$
Note that $ q < p < p' < q' $ by Claim \ref{cl02}(2). Note also that the resulting set $ F $ will be obtained as an affine transformation of $ E $.

\begin{claim} \label{cl01}
{\rm (1)} For every $ t $ with $ r_{1} < t \leq a' + \varepsilon (a') $, we have $ \lambda (D | (r_{1}, t)) \geq 1 - 2\delta $. Similarly, for every $ s $ with $ b' - \varepsilon (b') \leq s < s_{n} $, we have $ \lambda (D | (s, s_{n})) \geq 1 - 2\delta $.

{\rm (2)} For every $ x $ with $ r_{1} \leq x < p $, we have $ \lambda (D | (x, p)) > 2\delta $. Similarly, for every $ y $ with $ p' < y \leq s_{n} $, we have $ \lambda (D | (p', y)) > 2\delta $.

{\rm (3)} We have $ \lambda (D | (r_{1}, a' + \varepsilon (a'))) \geq 1 - \delta $ and $ \lambda (D | (b' - \varepsilon (b'), s_{n})) \geq 1 - \delta $.

{\rm (4)} We have $ \lambda (D | (r_{1}, p)) \geq 1 - \delta $ and $ \lambda (D | (p', s_{n})) \geq 1 - \delta $.
\end{claim}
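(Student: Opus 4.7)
The four assertions are local density bounds near the left endpoint $r_1$ and, by symmetry, near the right endpoint $s_n$; I treat only the left half. The one geometric input I keep using is that property (vi) of Lemma \ref{lemmab} forces $a' - \varepsilon(a') < s_1$, so that the entire prefix $(r_1, a' - \varepsilon(a'))$ is contained in $(r_1, s_1) \subset D$. Together with property (v), which delivers a $(1 - \delta)$-density on $I_{\varepsilon(a')}(a')$, this is essentially all the structure available near $r_1$.

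Part (3) follows by a convex-combination argument: I splice the trivial relative measure~$1$ on the in-$D$ prefix $(r_1, a' - \varepsilon(a'))$ onto the $(1 - \delta)$-bound on $I_{\varepsilon(a')}(a')$ and conclude $\lambda(D \mid (r_1, a' + \varepsilon(a'))) \geq 1 - \delta$. For part (1), the range $t \leq a' - \varepsilon(a')$ is immediate from the prefix inclusion, and for $t > a' - \varepsilon(a')$, Lemma \ref{lemmac} applied with $c = a'$ and $\gamma = \varepsilon(a')$ gives $\lambda(D \mid (a' - \varepsilon(a'), t)) \geq 1 - 2\delta$; splicing the prefix onto this interval only raises the density.

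For parts (4) and (2) I would unpack the definition of $p$. The function $g(x) := \lambda(D \mid (x, a' + \varepsilon(a')))$ is continuous on $[a' - \varepsilon(a'), a' + \varepsilon(a'))$, and the bound $g(a' - \varepsilon(a')) \geq 1 - \delta > 2\delta$ (using $\delta < \zeta_3 < 1/3$ from Table \ref{roots}) ensures that the defining set is closed in that interval and does not contain $a' - \varepsilon(a')$. Consequently the minimum $p$ satisfies $p > a' - \varepsilon(a')$, $g(p) \leq 2\delta$ whenever $p < a' + \varepsilon(a')$, and $g(x) > 2\delta$ for every $x < p$. Part (4) then follows from (3) by subtracting the tail bound $\lambda(D \cap (p, a' + \varepsilon(a'))) \leq 2\delta (a' + \varepsilon(a') - p)$; a short computation leaves the non-negative residual $(1 - 3\delta)(a' + \varepsilon(a') - p)$. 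For (2) with $x \in [a' - \varepsilon(a'), p)$, subtracting the same tail bound from the strict inequality $g(x) > 2\delta$ produces $\lambda(D \cap (x, p)) > 2\delta(p - x)$; for $x \in [r_1, a' - \varepsilon(a'))$, I append the in-$D$ prefix $(x, a' - \varepsilon(a'))$ to the inequality at $a' - \varepsilon(a')$ and use $1 > 2\delta$ on the added piece.

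The only care required is in the continuity/attainment bookkeeping at the infimum defining $p$ and in ruling out the degenerate case $p = a' - \varepsilon(a')$; both are controlled by the single quantitative input $1 - \delta > 2\delta$, which is far weaker than the standing hypothesis $\delta < \zeta_3$. No step looks genuinely hard: the whole argument amounts to careful bookkeeping of the in-$D$ prefix, the $(1 - \delta)$-density on $I_{\varepsilon(a')}(a')$, and the minimality property in the definition of $p$.
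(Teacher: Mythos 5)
Your proof is correct and follows essentially the same route as the paper's: both reduce to the interval $[a' - \varepsilon(a'), a' + \varepsilon(a')]$ via the inclusion $(r_1, a' - \varepsilon(a')) \subset D$ (from (iv) and (vi)), obtain (1) and (3) from property (v) together with Lemma~\ref{lemmac}, and derive (2) and (4) from the minimality built into the definition of $p$ plus the inequality $1 - \delta > 2\delta$. The only cosmetic difference is that the paper states the reduction as explicit primed sub-claims on $[a' - \varepsilon(a'), a' + \varepsilon(a')]$ and argues (2') by contradiction, whereas you carry the prefix along explicitly and argue (2) directly by subtracting the tail; these are the same computation.
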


\begin{proof}
In each part, it is sufficient, due to the symmetry, to prove the first half of the statement only. We realize first that, since $ (r_{1}, a' - \varepsilon (a')) \subset D $ by (iv) and (vi), it is sufficient to prove that:

(1') For every $ t $ with $ a' - \varepsilon (a') < t \leq a' + \varepsilon (a') $, we have $ \lambda (D | (a' - \varepsilon (a'), t)) \geq 1 - 2\delta $.

(2') For every $ x $ with $ a' - \varepsilon (a') \leq x < p $, we have $ \lambda (D | (x, p)) > 2\delta $.

(3') We have $ \lambda (D | (a' - \varepsilon (a'), a' + \varepsilon (a'))) \geq 1 - \delta $.

(4') We have $ \lambda (D | (a' - \varepsilon (a'), p)) \geq 1 - \delta $.

The statement (1') follows from (v) and from Lemma \ref{lemmac}. Let us show (2'). Assume that $ a' - \varepsilon (a') \leq x < p $. We have $ p = a' + \varepsilon (a') $ or $ \lambda (D | (p, a' + \varepsilon (a'))) \leq 2\delta $ from the definition of $ p $. If $ \lambda (D | (x, p)) \leq 2\delta $, then $ \lambda (D | (x, a' + \varepsilon (a'))) \leq 2\delta $, and so $ p \leq x $ by the definition of $ p $, which is not possible. Therefore, $ \lambda (D | (x, p)) > 2\delta $. The statement (3') follows from (v). To prove (4'), we use again that $ p = a' + \varepsilon (a') $ or $ \lambda (D | (p, a' + \varepsilon (a'))) \leq 2\delta $. Since $ 2\delta < 1 - \delta $, we obtain (4') from (3').
\end{proof}

\begin{claim} \label{cl02}
{\rm (1)} We have $ a < p \leq a' + \varepsilon (a') < b' - \varepsilon (b') \leq p' < b $.

{\rm (2)} We have $ q < p < p' < q' $.
\end{claim}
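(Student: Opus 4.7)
The plan is to dispose of the easier pieces first and then concentrate on the one strict inequality that requires real work. For part (2), once part (1) is established, the chain $p \leq a' + \varepsilon(a') < b' - \varepsilon(b') \leq p'$ gives $p < p'$. For $q < p$, Claim \ref{cl01}(4) yields $\lambda(D \mid (r_{1}, p)) \geq 1 - \delta$, so $1 - \lambda(D \mid (r_{1}, p)) \leq \delta$ and hence
$$ q - r_{1} = \frac{p - r_{1}}{2\delta} \bigl( 1 - \lambda(D \mid (r_{1}, p)) \bigr) \leq \frac{p - r_{1}}{2}, $$
giving $q \leq (r_{1} + p)/2 < p$. (Here $r_{1} < p$: by (iv) $a' - \varepsilon(a') \geq r_{1}$, and the case $p = a' - \varepsilon(a')$ is excluded because it would force $\lambda(D \mid I_{\varepsilon(a')}(a')) \leq 2\delta$, contradicting (v) since $\delta < \zeta_{3} < 1/3$.) The bound $p' < q'$ is symmetric.

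For part (1), the inequalities $p \leq a' + \varepsilon(a')$ and $b' - \varepsilon(b') \leq p'$ are immediate from the definitions, since each extremum is taken over a set containing $a' + \varepsilon(a')$ (resp.\ $b' - \varepsilon(b')$). I now turn to $a < p$; the bound $p' < b$ is symmetric. If $p = a' + \varepsilon(a')$, then $p > a' > a$ by (i). Otherwise $p \in [a' - \varepsilon(a'), a' + \varepsilon(a'))$ with $\lambda(D \mid (p, a' + \varepsilon(a'))) \leq 2\delta$. Splitting $\lambda(D \cap I_{\varepsilon(a')}(a'))$ at $p$ and combining the upper bound $2\delta \cdot (a' + \varepsilon(a') - p)$ on the right-hand portion with $\lambda(D \mid I_{\varepsilon(a')}(a')) \geq 1 - \delta$ from (v), a short mass-balance computation yields
$$ p - a' \geq \frac{1 - 4\delta}{1 - 2\delta} \varepsilon(a'), \quad\text{equivalently}\quad a' - p \leq \frac{4\delta - 1}{1 - 2\delta} \varepsilon(a'). $$
Property (viii) then forces $a < a' - \frac{4\delta - 1}{1 - 2\delta}\varepsilon(a') \leq p$.

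The delicate remaining step is the strict inequality $a' + \varepsilon(a') < b' - \varepsilon(b')$. I would argue by contradiction: assume $a' + \varepsilon(a') \geq b' - \varepsilon(b')$. Then the two larger intervals $(r_{1}, a' + \varepsilon(a'))$ and $(b' - \varepsilon(b'), s_{n})$ together cover $(r_{1}, s_{n})$ with intersection of length $\Delta := \varepsilon(a') + \varepsilon(b') - (b' - a') \geq 0$, and each has $D$-density at least $1 - \delta$ by Claim \ref{cl01}(3). Inclusion--exclusion then gives
$$ \lambda D \geq (1 - \delta)(s_{n} - r_{1}) - \delta \Delta \geq (1 - \delta)(b - a) - \delta \Delta, $$
which, combined with $\lambda D \leq \frac{4\delta^{2}}{1 - 2\delta}(b - a)$ from (vii), forces $\delta \Delta \geq \frac{1 - 3\delta - 2\delta^{2}}{1 - 2\delta}(b - a)$ (positive since $\delta < \zeta_{3} < \zeta_{7}$). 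Upper-bounding $\Delta$ via (viii) and (ix) for $\delta \geq 1/4$ (which yield $\varepsilon(a') + \varepsilon(b') < \frac{1 - 2\delta}{4\delta - 1}\bigl[(b - a) - (b' - a')\bigr]$) or via (iv) alone for $\delta < 1/4$ should collapse the chain into a polynomial inequality in $\delta$ forbidden by $4\delta^{2} + 10\delta - 3 < 0$ (the defining property of $\zeta_{3}$ in Table \ref{roots}). Extracting enough slack here is the main obstacle I anticipate --- all the intermediate bounds are close to sharp, and a careful case analysis (distinguishing $a' = b'$ from $a' < b'$, and tracking strict versus non-strict inequality in (viii), (ix) and (vii)) will likely be needed to close the contradiction.
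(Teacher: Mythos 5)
Parts of your argument match the paper and are correct: the bounds $p \leq a'+\varepsilon(a')$ and $b'-\varepsilon(b') \leq p'$ follow from the definitions, $a < p$ and $p' < b$ follow from the mass-balance inequality $a' - p \leq \frac{4\delta - 1}{1-2\delta}\varepsilon(a')$ combined with (viii) (resp.\ (ix)), and the proof of (2) via Claim~\ref{cl01}(4) and the definition of $q, q'$ is essentially the paper's argument (the paper uses the weaker observation $\lambda(D\mid(r_1,p)) > 1-2\delta$, yielding $q < p$ directly).

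The genuine gap is in the central inequality $a'+\varepsilon(a') < b'-\varepsilon(b')$, and you yourself flag the uncertainty. Your inclusion--exclusion uses \emph{only} Claim~\ref{cl01}(3) (the density-$(1-\delta)$ bounds on the two covering intervals) together with the trivial bound $\beta_D \leq \beta$ for the overlap. This gives $\lambda D \geq (1-\delta)(s_n-r_1) - \delta\Delta$. Combined with (vii), that yields a contradiction only when $\delta < 1/4$ or when one can force $\Delta \leq \tfrac{1}{3}(s_n-r_1)$; for $\delta \in [1/4, \zeta_3)$, the range that actually matters, your proposed bound on $\Delta$ via (viii)--(ix) does not close it. Substituting $P=a'-a$, $Q=b-b'$, $R=b'-a'$ and $\Delta < \frac{1-2\delta}{4\delta-1}(P+Q) - R$ into $\delta\Delta \geq \frac{1-3\delta-2\delta^2}{1-2\delta}(P+Q+R)$ produces an inequality whose coefficient of $(P+Q)$ is $\frac{1-3\delta-2\delta^2}{1-2\delta} - \frac{\delta(1-2\delta)}{4\delta-1}$, which is \emph{negative} near $\delta \approx 0.27$, so no contradiction follows for $R \leq 0$ (and for many $R > 0$ as well). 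The missing ingredient is Claim~\ref{cl01}(1): the paper applies the density-$(1-2\delta)$ bounds to the two outer pieces $(r_1, b'-\varepsilon(b'))$ and $(a'+\varepsilon(a'), s_n)$ and combines them with the two density-$(1-\delta)$ bounds via a weighted sum (weights $1,2,2,1$) that eliminates the overlap entirely, giving the clean bound $\lambda D \geq (1 - \tfrac{4}{3}\delta)(s_n - r_1)$; only this, together with (vii) and $b-a \leq s_n - r_1$, forces $4\delta^2 + 10\delta - 3 \geq 0$. Without incorporating Claim~\ref{cl01}(1), the contradiction at the $\zeta_3$ threshold cannot be reached.
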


\begin{proof}
(1) The second and the fourth inequality are clear. Using (v), we compute
\begin{eqnarray*}
2(1 - \delta )\varepsilon (a') & = & (1 - \delta )\lambda I_{\varepsilon (a')}(a') \\
 & \leq & \lambda (D \cap I_{\varepsilon (a')}(a')) \\
 & = & \lambda (D \cap (a' - \varepsilon (a'), p)) + \lambda (D \cap (p, a' + \varepsilon (a'))) \\
 & \leq & p - (a' - \varepsilon (a')) + 2\delta (a' + \varepsilon (a') - p),
\end{eqnarray*}
and, using (viii), we obtain
$$ p \geq a' - \frac{4\delta - 1}{1 - 2\delta }\varepsilon (a') > a. $$
Similarly, we can show, using (v) and (ix), that $ p' < b $.

It remains to show that $ a' + \varepsilon (a') < b' - \varepsilon (b') $. Assume the opposite and put
$$ \alpha = \lambda (r_{1}, b' - \varepsilon (b')), \quad \alpha _{D} = \lambda (D \cap (r_{1}, b' - \varepsilon (b'))), $$
$$ \beta = \lambda (b' - \varepsilon (b'), a' + \varepsilon (a')), \quad \beta _{D} = \lambda (D \cap (b' - \varepsilon (b'), a' + \varepsilon (a'))), $$
$$ \gamma = \lambda (a' + \varepsilon (a'), s_{n}), \quad \gamma _{D} = \lambda (D \cap (a' + \varepsilon (a'), s_{n})). $$
We obtain from the parts (1) and (3) of Claim \ref{cl01} that
$$ \alpha _{D} \geq (1 - 2\delta )\alpha , \quad \gamma _{D} \geq (1 - 2\delta )\gamma , $$
$$ \alpha _{D} + \beta _{D} \geq (1 - \delta )(\alpha + \beta ), \quad \beta _{D} + \gamma _{D} \geq (1 - \delta )(\beta + \gamma ). $$
We compute
$$
\begin{aligned}
3(\alpha _{D} + \beta _{D} & + \gamma _{D}) + \beta _{D} = 2(\alpha _{D} + \beta _{D}) + 2(\beta _{D} + \gamma _{D}) + \alpha _{D} + \gamma _{D} \\
 & \geq 2(1 - \delta )(\alpha + \beta ) + 2(1 - \delta )(\beta + \gamma ) + (1 - 2\delta )\alpha + (1 - 2\delta )\gamma \\
 & = (3 - 4\delta )(\alpha + \beta + \gamma ) + \beta \\
 & \geq (3 - 4\delta )(\alpha + \beta + \gamma ) + \beta _{D}
\end{aligned}
$$
and obtain
$$ \frac{\lambda D}{\lambda (r_{1}, s_{n})} \geq 1 - \frac{4}{3}\delta . $$
Hence, using (vii),
$$ 1 \geq \frac{b - a}{s_{n} - r_{1}} \geq \frac{1 - 2\delta }{4\delta ^{2}} \frac{\lambda D}{\lambda (r_{1}, s_{n})} \geq \frac{1 - 2\delta }{4\delta ^{2}} \Big( 1 - \frac{4}{3}\delta \Big) , $$
which is not possible due to the assumption $ \delta < \zeta _{3} $.

(2) We obtain from Claim \ref{cl01}(4) that $ \lambda (D | (r_{1}, p)) \geq 1 - \delta > 1 - 2\delta $. Thus,
$$ q = r_{1} + \frac{p - r_{1}}{2\delta } \big( 1 - \lambda (D | (r_{1}, p)) \big) < r_{1} + \frac{p - r_{1}}{2\delta } 2\delta = p. $$
Similarly, it can be shown that $ p' < q ' $. Finally, we know from the previous part that $ p < p' $.
\end{proof}

\begin{claim} \label{cl03}
For sufficiently small $ \varepsilon > 0 $ and $ \varepsilon ' > 0 $, we have $ (p - \varepsilon , p) \subset D $ and $ (p', p' + \varepsilon ') \subset D $.
\end{claim}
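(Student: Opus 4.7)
The plan is to argue by contradiction, exploiting the minimality of $p$ in its defining set. By symmetry it suffices to treat the assertion for $p$, the proof for $p'$ being analogous. The key initial observation is an elementary dichotomy: since $D$ is a finite union of open intervals, either $(p - \varepsilon, p) \subset D$ for all sufficiently small $\varepsilon > 0$, or there exists $\varepsilon_{0} > 0$ such that $(p - \varepsilon_{0}, p) \cap D = \emptyset$. So it is enough to refute the second alternative.

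Before the main step I would verify that $p > a' - \varepsilon(a')$. By property (v) of Lemma \ref{lemmab} we have $\lambda(D \,|\, I_{\varepsilon(a')}(a')) \geq 1 - \delta$, and $\delta < \zeta_{3} < 1/3$ gives $1 - \delta > 2\delta$. Hence $x = a' - \varepsilon(a')$ fails the inequality $\lambda(D \,|\, (x, a' + \varepsilon(a'))) \leq 2\delta$ and so cannot belong to the second defining set of $p$. This forces $p > a' - \varepsilon(a')$.

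Now, assuming $(p - \varepsilon_{0}, p) \cap D = \emptyset$, I would pick $\varepsilon \in (0, \min\{ \varepsilon_{0}, \, p - (a' - \varepsilon(a')) \})$, so that $p - \varepsilon \in [a' - \varepsilon(a'), a' + \varepsilon(a'))$. Since $(p - \varepsilon, p) \cap D = \emptyset$,
$$ \lambda(D \cap (p - \varepsilon, a' + \varepsilon(a'))) = \lambda(D \cap (p, a' + \varepsilon(a'))). $$
In the case $p = a' + \varepsilon(a')$ the right-hand side vanishes, and in the remaining case the defining property of $p$ bounds it by $2\delta \, (a' + \varepsilon(a') - p)$. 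Dividing by the strictly larger denominator $a' + \varepsilon(a') - (p - \varepsilon)$ yields $\lambda(D \,|\, (p - \varepsilon, a' + \varepsilon(a'))) < 2\delta$, so $p - \varepsilon$ would lie in the second defining set of $p$, contradicting its minimality.

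I do not expect a serious obstacle; the argument is essentially an unwinding of the definition of $p$. The only subtlety is the preliminary need to ensure that the perturbation $p - \varepsilon$ remains inside the admissible range $[a' - \varepsilon(a'), a' + \varepsilon(a'))$, which is precisely what the bound $p > a' - \varepsilon(a')$ provides via property (v).
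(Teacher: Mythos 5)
Your argument is correct and rests on the same underlying idea as the paper's: the minimality of $p$ forces $D$ to meet every left neighbourhood $(p - \varepsilon, p)$, and since $D$ is a finite union of open intervals this gives $(p-\varepsilon, p) \subset D$ for small $\varepsilon$. The only stylistic difference is that the paper invokes Claim \ref{cl01}(2) (already established, and itself proved by the same unwinding of the definition of $p$), whereas you re-derive the needed consequence directly; this makes your write-up slightly longer but does not change the route.
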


\begin{proof}
Due to the symmetry, it is sufficient to find an $ \varepsilon > 0 $ only. By Claim \ref{cl01}(2), we have $ \lambda (D | (x, p)) > 2\delta $ whenever $ r_{1} \leq x < p $. In particular, $ D \cap (x, p) \neq \emptyset $ whenever $ r_{1} \leq x < p $. Since $ D $ is a finite union of intervals, there is an $ \varepsilon > 0 $ such that $ (p - \varepsilon , p) \subset D $.
\end{proof}

\begin{claim} \label{cl04}
$ E $ consists of less intervals than any counterexample to $ \mathcal{K}(\delta ) $.
\end{claim}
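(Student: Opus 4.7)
The plan is to count the connected components of $E=(q,p)\cup(D\cap[p,p'])\cup(p',q')$ and show that this count does not exceed $n$; since property~(ii) of Lemma~\ref{lemmabwlog} guarantees that every counterexample to $\mathcal{K}(\delta)$ has strictly more than $n$ intervals, this will finish the argument.

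First I would use Claim~\ref{cl03} to locate $p$ and $p'$ inside the interval structure of $D$. Since $(p-\varepsilon,p)\subset D$ for small $\varepsilon>0$ and $D$ is a disjoint union of open intervals, there is an index $k_{0}\in\{1,\dots,n\}$ with $(p-\varepsilon,p)\subset(r_{k_{0}},s_{k_{0}})$, and hence $p\in(r_{k_{0}},s_{k_{0}}]$. Symmetrically one obtains $k_{1}\in\{1,\dots,n\}$ with $p'\in[r_{k_{1}},s_{k_{1}})$, and Claim~\ref{cl02}(2) together with $p<p'$ forces $k_{0}\leq k_{1}$.

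Next I would enumerate the components of $E$. The interval $(q,p)$ forms one piece, which absorbs the initial portion $[p,\min(s_{k_{0}},p'))$ of $D\cap[p,p']$ when $p<s_{k_{0}}$, and otherwise terminates at $p=s_{k_{0}}$; the analogous merging happens on the right between $(p',q')$ and the rightmost component of $D\cap[p,p']$. In between, $D\cap[p,p']$ contributes only the full intervals $(r_{k},s_{k})$ for $k_{0}<k<k_{1}$, amounting to $\max(k_{1}-k_{0}-1,0)$ components. A routine case check (according as $k_{0}=k_{1}$, or $k_{0}<k_{1}$ with each of $p,p'$ interior to, or a common endpoint with, its $D$-interval) shows that the total number of components of $E$ is always at most $k_{1}-k_{0}+1$, collapsing to $E=(q,q')$ (a single interval) when $k_{0}=k_{1}$. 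Combined with $1\leq k_{0}\leq k_{1}\leq n$ this gives at most $n$ components of $E$.

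Finally, by property~(ii) of Lemma~\ref{lemmabwlog} any counterexample to $\mathcal{K}(\delta)$ consists of at least $n+1$ intervals, so $E$, having at most $n$, consists of fewer intervals than any counterexample. The only step demanding attention is the bookkeeping of merges at $p$ and $p'$ when these points happen to coincide with endpoints of $D$, but no scenario produces more than $k_{1}-k_{0}+1$ components, so this obstacle is mild and the claim follows.
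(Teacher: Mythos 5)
Your proof is correct and follows essentially the same route as the paper's: use Claim~\ref{cl03} to see that $E$ has no more connected components than $D$, then invoke property~(ii). The paper simply states this in a single line; you have fleshed out the bookkeeping of how $(q,p)$ and $(p',q')$ merge with the extreme components of $D\cap[p,p']$ at $p$ and $p'$, which is exactly what the citation of Claim~\ref{cl03} is meant to encode.
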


\begin{proof}
It follows from the definition of $ E $ and from Claim \ref{cl03} that $ E $ does not consist of more intervals than $ D $. Now, it is sufficient to use (ii).
\end{proof}

\begin{claim} \label{cl05}
For every endpoint $ c $ of $ E $ with $ q < c < q' $, there is a radius $ \varrho > 0 $ such that $ I_{\varrho }(c) \subset (q, q') $ and $ \lambda (E | I_{\varrho }(c)) \notin (\delta , 1 - \delta ) $.
\end{claim}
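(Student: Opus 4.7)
The plan is to classify the endpoints of $ E $ that lie in $ (q, q') $ by their origin. Such an endpoint is either an endpoint of $ D $ strictly between $ p $ and $ p' $, or one of the two boundary points $ p, p' $ themselves; by Claim \ref{cl03} the point $ p $ is an endpoint of $ E $ precisely when $ p \notin D $, and similarly for $ p' $. The construction is left-right symmetric, so $ p' $ is handled by the mirror argument of $ p $, and I focus on the two substantive cases: inherited endpoints and the new endpoint $ p $.

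For an inherited endpoint $ c $ of $ D $ with $ p < c < p' $, property (iii) of Lemma \ref{lemmab} yields a radius $ \omega > 0 $ with $ I_{\omega }(c) \subset (r_{1}, s_{n}) $ and $ \lambda (D | I_{\omega }(c)) \notin (\delta , 1 - \delta ) $. The key structural fact is that $ E \supset D $ throughout $ (q, q') $: the two sets agree on $ [p, p'] $, while on $ (q, p) \cup (p', q') $ the set $ E $ is completely filled and hence contains $ D $. In the ``black'' case $ \lambda (D | I_{\omega }(c)) \geq 1 - \delta $, after shrinking $ \omega $ so that $ I_{\omega }(c) \subset (q, q') $ (exploiting the maximality built into the choice of the radius), this inclusion yields $ \lambda (E | I_{\omega }(c)) \geq \lambda (D | I_{\omega }(c)) \geq 1 - \delta $ for free. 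In the ``white'' case $ \lambda (D | I_{\omega }(c)) \leq \delta $, the task is more delicate: one must check that $ I_{\omega }(c) \subset [p, p'] $, so that $ E $ and $ D $ coincide on $ I_{\omega }(c) $ and the low-density bound transfers to $ E $. This should follow from the definitions of $ p $ and $ p' $ as the first points at which the average density of $ D $ on the adjacent interval drops to $ 2\delta $, combined with the lower bound $ \lambda (D | (r_{1}, p)) \geq 1 - \delta $ of Claim \ref{cl01}(4): a radius $ \omega $ pushing $ I_{\omega }(c) $ past $ p $ or $ p' $ would trap too much $ D $ on the ``outside'' side to still satisfy $ \lambda (D | I_{\omega }(c)) \leq \delta $.

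For the new endpoint $ c = p $, assuming $ p \notin D $, the set $ E $ has full density on the left of $ p $ (since $ (q, p) \subset E $) and vanishes on a small right neighborhood. Hence any working symmetric interval must be large enough to pick up mass on the right. The natural candidate is $ \varrho = a' + \varepsilon (a') - p $: on the right half $ (p, a' + \varepsilon (a')) $, Lemma \ref{lemmac} applied to the maximally black interval $ I_{\varepsilon (a')}(a') $ from property (v) of Lemma \ref{lemmab} gives $ \lambda (D | (p, a' + \varepsilon (a'))) \geq 1 - 2\delta $, and the left half lies entirely in $ E $. Putting the two pieces together yields
$$ \lambda (E | I_{\varrho }(p)) \geq \tfrac{1}{2} + \tfrac{1}{2}(1 - 2\delta ) = 1 - \delta , $$
which is the desired bound.

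The main obstacle will be ensuring that $ I_{\varrho }(p) \subset (q, q') $. The right endpoint $ a' + \varepsilon (a') $ is safely below $ q' $ by Claim \ref{cl02}, but the left endpoint $ 2p - (a' + \varepsilon (a')) $ could in principle fall below $ q $. The bound $ q - r_{1} \leq (p - r_{1})/2 $, which follows from the explicit definition of $ q $ and Claim \ref{cl01}(4), is the main lever; if this estimate is insufficient I would retreat to the smaller radius $ \varrho = p - q $ and estimate $ \lambda (D | (p, 2p - q)) $ using the inequality $ \lambda (D | (p, a' + \varepsilon (a'))) \leq 2\delta $ built into the definition of $ p $ together with Lemma \ref{lemmac}. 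I expect the hypothesis $ \delta < \zeta _{3} $ to enter precisely at this delicate step. The endpoint $ p' $ is then treated by the symmetric reflection.
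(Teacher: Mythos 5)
Your white case intuition and your note that the set $E$ contains $D$ on $(q,q')$ are both sound, and your separate treatment of $c=p,p'$ is a reasonable (if unnecessary) idea; the paper simply observes that any endpoint of $E$ in $(q,q')$ is automatically an endpoint of $D$ (if $p$ were interior to $D$ then, by Claim~\ref{cl03}, it would be interior to $E$), so all endpoints are handled uniformly by property (iii). But there is a genuine gap in your black case for interior endpoints. You write that one can ``shrink $\omega$ so that $I_\omega(c)\subset(q,q')$ (exploiting the maximality built into the choice of the radius)'' and then get $\lambda(E\mid I_\omega(c))\geq\lambda(D\mid I_\omega(c))\geq1-\delta$ for free. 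This does not work: property (iii) of Lemma~\ref{lemmab} asserts no maximality of $\omega$, and even if $\omega$ were chosen so that $\lambda(D\mid I_\omega(c))$ is maximal over admissible radii, shrinking the radius would if anything \emph{decrease} that density — the inequality goes the wrong way. The paper's actual treatment of this case is the computation following ``Let us show that $\varrho=c-q$ works'': one sets $\varrho=c-q$ (after assuming $c-\omega\leq q$; otherwise one is already done), and shows $\lambda(E\cap I_\varrho(c))-(1-\delta)\lambda I_\varrho(c)\geq 0$ by a chain of inequalities using $\lambda(D\cap I_\omega(c))\geq(1-\delta)2\omega$, the bound $\lambda(D\mid(r_1,c-\omega))\geq 1-2\delta$ from Claim~\ref{cl01}(1), and — crucially — the exact formula defining $q$, which makes the final expression equal to zero. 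This is a real computation, not a containment argument, and your proposal is missing it.

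A secondary issue: your radius $\varrho=a'+\varepsilon(a')-p$ for the endpoint $c=p$ is only valid when $p-\varrho\geq q$ and $p<a'+\varepsilon(a')$, and the stated fallback (``estimate $\lambda(D\mid(p,2p-q))$ using $\lambda(D\mid(p,a'+\varepsilon(a')))\leq2\delta$ and Lemma~\ref{lemmac}'') does not produce the needed \emph{lower} bound on $\lambda(D\mid(p,2p-q))$ — the inequality from the definition of $p$ goes the wrong way, and Lemma~\ref{lemmac} applied to $I_{\varepsilon(a')}(a')$ controls tails anchored at $a'\pm\varepsilon(a')$, not at $p$. In the paper these points are absorbed into the unified black/white argument, so no special radius is needed for them.
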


\begin{proof}
Realize first that $ c $ is also an endpoint of $ D $. It is clear in the case that $ c \in (p, p') $. If $ c = p $ and $ c $ is not an endpoint of $ D $, then, by Claim \ref{cl03}, it lies in the interior of $ D $, and it also lies in the interior of $ E $, which is not possible. Similarly, the case that $ c = p' $ and $ c $ is not an endpoint of $ D $ is not possible. Now, as $ c $ is an endpoint of $ D $ and $ c \in [p, p'] \subset (a, b) $ by Claim \ref{cl02}(1), we obtain from (iii) a radius $ \omega > 0 $ such that $ I_{\omega }(c) \subset (r_{1}, s_{n}) $ and $ \lambda (D | I_{\omega }(c)) \notin (\delta , 1 - \delta ) $. We consider two possibilities.

(a) Let $ \lambda (D | I_{\omega }(c)) \leq \delta $. Let $ \varrho > 0 $ be a radius with $ I_{\varrho }(c) \subset (r_{1}, s_{n}) $ such that $ \lambda (D | I_{\varrho }(c)) $ is minimal (i.e., $ \lambda (D | I_{\varrho }(c)) \leq \lambda (D | I_{\varepsilon }(c)) $ whenever $ I_{\varepsilon }(c) \subset (r_{1}, s_{n}) $). Let us show that $ I_{\varrho }(c) \subset (p, p') $. Assume that, e.g., $ p \in I_{\varrho }(c) $. Then, by Claim \ref{cl01}(2), we have $ \lambda (D | (c - \varrho , p)) > 2\delta $. On the other hand, by Lemma \ref{lemmac} applied on $ \mathbb{R} \setminus D $, we have $ \lambda (D | (c - \varrho , p)) \leq 2\delta $, which is a contradiction. So, $ I_{\varrho }(c) \subset (p, p') $ indeed, and we obtain $ \lambda (E | I_{\varrho }(c)) = \lambda (D | I_{\varrho }(c)) \leq \lambda (D | I_{\omega }(c)) \leq \delta $.

(b) Let $ \lambda (D | I_{\omega }(c)) \geq 1 - \delta $. We will assume that the distance of $ c $ and $ q $ is not greater than the distance of $ c $ and $ q' $ (the other case is symmetric). We may suppose that $ c - \omega \leq q $ because $ \lambda (E | I_{\omega }(c)) \geq \lambda (D | I_{\omega }(c)) \geq 1 - \delta $ in the case that $ c - \omega > q $. Let us show that
$$ \varrho = c - q $$
works. Note that $ 0 < \varrho \leq \omega $ and $ c + \varrho \leq q' $. Using Claim \ref{cl01}(1), we compute
$$
\begin{aligned}
 & \hspace{-0.4cm} \lambda (E \cap I_{\varrho}(c)) - (1 - \delta ) \lambda I_{\varrho}(c) \\
 = & \lambda (q, p) + \lambda (E \cap (p, c + \varrho)) - (1 - \delta ) 2\varrho \\
 \geq & \lambda (q, p) + \lambda (D \cap (p, c + \varrho)) - (1 - \delta ) 2\varrho \\
 = & p - q - (1 - \delta ) 2\varrho + \lambda (D \cap I_{\omega }(c)) - \lambda (D \cap (c - \omega , p)) - \lambda (D \cap (c + \varrho, c + \omega )) \\
 \geq & p - q - (1 - \delta ) 2\varrho + (1 - \delta ) 2\omega - \lambda (D \cap (r_{1}, p)) + \lambda (D \cap (r_{1}, c - \omega )) - (\omega - \varrho) \\
 \geq & p - q - (1 - \delta ) 2\varrho + (1 - \delta ) 2\omega - \lambda (D \cap (r_{1}, p)) + (1 - 2\delta )(c - \omega - r_{1}) - (\omega - \varrho) \\
 = & p - q - (1 - 2\delta )\varrho + (1 - 2\delta )\omega - \lambda (D \cap (r_{1}, p)) + (1 - 2\delta )(c - \omega - r_{1}) \\
 = & p - q - (1 - 2\delta )(c - q) + (1 - 2\delta )\omega - \lambda (D \cap (r_{1}, p)) + (1 - 2\delta )(c - \omega - r_{1}) \\
 = & p - 2\delta q - (1 - 2\delta )r_{1} - \lambda (D \cap (r_{1}, p)) \\
 = & 0.
\end{aligned}
$$
\end{proof}

\begin{claim} \label{cl06}
We have $ \lambda (E | [\frac{1}{2}(q + p), \frac{1}{2}(p' + q')]) \leq \frac{4\delta^{2}}{1 - 2\delta} $.
\end{claim}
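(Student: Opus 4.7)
The proof is a computation rooted in property (vii) of Lemma \ref{lemmabwlog}, $\lambda D \le \tfrac{4\delta^{2}}{1-2\delta}(b-a)$. Write $\kappa = \tfrac{4\delta^{2}}{1-2\delta}$ for brevity. Since $(q,p) \cup (p',q') \subset E$, the plan is first to unpack the left-hand side: the interval $[\tfrac{q+p}{2},\tfrac{p'+q'}{2}]$ meets $E$ in measure $\tfrac{p-q}{2} + \lambda(D \cap [p,p']) + \tfrac{q'-p'}{2}$ and has total length $(p'-p) + \tfrac{(p-q)+(q'-p')}{2}$. This reduces the claim to
\[
\lambda(D \cap [p,p']) \le \kappa(p'-p) - (1-\kappa)\cdot\tfrac{(p-q)+(q'-p')}{2}.
\]

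Next I would bound $\lambda(D \cap [p,p'])$ using (vii). Inverting the defining formulas of $q$ and $q'$ yields the identities
\[
\lambda(D \cap (r_{1},p)) = 2\delta(p-q) + (1-2\delta)(p-r_{1}),\quad \lambda(D \cap (p',s_{n})) = 2\delta(q'-p') + (1-2\delta)(s_{n}-p').
\]
Writing $\lambda D$ as the sum of its three pieces on $(r_{1},p)$, $[p,p']$, $(p',s_{n})$, applying (vii), and decomposing $b-a = (p-a) + (p'-p) + (b-p')$, the remaining inequality takes the shape
\[
\kappa\bigl[(p-a)+(b-p')\bigr] + \tfrac{1-6\delta+4\delta^{2}}{2(1-2\delta)}\bigl[(p-q)+(q'-p')\bigr] \le (1-2\delta)\bigl[(p-r_{1})+(s_{n}-p')\bigr].
\]

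The subtlety is that for $\delta$ in the relevant range the middle coefficient is negative (it vanishes at $(3-\sqrt{5})/4 \approx 0{,}191 < \zeta_{1}$), so I need a \emph{lower} bound on $(p-q)+(q'-p')$. Claim \ref{cl01}(4) supplies $\lambda(D \cap (r_{1},p)) \ge (1-\delta)(p-r_{1})$, which via the inversion identity gives the key estimate $p-q \ge \tfrac{1}{2}(p-r_{1})$, and symmetrically $q'-p' \ge \tfrac{1}{2}(s_{n}-p')$. After substituting these bounds and using $p-r_{1} \ge p-a$ and $s_{n}-p' \ge b-p'$ (i.e., $r_{1}\le a$ and $s_{n}\ge b$ from condition (i) of Lemma \ref{lemmabwlog}), the whole inequality reduces to the numerical statement $16\delta^{2} \le 3 - 10\delta + 12\delta^{2}$, equivalently $4\delta^{2}+10\delta-3 \le 0$. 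This is precisely the defining inequality for $\delta < \zeta_{3}$ listed in Table \ref{roots}.

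The main obstacle is really only this sign flip: without the Claim \ref{cl01}(4) lower bound on $p-q$, property (vii) by itself would be insufficient, and it is exactly the tight use of both (vii) and the density-$(1-\delta)$ estimate on $(r_{1},p)$ that singles out the constant $\zeta_{3}$ appearing in the hypothesis of the lemma.
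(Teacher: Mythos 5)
Your proof is correct and is essentially the paper's argument repackaged: the paper's $W$ and $W_D$ correspond to your $B$ and $2\delta P + (1-2\delta)B$, the lower bound $W_D \geq (1-\delta)W$ from Claim \ref{cl01}(4) is exactly equivalent to your $P \geq \tfrac{1}{2}B$, and both reductions close with the same $\zeta_3$ inequality $4\delta^2 + 10\delta - 3 \leq 0$. The one step you leave implicit is that the middle coefficient $1 - 6\delta + 4\delta^2$ is indeed $\leq 0$, which requires $\delta \geq \tfrac{1}{4}(3-\sqrt{5})$; this holds since $\mathcal{K}(\delta)$ fails forces $\delta \geq 1/4$, a fact the paper records explicitly.
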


\begin{proof}
We put
$$ t = \frac{1}{2}\big( q + p \big) , \quad t' = \frac{1}{2}\big( p' + q' \big) , $$
$$ W = \lambda (r_{1}, p) + \lambda (p', s_{n}), \quad W_{D} = \lambda (D \cap (r_{1}, p)) + \lambda (D \cap (p', s_{n})). $$
Compute
\begin{eqnarray*}
2(t' - t) - 2(s_{n} - r_{1}) & = & p' + q' - q - p - 2(s_{n} - r_{1}) \\
 & = & - (s_{n} - p') - (s_{n} - q') - (q - r_{1}) - (p - r_{1}) \\
 & = & - (s_{n} - p') - \frac{s_{n} - p'}{2\delta } \big( 1 - \lambda (D | (p', s_{n})) \big) \\
 & & - (p - r_{1}) - \frac{p - r_{1}}{2\delta } \big( 1 - \lambda (D | (r_{1}, p)) \big) \\
 & = & - \Big( 1 + \frac{1}{2\delta }\Big) W + \frac{1}{2\delta }W_{D}
\end{eqnarray*}
and
\begin{eqnarray*}
\lambda (E \cap [t, t']) - \lambda D & = & \lambda [t, p) + \lambda (D \cap [p, p']) + \lambda (p', t'] - \lambda D \\
 & = & p - t + t' - p' - \lambda (D \cap (r_{1}, p)) - \lambda (D \cap (p', s_{n})) \\
 & = & W - (s_{n} - r_{1}) + t' - t - W_{D} \\
 & = & W + \frac{1}{2} \Big[ - \Big( 1 + \frac{1}{2\delta }\Big) W + \frac{1}{2\delta }W_{D} \Big] - W_{D}.
\end{eqnarray*}
We have, using (vii),
$$ \lambda D \leq \frac{4\delta ^{2}}{1 - 2\delta } (b - a) \leq \frac{4\delta ^{2}}{1 - 2\delta } (s_{n} - r_{1}), $$
so, to prove the claim, it is sufficient to prove that
$$ \lambda (E \cap [t, t']) - \lambda D \leq \frac{4\delta ^{2}}{1 - 2\delta } (t' - t) - \frac{4\delta ^{2}}{1 - 2\delta } (s_{n} - r_{1}), $$
which can be rewritten as
$$ W + \frac{1}{2} \Big[ - \Big( 1 + \frac{1}{2\delta }\Big) W + \frac{1}{2\delta }W_{D} \Big] - W_{D} \leq \frac{4\delta ^{2}}{1 - 2\delta } \frac{1}{2} \Big[ - \Big( 1 + \frac{1}{2\delta }\Big) W + \frac{1}{2\delta }W_{D} \Big] . $$
A calculation shows that the desired inequality is equivalent to
$$ (-4\delta ^{2} + 6\delta - 1) W_{D} \geq (8\delta ^{3} + 4\delta - 1) W. $$
Note that, since we assume that $ \mathcal{K}(\delta ) $ holds, we have $ \delta \geq \delta _{\mathcal{K}} \geq 1/4 $ (see \cite{szenes}). It follows that $ -4\delta ^{2} + 6\delta - 1 \geq 0 $. By Claim \ref{cl01}(4),
$$ W_{D} \geq (1 - \delta )W, $$
and it is sufficient to compute, using the assumption $ \delta < \zeta _{3} $,
$$ (-4\delta ^{2} + 6\delta - 1) W_{D} \geq (-4\delta ^{2} + 6\delta - 1)(1 - \delta ) W \geq (8\delta ^{3} + 4\delta - 1) W. $$
\end{proof}

Now, let $ o $ be the center of the lowest and $ o' $ be the center of the highest connected component of $ E $. Let $ \varphi $ be the affine transformation which maps $ o $ onto $ 0 $ and $ o' $ onto $ 1 $. We set
$$ F = \varphi (E). $$
The condition (I) is clear and the conditions (II)--(IV) follow from Claims \ref{cl04}--\ref{cl06} (note that $ \lambda (E | [o, o']) \leq \lambda (E | [\frac{1}{2}(q + p), \frac{1}{2}(p' + q')]) $ since $ (\frac{1}{2}(q + p), o) \subset E $ and $ (o', \frac{1}{2}(p' + q')) \subset E $). This completes the proof of Lemma \ref{lemmad}.

\section{The concept of a $ \delta $-good set}

In the previous section, we worked with a configuration which was a counterexample to $ \mathcal{K}(\delta ) $ with the least possible number of intervals in it. We constructed new objects from this configuration (Lemma \ref{lemmab} and Lemma \ref{lemmad}). One more step remains to construct an object which will play a key role in the proof of our lower bound on $ \delta _{\mathcal{H}} $.

Let $ 0 < \delta < 1/2 $. Let $ G $ be a set given by
$$ G = [0, \nu _{1}) \cup (\mu _{2}, \nu _{2}) \cup \dots \cup (\mu _{r-1}, \nu _{r-1}) \cup (\mu _{r}, 1], $$
where $ 0 < \nu _{1} < \mu _{2} < \nu _{2} < \dots < \mu _{r-1} < \nu _{r-1} < \mu _{r} < 1 $. We say that $ G $ is \emph{$ \delta $-good} if the following conditions are satisfied for $ H $ defined as $ G + \mathbb{Z} $:
\begin{itemize}
\item[{(i)}] for every endpoint $ p $ of $ H $, there is a radius $ \omega > 0 $ such that $ \lambda (H | I_{\omega }(p)) \notin (\delta , 1 - \delta ) $,
\item[{(ii)}] if $ a < b $, then each of the sets
$$ ((-\infty , a) \cup H) \setminus [b, \infty ) , \quad (H \cup (b, \infty )) \setminus (-\infty , a], $$
$$ H \setminus [b, \infty ) , \quad H \setminus (-\infty , a], $$
denoted by $ C $, has an endpoint $ p $ such that $ \lambda (C | I_{\omega }(p)) \in (\delta , 1 - \delta ) $ for every $ \omega > 0 $.
\end{itemize}

\begin{lemma} \label{goodset}
Let $ 0 < \delta < \zeta _{3} $ where $ \zeta _{3} $ is as in Table \ref{roots}. If there is a counterexample to $ \mathcal{K}(\delta ) $, then there is a $ \delta $-good set
$$ G = [0, \nu _{1}) \cup (\mu _{2}, \nu _{2}) \cup \dots \cup (\mu _{r-1}, \nu _{r-1}) \cup (\mu _{r}, 1] $$
such that
$$ \lambda G \leq \frac{4\delta^{2}}{1 - 2\delta}. $$
\end{lemma}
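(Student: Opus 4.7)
My plan is to combine Lemma~\ref{lemmad} with a reflection-and-rescaling construction. Let $F = (u_1,v_1) \cup \ldots \cup (u_m,v_m)$ be the set produced by Lemma~\ref{lemmad}. Property~(I) makes the first interval $(-v_1,v_1)$ symmetric about $0$ and the last interval $(u_m,\,2-u_m)$ symmetric about $1$. Let $\tilde F$ be the $2$-periodic set obtained by reflecting $F$ in the lines $x = 0$ and $x = 1$; the symmetry of the extreme intervals ensures that $\tilde F$ coincides with $F$ throughout $[u_1,v_m]$. Set
$$ G \;=\; \tfrac{1}{2}\bigl(\tilde F \cap [0,2]\bigr) \;\subset\; [0,1], $$
so that $H := G + \mathbb{Z} = \tfrac{1}{2}\tilde F$ is $1$-periodic and, because $\tilde F$ is mirror-symmetric about every integer, $H$ is mirror-symmetric about every half-integer. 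The set $G$ has the required form
$$ G \;=\; [0,\tfrac{v_1}{2}) \cup (\tfrac{u_2}{2},\tfrac{v_2}{2}) \cup \ldots \cup (\tfrac{u_m}{2},\,1-\tfrac{u_m}{2}) \cup \ldots \cup (1-\tfrac{v_1}{2},1], $$
with $r = 2m-1$ components, and a short computation using symmetry of $(u_m,v_m)$ about $1$ yields $\lambda G = \lambda(F \cap [0,1])$, which is at most $\frac{4\delta^2}{1-2\delta}$ by~(IV).

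For condition~(i): each endpoint of $H$ in $[0,1]$ is a halved endpoint of $\tilde F$ in $[0,2]$, which is in turn either an endpoint $p$ of $F$ in $(u_1,v_m)$ or its reflection $2-p$ about $1$. For endpoints of the first kind, property~(III) supplies a radius $\omega > 0$ with $I_\omega(p) \subset (u_1,v_m)$ and $\lambda(F | I_\omega(p)) \notin (\delta, 1-\delta)$; since $\tilde F$ coincides with $F$ on $(u_1,v_m)$, the same inequality holds with $\tilde F$ in place of $F$. Mirror symmetry about $1$ then handles endpoints of the second kind, and rescaling by $\tfrac{1}{2}$ produces the desired radius for $H$.

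For condition~(ii), fix $a < b$ and let $C$ be one of the four truncations of $H$; suppose for contradiction that every endpoint $p$ of $C$ admits $\omega_p > 0$ with $\lambda(C|I_{\omega_p}(p)) \notin (\delta,1-\delta)$. My plan is to use mirror symmetry of $H$ at the half-integers to fold the infinite tails of $C$ (present in $H \setminus [b,\infty)$ or $H \setminus (-\infty,a]$) into a bounded mirror-symmetric set, and then to use $1$-periodicity of $H$ to excise complete periods from the middle one at a time, verifying at each stage that both the configuration shape and the failure-of-balance at every endpoint are preserved. After finitely many reductions, the result is a genuine configuration with at most $m$ bounded intervals, giving a counterexample to $\mathcal{K}(\delta)$ no larger than $F$ and thus contradicting the minimality property~(II). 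The principal obstacle is this bookkeeping in~(ii): checking that each folding and period-removal step preserves the universal failure of balance and eventually drives the interval count down to at most $m$. Condition~(i) and the measure bound are comparatively direct once one observes the coincidence $\tilde F = F$ on $[u_1,v_m]$ that follows from the symmetry~(I) of $F$'s extreme intervals.
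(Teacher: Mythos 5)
Your construction of $G$ is the same as the paper's: the paper sets $G = \tfrac12(F\cap[0,1]) \cup \bigl(1 - \tfrac12(F\cap[0,1])\bigr)$, which equals your $\tfrac12(\tilde F\cap[0,2])$, and the verification of the measure bound and of condition~(i) proceeds essentially as you indicate. The gap is in condition~(ii), which you explicitly leave as a plan rather than a proof. Your proposed strategy — fold the unbounded tail of $C$ using mirror symmetry of $H$ at half-integers, then repeatedly excise whole periods from the middle — is not the paper's argument and is not obviously workable: the truncated set $C = H\setminus[b,\infty)$ is \emph{not} mirror-symmetric about a half-integer lying below $b$ (the cut at $b$ destroys the symmetry), so it is unclear what ``folding the tail'' produces; and deleting a period from the middle of a set and gluing the halves together can change the relative measure $\lambda(C\,|\,I_\omega(p))$ for endpoints $p$ near the glue, so the ``universal failure of balance'' need not survive each excision.

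The paper instead introduces the lattice $L = \{\tfrac12 v_m,\, 1 - \tfrac12 u_1\} + \mathbb{Z}$ of tile boundaries and proves a separate claim (from~(II) and~(III)) that $\lambda(F\,|\,(u_1,s)) > 2\delta$ and $\lambda(F\,|\,(s,v_m)) > 2\delta$ for every $s\in(u_1,v_m)$, which propagates to $\lambda(H\,|\,(s,l)) > 2\delta$ for every $l\in L$, $s<l$. This is the crucial quantitative fact: it forces the minimal radius $\varrho(p)$ of any ``white'' endpoint $p$ of $C$ to avoid $a' := \max L\cap(-\infty,b)$, and hence lets one replace the truncation point $a$ (or $-\infty$) by $a'\in L$ while preserving a bad radius at every endpoint. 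The resulting $C'$ then lies within a single affine tile of $H$ and has no more bounded intervals than $F$, contradicting~(II). None of this — neither $L$, nor the $2\delta$-separation claim, nor the single replacement $a\mapsto a'$ — appears in your sketch, and without such an ingredient I do not see how to control the balance at the excision points in your plan. You should treat condition~(ii) as unproved.
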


We prove a claim first.

\begin{claim} \label{goodsetclaim}
Let $ F $ be as in Lemma \ref{lemmad}. Then, for every $ s \in (u_{1}, v_{m}) $, we have
$$ \lambda (F | (u_{1}, s)) > 2\delta \quad \textrm{and} \quad \lambda (F | (s, v_{m})) > 2\delta . $$
\end{claim}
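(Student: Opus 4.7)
The plan is to work in $E$-coordinates via the affine map $\varphi$ (which preserves relative measures), so that the claim becomes: for every $s \in (q, q')$, both $\lambda(E | (q, s)) > 2\delta$ and $\lambda(E | (s, q')) > 2\delta$. The constructions of $p, q$ versus $p', q'$, and the two halves of Claim~\ref{cl01}, are symmetric under left--right reflection, so I will only prove the first inequality; the second follows by the same argument with the primed quantities.

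Set $\phi(s) := \lambda(E \cap (q, s)) - 2\delta(s - q)$, so the target is $\phi > 0$ on $(q, q')$. The function $\phi$ is piecewise linear with slope $\mathbf{1}_E(s) - 2\delta$: on $(q, p) \cup (p', q') \subset E$ the slope is $1 - 2\delta > 0$, so $\phi$ is strictly increasing on $(q, p]$ and on $[p', q')$. Hence the minimum of $\phi$ over $[q, q']$ is attained somewhere in $[p, p']$. Using the defining identity $(p - r_1) - \lambda(D \cap (r_1, p)) = 2\delta(q - r_1)$ extracted from the formula for $q$, one rewrites
\[
\phi(s) = (1 - 2\delta)(p - q) + \lambda(D \cap (p, s)) - 2\delta(s - p) \qquad (s \in [p, p']).
\]
At $s = p$ this evaluates to $(1 - 2\delta)(p - q)$, which is positive by Claim~\ref{cl02}(2) ($p > q$); the remaining local minima of $\phi|_{[p, p']}$ are the left endpoints $r_i$ of components of $D \cap (p, p')$ and the boundary point $s = p'$.

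At each such critical point $s_*$, I plan to argue by contradiction using the minimality property (ii) of Lemma~\ref{lemmab}. The inequality $\phi(s_*) \leq 0$, unpacked via the definition of $q$, is equivalent to the assertion that the analogously defined point $q_* := r_1 + \frac{s_* - r_1}{2\delta}\bigl(1 - \lambda(D | (r_1, s_*))\bigr)$ satisfies $q_* \geq q$. This opens the possibility of \emph{reglueing} $D$ at $s_*$: one replaces the portion of $D$ lying in $(r_1, s_*]$ by a single interval ending at $s_*$, and affinely rescales to obtain a configuration with strictly fewer intervals than $D$. A recomputation parallel to the one in Claim~\ref{cl05}(b)---but carried out at the centre $s_*$ rather than at the original endpoint $c$---shows that every endpoint of the new configuration still admits a window of density outside $(\delta, 1 - \delta)$, using Claim~\ref{cl01}(2) together with property (iii) of Lemma~\ref{lemmab}. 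The resulting counterexample to $\mathcal{K}(\delta)$ then contradicts (ii) of Lemma~\ref{lemmab}.

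The main obstacle is precisely this reglueing step. The verification that every endpoint of the newly produced configuration still satisfies the density-escape condition requires a careful case-by-case adaptation of the argument in Claim~\ref{cl05}(b) to the new centre $s_*$, keeping track of the interplay between Claim~\ref{cl01}(2), (4) and property (iii) of Lemma~\ref{lemmab}; the bookkeeping is where the bulk of the technical content will sit.
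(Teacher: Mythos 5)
Your plan takes a genuinely different route from the paper, but it is both unnecessarily complicated and, as it stands, incorrect and incomplete. The paper's own proof works directly with $F$ and makes no reference to $E$, $D$, $p$, or $q$. Assume $\lambda(F\mid(u_1,s))\le 2\delta$ for some $s\in(u_1,v_m)$ and form
$$C:=F\cup[v_m,\infty)=(u_1,v_1)\cup\dots\cup(u_{m-1},v_{m-1})\cup(u_m,\infty),$$
which, after an orientation-reversing affine map, is a configuration with the same number of intervals as $F$. Every endpoint $c$ of $C$ with $c>u_1$ lies in $(u_1,v_m)$ and inherits its radius from property (III), since $I_\omega(c)\subset(u_1,v_m)$ forces $C\cap I_\omega(c)=F\cap I_\omega(c)$; for $u_1$ the radius $\omega=s-u_1$ gives $\lambda(C\mid I_\omega(u_1))=\tfrac12\,\lambda(F\mid(u_1,s))\le\delta$. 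So $C$ would be a counterexample to $\mathcal{K}(\delta)$, contradicting (II). No critical-point analysis of $\phi$ and no reglueing is needed.

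Beyond the overcomplication, your proposal contains a concrete algebraic error and a substantial gap. The error: $q_*-q=\tfrac{1}{2\delta}\,\lambda\big((p,s_*)\setminus D\big)\ge 0$, so $q_*\ge q$ holds \emph{unconditionally} and carries no information; it cannot be equivalent to $\phi(s_*)\le 0$. The correct identity is $\phi(s_*)=(1-2\delta)(s_*-q)-2\delta(q_*-q)$, so $\phi(s_*)\le 0$ amounts to the strictly stronger statement $q_*\ge q+\tfrac{1-2\delta}{2\delta}(s_*-q)$. The gap: the reglueing step --- producing from $D$ a configuration with fewer intervals that is still a counterexample by redoing the density checks of Claim~\ref{cl05}(b) at the centre $s_*$ --- is exactly where the argument would have to earn its keep, and you leave it entirely unverified while acknowledging it as the main obstacle. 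As written, the proposal neither reproduces the paper's short argument nor supplies a complete alternative.
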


\begin{proof}
Due to the symmetry, it is enough to prove the first inequality only. Assume that $ s \in (u_{1}, v_{m}) $ and $ \lambda (F | (u_{1}, s)) \leq 2\delta $. Then
$$ C = F \cup [v_{m}, \infty ) = (u_{1}, v_{1}) \cup (u_{2}, v_{2}) \cup \dots \cup (u_{m-1}, v_{m-1}) \cup (u_{m}, \infty ) $$
will be, after an affine transformation, a counterexample to $ \mathcal{K}(\delta ) $. Indeed, there is an appropriate radius for every endpoint of $ C $ which is greater than $ u_{1} $ (we can take the radius (III) gives). For $ u_{1} $, we can take the radius $ \omega = s - u_{1} $ because $ \lambda (C | I_{\omega }(u_{1})) = \lambda (C | (u_{1}, s))/2 = \lambda (F | (u_{1}, s))/2 \leq \delta $. This is a contradiction with (II), as $ C $ does not consist of more intervals than $ F $.
\end{proof}

\begin{proof}[Proof of Lemma \ref{goodset}]
Let
$$ F = (u_{1}, v_{1}) \cup (u_{2}, v_{2}) \cup \dots \cup (u_{m}, v_{m}) $$
be the set which Lemma \ref{lemmad} gives for $ \delta $. We show that the choice
$$ G = (1/2)(F \cap [0, 1]) \cup \big( 1 - (1/2)(F \cap [0, 1]) \big) $$
works. It follows from (IV) that
$$ \lambda G = \lambda (F \cap [0, 1]) \leq \frac{4\delta ^{2}}{1 - 2\delta }. $$
We define $ H = G + \mathbb{Z} $. Note that we obtain from (I) the following geometric interpretation: $ H $ consists of affine transformations of $ F $ such that the last interval of the transformation $ (1/2)F + z $ coincides with the first interval of the transformation $ 1 - (1/2)F + z $ and the last interval of the transformation $ 1 - (1/2)F + z $ coincides with the first interval of the transformation $ (1/2)F + (z + 1) $.

The condition (i) on a $ \delta $-good set follows from (III) and from the mentioned geometric interpretation. To prove (ii), we need an observation first. We define
$$ L = \{ (1/2)v_{m}, 1 - (1/2)u_{1} \} + \mathbb{Z}. $$
Let us show that
$$ \lambda (H | (s, l)) > 2\delta , \quad l \in L, s < l. $$
If $ l' \leq s < l $ where $ l' $ is the predecessor of $ l $ in $ L $, then we have $ \lambda (H | (s, l)) > 2\delta $ from Claim \ref{goodsetclaim}. In particular, $ \lambda (H | (l', l)) > 2\delta $. If $ l \in L $ and $ s < l $ are general, then we write first $ l_{k+1} \leq s < l_{k} < \dots < l_{1} = l $ where $ l_{i+1} $ is the predecessor of $ l_{i} $ in $ L $ for $ i = 1, 2, \dots , k $. Then we obtain $ \lambda (H | (s, l)) > 2\delta $ from $ \lambda (H | (s, l_{k})) > 2\delta $ and from $ \lambda (H | (l_{i+1}, l_{i})) > 2\delta , i = 1, 2, \dots , k - 1 $.

Let us prove (ii) now. It is sufficient, due to $ H = -H $, to consider the cases $ ((-\infty , a) \cup H) \setminus [b, \infty ) $ and $ H \setminus [b, \infty ) $ only. Actually, this will be one case for us since we will allow $ a = -\infty $. So, let $ -\infty \leq a < b < \infty $ and let
$$ C = ((-\infty , a) \cup H) \setminus [b, \infty ). $$
What we need is to show that there is an endpoint $ p $ of $ C $ such that $ \lambda (C | I_{\omega }(p)) \in (\delta , 1 - \delta ) $ for every $ \omega > 0 $. Suppose the opposite, i.e., that every endpoint $ p $ of $ C $ has a radius $ \omega (p) > 0 $ such that $ \lambda (C | I_{\omega (p)}(p)) \notin (\delta , 1 - \delta ) $. We put
$$ a' = \max L \cap (-\infty , b). $$
We have $ a < a' $. Indeed, if $ a \geq a' $, then $ C $ will be, after an affine transformation (and adding an isolated point of the complement if necessary), a counterexample to $ \mathcal{K}(\delta ) $ which does not consist of more intervals than $ F $, which contradicts (II). We are going to show that every endpoint $ p $ of
$$ C' = ((-\infty , a') \cup H) \setminus [b, \infty ) $$
has a radius $ \varrho (p) > 0 $ such that $ \lambda (C' | I_{\varrho (p)}(p)) \notin (\delta , 1 - \delta ) $. Firstly, every endpoint $ p $ of $ C' $ is also an endpoint of $ C $, and thus $ p $ has an appropriate $ \omega (p) $. Secondly, $ C \subset C' $, so we can choose $ \varrho (p) = \omega (p) $ if $ \lambda (C | I_{\omega (p)}(p)) \geq 1 - \delta $. It remains to find an appropriate $ \varrho (p) $ for endpoints with $ \lambda (C | I_{\omega (p)}(p)) \leq \delta $. We choose $ \varrho (p) $ to be a radius for which $ \lambda (C | I_{\varrho (p)}(p)) $ is minimal. Let us show that $ I_{\varrho (p)}(p) \subset (a', \infty ) $. Assume that $ a' \in I_{\varrho (p)}(p) $. Then, as $ a' \in L $, we have $ \lambda (C | (p - \varrho (p), a')) \geq \lambda (H | (p - \varrho (p), a')) > 2\delta $. On the other hand, by Lemma \ref{lemmac} applied on $ \mathbb{R} \setminus C $, we have $ \lambda (C | (p - \varrho (p), a')) \leq 2\delta $, which is not possible. Hence $ I_{\varrho (p)}(p) \subset (a', \infty ) $ indeed, and we obtain $ \lambda (C' | I_{\varrho (p)}(p)) = \lambda (C | I_{\varrho (p)}(p)) \leq \lambda (C | I_{\omega (p)}(p)) \leq \delta $.

So, after an affine transformation, $ C' $ will be a counterexample to $ \mathcal{K}(\delta ) $ which does not consist of more intervals than $ F $. Again, this contradicts (II), and (ii) is proved.
\end{proof}

Now, we prove some useful properties of a $ \delta $-good set.

\begin{lemma} \label{lemmae}
Let
$$ G = [0, \nu _{1}) \cup (\mu _{2}, \nu _{2}) \cup \dots \cup (\mu _{r-1}, \nu _{r-1}) \cup (\mu _{r}, 1] $$
be a $ \delta $-good set and let $ H $ denote $ G + \mathbb{Z} $.

{\rm (1)} If $ s, t \in \mathbb{R} $ are points which do not belong to the interior of $ H $, then there are intervals $ I_{\alpha }(a) $ and $ I_{\beta }(b) $ such that
$$ a \leq s, \quad s \in I_{\alpha }(a), \quad b \geq t, \quad t \in I_{\beta }(b), $$
$$ \lambda (H | I_{\alpha }(a)) \unrhd 1 - \delta , \quad \lambda (H | I_{\beta }(b)) \unrhd 1 - \delta . $$

{\rm (2)} If $ p \leq q $ are two endpoints of $ H $, then there are intervals $ I_{\alpha }(a) $ and $ I_{\beta }(b) $ such that
$$ p \leq a \leq q, \quad \quad 0 < \varepsilon < \alpha \Rightarrow \lambda (H | I_{\varepsilon }(a)) \in (\delta , 1 - \delta ), $$
$$ p \leq b \leq q, \quad \quad 0 < \varepsilon < \beta \Rightarrow \lambda (H | I_{\varepsilon }(b)) \in (\delta , 1 - \delta ), $$
$$ [\lambda (H | I_{\alpha }(a)) \geq 1 - \delta \; \textrm{and} \; p \in I_{\alpha }(a)] \quad \textrm{or} \quad [\lambda (H | I_{\alpha }(a)) \leq \delta \; \textrm{and} \; q \in I_{\alpha }(a)], $$
$$ [\lambda (H | I_{\beta }(b)) \geq 1 - \delta \; \textrm{and} \; q \in I_{\beta }(b)] \quad \textrm{or} \quad [\lambda (H | I_{\beta }(b)) \leq \delta \; \textrm{and} \; p \in I_{\beta }(b)]. $$

{\rm (3)} $ \mathbb{R} $ is covered by a locally finite system of intervals $ I $ with $ \lambda (H | I) \geq 1 - \delta $.
\end{lemma}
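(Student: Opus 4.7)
The plan is to treat the three parts in turn, with Part~(3) reducing essentially to Part~(1).

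For Part~(1), fix $s$ outside the interior of $H$. For each $a \in \mathbb{R}$ that is not a density point of $H$, define $\gamma^{*}(a) = \inf\{\gamma > 0 : \lambda(H | I_{\gamma}(a)) \geq 1-\delta\} \in (0,\infty]$. Continuity of $\gamma \mapsto \lambda(H | I_{\gamma}(a))$ ensures that whenever $\gamma^{*}(a)$ is finite the infimum is attained and $\lambda(H | I_{\gamma^{*}(a)}(a)) \unrhd 1-\delta$, so the task reduces to locating some $a \leq s$ (not a density point of $H$) with $\gamma^{*}(a) > s-a$. If $s$ is itself an endpoint of $H$ for which property~(i) provides an $\omega > 0$ with $\lambda(H | I_{\omega}(s)) \geq 1-\delta$, then $a = s$ works immediately. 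Otherwise $s$ is a ``white'' endpoint of $H$ or lies in an open component of the complement, and I would apply property~(ii) to an appropriate trimming of $H$ (for instance $H \setminus [b, \infty)$ with $b$ chosen just to the right of $s$): the oscillating-density endpoint guaranteed by~(ii) must occur so that some ``black'' center $a \leq s$ has its $\unrhd$-interval extending past $s$, and that $a$ supplies the required interval.

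For Part~(2), with $p \leq q$ two endpoints of $H$, set $\alpha(c) = \inf\{\gamma > 0 : \lambda(H | I_{\gamma}(c)) \notin (\delta, 1-\delta)\}$ for each endpoint $c$ of $H$ in $[p,q]$; by property~(i) this is positive and finite, and continuity forces $\lambda(H | I_{\alpha(c)}(c))$ to equal $1-\delta$ (``black'') or $\delta$ (``white''). I would trace colors through the finite sequence of endpoints of $H$ between $p$ and $q$. If $p$ is black then $a = p$ with $\alpha = \alpha(p)$ satisfies the first alternative for $a$, since $p$ is the center of its own interval; symmetrically, if $q$ is white, then $a = q$ works. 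Otherwise $p$ is white and $q$ is black with the extremal interval at each too short to reach the opposite endpoint, and I would examine the first color change among the endpoints of $H$ in $(p,q]$: combining the extremal radius at the transition point with Lemma~\ref{lemmac} produces an interval reaching either $p$ (if the transition endpoint is black) or $q$ (if white), supplying $a$. The center $b$ is obtained by the symmetric scan from $q$ toward $p$.

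Part~(3) follows from Part~(1): each endpoint of $H$ lies in an interval of relative $H$-measure at least $1-\delta$, and by periodicity the endpoints of $H$ form a locally finite subset of $\mathbb{R}$. Applying Part~(1) at one additional interior point of each complement component (finitely many per period) covers the complement of $H$, and adding one short interval of density~$1$ inside each component of $H$ (again finitely many per period) completes the cover, with local finiteness preserved throughout. The main obstacle I anticipate is the mixed-color case in Part~(2): the function $c \mapsto \alpha(c)$ can jump at color transitions as the minimal extremal interval shrinks or changes color abruptly, so a careful case analysis combined with a limit argument using the continuity of $\gamma \mapsto \lambda(H | I_{\gamma}(c))$ at each fixed $c$ is needed to secure the inclusion $p \in I_{\alpha}(a)$ or $q \in I_{\alpha}(a)$ required by the statement.
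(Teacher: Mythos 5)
Your Part (1) is essentially the paper's argument, though your phrase ``$b$ chosen just to the right of $s$'' is riskier than the paper's clean choice $C = H \setminus [s,\infty)$: since $s$ is not interior to $H$, the truncation at $s$ itself creates no new endpoint, whereas truncating at a nearby $b$ can introduce a spurious endpoint if $b$ falls inside an interval of $H$; you should simply take $b=s$. You also leave out the two deductions that carry the argument --- that the oscillating endpoint $a$ of $C$ supplied by property (ii) must be \emph{black} for $H$ because $\lambda(H\,|\,I_\alpha(a)) \geq \lambda(C\,|\,I_\alpha(a)) > \delta$, and that $s\in I_\alpha(a)$ because otherwise $H$ and $C$ agree on $I_\alpha(a)$ so the density would be $<1-\delta$ --- but the plan is correct. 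Part (3) is fine once you notice the ``short'' intervals inside $H$ should just be the full components $(\mu_i,\nu_i)$, each trivially of density $1\ge 1-\delta$, and these together with the black intervals from Part (1) covering the closed complement components cover all of $\mathbb{R}$.

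Part (2) is where you diverge from the paper, and where a genuine gap remains. You propose scanning the endpoints of $H$ in $[p,q]$ and arguing from the first color change, but you give no reason why the transition endpoint's extremal interval should reach $p$ (if black) or $q$ (if white), and in fact nothing forces this: a white endpoint can have an arbitrarily short white interval, immediately followed by a black endpoint with an arbitrarily short black interval, so a local color flip carries no global reach. You recognize this yourself when you write that ``a careful case analysis combined with a limit argument is needed,'' but no such argument is supplied, and I do not see a way to rescue the scan without effectively re-proving property (ii). The paper's route is different and much shorter: apply (ii) directly to the set $C = (H \cup (q,\infty)) \setminus (-\infty,p]$ (one of the four forms in the definition of a $\delta$-good set). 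This yields an endpoint $a$ of $C$, necessarily in $[p,q]$ and an endpoint of $H$, with $\lambda(C\,|\,I_\omega(a)) \in (\delta,1-\delta)$ for all $\omega>0$. Taking the minimal $\alpha$ from (i) for $H$, one then argues: if $\lambda(H\,|\,I_\alpha(a)) \geq 1-\delta$ and $p \notin I_\alpha(a)$, then $I_\alpha(a)\subset(p,\infty)$ so $\lambda(H\,|\,I_\alpha(a)) \leq \lambda(C\,|\,I_\alpha(a)) < 1-\delta$, a contradiction; and symmetrically if $\lambda(H\,|\,I_\alpha(a)) \leq \delta$ then $q \in I_\alpha(a)$. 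This is precisely the conclusion your scan was groping for, and property (ii) --- rather than a combinatorial transition argument --- is what forces it.
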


\begin{proof}
(1) We prove the existence of an $ I_{\alpha }(a) $ only because the existence of an $ I_{\beta }(b) $ can be proved in the same way. Consider the set
$$ C = H \setminus [s, \infty ). $$
By (ii), there is an endpoint $ a $ of $ C $ such that, for every $ \omega > 0 $, we have $ \lambda (C | I_{\omega }(a)) \in (\delta , 1 - \delta ) $. Note that $ a $ is also an endpoint of $ H $, as $ s $ does not belong to the interior of $ H $. On the other hand, by (i), there is an $ \alpha > 0 $ such that $ \lambda (H | I_{\alpha }(a)) \notin (\delta , 1 - \delta ) $. Since $ \lambda (H | I_{\alpha }(a)) \geq \lambda (C | I_{\alpha }(a)) > \delta $, we have $ \lambda (H | I_{\alpha }(a)) \geq 1 - \delta $. Let us take the minimal $ \alpha > 0 $ with this property that $ \lambda (H | I_{\alpha }(a)) \geq 1 - \delta $, so we have automatically
$$ \lambda (H | I_{\alpha }(a)) \unrhd 1 - \delta . $$
Such a minimal $ \alpha > 0 $ exists, as $ a $ is an endpoint of $ H $. Further, we have $ s \in I_{\alpha }(a) $. Indeed, if $ s \notin I_{\alpha }(a) $, then $ \lambda (H | I_{\alpha }(a)) = \lambda (C | I_{\alpha }(a)) < 1 - \delta $, which is not possible.

(2) We prove the existence of an $ I_{\alpha }(a) $ only because the existence of an $ I_{\beta }(b) $ can be proved in the same way. If $ p = q $, then we put $ a = p = q $. By (i), there is a radius $ \alpha > 0 $ such that $ \lambda (H | I_{\alpha }(a)) \notin (\delta , 1 - \delta ) $. It is sufficient to take the minimal such an $ \alpha > 0 $. Such a minimal $ \alpha $ exists, as $ a $ is an endpoint of $ H $.

So, we may assume that $ p < q $. Consider the set
$$ C = (H \cup (q, \infty )) \setminus (-\infty , p]. $$
By (ii), there is an endpoint $ a $ of $ C $ such that $ \lambda (C | I_{\omega }(a)) \in (\delta , 1 - \delta ) $ for every $ \omega > 0 $. Necessarily $ p \leq a \leq q $. Moreover, $ a $ is also an endpoint of $ H $ ($ H $ and $ C $ have the same endpoints in $ (p, q) $ and $ p, q $ are endpoints of $ H $). By (i), there is, on the other hand, a radius $ \alpha > 0 $ such that $ \lambda (H | I_{\alpha }(a)) \notin (\delta , 1 - \delta ) $. Let $ \alpha $ be the minimal such a radius, so we have automatically
$$ 0 < \varepsilon < \alpha \Rightarrow \lambda (H | I_{\varepsilon }(a)) \in (\delta , 1 - \delta ). $$
Such a minimal $ \alpha > 0 $ exists, as $ a $ is an endpoint of $ H $.

We consider two cases, $ \lambda (H | I_{\alpha }(a)) \geq 1 - \delta $ and $ \lambda (H | I_{\alpha }(a)) \leq \delta $.

\begin{itemize}
\item Let $ \lambda (H | I_{\alpha }(a)) \geq 1 - \delta $. If $ I_{\alpha }(a) \subset (p, \infty ) $, then $ 1 - \delta \leq \lambda (H | I_{\alpha }(a)) = \lambda ((H \setminus (-\infty , p]) | I_{\alpha }(a)) \leq \lambda (C | I_{\alpha }(a)) < 1 - \delta $, which is not possible. Hence, $ p \in I_{\alpha }(a) $.
\item Let $ \lambda (H | I_{\alpha }(a)) \leq \delta $. If $ I_{\alpha }(a) \subset (-\infty , q) $, then $ \delta \geq \lambda (H | I_{\alpha }(a)) = \lambda ((H \cup (q, \infty )) | I_{\alpha }(a)) \geq \lambda (C | I_{\alpha }(a)) > \delta $, which is not possible. Hence, $ q \in I_{\alpha }(a) $.
\end{itemize}

(3) We show that every interval $ (x, s) $ with $ (x, s) \subset \mathbb{R} \setminus H $ is covered by an interval $ I_{\alpha }(a) $ with $ \lambda (H | I_{\alpha }(a)) \geq 1 - \delta $. The part (1) gives an interval $ I_{\alpha }(a) $ such that $ a \leq s, s \in I_{\alpha }(a) $ and $ \lambda (H | I_{\alpha }(a)) \unrhd 1 - \delta $. It remains to show that $ x \in I_{\alpha }(a) $. If $ x \notin I_{\alpha }(a) $, then $ (a - \alpha , s) \subset \mathbb{R} \setminus H $, and so $ \lambda (H | (a - \alpha , s)) = 0 $. But this is not possible because, by Lemma \ref{lemmac}, we have $ \lambda (H | (a - \alpha , s)) \geq 1 - 2\delta $.
\end{proof}

\section{A Property of $ \delta $-good sets}

We introduced in Lemma \ref{lemmae} some properties of $ \delta $-good sets. These properties were obtained quite easily and naturally from the definition of a $ \delta $-good set. On the other hand, the proof of the following property is not easy. Although the property appears naturally in Lemma \ref{lemmaxy}, its proof is very technical. As the proof proceeds, we deal with more and more curious cases which demand more and more computations.

\begin{lemma} \label{keyprop}
Let $ 0 < \delta < \zeta _{2} $ where $ \zeta _{2} $ is as in Table \ref{roots} and let
$$ G = [0, \nu _{1}) \cup (\mu _{2}, \nu _{2}) \cup \dots \cup (\mu _{r-1}, \nu _{r-1}) \cup (\mu _{r}, 1] $$
be a $ \delta $-good set. Let
$$ H = G + \mathbb{Z}. $$
If $ w < v $ and $ \lambda (H | (w, v)) \leq (1 - \delta )/2 $, then there is an interval $ I_{\alpha }(a) \supset (w, v) $ such that
$$ \lambda (H | I_{\alpha }(a)) \unrhd 1 - \delta $$
and, for every $ u \in [w, v) $, there is an interval $ I_{\beta }(b) \supset (u, v) $ such that
$$ \lambda (H | I_{\beta }(b)) \unrhd 1 - \delta $$
and
$$ b - a \geq v - u - \frac{2}{1 - \delta } \lambda (H \cap (u, v)). $$
\end{lemma}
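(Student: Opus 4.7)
The plan is to obtain $I_\alpha(a)$ and $I_\beta(b)$ as minimal ``$(1{-}\delta)$-dense covers'' of $(w,v)$ and $(u,v)$ respectively, produced by Lemma~\ref{lemmae}(1) anchored at carefully chosen reference points near $v$ and near $u$, and then to deduce the center inequality by transferring density along these intervals via Lemma~\ref{lemmac}. The hypothesis $\lambda(H\mid(w,v))\le(1-\delta)/2$ is what forces each cover to have substantial mass outside $(w,v)$, which is what pins down the geometric location of its center.

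For $I_\alpha(a)$, I would first pick $s^{*}$ to be the smallest point in $[v,\infty)$ not lying in the interior of $H$ (it exists because the endpoints of $H$ are discrete, and if $v\notin\mathrm{int}(H)$ simply take $s^{*}=v$). Applying Lemma~\ref{lemmae}(1) with this $s^{*}$ yields an interval $I_\alpha(a)$ with $a\le s^{*}$, $s^{*}\in I_\alpha(a)$, and $\lambda(H\mid I_\alpha(a))\unrhd 1-\delta$; in particular $a+\alpha>s^{*}\ge v$. The inclusion $(w,v)\subset I_\alpha(a)$ thus reduces to proving $a-\alpha\le w$. This is the main obstacle, and I would argue by contradiction: assume $a-\alpha>w$ so that $I_\alpha(a)\subset(w,\infty)$. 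Write
\[
\lambda(H\cap I_\alpha(a))=\lambda\bigl(H\cap(a-\alpha,v)\bigr)+\lambda\bigl(H\cap[v,a+\alpha)\bigr),
\]
estimate the first summand by $\lambda(H\cap(w,v))\le\tfrac{1-\delta}{2}(v-w)$ and the second by $a+\alpha-v$, and compare with the lower bound $2(1-\delta)\alpha$. Sharpen the raw estimate using Lemma~\ref{lemmac} applied to $I_\alpha(a)$, which gives $\lambda(H\mid(a-\alpha,t))\ge 1-2\delta$ on the left portion and $\lambda(H\mid(s,a+\alpha))\ge 1-2\delta$ on the right portion; together with the minimality of $\alpha$ this is expected to force a relation that contradicts $\delta<\zeta_{2}$. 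I expect this to split into subcases depending on the position of $a$ relative to $w$ and $v$, matching the author's warning about ``more and more curious cases.''

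For $I_\beta(b)$, the construction is the mirror image: take $t^{*}$ the largest point in $(-\infty,u]$ not in $\mathrm{int}(H)$, apply Lemma~\ref{lemmae}(1) to obtain $I_\beta(b)$ with $b\ge t^{*}$, $t^{*}\in I_\beta(b)$, and $\lambda(H\mid I_\beta(b))\unrhd 1-\delta$. Immediately $b-\beta<t^{*}\le u$, and $b+\beta\ge v$ is verified by the same contradiction argument as for $a-\alpha\le w$, using $\lambda(H\mid(u,v))\le\lambda(H\mid(w,v))\le(1-\delta)/2$. Finally, for the center inequality, apply Lemma~\ref{lemmac} to $I_\alpha(a)$ at $s=v$ to get $\lambda(H\cap(v,a+\alpha))\ge(1-2\delta)(a+\alpha-v)$ and to $I_\beta(b)$ at $t=u$ to get $\lambda(H\cap(b-\beta,u))\ge(1-2\delta)(u-b+\beta)$. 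Combine these with the two density lower bounds on $I_\alpha(a)$ and $I_\beta(b)$ and the decomposition of each interval around the low-density segment $(u,v)$: solving the resulting mass-balance relations for $b-a$ should give exactly
\[
b-a\;\ge\;(v-u)-\frac{2}{1-\delta}\lambda(H\cap(u,v)),
\]
with the factor $\tfrac{2}{1-\delta}$ arising precisely because the compensating ``$1-2\delta$'' contributions from Lemma~\ref{lemmac} on the two flanks scale against a $(1-\delta)$ required average density. The main obstacle remains the verification of $a-\alpha\le w$ and $b+\beta\ge v$: these are where the assumption $\delta<\zeta_{2}$ and the full strength of the $\unrhd$ minimality are needed, and where the case analysis will be most delicate.
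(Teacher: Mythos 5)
Your proposal has a concrete error and a more fundamental gap.

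The concrete error is the remark that $\lambda(H\mid(u,v))\le\lambda(H\mid(w,v))\le(1-\delta)/2$, which you use to justify the construction of $I_\beta(b)$. This inequality is simply false: shrinking the left endpoint of an interval can raise the relative measure, so for $u>w$ the density on $(u,v)$ can be arbitrarily close to $1$ even though the density on $(w,v)$ is below $(1-\delta)/2$. The lemma has to hold for \emph{every} $u\in[w,v)$, including those near $v$ where $(u,v)$ is nearly contained in $H$. Indeed the paper is forced to produce two different candidate $\beta$-intervals (Claims \ref{cl8} and \ref{cl11}) and to show in Claim \ref{cl12} that for each $u$ at least one of them works; your single construction anchored at $t^{*}\le u$ cannot do this, because when $(u,v)$ is almost all in $H$ the hypothesis you need for your contradiction argument is unavailable.

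The deeper gap is in where the coefficient $\tfrac{2}{1-\delta}$ and the lower bound on $b-a$ actually come from. Your construction anchors $I_\alpha(a)$ to a point $s^{*}$ to the \emph{right} of $(w,v)$ and $I_\beta(b)$ to a point $t^{*}$ to the \emph{left}, which only pins down $a\le s^{*}$ and $b\ge t^{*}$ and therefore gives nothing better than $b-a\ge t^{*}-s^{*}\approx -(v-u)$; the ``mass-balance'' you invoke does not recover the needed inequality from these positions. The paper instead anchors via the extremizers $s(H,w,v)$ and $t(H,w,v)$ of the deficiency function $f_{H,w,v}(x)=(1-2\delta)x-\lambda(H\cap(w,x))$ (see Claims \ref{cl1}--\ref{cl2}); because $a\le s(H,w,v)$ and $b\ge t(H,w,v)$, the quantity $b-a$ is controlled from below by differences of $f$-values, and the telescoping of $f$ is exactly what produces the $v-u-\tfrac{2}{1-\delta}\lambda(H\cap(u,v))$ term. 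When $t(H,w,v)<s(H,w,v)$ (a case your sketch does not isolate) the argument requires the auxiliary points $s',t'$ and several further pages of estimates (Claims \ref{cl4}, \ref{cl9}--\ref{cl11}), and this is where $\delta<\zeta_2$ is genuinely used; the $\zeta_2$ bound does not emerge from the naive density comparison you describe for proving $a-\alpha\le w$, which, as written, I cannot see closing.
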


The proof of the lemma is provided in two steps. In the first part of the proof, we prove the lemma for the intervals with an additional property (Claim \ref{cl5}). In the second part, we are dealing with an interval which does not have this property. This part is finished by Claim \ref{cl12}. 

For a measurable $ A \subset \mathbb{R} $ and a couple $ p < q $, consider the function
$$ f_{A, p, q}(x) = (1 - 2\delta )x - \lambda (A \cap (p, x)), \quad x \in [p, q], $$
and let $ s(A, p, q) $ be the greatest point of minimum and $ t(A, p, q) $ be the least point of maximum of $ f_{A, p, q} $, so $ s(A, p, q) $ and $ t(A, p, q) $ have the property that
$$ p \leq x < s(A, p, q) \quad \Rightarrow \quad \lambda (A | (x, s(A, p, q))) \geq 1 - 2\delta , $$
$$ s(A, p, q) < x \leq q \quad \Rightarrow \quad \lambda (A | (s(A, p, q), x)) < 1 - 2\delta , $$
$$ p \leq x < t(A, p, q) \quad \Rightarrow \quad \lambda (A | (x, t(A, p, q))) < 1 - 2\delta , $$
$$ t(A, p, q) < x \leq q \quad \Rightarrow \quad \lambda (A | (t(A, p, q), x)) \geq 1 - 2\delta . $$

\begin{claim} \label{cl1}
Let $ A \subset \mathbb{R} $ be measurable and let $ I_{\gamma }(c) $ be an interval such that
$$ \lambda (A | I_{\gamma }(c)) \unrhd 1 - \delta . $$
For a couple $ p < q $, we have:
\begin{itemize}
\item If $ s(A, p, q) \in I_{\gamma }(c) $, then $ q \in I_{\gamma }(c) $.
\item If $ t(A, p, q) \in I_{\gamma }(c) $, then $ p \in I_{\gamma }(c) $.
\end{itemize}
\end{claim}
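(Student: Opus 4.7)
The plan is to prove each implication by contradiction, using that the hypothesis $\lambda(A \mid I_{\gamma}(c)) \unrhd 1 - \delta$ is exactly the hypothesis of Lemma \ref{lemmac}, which produces a one-sided density estimate that will clash with the extremal characterization of $s$ and $t$.

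For the first bullet, suppose $s := s(A,p,q) \in I_{\gamma}(c)$ but $q \notin I_{\gamma}(c)$. From $s \in (c-\gamma, c+\gamma)$ and $s \leq q$, one rules out $q \leq c - \gamma$, so necessarily $q \geq c + \gamma$. In particular $s < c + \gamma \leq q$, so the point $x = c + \gamma$ lies in the range $(s,q]$ where the defining property of $s$ (as the greatest minimiser of $f_{A,p,q}$) gives the strict inequality
\[
\lambda\bigl(A \mid (s,\, c+\gamma)\bigr) < 1 - 2\delta.
\]
On the other hand, $\lambda(A \mid I_{\gamma}(c)) \unrhd 1-\delta$ means in particular that $\lambda(A \mid I_{\varepsilon}(c)) < 1 - \delta \leq \lambda(A \mid I_{\gamma}(c))$ for $0 < \varepsilon < \gamma$, so Lemma \ref{lemmac} applies. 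With $c - \gamma \leq s < c + \gamma$ its first implication yields
\[
\lambda\bigl(A \mid (s,\, c+\gamma)\bigr) \geq 1 - 2\delta,
\]
contradicting the previous line.

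The second bullet is entirely symmetric. Assuming $t := t(A,p,q) \in I_{\gamma}(c)$ and $p \notin I_{\gamma}(c)$, the inequalities $p \leq t < c + \gamma$ force $p \leq c - \gamma$, hence $p \leq c - \gamma < t$. Now the defining property of $t$ as the least maximiser of $f_{A,p,q}$ gives $\lambda(A \mid (c-\gamma,\, t)) < 1 - 2\delta$, while the second implication of Lemma \ref{lemmac} (valid since $c - \gamma < t \leq c + \gamma$) gives $\lambda(A \mid (c-\gamma,\, t)) \geq 1 - 2\delta$, again a contradiction.

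There is no real obstacle here: the proof is a two-line bookkeeping argument once one notices that the $\unrhd$ notation is designed precisely to be the hypothesis of Lemma \ref{lemmac}, and that the points $s(A,p,q)$ and $t(A,p,q)$ are exactly the ones for which \emph{strict} one-sided density inequalities hold on the side pointing toward $q$ and $p$ respectively. The only small care needed is to check the correct side: for $s$ it is the right side (so the conflict is with $q$), and for $t$ it is the left side (so the conflict is with $p$).
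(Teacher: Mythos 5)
Your proof is correct and follows essentially the same argument as the paper: assume $s(A,p,q) \in I_\gamma(c)$ and $q \geq c+\gamma$, then derive a contradiction between the strict inequality $\lambda(A \mid (s, c+\gamma)) < 1-2\delta$ from the defining property of $s$ and the inequality $\lambda(A \mid (s, c+\gamma)) \geq 1-2\delta$ from Lemma \ref{lemmac}, with the symmetric argument for $t$. The only cosmetic difference is that you spell out the small reduction from $q \notin I_\gamma(c)$ to $q \geq c+\gamma$, which the paper leaves implicit.
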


\begin{proof}
We prove the first assertion only because the second one can be proved in the same way. Assume that $ c - \gamma < s(A, p, q) < c + \gamma \leq q $. We want to show that this situation is impossible. By a property of $ s(A, p, q) $, we have $ \lambda (A | (s(A, p, q), c + \gamma )) < 1 - 2\delta $. On the other hand, by Lemma \ref{lemmac}, we have $ \lambda (A | (s(A, p, q), c + \gamma )) \geq 1 - 2\delta $.
\end{proof}

\begin{claim} \label{cl2}
Let $ w < v $ be such that $ \lambda (H | (w, v)) < 1 - 2\delta $. Then there are intervals $ I_{\alpha }(a) $ and $ I_{\beta }(b) $ such that
$$ a \leq s(H, w, v), \quad v \in I_{\alpha }(a), \quad b \geq t(H, w, v), \quad w \in I_{\beta }(b), $$
$$ \lambda (H | I_{\alpha }(a)) \unrhd 1 - \delta , \quad \lambda (H | I_{\beta }(b)) \unrhd 1 - \delta . $$
\end{claim}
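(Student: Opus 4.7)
The plan is to combine Lemma~\ref{lemmae}(1) with Claim~\ref{cl1}: the intervals themselves will come from the former, and the additional containments $v \in I_{\alpha}(a)$ and $w \in I_{\beta}(b)$ will then be forced by the latter.

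Set $s := s(H,w,v)$ and $t := t(H,w,v)$. First I would verify that neither $s$ nor $t$ lies in the interior of $H$. On any open subinterval $J \subset H$, the function $f_{H,w,v}$ is affine with slope $(1-2\delta) - 1 = -2\delta < 0$, hence strictly decreasing; consequently no point of $J$ can be a local extremum of $f_{H,w,v}$. This rules out $s, t \in \operatorname{int} H$, and feeds the hypothesis of Lemma~\ref{lemmae}(1).

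Next I would apply Lemma~\ref{lemmae}(1) to the pair $s, t$. It produces intervals $I_{\alpha}(a)$ and $I_{\beta}(b)$ with
$$ a \le s, \quad s \in I_{\alpha}(a), \quad b \ge t, \quad t \in I_{\beta}(b), $$
together with $\lambda(H \mid I_{\alpha}(a)) \unrhd 1-\delta$ and $\lambda(H \mid I_{\beta}(b)) \unrhd 1-\delta$. The only conclusions of Claim~\ref{cl2} that are not yet evident are $v \in I_{\alpha}(a)$ and $w \in I_{\beta}(b)$. Both follow at once from Claim~\ref{cl1} applied with $A = H$, $p = w$, $q = v$: its first bullet promotes $s \in I_{\alpha}(a)$ to $v \in I_{\alpha}(a)$, and its second bullet promotes $t \in I_{\beta}(b)$ to $w \in I_{\beta}(b)$.

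I do not foresee a serious obstacle, as the argument is essentially a short assembly of two tools already at our disposal. The hypothesis $\lambda(H \mid (w,v)) < 1 - 2\delta$ is not needed in this particular deduction; it only ensures $f_{H,w,v}(v) > f_{H,w,v}(w)$, which presumably constrains the relative placement of $s$ and $t$ inside $(w,v)$ for the quantitative use of these intervals in the subsequent claims of the proof of Lemma~\ref{keyprop}.
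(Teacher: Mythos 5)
Your overall plan — extracting the intervals from Lemma~\ref{lemmae}(1) and then using Claim~\ref{cl1} to promote $s \in I_{\alpha}(a)$ to $v \in I_{\alpha}(a)$ and $t \in I_{\beta}(b)$ to $w \in I_{\beta}(b)$ — is exactly the route the paper takes. However, there is a gap in the preliminary step, and the closing remark about the hypothesis being superfluous is what causes it.

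The assertion ``no point of $J$ can be a local extremum of $f_{H,w,v}$'' is only valid for points of $J$ that are interior to the domain $[w,v]$. At the domain endpoints $w$ and $v$, a strictly monotone function does achieve local (indeed global) extrema, so the derivative argument does not apply there. Concretely, if $s(H,w,v) = v$ and $v \in \operatorname{int} H$, your argument does not detect any contradiction, yet the hypothesis of Lemma~\ref{lemmae}(1) fails. (The case $t(H,w,v) = w$ with $w \in \operatorname{int} H$ is the symmetric failure.) The other endpoint pairings are harmless: if $s = w$ with $w \in \operatorname{int} H$, then $f$ decreases on a right neighbourhood of $w$, so $w$ cannot be a point of minimum; likewise $t = v$ with $v \in \operatorname{int} H$ is impossible.

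The hypothesis $\lambda(H \mid (w,v)) < 1 - 2\delta$ is precisely what eliminates the dangerous endpoint cases: it gives $f_{H,w,v}(v) > f_{H,w,v}(w)$, so $v$ cannot be a point of minimum (hence $s < v$) and $w$ cannot be a point of maximum (hence $t > w$). Once $s \in [w,v)$ and $t \in (w,v]$ are established, your derivative argument correctly rules out $s,t \in \operatorname{int} H$ and the remainder of the proof goes through. So the hypothesis is not merely decorative here; it is used at exactly the place where the paper proves $s(H,w,v) < v$ before arguing that $s$ lies outside $\operatorname{int} H$, and your final sentence claiming it can be dropped is wrong.
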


\begin{proof}
We prove the existence of an $ I_{\alpha }(a) $ only because the existence of an $ I_{\beta }(b) $ can be proved in the same way. We have $ s(H, w, v) < v $. Indeed, if $ s(H, w, v) = v $, then $ 1 - 2\delta > \lambda (H | (w, v)) = \lambda (H | (w, s(H, w, v))) \geq 1 - 2\delta $, which is not possible. For every $ x $ with $ s(H, w, v) < x \leq v $, we have $ \lambda (H | (s(H, w, v), x)) < 1 - 2\delta $, and so $ (s(H, w, v), x) \not\subset H $. Consequently, $ s(H, w, v) $ does not belong to the interior of $ H $.

By Lemma \ref{lemmae}(1), there is an interval $ I_{\alpha }(a) $ such that $ a \leq s(H, w, v) $, $ s(H, w, v) \in I_{\alpha }(a) $ and $ \lambda (H | I_{\alpha }(a)) \unrhd 1 - \delta $. Since $ s(H, w, v) \in I_{\alpha }(a) $, we have $ v \in I_{\alpha }(a) $ by Claim \ref{cl1}.
\end{proof}

\begin{claim} \label{cl3}
Let $ p \leq t < s \leq q $ be such that $ \lambda (H | (p, s)) \geq 1 - 2\delta $ and $ \lambda (H | (t, q)) \geq 1 - 2\delta $.

{\rm (1)} We have $ \lambda (H | (p, q)) \geq (1 - 2\delta )/(1 + 2\delta ) $.

{\rm (2)} If $ \lambda (H | (p, q)) \leq (1 - \delta )/2 $, then $ \lambda (H | (t, s)) \geq (1 - \delta )(1 - 2\delta )/2\delta $.
\end{claim}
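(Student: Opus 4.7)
The plan is to introduce abbreviations $L = q - p$, $M = s - t$, $X = \lambda(H \cap (p, q))$, $Y = \lambda(H \cap (t, s))$, and begin by exploiting the inclusion-exclusion relation between $(p, s)$ and $(t, q)$. Since $p \leq t < s \leq q$, we have $(p, s) \cup (t, q) = (p, q)$ and $(p, s) \cap (t, q) = (t, s)$, so the two density hypotheses combine into
$$ X + Y \;=\; \lambda(H \cap (p, s)) + \lambda(H \cap (t, q)) \;\geq\; (1 - 2\delta)(L + M). \qquad (\ast) $$
Everything will follow from $(\ast)$ together with the trivial bounds $Y \leq M$ and $Y \leq X$ (the latter because $(t, s) \subset (p, q)$). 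Note that no property of $H$ beyond measurability is actually used; this is a purely measure-theoretic assertion.

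For part (1), the clean approach is a dichotomy. If $M \leq X$, then $Y \leq M$ in $(\ast)$ gives $X \geq (1 - 2\delta)L - 2\delta M \geq (1 - 2\delta)L - 2\delta X$, so $(1 + 2\delta)X \geq (1 - 2\delta)L$. If instead $X < M$, then $Y \leq X$ in $(\ast)$ gives $2X \geq (1 - 2\delta)(L + M) > (1 - 2\delta)(L + X)$, which again yields $(1 + 2\delta)X > (1 - 2\delta)L$. In either case $\lambda(H | (p, q)) = X/L \geq (1 - 2\delta)/(1 + 2\delta)$.

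For part (2), the first step is to extract from the hypothesis $X \leq (1 - \delta)L/2$ a lower bound on $L$ in terms of $M$. Applying $Y \leq X$ to $(\ast)$ gives $2X \geq (1 - 2\delta)(L + M)$; combining with $2X \leq (1 - \delta)L$ yields $(1 - \delta)L \geq (1 - 2\delta)(L + M)$, hence $\delta L \geq (1 - 2\delta)M$, i.e.
$$ L \;\geq\; \frac{1 - 2\delta}{\delta}\,M. $$
Next, plugging the upper bound on $X$ back into $(\ast)$ produces
$$ Y \;\geq\; (1 - 2\delta)(L + M) - \frac{1 - \delta}{2} L \;=\; (1 - 2\delta)M + \frac{1 - 3\delta}{2}\,L. $$
Since $\delta < \zeta_2 < 1/3$ we have $1 - 3\delta > 0$, and substituting the lower bound on $L$ gives
$$ Y \;\geq\; (1 - 2\delta)M\left[1 + \frac{1 - 3\delta}{2\delta}\right] \;=\; \frac{(1 - \delta)(1 - 2\delta)}{2\delta}\,M, $$
which is the desired inequality $\lambda(H|(t, s)) \geq (1 - \delta)(1 - 2\delta)/(2\delta)$.

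I do not anticipate a serious obstacle: the whole claim is a short exercise in inclusion–exclusion. The only subtlety is recognising that one should exploit both faces of $Y \leq \min(M, X)$ — using $Y \leq M$ in one branch of part (1) and $Y \leq X$ in the other — and then re-using $Y \leq X$ in part (2) to convert the upper bound on $X$ into the lower bound $L \geq (1 - 2\delta)M/\delta$ before substituting back.
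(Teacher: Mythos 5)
Your proof is correct and uses essentially the same ingredients as the paper: the combined inequality $X + Y \geq (1-2\delta)(L+M)$ obtained by adding the two hypotheses, together with the trivial bounds $Y \leq M$ and $Y \leq X$. The paper deduces part~(1) by sandwiching $l = M$ between $\frac{1-2\delta}{2\delta}L - \frac{1}{2\delta}L^H$ and $\frac{2}{1-2\delta}L^H - L$ and solving directly, rather than through your dichotomy on $M \lessgtr X$, but the two are algebraically interchangeable; part~(2) is done the same way in both.
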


\begin{proof}
Let us put
$$ k = \lambda (p, t), \quad k^{H} = \lambda (H \cap (p, t)), $$
$$ l = \lambda (t, s), \quad l^{H} = \lambda (H \cap (t, s)), $$
$$ m = \lambda (s, q), \quad m^{H} = \lambda (H \cap (s, q)), $$
$$ L = \lambda (p, q), \quad L^{H} = \lambda (H \cap (p, q)). $$
We obtain inequalities
$$ k^{H} + l^{H} \geq (1 - 2\delta )(k + l), \quad l^{H} + m^{H} \geq (1 - 2\delta )(l + m), $$
and compute
$$ L^{H} + l^{H} = k^{H} + 2l^{H} + m^{H} \geq (1 - 2\delta )(k + 2l + m) = (1 - 2\delta )L + (1 - 2\delta )l, $$
$$ 2L^{H} \geq L^{H} + l^{H} \geq (1 - 2\delta )L + (1 - 2\delta )l, $$
$$ L^{H} + l \geq L^{H} + l^{H} \geq (1 - 2\delta )L + (1 - 2\delta )l. $$
So,
$$ l^{H} \geq (1 - 2\delta )L + (1 - 2\delta )l - L^{H}, $$
$$ l \leq \frac{2}{1 - 2\delta }L^{H} - L, $$
$$ l \geq \frac{1 - 2\delta }{2\delta }L - \frac{1}{2\delta }L^{H}. $$

(1) From
$$ \frac{1 - 2\delta }{2\delta }L - \frac{1}{2\delta }L^{H} \leq l \leq \frac{2}{1 - 2\delta }L^{H} - L, $$
we obtain by a straightforward computation that
$$ L^{H} \geq \frac{1 - 2\delta }{1 + 2\delta }L. $$

(2) If $ L^{H} \leq \frac{1 - \delta }{2} L $, then we compute
\begin{eqnarray*}
l^{H} - \frac{(1 - \delta )(1 - 2\delta )}{2\delta } l & \geq & (1 - 2\delta )L + (1 - 2\delta )l - L^{H} - \frac{(1 - \delta )(1 - 2\delta )}{2\delta } l \\
 & = & (1 - 2\delta )L - L^{H} - \frac{(1 - 3\delta )(1 - 2\delta )}{2\delta } l \\
 & \geq & (1 - 2\delta )L - L^{H} - \frac{(1 - 3\delta )(1 - 2\delta )}{2\delta } \Big( \frac{2}{1 - 2\delta }L^{H} - L \Big) \\
 & = & \frac{(1 - 2\delta )}{\delta } \frac{(1 - \delta )}{2} L - \frac{(1 - 2\delta )}{\delta } L^{H} \\
 & \geq & 0.
\end{eqnarray*}
\end{proof}

\begin{claim} \label{cl4}
Let $ w < v $ be such that $ \lambda (H | (w, v)) \leq (1 - \delta )/2 $ and $ t(H, w, v) < s(H, w, v) $. Let $ t(H, w, v) \leq c \leq s(H, w, v) $ and let $ \gamma > 0 $ be a radius such that
$$ \lambda (H | I_{\gamma }(c)) \unrhd 1 - \delta . $$
If one of the points $ s(H, w, v), t(H, w, v) $ is an element of $ I_{\gamma }(c) $, then $ [w, v] \subset I_{\gamma }(c) $.
\end{claim}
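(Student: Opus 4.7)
By the reflection $x \mapsto -x$ (which sends $H$ to $-H$, $w$ to $-v$, $v$ to $-w$, $c$ to $-c$, and interchanges the roles of $s$ and $t$), I may assume $s \in I_\gamma(c)$; the case $t \in I_\gamma(c)$ will follow by applying the same argument to the reflected configuration. Claim \ref{cl1} applied with $(p,q)=(w,v)$ immediately yields $v \in I_\gamma(c)$, so $v < c+\gamma$, and it remains to show $w > c-\gamma$. Suppose, for contradiction, that $w \leq c-\gamma$. If additionally $t \in I_\gamma(c)$, the second bullet of Claim \ref{cl1} gives $w \in I_\gamma(c)$, contradicting the supposition. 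Hence $t \notin I_\gamma(c)$, and since $t \leq s < c+\gamma$ this forces $t \leq c-\gamma$. We are reduced to the configuration
\[ w \leq t \leq c-\gamma < s \leq v < c+\gamma. \]

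Introduce the lengths $\alpha = c-\gamma-w \geq 0$, $\beta = c+\gamma-v > 0$, $A = v-s \geq 0$, $B = s-c+\gamma > 0$, so that $v-w = A+B+\alpha$ and $2\gamma = A+B+\beta$. Continuity of $\varepsilon \mapsto \lambda(H|I_\varepsilon(c))$ together with $\unrhd$ forces $\lambda(H|I_\gamma(c)) = 1-\delta$ exactly. I would then extract three inequalities:

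\textbf{(i)} From $\lambda(H\cap I_\gamma(c))=(1-\delta)\cdot 2\gamma$, the inclusion $(c-\gamma,v)\subset(w,v)$ (giving $\lambda(H\cap(c-\gamma,v))\leq\tfrac{1-\delta}{2}(A+B+\alpha)$), and the trivial bound $\lambda(H\cap(v,c+\gamma))\leq\beta$:
\[ \beta \;\geq\; \tfrac{1-\delta}{2\delta}\bigl[(A+B)-\alpha\bigr]. \]

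\textbf{(ii)} The inequality $c \leq s$ rearranges to $\beta \leq B-A$.

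\textbf{(iii)} Claim \ref{cl3}(2) applied to $(p,q)=(w,v)$ yields $\lambda(H\cap(t,s)) \geq K(s-t)$ with $K=\tfrac{(1-\delta)(1-2\delta)}{2\delta}$; combined with $\lambda(H\cap(t,s)) \leq \tfrac{1-\delta}{2}(A+B+\alpha)$, this gives $s-t \leq \tfrac{\delta(A+B+\alpha)}{1-2\delta}$, and since $s-t \geq B$ (because $t \leq c-\gamma$ and $s=(c-\gamma)+B$), we obtain
\[ (1-3\delta)\,B \;\leq\; \delta(A+\alpha). \]

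Combining (i) and (ii) alone already yields $(1+\delta)A+(1-3\delta)B \leq (1-\delta)\alpha$, so $\alpha$ is bounded below by a positive combination of $A$ and $B$ (recall $\delta < \zeta_2 < 1/3$). The plan is then to fold in (iii) together with an extra estimate — most plausibly by applying the $\unrhd$ property to the strictly smaller concentric interval $I_{v-c}(c)=(2c-v,v)$, which gives $\lambda(H|I_{v-c}(c))<1-\delta$, in combination with the lower bound $\lambda(H\cap(c-\gamma,2c-v)) \geq (1-2\delta)\beta$ from Lemma \ref{lemmac} — and to eliminate $\alpha$ and $\beta$ so as to reach an arithmetic statement equivalent to $8\delta^3 - 16\delta^2 + 1 \leq 0$, i.e., $\delta \geq \zeta_2$, contradicting the hypothesis by Table \ref{roots}.

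The main obstacle will be pinning down the exact algebraic elimination that produces the polynomial inequality defining $\zeta_2$; a naïve elimination among (i)--(iii) seems to give only weaker bounds, so the precise bookkeeping of several overlapping density estimates — particularly exploiting the strict $<1-\delta$ at the smaller radius $\varepsilon = v-c$ — is the delicate point that the proof must handle carefully.
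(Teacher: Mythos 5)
Your setup is valid and (modulo the harmless reflection) mirrors the paper's, but the proof is genuinely incomplete: you derive three constraints and then explicitly acknowledge that you cannot perform the elimination. That gap is real, and your proposed rescue does not close it.

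The inequalities you have are not strong enough. Your (iii), obtained from Claim~\ref{cl3}(2), gives $B \leq \frac{\delta}{1-2\delta}(A+B+\alpha)$; the paper needs (and gets) the much sharper $B \leq \frac{\delta^2}{(1-2\delta)^2}(A+B+\alpha)$. Your proposed extra estimate — using $\lambda(H\,|\,I_{v-c}(c)) < 1-\delta$ together with $\lambda(H\cap(c-\gamma,2c-v)) \geq (1-2\delta)\beta$ — yields no information: writing $P_1 = \lambda(H\cap(c-\gamma,2c-v))$, $P_2 = \lambda(H\cap I_{v-c}(c))$, $P_3 = \lambda(H\cap(v,c+\gamma))$, the identity $P_1+P_2+P_3=(1-\delta)(A+B+\beta)$ combined with $P_2 < (1-\delta)(A+B-\beta)$ gives $P_1 + P_3 > 2(1-\delta)\beta$, while the trivial bound is $P_1+P_3 \leq 2\beta$; these are compatible for all $\delta>0$.

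The missing ingredient is the pair of estimates the paper derives (in your coordinates, writing $P_1 = \lambda(H\cap(w,c-\gamma))$, $P_2 = \lambda(H\cap(c-\gamma,s))$, $P_3 = \lambda(H\cap(s,v))$): one, $P_1 + P_2 \geq (1-2\delta)(B+\alpha)$, which is just the defining property of $s = s(H,w,v)$ via Lemma~\ref{lemmac}; and two, the crucial $P_2 + P_3 \geq A + (1-2\delta)B$, which comes from a weighted-average argument applied to the enlarged interval $I_B(s) = (c-\gamma,\,2s-c+\gamma) \supset I_\gamma(c)$, whose excess $(c+\gamma,\,2s-c+\gamma)$ lies entirely in $(v,\infty)$: one gets $\lambda\bigl((H\cup(v,\infty))\,|\,I_B(s)\bigr) \geq \lambda\bigl(H\,|\,I_\gamma(c)\bigr) = 1-\delta$, from which the inequality follows. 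These two estimates, together with your (C)-type bound $P_1+P_2+P_3 \leq \frac{1-\delta}{2}(A+B+\alpha)$, lead to $\delta^2(A+B+\alpha)\geq(1-2\delta)^2B$ and $2\delta(1-2\delta)B\geq\frac{1-3\delta}{2}(A+B+\alpha)$, hence $4\delta^3\geq(1-2\delta)(1-3\delta)$. Note also that you aim to contradict $\zeta_2$ directly (reaching $8\delta^3-16\delta^2+1\leq 0$), whereas the paper only reaches $\zeta_4$ (i.e., $4\delta^3-6\delta^2+5\delta-1\geq 0$) and then uses $\zeta_2\leq\zeta_4$ — your target is unnecessarily strong and may not be reachable by this route.
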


\begin{proof}
We may assume, due to the symmetry, that $ t(H, w, v) \in I_{\gamma }(c) $. By Claim \ref{cl1}, we have also $ w \in I_{\gamma }(c) $. It remains to show that $ v \in I_{\gamma }(c) $.

Assume the opposite, i.e., that $ c + \gamma \leq v $. We put
$$ k = \lambda (w, t(H, w, v)), \quad k^{H} = \lambda (H \cap (w, t(H, w, v))), $$
$$ l = \lambda (t(H, w, v), c + \gamma ), \quad l^{H} = \lambda (H \cap (t(H, w, v), c + \gamma )), $$
$$ m = \lambda (c + \gamma , v), \quad m^{H} = \lambda (H \cap (c + \gamma , v)), $$
$$ L = \lambda (w, v), \quad L^{H} = \lambda (H \cap (w, v)). $$
We have $ l \geq \gamma \geq k $, as $ l = c + \gamma - t(H, w, v) \geq \gamma = c - (c - \gamma ) \geq t(H, w, v) - w = k $. Also, $ c - \gamma \geq t(H, w, v) - l $, as $ c - \gamma = c + \gamma - 2\gamma \geq c + \gamma - 2l = t(H, w, v) - l $. Let us realize that
$$ k^{H} + l^{H} \geq k + (1 - 2\delta )l, \quad l^{H} + m^{H} \geq (1 - 2\delta )(l + m), $$
$$ L^{H} \leq \frac{1 - \delta }{2}L. $$
The second inequality is a property of $ t(H, w, v) $ and the third one is an assumption. The first inequality can be obtained from the computation
\begin{eqnarray*}
k^{H} + l^{H} + l - k & = & \lambda (H \cap (w, c + \gamma )) + \lambda (t(H, w, v) - l, w) \\
 & = & \lambda (((-\infty , w) \cup H) \cap (t(H, w, v) - l, c + \gamma )) \\
 & = & 2l \lambda (((-\infty , w) \cup H) | (t(H, w, v) - l, c + \gamma )) \\
 & \geq & 2l \lambda (((-\infty , w) \cup H) | I_{\gamma }(c)) \\
 & \geq & 2l \lambda (H | I_{\gamma }(c)) \\
 & \geq & (1 - \delta ) 2l.
\end{eqnarray*}

We obtain
$$ (1 - 2\delta )l^{H} + L^{H} = (1 - 2\delta )l^{H} + k^{H} + l^{H} + m^{H} \geq (1 - 2\delta )(k^{H} + l^{H}) + l^{H} + m^{H} $$
$$ \geq (1 - 2\delta )k + (1 - 2\delta )^{2}l + (1 - 2\delta )(l + m) = (1 - 2\delta )^{2}l + (1 - 2\delta )L. $$
The inequalities
$$ (1 - \delta )^{2} L = 2(1 - \delta ) \frac{1 - \delta }{2} L \geq 2(1 - \delta ) L^{H} \geq (1 - 2\delta )l^{H} + L^{H} \geq (1 - 2\delta )^{2}l + (1 - 2\delta )L $$
lead quickly to
$$ \delta ^{2} L \geq (1 - 2\delta )^{2}l. $$
On the other hand, the inequalities
$$ (1 - 2\delta )l + \frac{1 - \delta }{2} L \geq (1 - 2\delta )l^{H} + L^{H} \geq (1 - 2\delta )^{2}l + (1 - 2\delta )L $$
lead quickly to
$$ 2\delta (1 - 2\delta )l \geq \frac{1 - 3\delta }{2} L. $$
Hence,
$$ \frac{\delta ^{2}}{1 - 2\delta } L \geq (1 - 2\delta )l \geq \frac{1 - 3\delta }{4\delta } L, $$
and so
$$ 4\delta ^{3} \geq (1 - 2\delta )(1 - 3\delta ). $$
This contradicts our assumption $ \delta < \zeta _{2} $, as $ \zeta _{2} \leq \zeta _{4} $ where $ \zeta _{4} $ is as in Table \ref{roots}.
\end{proof}

\begin{claim} \label{cl5}
For a couple $ w < v $ with $ \lambda (H | (w, v)) \leq (1 - \delta )/2 $, let at least one of the following conditions be satisfied.

{\rm (a)} $ s(H, w, v) \leq t(H, w, v) $,

{\rm (b)} $ \lambda (H | (p, s(H, w, v))) < 1 - 2\delta $ for some $ p < s(H, w, v) $ or $ \lambda (H | (t(H, w, v), q)) < 1 - 2\delta $ for some $ q > t(H, w, v) $,

{\rm (c)} $ t(H, w, v) < s(H, w, v) $ and $ (t(H, w, v), s(H, w, v)) $ is covered by an interval $ (p, q) $ with $ \lambda (H | (p, q)) \leq \delta $.

Then there are intervals $ I_{\alpha }(a) \supset [w, v] $ and $ I_{\beta }(b) \supset [w, v] $ such that
$$ \lambda (H | I_{\alpha }(a)) \unrhd 1 - \delta , \quad \lambda (H | I_{\beta }(b)) \unrhd 1 - \delta $$
and
$$ b - a \geq v' - u - \frac{1}{1 - 2\delta } \lambda (H \cap (u, v')) $$
whenever $ w \leq u < v' \leq v $.
\end{claim}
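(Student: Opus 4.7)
The plan is to start from Claim \ref{cl2}, which supplies candidate intervals $I_{\alpha}(a)$ and $I_{\beta}(b)$ of density $\unrhd 1-\delta$, with $a \le s := s(H,w,v)$, $b \ge t := t(H,w,v)$, $v \in I_{\alpha}(a)$, and $w \in I_{\beta}(b)$. The two remaining tasks are (1) to upgrade $v \in I_{\alpha}(a)$ to the full containment $[w,v] \subset I_{\alpha}(a)$ (and symmetrically for $I_{\beta}(b)$), and (2) to verify the lower bound on $b - a$. The three hypotheses (a), (b), (c) correspond to different mechanisms for enforcing the containment.

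For (2), set
\[
 g(u,v') := (v' - u) - \frac{\lambda(H\cap(u,v'))}{1 - 2\delta}.
\]
In case (a), $s \le t$, and for $w \le u \le s$ and $t \le v' \le v$ the defining properties of $s$ and $t$ give $\lambda(H\cap(u,v')) \ge (1-2\delta)\bigl((s-u) + (v'-t)\bigr)$, hence $g(u,v') \le t - s$; sub-ranges in which $u$ or $v'$ lies in $[s,t]$ are handled by splitting at $s$ or $t$ and observing that the middle segment contributes nonpositively to $g$ thanks to the low- or high-density property inherited there. Together with $a \le s$ and $b \ge t$, this yields $b - a \ge t - s \ge g(u,v')$ uniformly. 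Cases (b) and (c) admit analogous splitting arguments with $(p,s)$, $(t,q)$, or $(p,q)$ providing the relevant boundary markers.

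For (1), the delicate step, hypothesis (a) gives the cleanest route: the chain $a \le s \le t \le v < a + \alpha$ forces $t \in I_\alpha(a)$, and then Claim \ref{cl1} (in the form $t(H,w,v) \in I_\alpha(a) \Rightarrow w \in I_\alpha(a)$) delivers $[w,v] \subset I_\alpha(a)$; the symmetric argument with $s \in I_\beta(b)$ handles $I_\beta(b)$. In case (b), where $\lambda(H\mid(p,s)) < 1-2\delta$ for some $p < s$ (necessarily $p < w$ by the $s$-property), I would re-apply Claim \ref{cl2} to the enlarged pair $(p,v)$ --- the hypothesis $\lambda(H\mid(p,v)) < 1-2\delta$ being a direct computation from the properties of $s$ --- and then use Claim \ref{cl1} in this enlarged setting to force $[w,v]$ into the new $I_\alpha(a)$. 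In case (c), where $(t,s) \subset (p,q)$ with $\lambda(H\mid(p,q)) \le \delta$, Lemma \ref{lemmae}(1) applied at endpoints of $H$ bracketing $(t,s)$ from within $(p,q)$ produces $I_\alpha(a), I_\beta(b)$ that reach across $[w,v]$, and Claim \ref{cl1} then certifies the full containment.

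The main obstacle will be the density bookkeeping in cases (b) and (c): one must verify that the reconstructed intervals still satisfy the $\unrhd$ minimality and that the resulting $b-a$ dominates $g$ globally on $\{w \le u < v' \le v\}$. This requires tracking how $s(H,p,v)$ and $t(H,p,v)$ (resp.\ the chosen endpoints inside $(p,q)$) relate to the original $s$ and $t$, and relies critically on the numerical inequality $\frac{2}{1-2\delta} > \frac{(1-2\delta)^2}{1-3\delta-2\delta^2}$ from $\delta < \zeta_2$ in Table \ref{roots}. I expect the argument to fragment into several sub-cases depending on whether $s, t$ are interior points of $H$ or boundary points and on the relative positions of $a, b, p, q$ versus $s, t$, in agreement with the paper's own warning about ``more and more curious cases which demand more and more computations.''
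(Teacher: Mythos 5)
Your overall architecture (Claim~\ref{cl2} for candidate intervals, the $f_{H,w,v}$-computation for the $b-a$ bound, Claim~\ref{cl1} for containment) matches the paper, and your treatment of case (a) is essentially the published argument. However, cases (b) and (c) have genuine gaps.

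In case (b), two ingredients are missing. First, you need the observation that $s(H,p,v) < w$ (which follows because otherwise $s(H,p,v)=s(H,w,v)$, contradicting the hypothesis $\lambda(H\mid(p,s))<1-2\delta$); this is what makes the $I_\alpha(a)$ containment trivial: $[w,v]\subset(s(H,p,v),v]\subset(a,v]\subset I_\alpha(a)$, with no use of Claim~\ref{cl1}. Second and more seriously, the $I_\beta(b)$ containment is \emph{not} reachable via Claim~\ref{cl1} alone as you propose. The interval $I_\beta(b)$ comes from applying Claim~\ref{cl2} to $(w,v)$, giving $b\ge t(H,w,v)$ and $w\in I_\beta(b)$. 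When $b\ge s(H,w,v)$, Claim~\ref{cl1} does finish the job, but in the subcase $t(H,w,v)\le b<s(H,w,v)$ neither assertion of Claim~\ref{cl1} produces $v\in I_\beta(b)$; one must invoke Claim~\ref{cl4} (which itself needs $\delta<\zeta_2\le\zeta_4$). Your proposal does not mention Claim~\ref{cl4} and does not distinguish these subcases. The $b-a$ bound in case (b) is also not the naive ``$g(u,v')\le t-s$'' estimate: one must use $b-a\ge t(H,w,v)-s(H,p,v)$ and carefully relate $f_{H,w,v}$ and $f_{H,p,v}$, relying on the sign of $\lambda(H\cap(s(H,p,v),t(H,w,v)))$.

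In case (c) your route diverges from the paper's and appears to fail. You propose to get $I_\alpha(a),I_\beta(b)$ directly from Lemma~\ref{lemmae}(1) at $s$ and $t$, giving $a\le s$ and $b\ge t$. But in case (c) one has $t<s$, so this yields only $b-a\ge t-s<0$, which does not dominate $g(u,v')$, and the containment $[w,v]\subset I_\alpha(a)$ is also not secured by Claim~\ref{cl1} from $s\in I_\alpha(a)$ alone. The paper instead reduces (c) to (b) via Claim~\ref{cl3}(1): since $\lambda(H\mid(p,q))\le\delta<(1-2\delta)/(1+2\delta)$ (here $\delta<\zeta_7$ is used), one of the density conditions defining (b) must hold, and the case collapses. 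Finally, the specific inequality $\frac{2}{1-2\delta}>\frac{(1-2\delta)^2}{1-3\delta-2\delta^2}$ you cite as ``critical'' is not used in this claim's proof at all (it is used later, in Claim~\ref{cl10}); the relevant inequalities here are the ones attached to $\zeta_4$ and $\zeta_7$.
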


\begin{proof}
(a) Assume that $ s(H, w, v) \leq t(H, w, v) $. We show that the intervals $ I_{\alpha }(a) $ and $ I_{\beta }(b) $ given by Claim \ref{cl2} work. If $ w \leq u < v' \leq v $, then
\begin{eqnarray*}
(1 - 2\delta )(b - a) & \geq & (1 - 2\delta )(t(H, w, v) - s(H, w, v)) \\
 & = & f_{H, w, v}(t(H, w, v)) - f_{H, w, v}(s(H, w, v)) \\
 & & \quad + \lambda (H \cap (s(H, w, v), t(H, w, v))) \\
 & \geq & f_{H, w, v}(t(H, w, v)) - f_{H, w, v}(s(H, w, v)) \\
 & \geq & f_{H, w, v}(v') - f_{H, w, v}(u) \\
 & = & (1 - 2\delta )(v' - u) - \lambda (H \cap (u, v')).
\end{eqnarray*}
It remains to show that $ w \in I_{\alpha }(a) $ and $ v \in I_{\beta }(b) $. We show only that $ w \in I_{\alpha }(a) $, the proof of $ v \in I_{\beta }(b) $ is the same. We have $ a \leq s(H, w, v) \leq t(H, w, v) \leq v $ and $ v \in I_{\alpha }(a) $. Hence, $ t(H, w, v) \in I_{\alpha }(a) $. Now, $ w \in I_{\alpha }(a) $ by Claim \ref{cl1}.

(b) We may assume, due to the symmetry, that $ \lambda (H | (p, s(H, w, v))) < 1 - 2\delta $ for some $ p < s(H, w, v) $. We have necessarily $ p < w $, as $ \lambda (H | (x, s(H, w, v))) \geq 1 - 2\delta $ when $ w \leq x < s(H, w, v) $. Let us realize that
$$ s(H, p, v) < w. $$
Suppose that $ s(H, p, v) \in [w, v] $. Then $ s(H, p, v) = s(H, w, v) $ (as $ f_{H, p, v} $ and $ f_{H, w, v} $ have the same points of minimum in $ [w, v] $). But this is not possible because, in such a case, $ \lambda (H | (p, s(H, w, v))) = \lambda (H | (p, s(H, p, v))) \geq 1 - 2\delta $ by a property of $ s(H, p, v) $. 

By Claim \ref{cl2} (applied on $ (p, v) $ and on $ (w, v) $), there are intervals $ I_{\alpha }(a) $ and $ I_{\beta }(b) $ such that
$$ a \leq s(H, p, v), \quad v \in I_{\alpha }(a), \quad b \geq t(H, w, v), \quad w \in I_{\beta }(b), $$
$$ \lambda (H | I_{\alpha }(a)) \unrhd 1 - \delta , \quad \lambda (H | I_{\beta }(b)) \unrhd 1 - \delta . $$
We have $ [w, v] \subset (s(H, p, v), v] \subset (a, v] \subset I_{\alpha }(a) $. We already have $ w \in I_{\beta }(b) $. If $ b \geq s(H, w, v) $, then $ s(H, w, v) \in I_{\beta }(b) $, and therefore $ v \in I_{\beta }(b) $ by Claim \ref{cl1}, so $ [w, v] \subset I_{\beta }(b) $. If $ t(H, w, v) \leq b < s(H, w, v) $, then $ [w, v] \subset I_{\beta }(b) $ by Claim \ref{cl4}.

For $ w \leq u < v' \leq v $, we obtain
\begin{eqnarray*}
(1 - 2\delta )(b - a) & \geq & (1 - 2\delta )(t(H, w, v) - s(H, p, v)) \\
 & = & f_{H, w, v}(t(H, w, v)) + \lambda (H \cap (w, t(H, w, v))) \\
 & & \quad - f_{H, p, v}(s(H, p, v)) - \lambda (H \cap (p, s(H, p, v))) \\
 & \geq & f_{H, w, v}(v') + \lambda (H \cap (w, t(H, w, v))) \\
 & & \quad - f_{H, p, v}(u) - \lambda (H \cap (p, s(H, p, v))) \\
 & = & (1 - 2\delta )v' - \lambda (H \cap (w, v')) + \lambda (H \cap (w, t(H, w, v))) \\
 & & \quad - (1 - 2\delta )u + \lambda (H \cap (p, u)) - \lambda (H \cap (p, s(H, p, v))) \\
 & = & (1 - 2\delta )(v' - u) - \lambda (H \cap (u, v')) \\
 & & \quad + \lambda (H \cap (s(H, p, v), t(H, w, v))) \\
 & \geq & (1 - 2\delta )(v' - u) - \lambda (H \cap (u, v')).
\end{eqnarray*}

(c) Since $ \delta < \zeta _{2} \leq \zeta _{7} $ where $ \zeta _{7} $ is as in Table \ref{roots}, we have $ \lambda (H | (p, q)) \leq \delta < (1 - 2\delta )/(1 + 2\delta ) $. We have $ \lambda (H | (p, s(H, w, v))) < 1 - 2\delta $ or $ \lambda (H | (t(H, w, v), q)) < 1 - 2\delta $ because, in the other case, $ \lambda (H | (p, q)) \geq (1 - 2\delta )/(1 + 2\delta ) $ by Claim \ref{cl3}(1). So (b) is satisfied.
\end{proof}

Claim \ref{cl5} says that Lemma \ref{keyprop} is proved for intervals for which one of the conditions (a)--(c) is satisfied. In what follows, we will work with a fixed couple $ w < v $ with $ \lambda (H | (w, v)) \leq (1 - \delta )/2 $ for which none of the conditions (a)--(c) is satisfied. We will denote
$$ s = s(H, w, v), \quad t = t(H, w, v), $$
$$ s' = s(\mathbb{R} \setminus H, t, s), \quad t' = t(\mathbb{R} \setminus H, t, s). $$

\begin{claim} \label{cl6}
If $ \delta < 1/4 $, then $ s' = t $ and $ t' = s $.
\end{claim}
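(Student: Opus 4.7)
The plan is short because Claim~\ref{cl6} follows directly from the definitions of $s$ and $t$ together with the hypothesis $\delta<1/4$. Since condition (a) of Claim~\ref{cl5} is assumed to fail, we have $t<s$, so $[t,s]$ is a nondegenerate interval on which $f_{\mathbb{R}\setminus H,t,s}$ is defined; this is the interval inside which we need to locate its greatest point of minimum $s'$ and its least point of maximum $t'$.

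First I would record the two uniform density bounds on subintervals of $[t,s]$. By the defining property of $t=t(H,w,v)$ (applied to $x\in(t,v]$), for every $x\in(t,s]$ we have $\lambda(H\mid(t,x))\geq 1-2\delta$, equivalently
$$
\lambda((\mathbb{R}\setminus H)\cap(t,x))\leq 2\delta(x-t).
$$
Symmetrically, by the defining property of $s=s(H,w,v)$ (applied to $x\in[w,s)$), for every $x\in[t,s)$ we have
$$
\lambda((\mathbb{R}\setminus H)\cap(x,s))\leq 2\delta(s-x).
$$

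To prove $s'=t$, I would compare $f_{\mathbb{R}\setminus H,t,s}$ at $t$ and at an arbitrary $x\in(t,s]$:
$$
f_{\mathbb{R}\setminus H,t,s}(x)-f_{\mathbb{R}\setminus H,t,s}(t)=(1-2\delta)(x-t)-\lambda((\mathbb{R}\setminus H)\cap(t,x))\geq (1-4\delta)(x-t)>0,
$$
where the last inequality uses both the first bound above and $\delta<1/4$. Hence the minimum of $f_{\mathbb{R}\setminus H,t,s}$ on $[t,s]$ is attained only at $t$, so in particular the greatest point of minimum is $t$, giving $s'=t$. The argument for $t'=s$ is symmetric: the second bound yields $f_{\mathbb{R}\setminus H,t,s}(s)-f_{\mathbb{R}\setminus H,t,s}(x)\geq(1-4\delta)(s-x)>0$ for every $x\in[t,s)$, so the maximum is attained only at $s$ and hence $t'=s$.

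I would not expect any genuine obstacle here. The entire content is that the density of $\mathbb{R}\setminus H$ on every subinterval of $(t,s)$ anchored at an endpoint is at most $2\delta$, which is strictly less than the threshold $1-2\delta$ built into $f_{\mathbb{R}\setminus H,t,s}$ precisely when $\delta<1/4$; the resulting positive gap $1-4\delta$ pins the extrema at the two endpoints.
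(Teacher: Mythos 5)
Your proof is correct and uses the same key ingredients as the paper: the density of $\mathbb{R}\setminus H$ on subintervals of $(t,s)$ anchored at $t$ (resp.~$s$) is at most $2\delta < 1-2\delta$, which pins the extrema of $f_{\mathbb{R}\setminus H,t,s}$ at the endpoints. The paper phrases this as a two-line contradiction (if $s'>t$ then $1-2\delta \leq \lambda(H\mid(t,s')) \leq 2\delta$, forcing $\delta\geq 1/4$), whereas you unfold the same inequality into an explicit lower bound $f(x)-f(t)\geq(1-4\delta)(x-t)>0$ showing the minimizer is unique; the content is identical.
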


\begin{proof}
If $ s' > t $, then, using the properties of $ s' $ and $ t $,
$$ 2\delta = 1 - (1 - 2\delta ) \geq 1 - \lambda ((\mathbb{R} \setminus H) | (t, s')) = \lambda (H | (t, s')) \geq 1 - 2\delta , $$
and so $ \delta \geq 1/4 $. Hence, $ s' = t $. In the same way, it can be shown that $ t' = s $.
\end{proof}

\begin{claim} \label{cl7}
The points $ s, t, s', t', s(\mathbb{R} \setminus H, t, t') $ and $ t(\mathbb{R} \setminus H, s', s) $ are endpoints of $ H $.
\end{claim}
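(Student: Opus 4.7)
The strategy is to extract, from the defining extremality of each point $c$ in question, a left neighborhood in which $H$ has positive relative measure and a right neighborhood in which $\mathbb{R} \setminus H$ has positive relative measure (or vice versa). This will place $c$ in $\overline{H} \cap \overline{\mathbb{R} \setminus H} = \partial H$, so that $c$ is an endpoint of $H$.

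First I handle $s$ and $t$. From the hypothesis $\lambda (H | (w, v)) \leq (1 - \delta )/2$ together with $\delta < \zeta _{2} < 1/3$ I obtain $(1 - \delta )/2 < 1 - 2\delta $, so
$$ f_{H, w, v}(v) - f_{H, w, v}(w) = (1 - 2\delta )(v - w) - \lambda (H \cap (w, v)) > 0 . $$
Hence neither $v$ is a point of minimum nor $w$ a point of maximum of $f_{H, w, v}$, which rules out $s = v$ and $t = w$; combined with the standing assumption that condition (a) fails, i.e., $t < s$, we obtain $w < t < s < v$. The defining properties of $s$ now yield $\lambda (H | (x, s)) \geq 1 - 2\delta > 0$ for $w \leq x < s$ and $\lambda (H | (s, x)) < 1 - 2\delta < 1$ for $s < x \leq v$; the first places $s$ in $\overline{H}$, the second in $\overline{\mathbb{R} \setminus H}$, so $s \in \partial H$. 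The argument for $t$ is symmetric.

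Next I handle $s'$ and $t'$. Both lie in $[t, s]$, and $t, s$ have just been identified as endpoints of $H$, so the only case requiring work is when $s'$ (resp.\ $t'$) lies strictly between $t$ and $s$. In that case the defining properties of $s' = s(\mathbb{R} \setminus H, t, s)$ give $\lambda ((\mathbb{R} \setminus H) | (x, s')) \geq 1 - 2\delta > 0$ for $t \leq x < s'$ and $\lambda (H | (s', x)) > 2\delta > 0$ for $s' < x \leq s$. The first shows $s' \in \overline{\mathbb{R} \setminus H}$ and the second shows $s' \in \overline{H}$, so $s' \in \partial H$. The argument for $t'$ is entirely analogous.

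The remaining two points are treated identically. The point $s(\mathbb{R} \setminus H, t, t')$ lies in $[t, t']$, whose endpoints are already known to be endpoints of $H$; one only needs to treat the case $t < s(\mathbb{R} \setminus H, t, t') < t'$, in which one reruns the previous density dichotomy with $(t, t')$ in place of $(t, s)$. Similarly, $t(\mathbb{R} \setminus H, s', s) \in [s', s]$ is either one of the (now established) endpoints $s'$ or $s$, or lies strictly between them and is handled by the same argument. The main subtlety is thus bookkeeping rather than computation: one must establish in the order $\{s, t\} \to \{s', t'\} \to \{s(\mathbb{R} \setminus H, t, t'), t(\mathbb{R} \setminus H, s', s)\}$ so that the degenerate cases (where an extremal point coincides with an endpoint of its defining interval) always fall on points already shown to be endpoints of $H$.
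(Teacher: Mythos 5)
Your proof is correct and essentially the same as the paper's: first establish $s$, $t$ are endpoints by showing each is in $\overline{H}$ and $\overline{\mathbb{R}\setminus H}$ via the defining extremal inequalities of $s(H,w,v)$ and $t(H,w,v)$ (the paper phrases this as "not in the interior of $H$ nor of $\mathbb{R}\setminus H$", which is equivalent), then bootstrap to $s'$, $t'$, $s(\mathbb{R}\setminus H,t,t')$, $t(\mathbb{R}\setminus H,s',s)$ by the general observation that the extremal points of $f_{\mathbb{R}\setminus H,p,q}$ are endpoints of $H$ whenever $p,q$ already are. The only cosmetic difference is that you rule out $s=v$ and $t=w$ via the strict positivity of $f_{H,w,v}(v)-f_{H,w,v}(w)$, while the paper derives the same two exclusions by a direct contradiction from the defining property of $s$ (resp.\ $t$) combined with $\lambda(H\,|\,(w,v))\leq(1-\delta)/2<1-2\delta$.
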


\begin{proof}
1) We show first that $ s $ is an endpoint of $ H $. It is enough to show that $ s $ does not lie in the interior of $ H $ nor in the interior of $ \mathbb{R} \setminus H $. We have $ s < v $ (if $ s = v $, then $ 1 - 2\delta > (1 - \delta )/2 \geq \lambda (H | (w, v)) = \lambda (H | (w, s)) \geq 1 - 2\delta $). For every $ x $ with $ s < x \leq v $, we have $ \lambda (H | (s, x)) < 1 - 2\delta $, in particular, $ (s, x) \not\subset H $. Therefore, $ s $ does not lie in the interior of $ H $. Further, we have $ w < s $, as $ w \leq t < s $. For every $ x $ with $ w \leq x < s $, we have $ \lambda (H | (x, s)) \geq 1 - 2\delta $, in particular, $ (x, s) \not\subset \mathbb{R} \setminus H $. Therefore, $ s $ does not lie in the interior of $ \mathbb{R} \setminus H $.

Hence, $ s $ is an endpoint of $ H $ and it can be shown in the same way that $ t $ is an endpoint of $ H $, too.

2) The proof can be finished via the following observation: If $ p < q $ are endpoints of $ H $, then $ s(\mathbb{R} \setminus H, p, q) $ and $ t(\mathbb{R} \setminus H, p, q) $ are endpoints of $ H $. Due to the symmetry, it is sufficient to show only that $ s(\mathbb{R} \setminus H, p, q) $ is an endpoint of $ H $. We may assume that $ p < s(\mathbb{R} \setminus H, p, q) < q $. It is enough to show that $ s(\mathbb{R} \setminus H, p, q) $ does not lie in the interior of $ H $ nor in the interior of $ \mathbb{R} \setminus H $. For every $ x $ with $ s(\mathbb{R} \setminus H, p, q) < x \leq q $, we have $ \lambda ((\mathbb{R} \setminus H) | (s(\mathbb{R} \setminus H, p, q), x)) < 1 - 2\delta $, in particular, $ (s(\mathbb{R} \setminus H, p, q), x) \not\subset \mathbb{R} \setminus H $. Therefore, $ s(\mathbb{R} \setminus H, p, q) $ does not lie in the interior of $ \mathbb{R} \setminus H $. Further, for every $ x $ with $ p \leq x < s(\mathbb{R} \setminus H, p, q) $, we have $ \lambda ((\mathbb{R} \setminus H) | (x, s(\mathbb{R} \setminus H, p, q))) \geq 1 - 2\delta $, in particular, $ (x, s(\mathbb{R} \setminus H, p, q)) \not\subset H $. Therefore, $ s(\mathbb{R} \setminus H, p, q) $ does not lie in the interior of $ H $.
\end{proof}

\begin{claim} \label{cl8}
There is an interval $ I_{\beta _{1}}(b_{1}) \supset (s, v) $ such that
$$ \lambda (H | I_{\beta _{1}}(b_{1})) \unrhd 1 - \delta $$
and
$$ b_{1} - s \geq v - u - \frac{2}{1 - \delta } \lambda (H \cap (u, v)) $$
whenever $ w \leq u < v $.
\end{claim}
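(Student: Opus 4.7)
The plan is to apply Claim \ref{cl2} to the couple $s<v$, taking the interval it produces on the right as our witness. Since $s=s(H,w,v)$, its defining property yields $\lambda(H|(s,x))<1-2\delta$ for every $x\in(s,v]$; in particular $\lambda(H|(s,v))<1-2\delta$, so Claim \ref{cl2} applies and delivers an interval $I_{\beta_1}(b_1)$ with $b_1\geq t(H,s,v)$, $s\in I_{\beta_1}(b_1)$, and $\lambda(H|I_{\beta_1}(b_1))\unrhd 1-\delta$.

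To verify $(s,v)\subset I_{\beta_1}(b_1)$, I would suppose for contradiction that $b_1+\beta_1<v$. Splitting $I_{\beta_1}(b_1)$ at $s$, using the trivial length bound on the left half and the property of $s$ on the right half (valid since $s<b_1+\beta_1<v$), a short computation yields
\[
\lambda(H|I_{\beta_1}(b_1))<1-\delta-\frac{\delta(b_1-s)}{\beta_1}.
\]
But $f_{H,s,v}(s)=0<f_{H,s,v}(v)$, so the least maximizer $t(H,s,v)$ is strictly greater than $s$, forcing $b_1>s$; the displayed inequality then contradicts $\lambda(H|I_{\beta_1}(b_1))\geq 1-\delta$.

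For the quantitative bound, the maximizing inequality $f_{H,s,v}(t(H,s,v))\geq f_{H,s,v}(v)$ rearranges to $v-t(H,s,v)\leq\lambda(H\cap(t(H,s,v),v))/(1-2\delta)$, whence
\[
b_1-s\geq t(H,s,v)-s\geq(v-s)-\tfrac{1}{1-2\delta}\lambda(H\cap(s,v)).
\]
It remains to derive $b_1-s\geq v-u-\tfrac{2}{1-\delta}\lambda(H\cap(u,v))$ for every $u\in[w,v)$. I would split into $u\in[s,v)$, where the property of $s$ yields $\lambda(H\cap(s,u))\leq(1-2\delta)(u-s)$ and hence an upper bound on the target right-hand side in terms of $\lambda(H\cap(s,v))$; and $u\in[w,s)$, where the symmetric property gives $\lambda(H\cap(u,s))\geq(1-2\delta)(s-u)$, so $\lambda(H\cap(u,v))\geq(1-2\delta)(s-u)+\lambda(H\cap(s,v))$. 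After collecting terms, both cases reduce to the arithmetic inequality $\tfrac{1}{1-2\delta}\leq\tfrac{2}{1-\delta}$, equivalent to $\delta\leq 1/3$, which is ensured by $\delta<\zeta_2$.

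The main obstacle is precisely this bookkeeping in the last step: the lower bound on $b_1-s$ is naturally phrased through $\lambda(H\cap(s,v))$, whereas the target inequality is phrased through $\lambda(H\cap(u,v))$, and one must trade between the two using the characteristic properties of $s$ on $[w,s)$ and on $(s,v]$. The factor $2$ appearing in $\tfrac{2}{1-\delta}$ in the statement is exactly what makes this trade possible for $\delta<1/3$; without it the claim would fail, and it is also the factor that feeds into the sharp constant in Lemma \ref{keyprop}.
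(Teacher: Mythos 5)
Your proposal follows the paper's route: apply Claim~\ref{cl2} to $(s,v)$ (valid since $\lambda(H|(s,v))<1-2\delta$ by the defining property of $s$), take the right-hand interval as $I_{\beta_1}(b_1)$, then verify the containment and the estimate. For the containment you compute directly by splitting at $s$; the paper instead observes $s(H,s,v)=s$ and invokes Claim~\ref{cl1}. Both work. (A minor slip: $f_{H,s,v}(s)=(1-2\delta)s$, not $0$; what you need is $f_{H,s,v}(v)>f_{H,s,v}(s)$, which does hold and gives $t(H,s,v)>s$, hence $b_1>s$.)

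The quantitative step as you describe it has a genuine gap in the case $u\in[s,v)$. You propose to substitute the property of $s$ into the target right-hand side, replacing $\lambda(H\cap(u,v))$ by $\lambda(H\cap(s,v))-(1-2\delta)(u-s)$ under the coefficient $\tfrac{2}{1-\delta}$, and you claim the comparison with $b_1-s\ge(v-s)-\tfrac{1}{1-2\delta}\lambda(H\cap(s,v))$ then collapses to $\tfrac{1}{1-2\delta}\le\tfrac{2}{1-\delta}$. It does not: collecting terms, it reduces to $(1-2\delta)(u-s)\le\lambda(H\cap(s,v))$, which fails as $u\to v$ precisely because $\lambda(H|(s,v))<1-2\delta$. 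The source of the problem is that the inequality $\lambda(H\cap(s,u))\le(1-2\delta)(u-s)$ must be applied under the coefficient $\tfrac{1}{1-2\delta}$, not $\tfrac{2}{1-\delta}$, so that the $(u-s)$ terms cancel exactly. The fix: starting from $b_1-s\ge(v-s)-\tfrac{1}{1-2\delta}\lambda(H\cap(s,v))$, split $\lambda(H\cap(s,v))=\lambda(H\cap(s,u))+\lambda(H\cap(u,v))$, apply the property of $s$ to the first summand to reach $b_1-s\ge(v-u)-\tfrac{1}{1-2\delta}\lambda(H\cap(u,v))$, and only at the very end relax $\tfrac{1}{1-2\delta}$ to $\tfrac{2}{1-\delta}$. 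Your $u\in[w,s)$ case is fine. Note also that the paper dispenses with the case split altogether: since $s$ minimizes $f_{H,w,v}$ on $[w,v]$, the single inequality $f_{H,w,v}(s)\le f_{H,w,v}(u)$ yields $b_1-s\ge(v-u)-\tfrac{1}{1-2\delta}\lambda(H\cap(u,v))$ uniformly for every $u\in[w,v)$, which is cleaner and worth adopting.
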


\begin{proof}
By Claim \ref{cl2}, there is an interval $ I_{\beta _{1}}(b_{1}) $ such that
$$ b_{1} \geq t(H, s, v), \quad s \in I_{\beta _{1}}(b_{1}), $$
$$ \lambda (H | I_{\beta _{1}}(b_{1})) \unrhd 1 - \delta . $$
By Claim \ref{cl1}, we have $ v \in I_{\beta _{1}}(b_{1}) $.

Now, if $ w \leq u < v $, then
\begin{eqnarray*}
(1 - 2\delta )(b_{1} - s) & \geq & (1 - 2\delta )(t(H, s, v) - s) \\
 & = & f_{H, s, v}(t(H, s, v)) + \lambda (H \cap (s, t(H, s, v))) \\
 & & \quad - f_{H, w, v}(s) - \lambda (H \cap (w, s)) \\
 & \geq & f_{H, s, v}(v) - f_{H, w, v}(u) - \lambda (H \cap (w, s)) \\
 & = & (1 - 2\delta )v - \lambda (H \cap (s, v)) \\
 & & \quad - (1 - 2\delta )u + \lambda (H \cap (w, u)) - \lambda (H \cap (w, s)) \\
 & = & (1 - 2\delta )(v - u) - \lambda (H \cap (u, v)),
\end{eqnarray*}
and so
$$ b_{1} - s \geq v - u - \frac{1}{1 - 2\delta }\lambda (H \cap (u, v)) \geq v - u - \frac{2}{1 - \delta }\lambda (H \cap (u, v)). $$
\end{proof}

\begin{claim} \label{cl9}
If $ s' \leq t' $, then there are intervals $ I_{\alpha }(a) $ and $ I_{\beta _{2}}(b_{2}) $ such that
$$ t \leq a \leq s', \quad (w, v) \subset I_{\alpha }(a), \quad t' \leq b_{2} \leq s, \quad (w, v) \subset I_{\beta _{2}}(b_{2}), $$
$$ \lambda (H | I_{\alpha }(a)) \unrhd 1 - \delta , \quad \lambda (H | I_{\beta _{2}}(b_{2})) \unrhd 1 - \delta . $$
\end{claim}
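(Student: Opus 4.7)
The plan is to construct $I_\alpha(a)$ via Lemma \ref{lemmae}(2) applied to the endpoints $t$ and $s'$ of $H$ (both are endpoints by Claim \ref{cl7}, with $t \leq s'$ by the standing hypothesis), and to construct $I_{\beta_2}(b_2)$ by the analogous application of Lemma \ref{lemmae}(2) to $t'$ and $s$. These two constructions mirror each other (the relevant ``inversion'' of $H$ swaps the roles of $t,s$ and of $s', t'$), so I focus on $I_\alpha(a)$ and leave $I_{\beta_2}(b_2)$ to the symmetric argument.

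Lemma \ref{lemmae}(2) with $p := t$ and $q := s'$ produces two candidate intervals $I_\alpha(a), I_\beta(b)$ with centres in $[t, s']$, each in one of two configurations: high $H$-density with a specified endpoint included, or low $H$-density with the opposite endpoint included. The favourable case is when $\lambda(H|I_\alpha(a)) \unrhd 1 - \delta$ and $t \in I_\alpha(a)$: since $a \in [t, s'] \subset [t, s]$, the hypotheses of Claim \ref{cl4} hold with $c := a, \gamma := \alpha$, and we immediately conclude $[w, v] \subset I_\alpha(a)$. If instead the first candidate is unfavourable while the second is favourable ($\lambda(H|I_\beta(b)) \unrhd 1-\delta$ with $s' \in I_\beta(b)$), a density computation exploiting the failure of condition (b) of Claim \ref{cl5} together with Lemma \ref{lemmac} and the estimate $\lambda(H|(t,s')) \leq 2\delta$ forces $t \in I_\beta(b)$, after which Claim \ref{cl4} applied to $c := b$ yields $[w, v] \subset I_\beta(b)$ and one takes $(a, \alpha) := (b, \beta)$.

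The principal obstacle is the ``both unfavourable'' scenario, in which both candidates have $\lambda(H|\cdot) \leq \delta$, with $s' \in I_\alpha(a)$ and $t \in I_\beta(b)$. Here Claim \ref{cl1} applied to $A := \mathbb{R}\setminus H$ on $[t, s]$ plays the central role: since $s' = s(\mathbb{R}\setminus H, t, s) \in I_\alpha(a)$ and $\lambda(\mathbb{R}\setminus H|I_\alpha(a)) \unrhd 1 - \delta$, we obtain $s \in I_\alpha(a)$; were $t$ also in $I_\alpha(a)$, then $(t, s) \subset I_\alpha(a)$ with $\lambda(H|I_\alpha(a)) \leq \delta$, contradicting the failure of condition (c) of Claim \ref{cl5}. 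Hence $t \notin I_\alpha(a)$, and the parallel argument gives $s' \notin I_\beta(b)$. The failure of (b), via Lemma \ref{lemmac}, then yields the quantitative bounds $\alpha \leq \frac{\delta}{1-2\delta}(a-t)$ and $\beta \leq \frac{\delta}{1-2\delta}(s-b)$. Combined with the geometric constraints $s - a < \alpha \leq a - t$ and $b - t < \beta \leq s' - b$, the plan is to show that the single interval $(b - \beta, a + \alpha) \supset (t, s)$ must itself have $\lambda(H|\cdot) \leq \delta$, again contradicting the failure of (c) and ruling out this scenario. Making this last estimate rigorous is the delicate quantitative step and depends critically on the assumption $\delta < \zeta_2$ from Table \ref{roots}.
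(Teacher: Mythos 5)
Your plan correctly identifies the key ingredients (Lemma~\ref{lemmae}(2) applied to the endpoints $t$ and $s'$, Claim~\ref{cl4} to blow up the interval to $[w,v]$, and Claim~\ref{cl1} to exploit the structure of $s'$ and $t'$), and your handling of the favourable case matches the paper. However, there is a genuine gap in your treatment of the case where $I_\alpha(a)$ is unfavourable, i.e.\ $\lambda(H|I_\alpha(a))\le\delta$ with $s'\in I_\alpha(a)$: this case is in fact \emph{impossible}, and the paper dispatches it directly, whereas you leave the contradiction hanging on an unfinished quantitative estimate.

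Concretely, once you have deduced $s\in I_\alpha(a)$ from Claim~\ref{cl1}, you say ``were $t$ also in $I_\alpha(a)$, then \dots\ contradicting the failure of (c). Hence $t\notin I_\alpha(a)$.'' But the inclusion $t\in I_\alpha(a)$ is not a hypothetical to be ruled out --- it is forced. Indeed, Lemma~\ref{lemmae}(2) gives $a\le s'$, so $a-\alpha<s'$, and you have shown $s<a+\alpha$; combined with the standing hypothesis $s'\le t'$ (and $t'\le s$) this yields $a-\alpha<s'\le t'\le s<a+\alpha$, so $t'\in I_\alpha(a)$. Now the \emph{second} bullet of Claim~\ref{cl1} applied to $t'=t(\mathbb{R}\setminus H,t,s)$ gives $t\in I_\alpha(a)$. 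So $(t,s)\subset I_\alpha(a)$ with $\lambda(H|I_\alpha(a))\le\delta$, contradicting the failure of Claim~\ref{cl5}(c) immediately. You never use the hypothesis $s'\le t'$ at this step, which is exactly why you fail to see that $t\in I_\alpha(a)$ is inescapable and end up chasing a ``both unfavourable'' scenario that does not occur. Your proposed final estimate --- that $(b-\beta,a+\alpha)$ has $H$-density at most $\delta$ --- is left unproved and is not needed; consequently, as written, the proposal does not close the argument.
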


\begin{proof}
We prove the existence of an $ I_{\alpha }(a) $ only because the existence of an $ I_{\beta _{2}}(b_{2}) $ can be proved in the same way. Note that $ t $ and $ s' $ are endpoints of $ H $ by Claim \ref{cl7}. Let $ I_{\alpha }(a) $ be the interval which Lemma \ref{lemmae}(2) gives for $ p = t $ and $ q = s' $. We consider two cases.

(i) Let $ \lambda (H | I_{\alpha }(a)) \geq 1 - \delta $ and $ t \in I_{\alpha }(a) $. We obtain $ (w, v) \subset I_{\alpha }(a) $ from Claim \ref{cl4}, and so $ I_{\alpha }(a) $ has all the required properties.

(ii) Let $ \lambda (H | I_{\alpha }(a)) \leq \delta $ and $ s' \in I_{\alpha }(a) $. Actually, we show that this case is impossible. Using Claim \ref{cl1}, we obtain $ s \in I_{\alpha }(a) $. As $ a \leq s' \leq t' \leq s $, we have also $ t' \in I_{\alpha }(a) $. Using Claim \ref{cl1} again, we obtain $ t \in I_{\alpha }(a) $. So $ I_{\alpha }(a) $ covers $ (t, s) $, which contradicts our assumption that none of the conditions of Claim \ref{cl5} is satisfied.
\end{proof}

\begin{claim} \label{cl10}
If $ t' < s' $, then there are intervals $ I_{\alpha }(a) $ and $ I_{\beta _{2}}(b_{2}) $ such that
$$ t \leq a \leq s(\mathbb{R} \setminus H, t, t'), \quad (w, v) \subset I_{\alpha }(a), \quad t(\mathbb{R} \setminus H, s', s) \leq b_{2} \leq s, \quad (w, v) \subset I_{\beta _{2}}(b_{2}), $$
$$ \lambda (H | I_{\alpha }(a)) \unrhd 1 - \delta , \quad \lambda (H | I_{\beta _{2}}(b_{2})) \unrhd 1 - \delta . $$
\end{claim}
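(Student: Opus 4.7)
The structure of the argument parallels that of Claim \ref{cl9}, with the point $u := s(\mathbb{R} \setminus H, t, t')$ taking the role previously played by $s'$ on the left, and analogously $t(\mathbb{R} \setminus H, s', s)$ on the right. By Claim \ref{cl7}, both $t$ and $u$ are endpoints of $H$, so Lemma \ref{lemmae}(2) applies to the pair $(t, u)$; it delivers an interval $I_\alpha(a)$ with $t \leq a \leq u$, the minimality property $\lambda(H | I_\varepsilon(a)) \in (\delta, 1 - \delta)$ for every $\varepsilon < \alpha$, and one of the alternatives: (i) $\lambda(H | I_\alpha(a)) \geq 1 - \delta$ with $t \in I_\alpha(a)$, or (ii) $\lambda(H | I_\alpha(a)) \leq \delta$ with $u \in I_\alpha(a)$. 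The interval $I_{\beta_2}(b_2)$ will be produced by the completely symmetric argument applied to the pair $(t(\mathbb{R} \setminus H, s', s), s)$, so it suffices to construct $I_\alpha(a)$.

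In the favorable case (i), since $t = t(H, w, v)$ and $t \leq a \leq u \leq t' \leq s = s(H, w, v)$, Claim \ref{cl4} yields $[w, v] \subset I_\alpha(a)$; combined with the minimality of $\alpha$, this provides the interval demanded by the claim. All that remains is to rule out case (ii).

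In case (ii), the inequality $\lambda(H | I_\alpha(a)) \leq \delta$ together with $\lambda(H | I_\varepsilon(a)) > \delta$ for $\varepsilon < \alpha$ yields $\lambda((\mathbb{R} \setminus H) | I_\alpha(a)) \unrhd 1 - \delta$, which allows me to chain Claim \ref{cl1} applied to $A = \mathbb{R} \setminus H$: using $u = s(\mathbb{R} \setminus H, t, t') \in I_\alpha(a)$ on the pair $(t, t')$ yields $t' \in I_\alpha(a)$, and then using $t' = t(\mathbb{R} \setminus H, t, s) \in I_\alpha(a)$ on the pair $(t, s)$ yields $t \in I_\alpha(a)$. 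If I can additionally establish $s' \in I_\alpha(a)$, then another application of Claim \ref{cl1} on the pair $(t, s)$ via $s' = s(\mathbb{R} \setminus H, t, s)$ gives $s \in I_\alpha(a)$, hence $(t, s) \subset I_\alpha(a)$ with $\lambda(H | I_\alpha(a)) \leq \delta$, contradicting the standing failure of condition (c) of Claim \ref{cl5}.

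The main obstacle is precisely this last extension. Unlike in Claim \ref{cl9}, where the required point $s'$ is furnished by the choice of $q$, here the given endpoint only guarantees $a + \alpha > u$; pushing $a + \alpha$ past $t'$ and further past $s'$ will require a delicate density estimate. The plan is to combine Lemma \ref{lemmac} applied to $\mathbb{R} \setminus H$ on the right half of $I_\alpha(a)$ (which produces $\lambda(H | (x, a + \alpha)) \leq 2\delta$ for suitable $x$) with the opposing strict bound $\lambda(H | (u, x)) > 2\delta$ for $u < x \leq t'$ and the high-density lower bound on $(a - \alpha, t)$ coming from $t = t(H, w, v)$; balancing these against the global bound $\lambda(H | I_\alpha(a)) \leq \delta$ and invoking $\delta < \zeta_2$ from Table \ref{roots} should force $a + \alpha > s'$. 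This is the technically delicate heart of the proof.
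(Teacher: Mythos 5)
Your outline of the case split is right and matches the paper: you get $I_\alpha(a)$ from Lemma \ref{lemmae}(2) applied to the endpoints $t$ and $s(\mathbb{R}\setminus H,t,t')$, you handle the alternative $\lambda(H|I_\alpha(a))\geq 1-\delta$ with $t\in I_\alpha(a)$ via Claim \ref{cl4}, and the two applications of Claim \ref{cl1} with $A=\mathbb{R}\setminus H$ correctly pull $t'$ and then $t$ into $I_\alpha(a)$ in the remaining case. But the argument stops exactly where it matters. You acknowledge that the ``delicate density estimate'' forcing $a+\alpha>s'$ is ``the technically delicate heart of the proof'' and offer only a plan for it, not a proof. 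That plan is not self-evidently executable: the quantities you name (the $\leq 2\delta$ bound from Lemma \ref{lemmac} on $\mathbb{R}\setminus H$, the $>2\delta$ bound past $u$, the high density near $t$) do not obviously ``balance'' to give $a+\alpha>s'$, and no inequality is actually derived. So this is a genuine gap, not merely an omitted routine computation.

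It is also worth noting that the paper does not try to push $a+\alpha$ past $s'$. It goes the opposite way: it deduces $a+\alpha\leq s$ from the failure of condition (c) of Claim \ref{cl5}, partitions $(a-\alpha,s)$ into four pieces of lengths $k,l,m,n$, assembles six measure inequalities (one of which rests on the quantitative Claim \ref{cl3}(2), which you do not invoke), and after a page of algebra with carefully tuned coefficients reaches a numerical contradiction using the threshold inequalities attached to $\zeta_2,\zeta_5,\zeta_7$ in Table \ref{roots}. Your proposed endgame, if it can be made to work at all, would have to reproduce an estimate of comparable strength; as written it replaces the hard part of the proof with a description of what one would like to be true.
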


\begin{proof}
We prove the existence of an $ I_{\alpha }(a) $ only because the existence of an $ I_{\beta _{2}}(b_{2}) $ can be proved in the same way. Note that $ t $ and $ s(\mathbb{R} \setminus H, t, t') $ are endpoints of $ H $ by Claim \ref{cl7}. Let $ I_{\alpha }(a) $ be the interval which Lemma \ref{lemmae}(2) gives for $ p = t $ and $ q = s(\mathbb{R} \setminus H, t, t') $. We consider two cases.

(i) Let $ \lambda (H | I_{\alpha }(a)) \geq 1 - \delta $ and $ t \in I_{\alpha }(a) $. We obtain $ (w, v) \subset I_{\alpha }(a) $ from Claim \ref{cl4}, and so $ I_{\alpha }(a) $ has all the required properties.

(ii) Let $ \lambda (H | I_{\alpha }(a)) \leq \delta $ and $ s(\mathbb{R} \setminus H, t, t') \in I_{\alpha }(a) $. Actually, we show that this case is impossible. Using Claim \ref{cl1}, we obtain $ t' \in I_{\alpha }(a) $, and, using Claim \ref{cl1} once more, we obtain $ t \in I_{\alpha }(a) $. At the same time, we have $ a + \alpha \leq s $. Indeed, if $ s \in I_{\alpha }(a) $, then $ I_{\alpha }(a) $ covers $ (t, s) $, which contradicts our assumption that none of the conditions of Claim \ref{cl5} is satisfied.

Let us put
$$ k = \lambda (a - \alpha , t), \quad k^{H} = \lambda (H \cap (a - \alpha , t)), $$
$$ l = \lambda (t, t'), \quad l^{H} = \lambda (H \cap (t, t')), $$
$$ m = \lambda (t', a + \alpha ), \quad m^{H} = \lambda (H \cap (t', a + \alpha )), $$
$$ n = \lambda (a + \alpha , s), \quad n^{H} = \lambda (H \cap (a + \alpha , s)). $$
We obtain inequalities
\begin{eqnarray}
\label{in01} m^{H} + n^{H} & \leq & 2\delta (m + n), \\
\label{in02} l^{H} + m^{H} + n^{H} & \geq & \frac{(1 - \delta )(1 - 2\delta )}{2\delta } (l + m + n), \\
\label{in03} k^{H} + l^{H} + m^{H} & \leq & \delta (k + l + m), \\
\label{in04} k & \geq & l + m - \frac{2}{1 - 2\delta } (l - l^{H}), \\
\label{in05} l^{H} + m^{H} & \geq & (1 - 2\delta )(l + m), \\
\label{in06} k^{H} + l^{H} + m^{H} + n^{H} & \geq & (1 - 2\delta )(k + l + m + n).
\end{eqnarray}
The inequality (\ref{in01}) is a property of $ t' $ (we have $ \lambda ((\mathbb{R} \setminus H) | (t', s)) \geq 1 - 2\delta $). The inequality (\ref{in02}) can be obtained from Claim \ref{cl3}(2), as $ \lambda (H | (w, s)) \geq 1 - 2\delta $ and $ \lambda (H | (t, v)) \geq 1 - 2\delta $. Further, (\ref{in03}) is nothing else than $ \lambda (H | I_{\alpha }(a)) \leq \delta $. To show (\ref{in04}), we compute first that $ l - l^{H} = \lambda ((\mathbb{R} \setminus H) \cap (t, t')) \geq \lambda ((\mathbb{R} \setminus H) \cap (t, s(\mathbb{R} \setminus H, t, t'))) \geq (1 - 2\delta ) \lambda (t, s(\mathbb{R} \setminus H, t, t')) = (1 - 2\delta )(s(\mathbb{R} \setminus H, t, t') - t) \geq (1 - 2\delta )(a - t) $. Now, we obtain (\ref{in04}) by the computation $ l + m - k = t' - t + a + \alpha - t' - t + a - \alpha = 2(a - t) \leq \frac{2}{1 - 2\delta } (l - l^{H}) $. The inequality (\ref{in05}) follows from a property of $ t $. Finally, the inequality (\ref{in06}) follows from our assumption that none of the conditions of Claim \ref{cl5} is satisfied.

Note that $ \delta \geq 1/4 $ by Claim \ref{cl6} and our assumption $ t' < s' $. We obtain from (\ref{in03}), (\ref{in04}) and (\ref{in05}) that
\begin{align*}
\delta (k + l & + m) + (1 - 3\delta )k + \frac{4\delta - 1}{1 - 2\delta } (l^{H} + m^{H}) \\
 & \geq k^{H} + l^{H} + m^{H} + (1 - 3\delta )\Big( l + m - \frac{2}{1 - 2\delta } (l - l^{H}) \Big) + (4\delta - 1)(l + m),
\end{align*}
and so
\begin{equation} \label{in07}
(1 - 2\delta )k - k^{H} + \frac{2 - 6\delta }{1 - 2\delta } (l - l^{H}) - \frac{2 - 6\delta }{1 - 2\delta } l^{H} \geq \frac{2 - 6\delta }{1 - 2\delta } m^{H} \geq 0.
\end{equation}
Further, consider the coefficients
$$ A = \frac{(1 - 2\delta )(1 - 3\delta )}{1 - 3\delta - 2\delta ^{2}}, \quad B = \frac{2\delta (4\delta - 1)}{1 - 3\delta - 2\delta ^{2}}, $$
which have the properties (note that $ 1 - 3\delta - 2\delta ^{2} > 0 $, as $ \delta < \zeta _{2} \leq \zeta _{7} $)
$$ A \geq 0, \quad B \geq 0, \quad A - B = 1, \quad 2\delta A - B \frac{(1 - \delta )(1 - 2\delta )}{2\delta } = 1 - 2\delta . $$
We obtain from (\ref{in01}) and (\ref{in02}) that
$$ 2\delta A (m + n) + B(l^{H} + m^{H} + n^{H}) \geq A(m^{H} + n^{H}) + B \frac{(1 - \delta )(1 - 2\delta )}{2\delta } (l + m + n), $$
and so
\begin{align*}
\Big( 2\delta A - B \frac{(1 - \delta )(1 - 2\delta )}{2\delta } \Big) (l + m + n) & - (A - B)(l^{H} + m^{H} + n^{H}) \\
 & - 2\delta A(l - l^{H}) + (1 - 2\delta ) A l^{H} \geq 0,
\end{align*}
i.e.,
\begin{align*}
(1 - 2\delta ) (l + m + n) & - (l^{H} + m^{H} + n^{H}) \\
 & - 2\delta \frac{(1 - 2\delta )(1 - 3\delta )}{1 - 3\delta - 2\delta ^{2}} (l - l^{H}) + \frac{(1 - 2\delta )^{2}(1 - 3\delta )}{1 - 3\delta - 2\delta ^{2}} l^{H} \geq 0.
\end{align*}
Combining this with (\ref{in07}), we obtain
$$ (1 - 2\delta )(k + l + m + n) - (k^{H} + l^{H} + m^{H} + n^{H}) + \frac{2 - 6\delta }{1 - 2\delta } (l - l^{H}) - \frac{2 - 6\delta }{1 - 2\delta } l^{H} $$
$$ - 2\delta \frac{(1 - 2\delta )(1 - 3\delta )}{1 - 3\delta - 2\delta ^{2}} (l - l^{H}) + \frac{(1 - 2\delta )^{2}(1 - 3\delta )}{1 - 3\delta - 2\delta ^{2}} l^{H} \geq 0. $$
Using (\ref{in06}), we can write
$$ \frac{2 - 6\delta }{1 - 2\delta } (l - l^{H}) - \frac{2 - 6\delta }{1 - 2\delta } l^{H} - 2\delta \frac{(1 - 2\delta )(1 - 3\delta )}{1 - 3\delta - 2\delta ^{2}} (l - l^{H}) + \frac{(1 - 2\delta )^{2}(1 - 3\delta )}{1 - 3\delta - 2\delta ^{2}} l^{H} \geq 0, $$
and so
\begin{equation} \label{in08}
\Big[ \frac{2}{1 - 2\delta } - 2\delta \frac{1 - 2\delta }{1 - 3\delta - 2\delta ^{2}} \Big] (l - l^{H}) - \Big[ \frac{2}{1 - 2\delta } - \frac{(1 - 2\delta )^{2}}{1 - 3\delta - 2\delta ^{2}} \Big] l^{H} \geq 0.
\end{equation}
Note that $ m + n > 0 $ because $ t' < s' \leq s $. Using (\ref{in02}) and $ \delta < \zeta _{2} \leq \zeta _{7} $, we compute
\begin{eqnarray*}
l^{H} + m^{H} + n^{H} & \geq & \frac{(1 - \delta )(1 - 2\delta )}{2\delta } (l + m + n) \\
 & > & \frac{(1 - \delta )(1 - 2\delta )}{2\delta } l + 2\delta (m + n) \geq \frac{(1 - \delta )(1 - 2\delta )}{2\delta } l + m^{H} + n^{H},
\end{eqnarray*}
and so
$$ l^{H} > \frac{(1 - \delta )(1 - 2\delta )}{2\delta } l. $$
It follows that $ l^{H} > 0 $ and that
\begin{equation} \label{in09}
\Big[ \frac{2\delta }{(1 - \delta )(1 - 2\delta )} - 1 \Big] l^{H} > l - l^{H}.
\end{equation}
Now, we must have
\begin{equation} \label{in10}
\frac{2}{1 - 2\delta } - 2\delta \frac{1 - 2\delta }{1 - 3\delta - 2\delta ^{2}} < 0
\end{equation}
because, in the other case, using (\ref{in08}), (\ref{in09}) and $ \delta < \zeta _{2} \leq \zeta _{5} $,
\begin{eqnarray*}
0 & \leq & \Big[ \frac{2}{1 - 2\delta } - 2\delta \frac{1 - 2\delta }{1 - 3\delta - 2\delta ^{2}} \Big] (l - l^{H}) - \Big[ \frac{2}{1 - 2\delta } - \frac{(1 - 2\delta )^{2}}{1 - 3\delta - 2\delta ^{2}} \Big] l^{H} \\
 & \leq & \Big[ \frac{2}{1 - 2\delta } - 2\delta \frac{1 - 2\delta }{1 - 3\delta - 2\delta ^{2}} \Big] \Big[ \frac{2\delta }{(1 - \delta )(1 - 2\delta )} - 1 \Big] l^{H} \\
 & & \quad - \Big[ \frac{2}{1 - 2\delta } - \frac{(1 - 2\delta )^{2}}{1 - 3\delta - 2\delta ^{2}} \Big] l^{H} \\
 & = & - \frac{4\delta ^{2} - 12\delta + 3}{(1 - \delta )(1 - 2\delta )^{2}} l^{H} < 0.
\end{eqnarray*}
So, using (\ref{in08}), (\ref{in10}) and $ \delta < \zeta _{2} $, we can compute
\begin{eqnarray*}
0 & \leq & \Big[ \frac{2}{1 - 2\delta } - 2\delta \frac{1 - 2\delta }{1 - 3\delta - 2\delta ^{2}} \Big] (l - l^{H}) - \Big[ \frac{2}{1 - 2\delta } - \frac{(1 - 2\delta )^{2}}{1 - 3\delta - 2\delta ^{2}} \Big] l^{H} \\
 & \leq & 0 - \Big[ \frac{2}{1 - 2\delta } - \frac{(1 - 2\delta )^{2}}{1 - 3\delta - 2\delta ^{2}} \Big] l^{H} < 0,
\end{eqnarray*}
which is a contradiction.
\end{proof}

\begin{claim} \label{cl11}
There are intervals $ I_{\alpha }(a) \supset (w, v) $ and $ I_{\beta _{2}}(b_{2}) \supset (w, v) $ such that $ a \leq s $ and
$$ \lambda (H | I_{\alpha }(a)) \unrhd 1 - \delta , \quad \lambda (H | I_{\beta _{2}}(b_{2})) \unrhd 1 - \delta $$
and
$$ b_{2} - a \geq \frac{1}{1 - 2\delta } \big( \lambda (H \cap (z, s)) - 2\delta \lambda (z, s) \big) $$
for every $ z $ with $ t \leq z \leq s $.
\end{claim}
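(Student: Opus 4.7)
The plan is to split the argument into two cases according to whether $s' \leq t'$ or $t' < s'$, mirroring the case split in Claims \ref{cl9} and \ref{cl10}. In Case~1 ($s' \leq t'$), I will apply Claim \ref{cl9} to obtain intervals $I_\alpha(a), I_{\beta_2}(b_2) \supset (w, v)$ with $\lambda(H \mid I_\alpha(a)) \unrhd 1-\delta$, $\lambda(H \mid I_{\beta_2}(b_2)) \unrhd 1-\delta$, $t \leq a \leq s'$, and $t' \leq b_2 \leq s$. The condition $a \leq s$ is then immediate, and $b_2 - a \geq t' - s'$. In Case~2 ($t' < s'$), Claim \ref{cl10} yields the analogous pair with $t \leq a \leq s(\mathbb{R} \setminus H, t, t')$ and $t(\mathbb{R} \setminus H, s', s) \leq b_2 \leq s$; again $a \leq s$, and the auxiliary points supply an analogous lower bound on $b_2 - a$.

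To verify the main inequality, I will fix $z \in [t, s]$ and decompose $(z, s)$ at the auxiliary points $s', t'$ (and, in Case~2, at $s(\mathbb{R} \setminus H, t, t')$ and $t(\mathbb{R} \setminus H, s', s)$ as well). On each resulting sub-interval, the density of $H$ is controlled by the extremal property of the nearest auxiliary point: for example, $\lambda(H \cap (x, s')) \leq 2\delta(s' - x)$ for $x \in [t, s')$ by the definition of $s'$, and $\lambda(H \cap (t', y)) \leq 2\delta(y - t')$ for $y \in (t', s]$ by the definition of $t'$; analogous bounds hold for the two extra points in Case~2. Summing these contributions across the decomposition yields, for $z$ in the bulk of $[t, s]$, the estimate $\lambda(H \cap (z, s)) - 2\delta(s - z) \leq (1-2\delta)(t' - s')$ in Case~1 (and the symmetric bound in Case~2), which in turn is dominated by $(1-2\delta)(b_2 - a)$, as required.

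The principal obstacle will be the sub-region $z \in [t', s]$ in Case~1 (and the symmetric sub-region in Case~2), where the direct bound $\lambda(H \cap (z, s)) - 2\delta(s - z) \leq 2\delta(z - t')$ coming from the $t'$-property may exceed $(1-2\delta)(t' - s')$, so the naive estimate on $b_2 - a$ is too weak. To address this I plan to exploit the fact that, by the $\geq 1-\delta$ branch of Lemma \ref{lemmae}(2), $s$ lies in $I_{\beta_2}(b_2)$ and $t$ lies in $I_\alpha(a)$, forcing $a$ and $b_2$ to sit near the ends of the $t$-$s$ window rather than at the extreme positions $s'$ and $t'$ of their ranges. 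Combining this positional information with the numerical assumption $\delta < \zeta_2$ from Table \ref{roots} will convert the combined estimate into the required inequality across all of $[t, s]$.
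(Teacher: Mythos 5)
Your overall structure — the case split on $s'\leq t'$ versus $t'<s'$, invoking Claims \ref{cl9} and \ref{cl10} to get the two intervals, and noting $a\leq s$ — matches the paper, and your decomposition estimate for $z$ in the left part of $[t,s]$ (say $z\leq t'$ in Case~1) is correct.

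The gap lies in your treatment of the region $z\in(t',s]$ (and its Case~2 analogue), and in particular your proposed fix is wrong in spirit. The needed estimate there does not require any positional information from Lemma \ref{lemmae}(2) beyond what Claims \ref{cl9}/\ref{cl10} already give, and it certainly does not require the numerical hypothesis $\delta<\zeta_{2}$; the paper's proof of Claim \ref{cl11} uses neither. What you are missing is that in the problematic region you must combine the $t'$-maximality with the $s'$-\emph{minimality}: the $s'$-property gives a \emph{lower} bound $\lambda(H\cap(s',z))\geq 2\delta(z-s')$ (equivalently, $\lambda((\mathbb{R}\setminus H)\,|\,(s',z))\leq 1-2\delta$) for $z>s'$, and combining this with the $t'$-property $\lambda(H\cap(t',s))\leq 2\delta(s-t')$ yields
$$\lambda(H\cap(z,s))\leq 2\delta(s-t')-\big(2\delta(z-s')-(t'-s')\big),$$
from which $\lambda(H\cap(z,s))-2\delta(s-z)\leq(1-2\delta)(t'-s')$ follows for \emph{all} $z\in[t,s]$. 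The paper packages this cleanly via the auxiliary function $f_{\mathbb{R}\setminus H,t,s}$: since $t'$ is a point of maximum and $s'$ a point of minimum of $f_{\mathbb{R}\setminus H,t,s}$ over $[t,s]$, one has $f_{\mathbb{R}\setminus H,t,s}(t')-f_{\mathbb{R}\setminus H,t,s}(s')\geq f_{\mathbb{R}\setminus H,t,s}(s)-f_{\mathbb{R}\setminus H,t,s}(z)$, and the latter equals exactly $\lambda(H\cap(z,s))-2\delta\lambda(z,s)$. With $(1-2\delta)(t'-s')=f_{\mathbb{R}\setminus H,t,s}(t')-f_{\mathbb{R}\setminus H,t,s}(s')+\lambda((s',t')\setminus H)\geq f_{\mathbb{R}\setminus H,t,s}(t')-f_{\mathbb{R}\setminus H,t,s}(s')$, the chain closes with no case split on $z$. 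The Case~2 computation is analogous, using $f_{\mathbb{R}\setminus H,s',s}$ together with $f_{\mathbb{R}\setminus H,t,s}$. You should replace the third paragraph of your plan by this two-sided use of the extremal properties rather than pursuing Lemma \ref{lemmae}(2) and $\zeta_{2}$.
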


\begin{proof}
1) If $ s' \leq t' $, then we take the intervals $ I_{\alpha }(a) $ and $ I_{\beta _{2}}(b_{2}) $ from Claim \ref{cl9}. We have $ a \leq s' \leq s $. For a $ z $ with $ t \leq z \leq s $, we have
\begin{eqnarray*}
(1 - 2\delta )(b_{2} - a) & \geq & (1 - 2\delta )(t' - s') \\
& = & f_{\mathbb{R} \setminus H, t, s}(t') - f_{\mathbb{R} \setminus H, t, s}(s') + \lambda ((s', t') \setminus H) \\
 & \geq & f_{\mathbb{R} \setminus H, t, s}(t') - f_{\mathbb{R} \setminus H, t, s}(s') \\
 & \geq & f_{\mathbb{R} \setminus H, t, s}(s) - f_{\mathbb{R} \setminus H, t, s}(z) \\
 & = & (1 - 2\delta )(s - z) - \lambda ((z, s) \setminus H) \\
 & = & (1 - 2\delta )(s - z) - \lambda (z, s) + \lambda (H \cap (z, s)) \\
 & = & \lambda (H \cap (z, s)) - 2\delta \lambda (z, s).
\end{eqnarray*}

2) If $ t' < s' $, then we take the intervals $ I_{\alpha }(a) $ and $ I_{\beta _{2}}(b_{2}) $ from Claim \ref{cl10}. We have $ a \leq s(\mathbb{R} \setminus H, t, t') \leq t' < s' \leq s $. For a $ z $ with $ t \leq z \leq s $, we have
\begin{eqnarray*}
(1 - 2\delta )(b_{2} - a) & \geq & (1 - 2\delta )(t(\mathbb{R} \setminus H, s', s) - s') \\
 & = & f_{\mathbb{R} \setminus H, s', s}(t(\mathbb{R} \setminus H, s', s)) + \lambda ((s', t(\mathbb{R} \setminus H, s', s)) \setminus H) \\
 & & \quad - f_{\mathbb{R} \setminus H, t, s}(s') - \lambda ((t, s') \setminus H) \\
 & \geq & f_{\mathbb{R} \setminus H, s', s}(s) - f_{\mathbb{R} \setminus H, t, s}(z) - \lambda ((t, s') \setminus H) \\
 & = & (1 - 2\delta )s - \lambda ((s', s) \setminus H) \\
 & & \quad - (1 - 2\delta )z + \lambda ((t, z) \setminus H) - \lambda ((t, s') \setminus H) \\
 & = & (1 - 2\delta )(s - z) - \lambda ((z, s) \setminus H) \\
 & = & \lambda (H \cap (z, s)) - 2\delta \lambda (z, s).
\end{eqnarray*}
\end{proof}

The following claim completes the proof of Lemma \ref{keyprop}.

\begin{claim} \label{cl12}
There is an interval $ I_{\alpha }(a) \supset (w, v) $ such that
$$ \lambda (H | I_{\alpha }(a)) \unrhd 1 - \delta $$
and, for every $ u \in [w, v) $, there is an interval $ I_{\beta }(b) \supset (u, v) $ such that
$$ \lambda (H | I_{\beta }(b)) \unrhd 1 - \delta $$
and
$$ b - a \geq v - u - \frac{2}{1 - \delta } \lambda (H \cap (u, v)). $$
\end{claim}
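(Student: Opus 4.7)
The plan is to retain the interval $I_{\alpha}(a)$ furnished by Claim \ref{cl11} and, for each $u \in [w, v)$, to choose $I_{\beta}(b)$ as either $I_{\beta_{1}}(b_{1})$ from Claim \ref{cl8} or $I_{\beta_{2}}(b_{2})$ from Claim \ref{cl11}, depending on how close $u$ is to $v$. Since $(s, v) \subset I_{\beta_{1}}(b_{1})$, the containment $(u, v) \subset I_{\beta_{1}}(b_{1})$ is equivalent to $u \geq b_{1} - \beta_{1}$, and this is the natural threshold for the case split. Note also that $b_{1} - \beta_{1} \leq s$.

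In the easy case $u \geq b_{1} - \beta_{1}$, I would take $b = b_{1}$ and $\beta = \beta_{1}$. The containment is immediate, and since $a \leq s$ by Claim \ref{cl11}, the required estimate follows at once from Claim \ref{cl8}:
$$ b_{1} - a \;\geq\; b_{1} - s \;\geq\; v - u - \frac{2}{1 - \delta}\lambda(H \cap (u, v)). $$

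In the remaining case $u < b_{1} - \beta_{1}$ (which in particular forces $u < s$), I would take $b = b_{2}$ and $\beta = \beta_{2}$. The containment $(u, v) \subset (w, v) \subset I_{\beta_{2}}(b_{2})$ is automatic. What remains is the inequality $b_{2} - a \geq v - u - \frac{2}{1 - \delta}\lambda(H \cap (u, v))$. To establish it, I plan to apply Claim \ref{cl11} with $z := \max\{t, u\}$, obtaining
$$ b_{2} - a \;\geq\; \frac{\lambda(H \cap (z, s)) - 2\delta(s - z)}{1 - 2\delta}, $$
and then compare this with the target by decomposing $\lambda(H \cap (u, v))$ across the points $u \leq t \leq s \leq v$ (or $t \leq u \leq s \leq v$) and inserting the density lower bounds $\lambda(H \cap (u, s)) \geq (1 - 2\delta)(s - u)$ and $\lambda(H \cap (t, v)) \geq (1 - 2\delta)(v - t)$. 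Both of these bounds hold because we are in the regime where none of the conditions (a)--(c) of Claim \ref{cl5} is satisfied; in particular the negation of~(b) supplies the required densities on $(u, s)$ and $(t, v)$.

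The main obstacle is precisely this second case. The Claim \ref{cl11} bound only controls the central segment $(t, s)$, whereas the target concerns the full interval $(u, v)$, so one must transfer the estimate across the extra pieces $(u, t)$ (or $(t, u)$) and $(s, v)$ using the density information above. I expect that, as in the proof of Claim \ref{cl10}, a polynomial estimate in $\delta$ relying on the hypothesis $\delta < \zeta_{2}$ (and the consequent inequalities collected in Table \ref{roots}) will be what clinches the argument.
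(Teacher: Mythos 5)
Your case split at $u \gtrless b_{1} - \beta_{1}$ and your treatment of the easy case coincide with the paper. The gap is in the hard case $w \leq u < b_{1} - \beta_{1}$: the choice $z = \max\{t, u\}$ in Claim~\ref{cl11} does not deliver the second density estimate needed for the final comparison when $t < u$. The paper instead takes $z = b_{1} - \beta_{1}$ for every such $u$ (after checking $t \leq b_{1} - \beta_{1}$ via Claim~\ref{cl1}: $w < b_{1} - \beta_{1}$ forces $w \notin I_{\beta_{1}}(b_{1})$, hence $t \notin I_{\beta_{1}}(b_{1})$), and the decisive ingredient is the bound $\lambda(H | (b_{1}-\beta_{1}, v)) \geq 1 - 2\delta$ obtained from Lemma~\ref{lemmac} applied to $I_{\beta_{1}}(b_{1})$ --- not from the defining property of $t$. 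In the three-piece decomposition $k = \lambda(u, b_{1}-\beta_{1})$, $l = \lambda(b_{1}-\beta_{1}, s)$, $m = \lambda(s, v)$, it is precisely this bound that yields $l^{H} + m^{H} \geq (1-2\delta)(l+m)$, the second of the two overlapping inequalities (alongside $k^{H} + l^{H} \geq (1-2\delta)(k+l)$ from the property of $s$) that drive the computation.

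With $z = u$, as in your plan when $t < u$, no analogous bound on $(z, v)$ is available: the defining property of $t$ controls only intervals whose left endpoint is exactly $t$, and there is no reason $\lambda(H | (u, v)) \geq 1 - 2\delta$ should hold for a $u$ strictly between $t$ and $s$. The ingredients you list --- $\lambda(H \cap (u,s)) \geq (1-2\delta)(s-u)$, $\lambda(H \cap (t,v)) \geq (1-2\delta)(v-t)$, and the reduction to the case $\frac{2}{1-\delta}\lambda(H \cap (u,v)) \leq v-u$ --- are not sufficient: take $\lambda(H \cap (u,s)) = (1-2\delta)(s-u)$ and $\lambda(H \cap (s,v)) = 0$, with $v-s$ very large and $u-t$ correspondingly large (so the $t$-bound stays satisfied). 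Then $\frac{1}{1-2\delta}\bigl(\lambda(H \cap (u,s)) - 2\delta(s-u)\bigr)$ is a fixed bounded quantity (even negative when $\delta > 1/4$), while $v - u - \frac{2}{1-\delta}\lambda(H \cap (u,v))$ grows without bound. Replacing $z$ by $b_{1} - \beta_{1}$ and invoking Lemma~\ref{lemmac} on $I_{\beta_{1}}(b_{1})$ repairs the argument; the polynomial estimate you anticipate (split into $\delta \leq 1/4$ and $\delta > 1/4$, with $\delta < \zeta_{7}$ used in the latter) then closes exactly as in the paper.
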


\begin{proof}
Let $ I_{\beta _{1}}(b_{1}) $ be as in Claim \ref{cl8} and let $ I_{\alpha }(a) $ and $ I_{\beta _{2}}(b_{2}) $ be as in Claim \ref{cl11}. It is enough to show that, for every $ u \in [w, v) $, there is an $ i \in \{ 1, 2 \} $ such that $ (u, v) \subset I_{\beta _{i}}(b_{i}) $ and
$$ b_{i} - a \geq v - u - \frac{2}{1 - \delta } \lambda (H \cap (u, v)). $$
Since $ b_{1} - a \geq b_{1} - s $, we can choose $ i = 1 $ in the case that $ u \geq b_{1} - \beta _{1} $. It remains to show that we can choose $ i = 2 $ in the opposite case, i.e., we have to show that
$$ b_{2} - a \geq v - u - \frac{2}{1 - \delta } \lambda (H \cap (u, v)) $$
whenever $ w \leq u < b_{1} - \beta _{1} $.

So, let $ u $ be such that $ w \leq u < b_{1} - \beta _{1} $. Put
$$ k = \lambda (u, b_{1} - \beta _{1}), \quad k^{H} = \lambda (H \cap (u, b_{1} - \beta _{1})), $$
$$ l = \lambda (b_{1} - \beta _{1}, s), \quad l^{H} = \lambda (H \cap (b_{1} - \beta _{1}, s)), $$
$$ m = \lambda (s, v), \quad m^{H} = \lambda (H \cap (s, v)), $$
$$ L = \lambda (u, v), \quad L^{H} = \lambda (H \cap (u, v)). $$
We obtain inequalities
$$ k^{H} + l^{H} \geq (1 - 2\delta )(k + l), \quad l^{H} + m^{H} \geq (1 - 2\delta )(l + m), $$
$$ b_{2} - a \geq \frac{1}{1 - 2\delta } (l^{H} - 2\delta l). $$
The first inequality is a property of $ s $ and the second inequality follows from Lemma \ref{lemmac}. The third inequality is the inequality from Claim \ref{cl11}, we only need to check that $ t \leq b_{1} - \beta _{1} $. Since $ w \leq u < b_{1} - \beta _{1} $, we have $ w \notin I_{\beta _{1}}(b_{1}) $. Consequently, $ t \notin I_{\beta _{1}}(b_{1}) $ by Claim \ref{cl1}, and so $ t \leq b_{1} - \beta _{1} $.

We need to show that
$$ b_{2} - a \geq L - \frac{2}{1 - \delta }L^{H}. $$
We may assume that
$$ \frac{2}{1 - \delta }L^{H} \leq L $$
because, in the other case, applying the inequality from Claim \ref{cl11} on $ z = s $, we obtain $ b_{2} - a \geq 0 > L - \frac{2}{1 - \delta }L^{H} $. We compute
$$ L^{H} + l^{H} = k^{H} + 2l^{H} + m^{H} \geq (1 - 2\delta )(k + 2l + m) = (1 - 2\delta )L + (1 - 2\delta )l, $$
$$ 2L^{H} \geq L^{H} + l^{H} \geq (1 - 2\delta )L + (1 - 2\delta )l, $$
and so
$$ l^{H} - (1 - 2\delta )l \geq (1 - 2\delta )L - L^{H}, $$
$$ l \leq \frac{2}{1 - 2\delta }L^{H} - L. $$
If $ \delta \leq 1/4 $, then we compute
\begin{eqnarray*}
b_{2} - a & \geq & \frac{1}{1 - 2\delta } (l^{H} - 2\delta l) \geq \frac{1}{1 - 2\delta } (l^{H} - (1 - 2\delta )l) \\
 & \geq & \frac{1}{1 - 2\delta } ((1 - 2\delta )L - L^{H}) = L - \frac{1}{1 - 2\delta }L^{H} \geq L - \frac{2}{1 - \delta }L^{H}.
\end{eqnarray*}
If $ \delta > 1/4 $, then we compute
\begin{eqnarray*}
b_{2} - a & \geq & \frac{1}{1 - 2\delta } (l^{H} - 2\delta l) = \frac{1}{1 - 2\delta } \big( l^{H} - (1 - 2\delta )l - (4\delta - 1)l \big) \\
 & \geq & \frac{1}{1 - 2\delta } \Big[ (1 - 2\delta )L - L^{H} - (4\delta - 1)\Big( \frac{2}{1 - 2\delta }L^{H} - L \Big) \Big] \\
 & = & \frac{1}{1 - 2\delta } \Big( 2\delta L - \frac{6\delta - 1}{1 - 2\delta }L^{H} \Big) .
\end{eqnarray*}
It remains to show that
$$ \frac{1}{1 - 2\delta } \Big( 2\delta L - \frac{6\delta - 1}{1 - 2\delta }L^{H} \Big) \geq L - \frac{2}{1 - \delta }L^{H}, $$
i.e., that
$$ \frac{-14\delta ^{2} + 15\delta - 3}{(4\delta - 1)(1 - 2\delta )(1 - \delta )}L^{H} \leq L. $$
Since $ \delta < \zeta _{2} \leq \zeta _{7} $ where $ \zeta _{7} $ is as in Table \ref{roots}, the desired inequality can be obtained from
$$ \frac{-14\delta ^{2} + 15\delta - 3}{(4\delta - 1)(1 - 2\delta )(1 - \delta )}L^{H} \leq \frac{2}{1 - \delta }L^{H} \leq L. $$
\end{proof}

\section{Lower bound}

In this section, we prove the desired lower bound on $ \delta _{\mathcal{K}} $. Recall that Szenes \cite{szenes} proved that $ \delta _{\mathcal{K}} \geq 0,262978... $ where the exact value of the bound is a root of the polynomial $ 4x^{3} + 2x^{2} + 3x - 1 $. Broadly speaking, the reason why Szenes's bound is not the best possible is that the conclusion of \cite[Lemma 13]{szenes} is not as strong as we need. We prove an analogue of this lemma and, under an additional assumption (C), we obtain the \textquotedblleft right\textquotedblright {} conclusion.

\begin{lemma} \label{lemmaxy}
Let $ H \subset \mathbb{R} $ be a measurable set and let $ 0 < \delta < \zeta _{6} $ where $ \zeta _{6} $ is as in Table \ref{roots}. Let $ p < q $ be such that the interval $ (p, q) $ is covered by a finite number of intervals $ I $ with $ \lambda (H | I) \geq 1 - \delta $ and the following condition is satisfied.

$$
\textrm{\rm (C)}
\left\{ \parbox{11cm}{
If $ p \leq w < v \leq q $ and $ \lambda (H | (w, v)) \leq (1 - \delta )/2 $, then there is an interval $ I_{\alpha }(a) \supset (w, v) $ such that
$$ \lambda (H | I_{\alpha }(a)) \unrhd 1 - \delta  $$
and, for every $ u \in [w, v) $, there is an interval $ I_{\beta }(b) \supset (u, v) $ such that
$$ \lambda (H | I_{\beta }(b)) \unrhd 1 - \delta $$
and
$$ b - a \geq v - u - \frac{2}{1 - \delta } \lambda (H \cap (u, v)). $$
}
\right.
$$

Then
$$ \lambda H \geq \frac{(1 - \delta)(1 + 2\delta)}{1 + 3\delta} (q - p). $$
\end{lemma}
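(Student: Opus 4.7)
My approach follows the blueprint sketched in Section 3. The first task is to convert the hypothesis---a finite cover of $(p,q)$ by high-density intervals together with condition (C)---into a concrete combinatorial structure on $(p, q)$. Specifically, I plan to construct points
$$p \leq v_{0} < u_{1} < v_{1} < \dots < u_{n} < v_{n} < u_{n+1} \leq q$$
together with, for each $k = 1, \dots, n$, a pair of intervals $I_{\alpha _{k}}(a_{k})$, $I_{\beta _{k}}(b_{k})$ satisfying $\lambda(H | I_{\alpha _{k}}(a_{k})) \geq 1 - \delta$, $\lambda(H | I_{\beta _{k}}(b_{k})) \geq 1 - \delta$, and $I_{\alpha _{k}}(a_{k}) \cup I_{\beta _{k}}(b_{k}) = (v_{k-1}, u_{k+1})$. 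The intervals $(u_{k}, v_{k})$ play the role of ``gaps'' on which $\lambda(H | (u_{k}, v_{k})) \leq (1-\delta)/2$, while the complementary pieces $[v_{k-1}, u_{k}] \cup [v_{k}, u_{k+1}]$ are the ``blocks'' of high density. Starting from the given finite cover, I would refine it to a minimal subcover and split at points where the minimality forces density transitions; the resulting thresholds define the $u_{k}$ and $v_{k}$. Condition (C) applied to each gap $(u_{k}, v_{k})$ then furnishes the intervals $I_{\alpha _{k}}(a_{k}), I_{\beta _{k}}(b_{k})$ and, crucially, gives the center-separation inequality
$$b_{k} - a_{k} \geq v_{k} - u_{k} - \frac{2}{1 - \delta} \lambda(H \cap (u_{k}, v_{k})).$$

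Next, I would translate the density and center-separation estimates into a lower bound on $\lambda(H \cap (v_{k-1}, u_{k+1}))$ for each block. On each of $I_{\alpha _{k}}(a_{k})$ and $I_{\beta _{k}}(b_{k})$ one has $\lambda(H \cap I) \geq (1-\delta)\lambda(I)$, while the two intervals overlap in a subinterval whose length $2\alpha _{k} + 2\beta _{k} - (u_{k+1} - v_{k-1})$ is controlled, via the estimate on $b_{k} - a_{k}$, by $\lambda(H \cap (u_{k}, v_{k}))$. Combining these with inclusion--exclusion on $(v_{k-1}, u_{k+1})$ should yield a per-block inequality relating $\lambda(H \cap (v_{k-1}, u_{k+1}))$, the block length $u_{k+1} - v_{k-1}$, and the gap mass $\lambda(H \cap (u_{k}, v_{k}))$.

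The endgame is to sum these per-block estimates over $k = 1, \dots, n$. Since each gap $(u_{k}, v_{k})$ lies inside two consecutive blocks, the bookkeeping collapses into an inequality of the shape $\lambda H \geq A(q - p) - B\lambda H$, which rearranges to $\lambda H \geq \frac{A}{1+B}(q - p)$. The hypothesis $\delta < \zeta _{6}$ from Table \ref{roots} is precisely what is needed to keep the coefficients with the right signs and to produce the sharp constant $\frac{(1 - \delta)(1 + 2\delta)}{1 + 3\delta}$.

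The main obstacle I expect is the construction step: producing the alternating system of points and intervals so that $(p, q)$ is cleanly partitioned into gaps and blocks, each gap has density at most $(1 - \delta)/2$ so that (C) applies, and the two intervals $I_{\alpha _{k}}(a_{k}), I_{\beta _{k}}(b_{k})$ supplied by (C) truly cover $(v_{k-1}, u_{k+1})$. Extracting this clean skeleton from just a finite high-density cover together with the abstract condition (C) is the combinatorial heart of the argument, and it is here that one must argue by minimality and iteratively ``next-gap'' style choices. The remaining arithmetic, though delicate and tied to the specific value of $\zeta _{6}$, is essentially linear bookkeeping once the geometric skeleton is in place.
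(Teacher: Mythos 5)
Your outline reproduces the informal sketch from Section~3 of the paper: build an alternating skeleton of gaps $(u_k, v_k)$ and intervals $I_{\alpha_k}(a_k), I_{\beta_k}(b_k)$ with $I_{\alpha_k}(a_k)\cup I_{\beta_k}(b_k) = (v_{k-1}, u_{k+1})$, and use condition (C) to control the center separation $b_k - a_k$. That much is the right skeleton. But the step you call ``essentially linear bookkeeping'' is where the proof actually lives, and your description of it does not work. A per-block inclusion--exclusion on $(v_{k-1},u_{k+1})$ gives (after plugging in the center-separation bound) something of the form
$$\lambda(H\cap(v_{k-1},u_{k+1})) \geq (1-2\delta)\bigl[(u_k-v_{k-1})+(u_{k+1}-v_k)\bigr] + (v_k-u_k) - \tfrac{4\delta}{1-\delta}\,\lambda(H\cap(u_k,v_k)),$$
and when you sum over $k$ the pieces counted twice are the \emph{non-gap} pieces $(v_{k-1},u_k)$, not the gaps as you state. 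Separating the contributions, the summed inequality is equivalent (modulo boundary terms) to
$$2b^{H} + \tfrac{1+3\delta}{1-\delta}\,g^{H} \;\geq\; 2(1-2\delta)\,b + g,$$
where $b, g$ are the total non-gap and gap lengths and $b^{H}, g^{H}$ the corresponding $H$-masses. Together with $b^{H}\leq b$ and $g^{H}\leq g$ this does \emph{not} imply $b^{H}+g^{H}\geq \frac{(1-\delta)(1+2\delta)}{1+3\delta}(b+g)$: taking $g\to 0$ and $b^{H}=(1-2\delta)b$ satisfies the constraint but gives ratio $1-2\delta$, which is strictly less than the target. So the simple ``sum, then solve $\lambda H \geq A(q-p) - B\lambda H$'' collapse you anticipate is not available.

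What the paper actually does in Claims \ref{clc}--\ref{clxy} is assign free weights $\omega_i\in[0,2]$ to the two intervals of block $i$ in a way that depends on whether the next block's left interval $I_{\beta_{i+1}}$ overlaps $I_{\alpha_i}$ (the sets $X$ and $Y$), and then proves the resulting error term $\sum\kappa_i + \sum\bigl[\frac{1}{2\delta}(\psi_i^+ + \psi_{i+1}^-) + \omega_i\chi_i^+ + (2-\omega_{i+1})\chi_{i+1}^-\bigr]$ is nonnegative. This is exactly where $\zeta_6$ and the extra structural properties (e), (f) of the skeleton are used; none of that is visible in your plan. Relatedly, the skeleton construction (Claims \ref{cla}, \ref{clb}) is not just a ``minimal subcover with density transitions'' -- it requires choosing a distinguished left endpoint $v'$, reshaping the two intervals supplied by (C) so that $a_i-\alpha_i = v_{i-1}$ and $b_i+\beta_i = u_{i+1}$ while preserving properties (b)--(f), and treating the case $\lambda(H|(u_i,v_i)) > (1-\delta)/2$ separately. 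Without these ingredients, the proposal stops at the same level of detail as the paper's own heuristic Section~3, and the claimed ``bookkeeping'' step has a genuine gap.
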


The proof of the lemma will be provided in several steps (Claims \ref{cla}--\ref{clxy}). If the measure $ \lambda H $ is infinite, then the inequality is clearly satisfied. We will assume that the measure $ \lambda H $ is finite.

\begin{claim} \label{cla}
If $ p \leq w < v \leq q $ and $ \lambda (H | (w, v)) \leq (1 - \delta )/2 $, then there is a point $ v' \leq w $ such that, for every $ u \in [w, v) $, there are intervals $ I_{\alpha }(a) $ and $ I_{\beta }(b) $ such that $ v' = a - \alpha $ and

{\rm (a)} $ (u, v) \subset I_{\alpha }(a), (u, v) \subset I_{\beta }(b), $

{\rm (b)} $ a - \alpha \leq b - \beta , a + \alpha \leq b + \beta , $

{\rm (c)} $ \lambda (H | I_{\alpha }(a)) \geq 1 - \delta , \lambda (H | I_{\beta }(b)) \geq 1 - \delta , $

{\rm (d)} $ b - a \geq v - u - \frac{2}{1 - \delta } \lambda (H \cap (u, v)), $

{\rm (e)} $ \lambda (H | (u, a + \alpha )) \geq 1 - 2\delta , \lambda (H | (b - \beta , v)) \geq 1 - 2\delta , $

{\rm (f)} if $ a - \alpha < b - \beta $, then $ \lambda (H | (a - \alpha , b - \beta )) \geq 1 - 2\delta $, if $ a + \alpha < b + \beta $, then $ \lambda (H | (a + \alpha , b + \beta )) \geq 1 - 2\delta . $
\end{claim}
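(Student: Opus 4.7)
My plan is to invoke hypothesis (C) of Lemma \ref{lemmaxy} twice and then verify the listed properties (a)--(f), with Lemma \ref{lemmac} handling the one-sided density inequalities (e) and (f) and with (b) requiring the real work. First, I would apply (C) to the pair $(w,v)$---whose density hypothesis is exactly our assumption---to produce an interval $I_\alpha(a) \supset (w,v)$ with $\lambda(H|I_\alpha(a)) \unrhd 1-\delta$, and set $v' := a - \alpha$. Since $I_\alpha(a)$ depends only on $w$ and $v$, so does $v'$, and the containment $(w,v) \subset I_\alpha(a)$ forces $v' \leq w$. Then for each $u \in [w,v)$ I would apply (C) again to obtain an interval $I_\beta(b) \supset (u,v)$ with $\lambda(H|I_\beta(b)) \unrhd 1-\delta$ and $b - a \geq v - u - \frac{2}{1-\delta}\lambda(H\cap(u,v))$. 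Property (a) is then immediate from $(u,v) \subset (w,v) \subset I_\alpha(a)$ and $(u,v) \subset I_\beta(b)$, and properties (c) and (d) are directly the conclusions of (C).

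For (e) and (f) I would invoke Lemma \ref{lemmac}. The $\unrhd$ condition on $I_\alpha(a)$ supplies that lemma's monotonicity hypothesis, so applying it at $s = u$---which lies in $[a-\alpha, a+\alpha)$ because $a - \alpha \leq w \leq u < v \leq a + \alpha$---gives $\lambda(H|(u, a+\alpha)) \geq 1-2\delta$, and applying it symmetrically to $I_\beta(b)$ at $t = v$ gives the other half of (e). For (f), when $a - \alpha < b - \beta$, the point $b - \beta$ satisfies $a - \alpha < b - \beta \leq u < v \leq a + \alpha$ and so lies strictly inside $I_\alpha(a)$; Lemma \ref{lemmac} applied to $I_\alpha(a)$ at $t = b - \beta$ then yields $\lambda(H|(a-\alpha, b-\beta)) \geq 1-2\delta$. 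The case $a + \alpha < b + \beta$ is handled symmetrically by applying Lemma \ref{lemmac} to $I_\beta(b)$ at $s = a + \alpha$.

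The main obstacle is property (b), namely the orderings $a - \alpha \leq b - \beta$ and $a + \alpha \leq b + \beta$, or equivalently $|\alpha - \beta| \leq b - a$, which are \emph{not} automatic from the bare output of (C) for arbitrary choices of the two intervals. To secure (b) I expect to have to refine the choice of $I_\beta(b)$ returned by (C). The strategy is to exploit the equality $\lambda(H \cap I_\alpha(a)) = (1-\delta)\cdot 2\alpha$ supplied by the $\unrhd$ condition together with $\lambda(H|(w,v)) \leq (1-\delta)/2$: these force $H$ to be dense on $I_\alpha(a) \setminus (w,v)$, so any $I_\beta(b)$ furnished by (C) whose endpoints violate (b) can be replaced by a rightward-shifted or enlarged candidate that still has density $\unrhd 1-\delta$, still contains $(u,v)$, and still satisfies the distance bound in (d). Carrying this extremality argument out rigorously---likely by choosing, among the intervals satisfying the conclusion of (C), one with maximal $b + \beta$ and then deriving $b - \beta \geq a - \alpha$ from the $\unrhd$-minimality at $a$---is where I expect the genuine technical work to lie, in the spirit of the selections performed in Claims \ref{cl9}--\ref{cl11} of the previous section.
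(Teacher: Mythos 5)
Your outline correctly identifies where the work lies (property (b)), and the observations that (a), (c), (d) are immediate from (C) and that Lemma~\ref{lemmac} handles (e)--(f) for intervals with the $\unrhd$ property are sound. But the proposal has a genuine gap: the choice $v' := a - \alpha$ with $I_\alpha(a)$ the raw interval from (C) is not the right one, and the sketched fix for (b) does not work. If for some $u$ the raw $I_\beta(b)$ from (C) has $b-\beta < a-\alpha = v'$, you would need to trim $I_\beta(b)$ on the left at $v'$; but that trimming preserves $\lambda(H\,|\,\cdot) \geq 1-\delta$ only if the discarded piece $(b-\beta, v')$ has density at most $1-\delta$, which there is no reason to expect for $v' = a-\alpha$. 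Conversely, shifting/enlarging $I_\beta(b)$ to the right destroys the $\unrhd$ property you are relying on for (e) and (f), and you give no replacement argument. Your final heuristic --- picking $I_\beta(b)$ with maximal $b+\beta$ and invoking ``$\unrhd$-minimality at $a$'' --- does not yield $b-\beta\geq a-\alpha$, since $\unrhd$-minimality constrains intervals centered at $a$, not arbitrary competing intervals.

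The paper's proof resolves exactly this by choosing $v'$ not as $a^0-\alpha^0$ but as a maximizer of $x \mapsto (1-\delta)x + \lambda(H \cap (x, a^0-\alpha^0))$ on $(-\infty, a^0-\alpha^0]$. This $v'$ has the two-sided property that $\lambda(H\,|\,(x,v'))\leq 1-\delta$ for $x<v'$ and $\lambda(H\,|\,(v',x))\geq 1-\delta$ for $v'<x\leq a^0-\alpha^0$. With this choice one can simultaneously \emph{extend} $I_{\alpha^0}(a^0)$ to $(v', a^0+\alpha^0)$ (the added piece has density $\geq 1-\delta$, so density is preserved) and \emph{trim} $I_{\beta^0}(b^0)$ to $I_{\beta^0}(b^0)\setminus(-\infty,v']$ (the removed piece has density $\leq 1-\delta$, so density is preserved), which forces $a-\alpha = v' \leq b-\beta$ at once. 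There is then a second, $u$-dependent adjustment on the right to secure $a+\alpha\leq b+\beta$, with a case split on whether $\lambda(H\,|\,(b^1+\beta^1,a^1+\alpha^1))$ is $\leq 1-\delta$ or $>1-\delta$; and (e)--(f) are then verified by applying Lemma~\ref{lemmac} to the \emph{original} $\unrhd$-intervals $I_{\alpha^0}(a^0)$, $I_{\beta^0}(b^0)$ and splicing in the one-sided density estimates for $v'$. The optimizer $v'$ and the coordinated two-sided modification of both intervals are the missing ideas.
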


\begin{proof}
Let $ I_{\alpha ^{0}}(a^{0}) \supset (w, v) $ be an interval which (C) gives for $ (w, v) $. Let $ v' $ be a point of the maximum of the function $ x \in (- \infty , a^{0} - \alpha ^{0}] \mapsto (1 - \delta )x + \lambda (H \cap (x, a^{0} - \alpha ^{0})) $, so $ v' $ has the property that
$$ x < v' \quad \Rightarrow \quad \lambda (H | (x, v')) \leq 1 - \delta , $$
$$ v' < x \leq a^{0} - \alpha ^{0} \quad \Rightarrow \quad \lambda (H | (v', x)) \geq 1 - \delta . $$
Let us show that this choice works. Let $ u \in [w, v) $. Then (C) gives some answer $ I_{\beta ^{0}}(b^{0}) \supset (u, v) $. We construct suitable intervals $ I_{\alpha }(a) $ and $ I_{\beta }(b) $ in two steps.

1) We put
$$ I_{\alpha ^{1}}(a^{1}) = (v', a^{0} + \alpha ^{0}), \quad I_{\beta ^{1}}(b^{1}) = I_{\beta ^{0}}(b^{0}) \setminus (-\infty , v']. $$
Let us realize that the following conditions hold.

{\rm (a')} $ (u, v) \subset I_{\alpha ^{1}}(a^{1}), (u, v) \subset I_{\beta ^{1}}(b^{1}), $

{\rm (b')} $ a^{1} - \alpha ^{1} \leq b^{1} - \beta ^{1}, $

{\rm (c')} $ \lambda (H | I_{\alpha ^{1}}(a^{1})) \geq 1 - \delta , \lambda (H | I_{\beta ^{1}}(b^{1})) \geq 1 - \delta , $

{\rm (d')} $ b^{1} - a^{1} \geq v - u - \frac{2}{1 - \delta } \lambda (H \cap (u, v)), $

{\rm (e')} $ \lambda (H | (b^{1} - \beta ^{1}, v)) \geq 1 - 2\delta , $

{\rm (f')} if $ a^{1} - \alpha ^{1} < b^{1} - \beta ^{1} $, then $ \lambda (H | (a^{1} - \alpha ^{1}, b^{1} - \beta ^{1})) \geq 1 - 2\delta . $

The condition (a') holds due to $ (u, v) \subset I_{\alpha ^{0}}(a^{0}) $, $ (u, v) \subset I_{\beta ^{0}}(b^{0}) $ and $ v' \leq a^{0} - \alpha ^{0} \leq u $. The condition (b') holds due to $ a^{1} - \alpha ^{1} = v' $ and $ (-\infty , v'] \cap I_{\beta ^{1}}(b^{1}) = \emptyset $. By the choice of $ v' $, we have $ v' < a^{0} - \alpha ^{0} \Rightarrow \lambda (H | (v', a^{0} - \alpha ^{0})) \geq 1 - \delta $ and $ b^{0} - \beta ^{0} < v' \Rightarrow \lambda (H | (b^{0} - \beta ^{0}, v')) \leq 1 - \delta $, and to prove (c'), it is enough to use $ \lambda (H | I_{\alpha ^{0}}(a^{0})) \geq 1 - \delta $ and $ \lambda (H | I_{\beta ^{0}}(b^{0})) \geq 1 - \delta $. Since clearly $ a^{1} \leq a^{0} $ and $ b^{1} \geq b^{0} $, we obtain
$$ b^{1} - a^{1} \geq b^{0} - a^{0} \geq v - u - \frac{2}{1 - \delta } \lambda (H \cap (u, v)), $$
which gives (d'). To show (e'), we consider two cases. If $ b^{1} - \beta ^{1} > v' $, then $ b^{1} - \beta ^{1} = b^{0} - \beta ^{0} $, and so, we can apply Lemma \ref{lemmac} on $ I_{\beta ^{0}}(b^{0}) $ to obtain $ \lambda (H | (b^{1} - \beta ^{1}, v)) = \lambda (H | (b^{0} - \beta ^{0}, v)) \geq 1 - 2\delta $. If $ b^{1} - \beta ^{1} = v' $, then we apply Lemma \ref{lemmac} on $ I_{\alpha ^{0}}(a^{0}) $ to obtain $ \lambda (H | (a^{0} - \alpha ^{0}, v)) \geq 1 - 2\delta $, and we use that $ v' < a^{0} - \alpha ^{0} \Rightarrow \lambda (H | (v', a^{0} - \alpha ^{0})) \geq 1 - \delta \geq 1 - 2\delta $ by the choice of $ v' $. Finally, to show (f'), assume that $ a^{1} - \alpha ^{1} < b^{1} - \beta ^{1} $. We consider two cases again. If $ b^{1} - \beta ^{1} \leq a^{0} - \alpha ^{0} $, then, by the choice of $ v' $, we have $ \lambda (H | (a^{1} - \alpha ^{1}, b^{1} - \beta ^{1})) = \lambda (H | (v', b^{1} - \beta ^{1})) \geq 1 - \delta \geq 1 - 2\delta $. If $ b^{1} - \beta ^{1} > a^{0} - \alpha ^{0} $, then, by the choice of $ v' $, we have $ v' < a^{0} - \alpha ^{0} \Rightarrow \lambda (H | (v', a^{0} - \alpha ^{0})) \geq 1 - \delta \geq 1 - 2\delta $, and, applying Lemma \ref{lemmac} on $ I_{\alpha ^{0}}(a^{0}) $, we obtain $ \lambda (H | (a^{0} - \alpha ^{0}, b^{1} - \beta ^{1})) \geq 1 - 2\delta $. Hence, $ \lambda (H | (a^{1} - \alpha ^{1}, b^{1} - \beta ^{1})) = \lambda (H | (v', b^{1} - \beta ^{1})) \geq 1 - 2\delta $.

2) If $ a^{1} + \alpha ^{1} \leq b^{1} + \beta ^{1} $, then we put
$$ I_{\alpha }(a) = I_{\alpha ^{1}}(a^{1}), \quad I_{\beta }(b) = I_{\beta ^{1}}(b^{1}). $$
If $ a^{1} + \alpha ^{1} > b^{1} + \beta ^{1} $ and $ \lambda (H | (b^{1} + \beta ^{1}, a^{1} + \alpha ^{1})) \leq 1 - \delta $, then we put
$$ I_{\alpha }(a) = (a^{1} - \alpha ^{1}, b^{1} + \beta ^{1}), \quad I_{\beta }(b) = I_{\beta ^{1}}(b^{1}). $$
If $ a^{1} + \alpha ^{1} > b^{1} + \beta ^{1} $ and $ \lambda (H | (b^{1} + \beta ^{1}, a^{1} + \alpha ^{1})) > 1 - \delta $, then we put
$$ I_{\alpha }(a) = I_{\alpha ^{1}}(a^{1}), \quad I_{\beta }(b) = (b^{1} - \beta ^{1}, a^{1} + \alpha ^{1}). $$

Let us check that the conditions (a)--(f) hold. The condition (a) easily follows from (a'). We get from (b') that $ a - \alpha = a^{1} - \alpha ^{1} \leq b^{1} - \beta ^{1} = b - \beta $. The second part of (b) is clear (if $ a^{1} + \alpha ^{1} \leq b^{1} + \beta ^{1} $, then $ a + \alpha = a^{1} + \alpha ^{1} \leq b^{1} + \beta ^{1} = b + \beta $, if $ a^{1} + \alpha ^{1} > b^{1} + \beta ^{1} $, then $ I_{\alpha }(a) $ and $ I_{\beta }(b) $ are chosen so that $ a + \alpha = b + \beta $). The condition (c) easily follows from (c') and from the definitions of $ I_{\alpha }(a) $ and $ I_{\beta }(b) $. We obtain (d) from (d') and from the obvious fact that $ a \leq a^{1} $ and $ b \geq b^{1} $. The second part of (e) is nothing else than (e'). To show the first part, we realize that $ a + \alpha \in \{ a^{1} + \alpha ^{1}, b^{1} + \beta ^{1} \} = \{ a^{0} + \alpha ^{0}, b^{0} + \beta ^{0} \} $ and use Lemma \ref{lemmac} to obtain $ \lambda (H | (u, a^{0} + \alpha ^{0})) \geq 1 - 2\delta $ and $ \lambda (H | (u, b^{0} + \beta ^{0})) \geq 1 - 2\delta $. The first part of (f) is nothing else than (f'). Let us show the second part of (f). Assume that $ a + \alpha < b + \beta $. We have $ a^{1} + \alpha ^{1} \leq b^{1} + \beta ^{1} $ (indeed, if $ a^{1} + \alpha ^{1} > b^{1} + \beta ^{1} $, then $ I_{\alpha }(a) $ and $ I_{\beta }(b) $ are chosen so that $ a + \alpha = b + \beta $). So we have $ I_{\alpha }(a) = I_{\alpha ^{1}}(a^{1}) $ and $ I_{\beta }(b) = I_{\beta ^{1}}(b^{1}) $. Now, we apply Lemma \ref{lemmac} on $ I_{\beta ^{0}}(b^{0}) $ to obtain $ \lambda (H | (a + \alpha , b + \beta )) = \lambda (H | (a^{1} + \alpha ^{1}, b^{1} + \beta ^{1})) = \lambda (H | (a^{0} + \alpha ^{0}, b^{0} + \beta ^{0})) \geq 1 - 2\delta $.

To finish the proof, we realize that $ a - \alpha = a^{1} - \alpha ^{1} = v' $.
\end{proof}

\begin{claim} \label{clb}
There exist $ n \in \mathbb{N} $, points $ u_{i}, v_{i}, 1 \leq i \leq n, $ and intervals $ I_{\alpha _{i}}(a_{i}), I_{\beta _{i}}(b_{i}) $, $ 1 \leq i \leq n, $ such that
\begin{itemize}
\item $ p = u_{1} < v_{1} \leq u_{2} < v_{2} \leq \dots \leq u_{n} < v_{n} = q, $
\item $ a_{i} - \alpha _{i} = v_{i-1}, 2 \leq i \leq n, $
\item $ b_{i} + \beta _{i} = u_{i+1}, 1 \leq i \leq n - 1, $
\end{itemize}
and, for every $ i $ with $ 1 \leq i \leq n $,

{\rm (a)} $ (u_{i}, v_{i}) \subset I_{\alpha _{i}}(a_{i}), (u_{i}, v_{i}) \subset I_{\beta _{i}}(b_{i}), $

{\rm (b)} $ a_{i} - \alpha _{i} \leq b_{i} - \beta _{i} , a_{i} + \alpha _{i} \leq b_{i} + \beta _{i}, $

{\rm (c)} $ \lambda (H | I_{\alpha _{i}}(a_{i})) \geq 1 - \delta , \lambda (H | I_{\beta _{i}}(b_{i})) \geq 1 - \delta , $

{\rm (d)} $ b_{i} - a_{i} \geq v_{i} - u_{i} - \frac{2}{1 - \delta } \lambda (H \cap (u_{i}, v_{i})), $

{\rm (e)} if $ I_{\alpha _{i}}(a_{i}) \neq I_{\beta _{i}}(b_{i}) $, then $ \lambda (H | (u_{i}, a_{i} + \alpha _{i})) \geq 1 - 2\delta , \lambda (H | (b_{i} - \beta _{i}, v_{i})) \geq 1 - 2\delta , $

{\rm (f)} if $ a_{i} - \alpha _{i} < b_{i} - \beta _{i} $, then $ \lambda (H | (a_{i} - \alpha _{i}, b_{i} - \beta _{i})) \geq 1 - 2\delta $, if $ a_{i} + \alpha _{i} < b_{i} + \beta _{i} $, then $ \lambda (H | (a_{i} + \alpha _{i}, b_{i} + \beta _{i})) \geq 1 - 2\delta . $
\end{claim}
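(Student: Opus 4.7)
The plan is to construct the data inductively from left to right, starting with $u_1 = p$. At step $i$ (given $u_i$), I first select $v_i > u_i$ and then produce intervals $I_{\alpha_i}(a_i)$ and $I_{\beta_i}(b_i)$ satisfying the analogues of (a)--(f) from Claim~\ref{cla}, and set $u_{i+1} := b_i + \beta_i$. The chain condition $a_i - \alpha_i = v_{i-1}$ is handled retrospectively: once the intervals at step $i$ are produced, I \emph{define} $v_{i-1} := a_i - \alpha_i$. This is legitimate since the symbol $v_{i-1}$ is not used anywhere else in the construction at step $i-1$; it merely labels the left endpoint of the $\alpha$-interval at step $i$.

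The construction at step $i$ splits into two cases. In the main case, there exists $v \in (u_i, q]$ with $\lambda(H | (u_i, v)) \leq (1-\delta)/2$: I take $v_i$ to be the largest such $v$ (the supremum is attained by continuity of $v \mapsto \lambda(H \cap (u_i, v))$), and apply Claim~\ref{cla} with $(w, v) = (u_i, v_i)$ and $u = u_i$. The point $v'$ that Claim~\ref{cla} produces will play the role of $a_i - \alpha_i$, and the conclusions (a)--(f) of Claim~\ref{cla} deliver directly the required properties (a)--(f) of the present claim. In the degenerate case, where $\lambda(H | (u_i, v)) > (1-\delta)/2$ for every $v \in (u_i, q]$, I use the covering hypothesis to pick an interval $J \ni u_i$ with $\lambda(H | J) \geq 1 - \delta$ extending as far to the right as possible, and set $I_{\alpha_i}(a_i) = I_{\beta_i}(b_i) := J$; then (b), (c), (e), (f) are trivial since $a_i = b_i$ and $\alpha_i = \beta_i$, while (d) reduces to $\lambda(H | (u_i, v_i)) \geq (1-\delta)/2$, which holds by the case hypothesis (with $v_i$ taken as the right endpoint of $J$, truncated by $q$ if necessary).

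The process terminates when $v_n = q$, and the principal obstacle is the bookkeeping required to check that the chain $u_1 < v_1 \leq u_2 < v_2 \leq \dots \leq u_n < v_n$ is correctly ordered. The inequality $v_{i-1} \leq u_i$ will come from the relation $v' \leq w = u_i$ guaranteed by Claim~\ref{cla}, and the strict inequality $u_i < v_i$ from the explicit choices in each case. I will also have to verify that transitioning between the two cases across consecutive steps does not violate the chain condition $a_{i+1} - \alpha_{i+1} = v_i$, and that the process terminates in finitely many steps. Termination follows because each step advances $u_i$ to the right by a definite amount: in the main case by the quantitative lower bound on $b_i - a_i$ provided by Claim~\ref{cla}, and in the degenerate case by the length of one member of the finite cover. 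Once these details are confirmed, the properties (a)--(f) hold directly by construction, completing the claim.
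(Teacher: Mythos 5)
Your left-to-right construction runs into a genuine circularity that the paper resolves with a two-pass design. You say at step $i$ you \emph{select} $v_i > u_i$ and apply Claim~\ref{cla} to the window $(u_i, v_i)$; but then at step $i+1$ you propose to \emph{redefine} $v_i := a_{i+1} - \alpha_{i+1}$, claiming $v_i$ ``is not used anywhere else in the construction at step $i$.'' That is not so: $v_i$ was precisely the right endpoint of the window used to produce $I_{\alpha_i}(a_i)$ and $I_{\beta_i}(b_i)$. Once you overwrite it, the properties you inherited from Claim~\ref{cla}--- in particular condition (a), namely $(u_i, v_i) \subset I_{\alpha_i}(a_i)$ --- were verified with the \emph{old} $v_i$, not the new one. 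All you know about the new $v_i = a_{i+1} - \alpha_{i+1}$ is that it is $\leq u_{i+1} = b_i + \beta_i$; but condition (a) requires $v_i \leq a_i + \alpha_i$, and since $a_i + \alpha_i \leq b_i + \beta_i$ by (b), there is no way to conclude $v_i \leq a_i + \alpha_i$ from $v_i \leq b_i + \beta_i$. So (a) may simply fail for the redefined $v_i$. You would need to prove $a_{i+1} - \alpha_{i+1} \leq a_i + \alpha_i$, and nothing in your left-to-right process supplies that.

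The paper avoids this by first doing a \emph{backward} pass. It defines checkpoints $t_1 < t_2 < \dots < t_{n+1}$ going left-to-right, and then, going from $v_n = q$ down to $v_0$, it defines each $v_{i-1}$ as the $v'$ that Claim~\ref{cla} returns for a carefully chosen window $(w_i, v_i)$ with $w_i \leq t_i$. Only then does the forward pass produce $u_i$ and the intervals, applying Claim~\ref{cla} to the \emph{same} window $(w_i, v_i)$ with $u = u_i$; since the $v'$ of Claim~\ref{cla} depends only on $(w, v)$ and not on $u$, the identity $a_i - \alpha_i = v_{i-1}$ is automatic, and $(u_i, v_i) \subset I_{\alpha_i}(a_i)$ holds because $v_i$ is the actual window endpoint. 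Your plan also leaves the termination argument unsupported: the lower bound $b_i - a_i \geq v_i - u_i - \frac{2}{1-\delta}\lambda(H \cap (u_i, v_i))$ from Claim~\ref{cla} can be nonpositive, so it does not force a ``definite amount'' of rightward progress; the paper instead bounds $n$ by the number of checkpoints $t_i$, whose finiteness follows from the finite-cover hypothesis and $\lambda H < \infty$.
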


\begin{proof}
We define recursively
$$ t_{1} = p $$
and, if $ t_{i} < q $,
$$ t_{i+1} = \sup \big\{ t > t_{i} : \textrm{$ \lambda (H | (s, t)) \geq 1 - \delta $ for some $ s \leq t_{i} $} \big\} . $$
Note that $ t_{i+1} $ is well defined due to the assumption that $ (p, q) $ is covered by a finite number of intervals $ I $ with $ \lambda (H | I) \geq 1 - \delta $ and the assumption that the measure $ \lambda H $ is finite. Let $ n $ be the number such that $ t_{n} < q \leq t_{n+1} $. We define recursively, for $ i = n, n - 1, \dots , 1, 0, $ points $ v_{i} \leq t_{i+1} $ with $ i \geq 1 \Rightarrow v_{i} \in (t_{i}, t_{i+1}] $, as follows.

(i.0) If $ i = n $, then put $ v_{n} = q \in (t_{n}, t_{n+1}] $.

(i.1) If $ 1 \leq i \leq n $ and there is some $ w $ with $ p \leq w \leq t_{i} $ such that $ \lambda (H | (w, v_{i})) \leq (1 - \delta )/2 $, then we define first $ w_{i} $ as the minimal $ w \in [p, t_{i}] $ with this property. Then we take $ v_{i-1} $ as a point which Claim \ref{cla} gives for $ w_{i} $ and $ v_{i} $. We have $ v_{i-1} \leq w_{i} \leq t_{i} $ in particular. Let $ I_{\alpha }(a) $ be an interval which Claim \ref{cla} gives (for $ u \in [w_{i}, v_{i}) $ chosen arbitrarily), so we have $ \lambda (H | I_{\alpha }(a)) \geq 1 - \delta $, $ v_{i-1} = a - \alpha $ and $ a + \alpha \geq v_{i} > t_{i} $. If $ i - 1 \geq 1 $, then $ v_{i-1} = a - \alpha > t_{i-1} $ (in the opposite case that $ a - \alpha \leq t_{i-1} $, we have $ a + \alpha \leq t_{i} $ by the definition of $ t_{i} $).

(i.2) If $ 1 \leq i \leq n $ and there is no $ w $ with $ p \leq w \leq t_{i} $ such that $ \lambda (H | (w, v_{i})) \leq (1 - \delta )/2 $, then we take $ v_{i-1} $ as a point such that $ v_{i-1} \leq t_{i} $ and $ \lambda (H | (v_{i-1}, t_{i+1})) \geq 1 - \delta $. Such a point exists due to the definition of $ t_{i+1} $. If $ i - 1 \geq 1 $, then $ v_{i-1} > t_{i-1} $ (in the opposite case that $ v_{i-1} \leq t_{i-1} $, we have $ t_{i+1} \leq t_{i} $ by the definition of $ t_{i} $, which is not possible).

Further, we define recursively, for $ i = 1, 2, \dots , n, $ points $ u_{i} \in [v_{i-1}, t_{i}] $ and intervals $ I_{\alpha _{i}}(a_{i}) \supset (u_{i}, v_{i}) $ and $ I_{\beta _{i}}(b_{i}) \supset (u_{i}, v_{i}) $ with $ \lambda (H | I_{\alpha _{i}}(a_{i})) \geq 1 - \delta , \lambda (H | I_{\beta _{i}}(b_{i})) \geq 1 - \delta $, as follows.

(ii.0) If $ i = 1 $, then put $ u_{1} = p = t_{1} $.

(ii.1) If $ 1 < i \leq n $, then put $ u_{i} = b_{i-1} + \beta _{i-1} $. We obtain $ u_{i} \leq t_{i} $ from the definition of $ t_{i} $ and from $ b_{i-1} - \beta _{i-1} \leq u_{i-1} \leq t_{i-1} $. At the same time, $ u_{i} = b_{i-1} + \beta _{i-1} \geq v_{i-1} $.

(iii.1) If $ 1 \leq i \leq n $ and $ \lambda (H | (u_{i}, v_{i})) \leq (1 - \delta )/2 $, then $ v _{i-1} $ was chosen by (i.1) and $ w_{i} \leq u_{i} \leq t_{i} < v_{i} $. We take a couple $ I_{\alpha _{i}}(a_{i}), I_{\beta _{i}}(b_{i}) $ which Claim \ref{cla} gives for $ u_{i} $. The conditions (a)--(f) for $ i $ follow from the conditions (a)--(f) in Claim \ref{cla}.

(iii.2) If $ 1 \leq i \leq n $ and $ \lambda (H | (u_{i}, v_{i})) > (1 - \delta )/2 $, then we realize first that there is an interval $ I_{\alpha }(a) $ such that $ \lambda (H | I_{\alpha }(a)) \geq 1 - \delta $, $ v_{i-1} = a - \alpha $ and $ v_{i} \leq a + \alpha $. If $ v_{i-1} $ was chosen by (i.1), then we can take an interval $ I_{\alpha }(a) $ which Claim \ref{cla} gives (for $ u \in [w_{i}, v_{i}) $ chosen arbitrarily). If $ v_{i-1} $ was chosen by (i.2), then the choice $ I_{\alpha }(a) = (v_{i-1}, t_{i+1}) $ works. Now, we put $ I_{\alpha _{i}}(a_{i}) = I_{\beta _{i}}(b_{i}) = I_{\alpha }(a) $. The condition (a) holds due to $ a - \alpha = v_{i-1} \leq u_{i} \leq t_{i} < v_{i} \leq a + \alpha $. The condition (c) is clear and the conditions (b), (e), (f) are immediate consequences of $ I_{\alpha _{i}}(a_{i}) = I_{\beta _{i}}(b_{i}) $. The condition (d) follows from $ b_{i} - a_{i} = 0 $ and from $ \lambda (H | (u_{i}, v_{i})) > (1 - \delta )/2 $.

So the objects are constructed, and we easily check the required properties now. We have $ p = u_{1} \leq t_{1} < v_{1} \leq u_{2} \leq t_{2} < v_{2} \leq \dots \leq u_{n} \leq t_{n} < v_{n} = q $. The condition $ a_{i} - \alpha _{i} = v_{i-1}, 2 \leq i \leq n, $ follows from the choice of $ I_{\alpha _{i}}(a_{i}) $. The condition $ b_{i} + \beta _{i} = u_{i+1}, 1 \leq i \leq n - 1, $ follows from the choice of $ u_{i+1} $. Finally, the conditions (a)--(f) were already discussed in (iii.1) and (iii.2).
\end{proof}

Let us fix such a system of points and intervals Claim \ref{clb} gives and define
$$ X = \{ i \in \mathbb{N} : 1 \leq i \leq n - 1, b_{i+1} - \beta _{i+1} \leq a_{i} + \alpha _{i} \} , $$
$$ Y = \{ 1, 2, \dots , n - 1 \} \setminus X = \{ i \in \mathbb{N} : 1 \leq i \leq n - 1, b_{i+1} - \beta _{i+1} > a_{i} + \alpha _{i} \} . $$
Further, for every $ i $ with $ 1 \leq i \leq n $, define
$$ \kappa _{i} = b_{i} - a_{i} - (v_{i} - u_{i}) + \frac{2}{1 - \delta } \lambda (H \cap (u_{i}, v_{i})), $$
$$ \chi _{i}^{-} = \frac{1}{1 - \delta } \lambda (H \cap (a_{i} - \alpha _{i}, b_{i} - \beta _{i})) - \lambda (a_{i} - \alpha _{i}, b_{i} - \beta _{i}), $$
$$ \chi _{i}^{+} = \frac{1}{1 - \delta } \lambda (H \cap (a_{i} + \alpha _{i}, b_{i} + \beta _{i})) - \lambda (a_{i} + \alpha _{i}, b_{i} + \beta _{i}), $$
$$ \psi _{i}^{-} = \lambda ((b_{i} - \beta _{i}, u_{i}) \setminus H), $$
$$ \psi _{i}^{+} = \lambda ((v_{i}, a_{i} + \alpha _{i}) \setminus H). $$

\begin{claim} \label{clc}
If $ \omega _{i}, 1 \leq i \leq n, $ are numbers with $ 0 \leq \omega _{i} \leq 2 $, then
\begin{eqnarray*}
\Big( \frac{1}{2\delta } + \frac{2}{1 - \delta } \Big) 2\lambda H & \geq & \Big( \frac{1}{2\delta } + 1 \Big) 2(q - p) \\
 & & + \sum_{i=1}^{n} \kappa _{i} + \sum_{i=1}^{n-1} \Big[ \frac{1}{2\delta }(\psi _{i}^{+} + \psi _{i+1}^{-}) + \omega _{i} \chi _{i}^{+} + (2 - \omega _{i+1}) \chi _{i+1}^{-} \Big] .
\end{eqnarray*}
\end{claim}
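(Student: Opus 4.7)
The strategy is to derive the claim by summing the density lower bounds from Claim \ref{clb} with weights chosen so that each unit of $\lambda H$ is counted exactly $\tfrac{1}{\delta} + \tfrac{4}{1-\delta}$ times on the left-hand side. Specifically, the $(1-\delta)$-density bounds from (c) will be weighted by $\tfrac{2}{1-\delta}$, while the $(1-2\delta)$-density bounds from (e) and (f) will be weighted by $\tfrac{1}{2\delta}$.

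First I would multiply $\lambda(H \cap I_{\alpha_i}(a_i)) \geq (1-\delta)\cdot 2\alpha_i$ and $\lambda(H \cap I_{\beta_i}(b_i)) \geq (1-\delta)\cdot 2\beta_i$ by $\tfrac{2}{1-\delta}$ and sum over $i$. Using the algebraic identity $2(\alpha_i + \beta_i) = ((b_i+\beta_i)-(a_i-\alpha_i)) + ((a_i+\alpha_i)-(b_i-\beta_i))$ together with (d) rewritten as $b_i - a_i = (v_i - u_i) - \tfrac{2}{1-\delta}\lambda(H \cap (u_i,v_i)) + \kappa_i$, and the adjacency relations $a_i - \alpha_i = v_{i-1}$ and $b_i + \beta_i = u_{i+1}$, the pieces telescope: $\sum(v_i - u_i) + \sum(u_{i+1} - v_i) = q - p$. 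The $\tfrac{2}{1-\delta}\lambda(H \cap (u_i,v_i))$ term from (d) is absorbed by mass already present in both $I_{\alpha_i}(a_i)$ and $I_{\beta_i}(b_i)$ over $(u_i, v_i)$, avoiding double counting. This produces the $2(q-p)$ bulk of the right-hand side plus $\sum \kappa_i$, and leaves remainders of the form $(b_i - \beta_i) - (a_i + \alpha_i)$ and $(b_i + \beta_i) - (a_i - \alpha_i)$ that feed into the $\chi$-terms.

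Next I would add the $(1-2\delta)$-density bounds from (e) and (f) weighted by $\tfrac{1}{2\delta}$, using the rearrangement $\tfrac{1}{2\delta}\lambda(H^c \cap J) \leq \lambda J$. From (e), the stick-out pieces $(v_i, a_i+\alpha_i)$ and $(b_{i+1}-\beta_{i+1}, u_{i+1})$ produce the $\tfrac{1}{2\delta}(\psi_i^+ + \psi_{i+1}^-)$ terms. The extension pieces $(a_i+\alpha_i, b_i+\beta_i)$ and $(a_{i+1}-\alpha_{i+1}, b_{i+1}-\beta_{i+1})$ from (f) share the boundary $v_i = a_{i+1}-\alpha_{i+1}$, $u_{i+1} = b_i+\beta_i$ and may overlap. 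The free parameter $\omega_i \in [0,2]$ distributes a total weight of $2$ between them per gap, so the forward piece contributes $\omega_i \chi_i^+$ and the backward piece contributes $(2-\omega_{i+1})\chi_{i+1}^-$; the flexibility in $\omega_i$ is preserved because subsequent claims will choose it to suit the local geometry.

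The main obstacle is the case analysis in the bookkeeping. The intervals $I_{\alpha_i}(a_i)$ and $I_{\beta_i}(b_i)$ may coincide, overlap, or be separated (the last case triggering (e)), and the extension intervals for consecutive indices may overlap around the shared points $v_i$ and $u_{i+1}$. Each configuration must be checked so that no point of $H$ is counted more than $\tfrac{1}{\delta} + \tfrac{4}{1-\delta}$ times and so that every length on the right-hand side appears exactly as claimed. Once the accounting is verified, the inequality follows by direct summation of the individual nonnegative contributions, and the $\omega_i$ bound $0 \le \omega_i \le 2$ is used only to ensure that both $\omega_i \chi_i^+$ and $(2-\omega_{i+1})\chi_{i+1}^-$ are genuine weighted portions of a single $(1-2\delta)$-density bound rather than amplified ones.
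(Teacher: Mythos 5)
Your plan misidentifies the role of conditions (e) and (f). Claim \ref{clc} is proved in the paper using \emph{only} the density bounds from (c); properties (e) and (f) do not enter here at all --- they are used exclusively in Claims \ref{clx} and \ref{cly} to show that the $\kappa_i, \chi_i^{\pm}, \psi_i^{\pm}$-sums are nonnegative. The paper's device is to put the full weight $\tfrac{1}{2\delta}+\tfrac{2}{1-\delta}$ onto the two (c)-bounds, but distributed as $\tfrac{1}{2\delta}+\tfrac{\omega_i}{1-\delta}$ on $\lambda(H\cap I_{\alpha_i}(a_i))$ and $\tfrac{1}{2\delta}+\tfrac{2-\omega_i}{1-\delta}$ on $\lambda(H\cap I_{\beta_i}(b_i))$. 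With $I_{\alpha_i}(a_i)$ carrying the pieces $k_i,l_i,m_i,n_i$ and $I_{\beta_i}(b_i)$ carrying $l_i,m_i,n_i,o_i$, one adds the \emph{same} quantity $\kappa_i+\tfrac{1}{2\delta}(\psi_i^-+\psi_i^+)+(2-\omega_i)\chi_i^-+\omega_i\chi_i^+$ (re-expressed via $k,l,m,n,o$) to both sides, and the algebra collapses the $\omega_i$-dependence so that each interval $(a_i-\alpha_i,v_i)$ and $(u_i,b_i+\beta_i)$ acquires coefficient $\tfrac{1}{2\delta}+\tfrac{2}{1-\delta}$ on its $H$-mass and $\tfrac{1}{2\delta}+1$ on its length. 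Summing and telescoping via $a_i-\alpha_i=v_{i-1}$ and $b_i+\beta_i=u_{i+1}$ yields the claim. This is a pure bookkeeping identity built from (c); no $(1-2\delta)$-density estimate is invoked.

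Your plan of weighting (c) uniformly by $\tfrac{2}{1-\delta}$ and then importing (e),(f) with weight $\tfrac{1}{2\delta}$ cannot reproduce Claim \ref{clc}. First, (e),(f) are conditional (they require $I_{\alpha_i}(a_i)\neq I_{\beta_i}(b_i)$, respectively a strict inequality between endpoints), whereas Claim \ref{clc} holds unconditionally. Second, the $(1-2\delta)$-bounds give \emph{upper} bounds on the $\psi$-type quantities; since $+\tfrac{1}{2\delta}(\psi_i^++\psi_{i+1}^-)$ sits on the right-hand side of the target inequality, upper bounds on $\psi$ weaken rather than strengthen the needed estimate. Third, your description of the geometry is off: $I_{\alpha_i}(a_i)$ and $I_{\beta_i}(b_i)$ always overlap (both contain $(u_i,v_i)$), so the trichotomy ``coincide, overlap, or be separated'' --- with ``separated'' supposedly triggering (e) --- does not correspond to the actual case structure. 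The $\omega_i\in[0,2]$ in the paper is a free weight placed on the (c)-inequalities themselves, not a weight shared between the $\chi$-terms of adjacent gaps; it is chosen later, in Claim \ref{cly}, to make the $\chi$-contributions controllable via (e) and (f).
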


\begin{proof}
For $ 1 \leq i \leq n $, let us put
$$ k_{i} = \lambda (a_{i} - \alpha _{i}, b_{i} - \beta _{i}), \quad k_{i}^{H} = \lambda (H \cap (a_{i} - \alpha _{i}, b_{i} - \beta _{i})), $$
$$ l_{i} = \lambda (b_{i} - \beta _{i}, u_{i}), \quad l_{i}^{H} = \lambda (H \cap (b_{i} - \beta _{i}, u_{i})), $$
$$ m_{i} = \lambda (u_{i}, v_{i}), \quad m_{i}^{H} = \lambda (H \cap (u_{i}, v_{i})), $$
$$ n_{i} = \lambda (v_{i}, a_{i} + \alpha _{i}), \quad n_{i}^{H} = \lambda (H \cap (v_{i}, a_{i} + \alpha _{i})), $$
$$ o_{i} = \lambda (a_{i} + \alpha _{i}, b_{i} + \beta _{i}), \quad o_{i}^{H} = \lambda (H \cap (a_{i} + \alpha _{i}, b_{i} + \beta _{i})). $$
We have
$$ \kappa _{i} = \frac{1}{2}(k_{i} + o_{i}) - m_{i} + \frac{2}{1 - \delta } m_{i}^{H}, $$
$$ \chi _{i}^{-} = \frac{1}{1 - \delta } k_{i}^{H} - k_{i}, \quad \chi _{i}^{+} = \frac{1}{1 - \delta } o_{i}^{H} - o_{i}, $$
$$ \psi _{i}^{-} = l_{i} - l_{i}^{H}, \quad \psi _{i}^{+} = n_{i} - n_{i}^{H}. $$
Using (c), we obtain
$$
\begin{aligned}
 & \hspace{-1cm} \Big( \frac{1}{2\delta } + \frac{\omega _{i}}{1 - \delta } \Big) \lambda (H \cap I_{\alpha _{i}}(a_{i})) + \Big( \frac{1}{2\delta } + \frac{2 - \omega _{i}}{1 - \delta } \Big) \lambda (H \cap I_{\beta _{i}}(b_{i})) \\
 \geq \; & \Big( \frac{1}{2\delta } + \frac{\omega _{i}}{1 - \delta } \Big) (1 - \delta )\lambda I_{\alpha _{i}}(a_{i}) + \Big( \frac{1}{2\delta } + \frac{2 - \omega _{i}}{1 - \delta } \Big) (1 - \delta )\lambda I_{\beta _{i}}(b_{i}) \\
 = \; & \Big( \frac{1}{2\delta } - \frac{1}{2} + \omega _{i} \Big) \lambda I_{\alpha _{i}}(a_{i}) + \Big( \frac{1}{2\delta } - \frac{1}{2} + 2 - \omega _{i} \Big) \lambda I_{\beta _{i}}(b_{i}).
\end{aligned}
$$
It follows that
$$
\begin{aligned}
 & \hspace{-1cm} \Big( \frac{1}{2\delta } + \frac{\omega _{i}}{1 - \delta } \Big) (k_{i}^{H} + l_{i}^{H} + m_{i}^{H} + n_{i}^{H}) + \Big( \frac{1}{2\delta } + \frac{2 - \omega _{i}}{1 - \delta } \Big) (l_{i}^{H} + m_{i}^{H} + n_{i}^{H} + o_{i}^{H}) \\
 + & \frac{1}{2}(k_{i} + o_{i}) - m_{i} + \frac{2}{1 - \delta } m_{i}^{H} + \frac{1}{2\delta }(l_{i} - l_{i}^{H} + n_{i} - n_{i}^{H}) \\
 + & (2 - \omega _{i}) \Big( \frac{1}{1 - \delta } k_{i}^{H} - k_{i} \Big) + \omega _{i} \Big( \frac{1}{1 - \delta } o_{i}^{H} - o_{i} \Big) \\
 & \hspace{-1cm} \geq \Big( \frac{1}{2\delta } - \frac{1}{2} + \omega _{i} \Big) (k_{i} + l_{i} + m_{i} + n_{i}) + \Big( \frac{1}{2\delta } - \frac{1}{2} + 2 - \omega _{i} \Big) (l_{i} + m_{i} + n_{i} + o_{i}) \\
 + & \kappa _{i} + \frac{1}{2\delta }(\psi _{i}^{-} + \psi _{i}^{+}) + (2 - \omega _{i}) \chi _{i}^{-} + \omega _{i} \chi _{i}^{+}.
\end{aligned}
$$
This leads straightforwardly to
$$
\begin{aligned}
 & \hspace{-1.5cm} \Big( \frac{1}{2\delta } + \frac{2}{1 - \delta } \Big) (k_{i}^{H} + l_{i}^{H} + 2m_{i}^{H} + n_{i}^{H} + o_{i}^{H}) \\
 \geq \; & \Big( \frac{1}{2\delta } + 1 \Big) (k_{i} + l_{i} + 2m_{i} + n_{i} + o_{i}) \\
 & + \kappa _{i} + \frac{1}{2\delta }(\psi _{i}^{-} + \psi _{i}^{+}) + (2 - \omega _{i}) \chi _{i}^{-} + \omega _{i} \chi _{i}^{+},
\end{aligned}
$$
i.e.,
$$
\begin{aligned}
 & \hspace{-1.5cm} \Big( \frac{1}{2\delta } + \frac{2}{1 - \delta } \Big) \big( \lambda (H \cap (a_{i} - \alpha _{i}, v_{i})) + \lambda (H \cap (u_{i}, b_{i} + \beta _{i})) \big) \\
 \geq \; & \Big( \frac{1}{2\delta } + 1 \Big) \big( \lambda (a_{i} - \alpha _{i}, v_{i}) + \lambda (u_{i}, b_{i} + \beta _{i}) \big) \\
 & + \kappa _{i} + \frac{1}{2\delta }(\psi _{i}^{-} + \psi _{i}^{+}) + (2 - \omega _{i}) \chi _{i}^{-} + \omega _{i} \chi _{i}^{+}.
\end{aligned}
$$
Recall that $ a_{i} - \alpha _{i} = v_{i-1}, 2 \leq i \leq n, $ and $ b_{i} + \beta _{i} = u_{i+1}, 1 \leq i \leq n - 1 $. Summing these inequalities for $ i = 1, 2, \dots , n $, we obtain
$$
\begin{aligned}
 & \hspace{-1.5cm} \Big( \frac{1}{2\delta } + \frac{2}{1 - \delta } \Big) \big( \lambda (H \cap (a_{1} - \alpha _{1}, q)) + \lambda (H \cap (p, b_{n} + \beta _{n})) \big) \\
 \geq \; & \Big( \frac{1}{2\delta } + 1 \Big) \big( \lambda (a_{1} - \alpha _{1}, q) + \lambda (p, b_{n} + \beta _{n}) \big) \\
 & + \sum_{i=1}^{n} \Big[ \kappa _{i} + \frac{1}{2\delta }(\psi _{i}^{-} + \psi _{i}^{+}) + (2 - \omega _{i}) \chi _{i}^{-} + \omega _{i} \chi _{i}^{+} \Big] .
\end{aligned}
$$

Now, we compute
$$
\begin{aligned}
 & \hspace{-0.75cm} \Big( \frac{1}{2\delta } + 1 \Big) \big( \lambda (a_{1} - \alpha _{1}, p) + \lambda (q, b_{n} + \beta _{n}) \big) + \frac{1}{2\delta }(\psi _{1}^{-} + \psi _{n}^{+}) + (2 - \omega _{1}) \chi _{1}^{-} + \omega _{n} \chi _{n}^{+} \\
 \hspace{0.5cm} \geq \; & \Big( \frac{1}{2\delta } + 1 \Big) \big( \lambda (a_{1} - \alpha _{1}, b_{1} - \beta _{1}) + \lambda (a_{n} + \alpha _{n}, b_{n} + \beta _{n}) \big) + (2 - \omega _{1}) \chi _{1}^{-} + \omega _{n} \chi _{n}^{+} \\
 \hspace{0.5cm} = \; & \Big( \frac{1}{2\delta } + 1 \Big) (k_{1} + o_{n}) + (2 - \omega _{1}) \Big( \frac{1}{1 - \delta } k_{1}^{H} - k_{1} \Big) + \omega _{n} \Big( \frac{1}{1 - \delta } o_{n}^{H} - o_{n} \Big) \\
 \hspace{0.5cm} \geq \; & \Big( \frac{1}{2\delta } + 1 \Big) (k_{1} + o_{n}) - 2k_{1} - 2o_{n} = \Big( \frac{1}{2\delta } - 1 \Big) (k_{1} + o_{n}) \geq 0.
\end{aligned}
$$
Finally,
$$
\begin{aligned}
 & \hspace{-0.75cm} \Big( \frac{1}{2\delta } + \frac{2}{1 - \delta } \Big) 2\lambda H \geq \Big( \frac{1}{2\delta } + \frac{2}{1 - \delta } \Big) \big (\lambda (H \cap (a_{1} - \alpha _{1}, q)) + \lambda (H \cap (p, b_{n} + \beta _{n})) \big) \\
 \geq \; & \Big( \frac{1}{2\delta } + 1 \Big) \big( \lambda (a_{1} - \alpha _{1}, q) + \lambda (p, b_{n} + \beta _{n}) \big) \\
 & + \sum_{i=1}^{n} \Big[ \kappa _{i} + \frac{1}{2\delta }(\psi _{i}^{-} + \psi _{i}^{+}) + (2 - \omega _{i}) \chi _{i}^{-} + \omega _{i} \chi _{i}^{+} \Big] \\
 = \; & \Big( \frac{1}{2\delta } + 1 \Big) 2\lambda (p, q) + \sum_{i=1}^{n} \kappa _{i} + \sum_{i=1}^{n-1} \Big[ \frac{1}{2\delta }(\psi _{i}^{+} + \psi _{i+1}^{-}) + \omega _{i} \chi _{i}^{+} + (2 - \omega _{i+1}) \chi _{i+1}^{-} \Big] \\
 & + \Big( \frac{1}{2\delta } + 1 \Big) \big( \lambda (a_{1} - \alpha _{1}, p) + \lambda (q, b_{n} + \beta _{n}) \big) \\
 & + \frac{1}{2\delta }(\psi _{1}^{-} + \psi _{n}^{+}) + (2 - \omega _{1}) \chi _{1}^{-} + \omega _{n} \chi _{n}^{+} \\
 \geq \; & \Big( \frac{1}{2\delta } + 1 \Big) 2\lambda (p, q) + \sum_{i=1}^{n} \kappa _{i} + \sum_{i=1}^{n-1} \Big[ \frac{1}{2\delta }(\psi _{i}^{+} + \psi _{i+1}^{-}) + \omega _{i} \chi _{i}^{+} + (2 - \omega _{i+1}) \chi _{i+1}^{-} \Big] .
\end{aligned}
$$
\end{proof}

\begin{claim} \label{clx}
{\rm (1)} If $ 1 \leq i \leq n $, then $ \kappa _{i} \geq 0 $.

{\rm (2)} If $ i \in X $, then
$$ \frac{1}{2\delta }(\psi _{i}^{+} + \psi _{i+1}^{-}) + \omega _{i} \chi _{i}^{+} + (2 - \omega _{i+1}) \chi _{i+1}^{-} \geq 0 $$
for any $ \omega _{i} $ and $ \omega _{i+1} $ with $ 0 \leq \omega _{i} \leq 2 $ and $ 0 \leq \omega _{i+1} \leq 2 $.
\end{claim}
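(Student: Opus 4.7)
The plan for Part (1) is a one-line rewriting: condition (d) of Claim~\ref{clb} reads $b_i - a_i \geq v_i - u_i - \tfrac{2}{1-\delta}\lambda(H \cap (u_i, v_i))$, which is precisely $\kappa_i \geq 0$ by definition.

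For Part (2), I plan to exploit that the expression
\[
E(\omega_i, \omega_{i+1}) := \tfrac{1}{2\delta}(\psi_i^+ + \psi_{i+1}^-) + \omega_i \chi_i^+ + (2-\omega_{i+1})\chi_{i+1}^-
\]
is separately affine in each of its two variables, so its infimum over $[0,2]^2$ decouples. Instead of the ``obvious'' one-variable groupings, it will turn out to be essential to pair $\psi_i^+$ with $\chi_{i+1}^-$ and $\psi_{i+1}^-$ with $\chi_i^+$; that is, I aim to prove
\[
\tfrac{1}{2\delta}\psi_i^+ + (2-\omega_{i+1})\chi_{i+1}^- \geq 0 \quad \text{and} \quad \tfrac{1}{2\delta}\psi_{i+1}^- + \omega_i \chi_i^+ \geq 0
\]
for all $\omega_i, \omega_{i+1} \in [0, 2]$. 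Adding these two inequalities gives the claim.

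This particular cross-pairing is forced by the geometry. Combining the assumption $i \in X$, namely $b_{i+1} - \beta_{i+1} \leq a_i + \alpha_i$, with the identities $v_i = a_{i+1} - \alpha_{i+1}$ and $u_{i+1} = b_i + \beta_i$ from Claim~\ref{clb}, one obtains the inclusions $(a_{i+1} - \alpha_{i+1}, b_{i+1} - \beta_{i+1}) \subseteq (v_i, a_i + \alpha_i)$ and $(a_i + \alpha_i, b_i + \beta_i) \subseteq (b_{i+1} - \beta_{i+1}, u_{i+1})$. These in turn give $\psi_i^+ \geq \lambda((a_{i+1} - \alpha_{i+1}, b_{i+1} - \beta_{i+1}) \setminus H)$ and $\psi_{i+1}^- \geq \lambda((a_i + \alpha_i, b_i + \beta_i) \setminus H)$, which is exactly the relation needed so that each $\psi$ controls the ``opposite-side'' $\chi$.

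For the first inequality the only nontrivial case is $\chi_{i+1}^- < 0$ and $\omega_{i+1} = 0$. Then $B := (a_{i+1} - \alpha_{i+1}, b_{i+1} - \beta_{i+1})$ is nonempty, so condition (f) of Claim~\ref{clb} yields $\lambda(H | B) \geq 1 - 2\delta$. Parametrizing $\lambda(H \cap B) = (1 - \delta - \eta)\lambda(B)$ with $\eta \in (0, \delta]$ gives $\chi_{i+1}^- = -\tfrac{\eta}{1-\delta}\lambda(B)$ and $\lambda(B \setminus H) = (\delta + \eta)\lambda(B)$, and the desired inequality reduces to $(\delta + \eta)(1 - \delta) \geq 4\delta\eta$, i.e., $\delta(1-\delta) + \eta(1 - 5\delta) \geq 0$. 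Over $\eta \in [0, \delta]$ the worst case is $\eta = \delta$, giving $\delta(2 - 6\delta) \geq 0$, equivalent to $\delta \leq 1/3$; since $\zeta_6 < 1/3$, the standing hypothesis of Lemma~\ref{lemmaxy} already covers this. The second inequality follows by an entirely symmetric argument, with the roles of $A$ and $B$ interchanged. The only real obstacle is the bookkeeping---correctly identifying the cross-pairing and extracting the relevant inclusions from $i \in X$---after which the algebraic reduction is short and mechanical.
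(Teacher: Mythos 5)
Your proposal is correct and takes essentially the same route as the paper: part~(1) is a direct restatement of condition~(d), and for part~(2) you identify the same cross-pairing of $\psi_i^+$ with $\chi_{i+1}^-$ and $\psi_{i+1}^-$ with $\chi_i^+$, derive the same inclusions from $i \in X$, and invoke~(f) to control the relative density, differing only cosmetically (your $\eta$-parametrization and endpoint check at $\omega = 0$ versus the paper's direct proof of $\tfrac{1}{2\delta}(o - o^H) + x\bigl(\tfrac{1}{1-\delta}o^H - o\bigr) \geq 0$ for all $x \in [0,2]$). The only small imprecision is the remark that the worst case over $\eta \in [0,\delta]$ is $\eta = \delta$: this is true only when $\delta \geq 1/5$; when $\delta < 1/5$ the worst case is $\eta = 0$, but there the expression is $\delta(1-\delta) > 0$, so the conclusion stands anyway.
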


\begin{proof}
The part (1) is nothing else than the condition (d). To show (2), we realize first that
$$ \Big( - \frac{1}{2\delta } + \frac{2}{1 - \delta } \Big) o^{H} \geq \Big( - \frac{1}{2\delta } + 2 \Big) o $$
whenever $ 0 \leq o^{H} \leq o $ and $ o^{H} \geq (1 - 2\delta ) o $. This is clear when $ \delta \leq 1/4 $ because then $ -(1/2\delta ) + 2 \leq 0 $, and so
$$ \Big( - \frac{1}{2\delta } + \frac{2}{1 - \delta } \Big) o^{H} \geq \Big( - \frac{1}{2\delta } + 2 \Big) o^{H} \geq \Big( - \frac{1}{2\delta } + 2 \Big) o. $$
When $ \delta > 1/4 $, then, using $ - (1/2\delta ) + 2/(1 - \delta ) \geq - (1/2\delta ) + 2 \geq 0 $, we can compute
$$ \Big( - \frac{1}{2\delta } + \frac{2}{1 - \delta } \Big) o^{H} \geq \Big( - \frac{1}{2\delta } + \frac{2}{1 - \delta } \Big) (1 - 2\delta ) o = \Big( - \frac{1}{2\delta } + 2 + \frac{1 - 3\delta }{1 - \delta } \Big) o \geq \Big( - \frac{1}{2\delta } + 2 \Big) o. $$
Further, if $ 0 \leq x \leq 2 $, then
$$ \frac{1}{2\delta }(o - o^{H}) + x \Big( \frac{1}{1 - \delta } o^{H} - o \Big) \geq 0, $$
as
\begin{eqnarray*}
\Big( - \frac{1}{2\delta } + \frac{x}{1 - \delta } \Big) o^{H} & = & - \Big( 1 - \frac{x}{2} \Big) \frac{1}{2\delta } o^{H} + \frac{x}{2} \Big( - \frac{1}{2\delta } + \frac{2}{1 - \delta } \Big) o^{H} \\
 & \geq & - \Big( 1 - \frac{x}{2} \Big) \frac{1}{2\delta } o + \frac{x}{2} \Big( - \frac{1}{2\delta } + 2 \Big) o \\
 & = & \Big( - \frac{1}{2\delta } + x \Big) o.
\end{eqnarray*}

Let us put
$$ k = \lambda (a_{i+1} - \alpha _{i+1}, b_{i+1} - \beta _{i+1}), \quad k^{H} = \lambda (H \cap (a_{i+1} - \alpha _{i+1}, b_{i+1} - \beta _{i+1})), $$
$$ l = \lambda (a_{i} + \alpha _{i}, b_{i} + \beta _{i}), \quad l^{H} = \lambda (H \cap (a_{i} + \alpha _{i}, b_{i} + \beta _{i})). $$
We have $ b_{i+1} - \beta _{i+1} \leq a_{i} + \alpha _{i} $, as $ i \in X $. We obtain
\begin{eqnarray*}
\psi _{i}^{+} & = & \lambda ((v_{i}, a_{i} + \alpha _{i}) \setminus H) = \lambda ((a_{i+1} - \alpha _{i+1}, a_{i} + \alpha _{i}) \setminus H) \\
 & \geq & \lambda ((a_{i+1} - \alpha _{i+1}, b_{i+1} - \beta _{i+1}) \setminus H) = k - k^{H}, \\
\psi _{i+1}^{-} & = & \lambda ((b_{i+1} - \beta _{i+1}, u_{i+1}) \setminus H) = \lambda ((b_{i+1} - \beta _{i+1}, b_{i} + \beta _{i}) \setminus H) \\
 & \geq & \lambda ((a_{i} + \alpha _{i}, b_{i} + \beta _{i}) \setminus H) = l - l^{H}.
\end{eqnarray*}
By (f), we have $ k^{H} \geq (1 - 2\delta )k $ and $ l^{H} \geq (1 - 2\delta )l $. So, we obtain
\begin{eqnarray*}
\frac{1}{2\delta } \psi _{i}^{+} + (2 - \omega _{i+1}) \chi _{i+1}^{-} & \geq & \frac{1}{2\delta }(k - k^{H}) + (2 - \omega _{i+1}) \Big( \frac{1}{1 - \delta } k^{H} - k \Big) \geq 0, \\
\frac{1}{2\delta } \psi _{i+1}^{-} + \omega _{i} \chi _{i}^{+} & \geq & \frac{1}{2\delta }(l - l^{H}) + \omega _{i} \Big( \frac{1}{1 - \delta } l^{H} - l \Big) \geq 0.
\end{eqnarray*}
\end{proof}

\begin{claim} \label{cly}
Let $ 1 \leq c \leq d \leq n - 1 $ be such that $ \{ c, c + 1, \dots , d \} \subset Y $. Then there are numbers $ \omega _{c}, \omega _{c+1}, \dots , \omega _{d+1} $ with $ 0 \leq \omega _{i} \leq 2 $ such that
$$ \sum_{i=c}^{d+1} \kappa _{i} + \sum_{i=c}^{d} \Big[ \frac{1}{2\delta }(\psi _{i}^{+} + \psi _{i+1}^{-}) + \omega _{i} \chi _{i}^{+} + (2 - \omega _{i+1}) \chi _{i+1}^{-} \Big] \geq 0. $$
\end{claim}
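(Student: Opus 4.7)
The strategy is to analyze the structure of a maximal $Y$-run $\{c,\ldots,d\}$ and to exploit the fact that in the $Y$-case the two intervals $I_{\beta_i}(b_i)$ and $I_{\alpha_{i+1}}(a_{i+1})$ genuinely overlap, so the purely local compensation from Claim~\ref{clx}(2) is unavailable. The plan is to recover a non-negative sum by distributing the weights $\omega_i$ along the block in a correlated way and absorbing any leftover into the $\kappa_i$ reservoir.

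The first step is to set notation tied to the $Y$-structure. For each $i\in\{c,\ldots,d\}$ one has $v_i<a_i+\alpha_i<b_{i+1}-\beta_{i+1}<u_{i+1}$, and I would introduce $A_i=(v_i,a_i+\alpha_i)$, $G_i=(a_i+\alpha_i,b_{i+1}-\beta_{i+1})$, $B_i=(b_{i+1}-\beta_{i+1},u_{i+1})$, so that $\psi_i^{+},\psi_{i+1}^{-}$ measure the complement of $H$ on $A_i,B_i$, while $\chi_i^{+}$ and $\chi_{i+1}^{-}$ concern $G_i\cup B_i$ and $A_i\cup G_i$ respectively. Since the $Y$-hypothesis forces $I_{\alpha_i}(a_i)\neq I_{\beta_i}(b_i)$ and $I_{\alpha_{i+1}}(a_{i+1})\neq I_{\beta_{i+1}}(b_{i+1})$, properties (e) and (f) from Claim~\ref{clb} give the density bounds $\lambda(H\mid G_i\cup B_i)\ge 1-2\delta$ and $\lambda(H\mid A_i\cup G_i)\ge 1-2\delta$.

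The second step is the choice of the $\omega_j$. I would fix the endpoint values $\omega_c=0$ and $\omega_{d+1}=2$, which makes the extreme $\chi$-terms $\omega_c\chi_c^{+}$ and $(2-\omega_{d+1})\chi_{d+1}^{-}$ pair with $\psi_c^{+}$ and $\psi_{d+1}^{-}$ exactly in the pattern of Claim~\ref{clx}(2), yielding a non-negative contribution at the two ends of the block. For an interior index $j\in\{c+1,\ldots,d\}$ the interval $I_{\alpha_j}(a_j)$ overlaps on the left (because $j-1\in Y$) and $I_{\beta_j}(b_j)$ overlaps on the right (because $j\in Y$), so both $\chi_j^{-}$ and $\chi_j^{+}$ appear. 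I would choose $\omega_j$ so that $\omega_j\chi_j^{+}+(2-\omega_j)\chi_j^{-}$ redistributes the doubly-counted mass on $G_{j-1}$ and $G_j$; a natural candidate is the linear interpolation $\omega_j=2(j-c)/(d+1-c)$, but the precise optimal choice will be dictated by the algebra in step~three. The positivity of the summed result will then be assisted by the non-negative terms $\kappa_c,\kappa_{c+1},\ldots,\kappa_{d+1}$ coming from property~(d).

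The third step is the bookkeeping itself: rewrite the sum as a sum of contributions supported on the regions $(u_i,v_i)$, $A_i$, $G_i$, $B_i$, and show each grouped contribution is non-negative by combining the density estimates above with the numerical inequality furnished by the hypothesis $\delta<\zeta_6$, namely
$$2\delta\ \ge\ \frac{2\delta}{1-2\delta}-\frac{1}{2(1-\delta)},$$
from Table~\ref{roots}. I expect the main obstacle to be the doubly-counted region $G_i$: neither (e) nor (f) controls $\lambda(H\cap G_i)$ directly, so one must use the $\kappa_{i+1}$ reservoir (which encodes the excess density in $(u_{i+1},v_{i+1})$) to cover the possible shortfall on $G_i\cup G_{i+1}$. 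It is precisely the $\zeta_6$-inequality which makes this absorption quantitatively feasible, and its appearance explains why this particular threshold is assumed in Lemma~\ref{lemmaxy}. Verifying the cancellations cleanly will likely require a bookkeeping argument in the style of the computation in Claim~\ref{clc}, and I would not be surprised if the actual proof proceeds by induction on $d-c$, reducing a long $Y$-run to shorter ones after isolating the endpoint contribution.
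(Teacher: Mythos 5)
Your strategic picture is right in broad strokes --- you correctly identify the $Y$-overlap structure, the role of properties (e)/(f), the $\kappa_i$'s as a non-negative reservoir, and the fact that the $\zeta_6$ inequality must be the quantitative engine --- but the specific weight choices you propose are not the ones that make the argument close, and the crucial bookkeeping in your ``step three'' is left entirely unexecuted.

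The paper distinguishes two cases. When $c=d$ (a single $Y$-index), it does indeed take $\omega_c=0$, $\omega_{c+1}=2$, and the claim reduces immediately to $\kappa_c+\kappa_{c+1}+\frac{1}{2\delta}(\psi_c^{+}+\psi_{c+1}^{-})\ge 0$, which is trivial. But when $c<d$, the choice is $\omega_c=\delta$, $\omega_{d+1}=2-\delta$, and $\omega_i=1$ for every interior $i$ --- a constant interior weight, not a linear ramp. The constant weight $1$ is essential: it is what makes $\omega_i\chi_i^{+}+(2-\omega_i)\chi_i^{-}$ collapse to $\chi_i^{+}+\chi_i^{-}$ for interior $i$, which is then paired with $\kappa_i$ and bounded below using (c) in a way that telescopes across the block. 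With your linear interpolation $\omega_j=2(j-c)/(d+1-c)$ the coefficients of $\chi_j^{\pm}$ vary with $j$; since $\chi_j^{\pm}$ can be as negative as $-\frac{\delta}{1-\delta}$ times the relevant gap length (only the weaker $1-2\delta$ density from (f) is available there), a coefficient near $2$ on some $\chi_j^{-}$ produces a loss that the $\kappa_j$'s are not obviously able to absorb, and the telescoping identity breaks. Similarly, the endpoint choice $\omega_c=\delta$, $\omega_{d+1}=2-\delta$ (rather than $0$ and $2$) leaves small multiples $\delta\chi_c^{+}$ and $\delta\chi_{d+1}^{-}$ in the sum, and these are exactly what gets combined with the (e)-bounds $k_c^{H}+l_c^{H}\ge(1-2\delta)(k_c+l_c)$ and $n_d^{H}+k_{d+1}^{H}\ge(1-2\delta)(n_d+k_{d+1})$ after multiplying by $2\delta$; dropping them to $0$ loses the positive contribution that cancels the boundary defects. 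Finally, the paper does not induct on $d-c$; it writes down one global chain of inequalities summing the interior telescope, then handles the two boundary terms and invokes $\delta<\zeta_6$ via $2\delta\ge\frac{2\delta}{1-2\delta}-\frac{1}{2(1-\delta)}$ at the very last step. So the gap in your proposal is twofold: the proposed weights are not the ones the algebra requires, and the decisive computation that would reveal the correct weights is not carried out.
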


\begin{proof}
We consider two cases.

1) Let $ c = d $. We put
$$ \omega _{c} = 0, \quad \omega _{c+1} = 2, $$
so we need to check that
$$ \kappa _{c} + \kappa _{c+1} + \frac{1}{2\delta }(\psi _{c}^{+} + \psi _{c+1}^{-}) \geq 0. $$
But this is clear, due to Claim \ref{clx}(1).

2) Let $ c < d $. We put
$$ \omega _{c} = \delta , \quad \omega _{d+1} = 2 - \delta , $$
$$ \omega _{i} = 1, \; c + 1 \leq i \leq d. $$
Let us denote
$$ \Theta = \sum_{i=c}^{d+1} \kappa _{i} + \sum_{i=c}^{d} \Big[ \frac{1}{2\delta }(\psi _{i}^{+} + \psi _{i+1}^{-}) + \omega _{i} \chi _{i}^{+} + (2 - \omega _{i+1}) \chi _{i+1}^{-} \Big] , $$
so we have (using Claim \ref{clx}(1))
\begin{eqnarray*}
\Theta & \geq & \sum_{i=c}^{d+1} \kappa _{i} + \sum_{i=c}^{d} \Big[ \omega _{i} \chi _{i}^{+} + (2 - \omega _{i+1}) \chi _{i+1}^{-} \Big] \\
 & = & \sum_{i=c}^{d+1} \kappa _{i} + \omega _{c} \chi _{c}^{+} + \sum_{i=c+1}^{d} \omega _{i} \chi _{i}^{+} + \sum_{i=c+1}^{d} (2 - \omega _{i}) \chi _{i}^{-} + (2 - \omega _{d+1}) \chi _{d+1}^{-} \\
 & = & \kappa _{c} + \kappa _{d+1} + \delta \chi _{c}^{+} + \delta \chi _{d+1}^{-} + \sum_{i=c+1}^{d} \Big[ \kappa _{i} + \chi _{i}^{-} + \chi _{i}^{+} \Big] \\
 & \geq & 2\delta \kappa _{c} + 2\delta \kappa _{d+1} + \delta \chi _{c}^{+} + \delta \chi _{d+1}^{-} + \sum_{i=c+1}^{d} \Big[ \kappa _{i} + \chi _{i}^{-} + \chi _{i}^{+} \Big] .
\end{eqnarray*}

We have $ a_{i} + \alpha _{i} < b_{i+1} - \beta _{i+1} $ when $ c \leq i \leq d $, as $ i \in Y $. We put
$$ k_{i} = \lambda (u_{i}, v_{i}), \quad k_{i}^{H} = \lambda (H \cap (u_{i}, v_{i})), \quad c \leq i \leq d + 1, $$
$$ l_{i} = \lambda (v_{i}, a_{i} + \alpha _{i}), \quad l_{i}^{H} = \lambda (H \cap (v_{i}, a_{i} + \alpha _{i})), \quad c \leq i \leq d, $$
$$ m_{i} = \lambda (a_{i} + \alpha _{i}, b_{i+1} - \beta _{i+1}), \quad m_{i}^{H} = \lambda (H \cap (a_{i} + \alpha _{i}, b_{i+1} - \beta _{i+1})), \quad c \leq i \leq d, $$
$$ n_{i} = \lambda (b_{i+1} - \beta _{i+1}, u_{i+1}), \quad n_{i}^{H} = \lambda (H \cap (b_{i+1} - \beta _{i+1}, u_{i+1})), \quad c \leq i \leq d. $$
Note that
$$ b_{i} - a_{i} = \frac{1}{2} \big( \lambda (a_{i} - \alpha _{i}, b_{i} - \beta _{i}) + \lambda (a_{i} + \alpha _{i}, b_{i} + \beta _{i}) \big) , \quad 1 \leq i \leq n, $$
and recall that $ a_{i} - \alpha _{i} = v_{i-1}, 2 \leq i \leq n, $ and $ b_{i} + \beta _{i} = u_{i+1}, 1 \leq i \leq n - 1 $. We have
$$ \kappa _{i} = \frac{1}{2}(l_{i-1} + m_{i-1} + m_{i} + n_{i}) - k_{i} + \frac{2}{1 - \delta } k_{i}^{H}, \quad c + 1 \leq i \leq d, $$
$$ \kappa _{c} \geq \frac{1}{2}(m_{c} + n_{c}) - k_{c} + \frac{2}{1 - \delta } k_{c}^{H}, $$
$$ \kappa _{d+1} \geq \frac{1}{2}(l_{d} + m_{d}) - k_{d+1} + \frac{2}{1 - \delta } k_{d+1}^{H}, $$
$$ \chi _{i}^{-} = \frac{1}{1 - \delta } (l_{i-1}^{H} + m_{i-1}^{H}) - (l_{i-1} + m_{i-1}), \quad c + 1 \leq i \leq d + 1, $$
$$ \chi _{i}^{+} = \frac{1}{1 - \delta } (m_{i}^{H} + n_{i}^{H}) - (m_{i} + n_{i}), \quad c \leq i \leq d. $$
If $ c + 1 \leq i \leq d $, then, using (c), we obtain
$$ \frac{1}{2} \frac{1}{1 - \delta } \big( \lambda (H \cap I_{\alpha _{i}}(a_{i})) + \lambda (H \cap I_{\beta _{i}}(b_{i})) \big) \geq \frac{1}{2} \big( \lambda I_{\alpha _{i}}(a_{i}) + \lambda I_{\beta _{i}}(b_{i}) \big) , $$
and so
$$
\begin{aligned}
 & \hspace{-1cm} \frac{1}{2} \frac{1}{1 - \delta } (l_{i-1}^{H} + m_{i-1}^{H} + 2n_{i-1}^{H} + 2k_{i}^{H} + 2l_{i}^{H} + m_{i}^{H} + n_{i}^{H}) \\
 & + \kappa _{i} + \chi _{i}^{-} + \chi _{i}^{+} \\
 \geq \; & \frac{1}{2} (l_{i-1} + m_{i-1} + 2n_{i-1} + 2k_{i} + 2l_{i} + m_{i} + n_{i}) \\
 & + \frac{1}{2}(l_{i-1} + m_{i-1} + m_{i} + n_{i}) - k_{i} + \frac{2}{1 - \delta } k_{i}^{H} \\
 & + \frac{1}{1 - \delta } (l_{i-1}^{H} + m_{i-1}^{H}) - (l_{i-1} + m_{i-1}) + \frac{1}{1 - \delta } (m_{i}^{H} + n_{i}^{H}) - (m_{i} + n_{i}).
\end{aligned}
$$
This leads straightforwardly to
$$
\begin{aligned}
 & \hspace{-0.5cm} \kappa _{i} + \chi _{i}^{-} + \chi _{i}^{+} \\
 \geq \; & n_{i-1} + l_{i} + \frac{1}{2} \frac{1}{1 - \delta } (l_{i-1}^{H} + m_{i-1}^{H} + m_{i}^{H} + n_{i}^{H}) - \frac{1}{1 - \delta } (n_{i-1}^{H} + l_{i}^{H}) + \frac{1}{1 - \delta } k_{i}^{H} \\
 \geq \; & n_{i-1} + l_{i} + \frac{1}{2} \frac{1}{1 - \delta } (l_{i-1}^{H} + n_{i}^{H}) - \frac{1}{1 - \delta } (n_{i-1}^{H} + l_{i}^{H}).
\end{aligned}
$$
Summing these inequalities for $ i = c + 1, c + 2, \dots , d $, we obtain
$$
\begin{aligned}
 & \hspace{-0.5cm} \sum_{i=c+1}^{d} \Big[ \kappa _{i} + \chi _{i}^{-} + \chi _{i}^{+} \Big] \\
 \geq \; & \sum_{i=c+1}^{d} \Big[ n_{i-1} + l_{i} + \frac{1}{2} \frac{1}{1 - \delta } (l_{i-1}^{H} + n_{i}^{H}) - \frac{1}{1 - \delta } (n_{i-1}^{H} + l_{i}^{H}) \Big] \\
 = \; & \sum_{i=c+1}^{d-1} \Big[ n_{i} + l_{i} + \frac{1}{2} \frac{1}{1 - \delta } (l_{i}^{H} + n_{i}^{H}) - \frac{1}{1 - \delta } (n_{i}^{H} + l_{i}^{H}) \Big] \\
 & + n_{c} + l_{d} + \frac{1}{2} \frac{1}{1 - \delta } (l_{c}^{H} + n_{d}^{H}) - \frac{1}{1 - \delta } (n_{c}^{H} + l_{d}^{H}) \\
 \geq \; & n_{c} + l_{d} + \frac{1}{2} \frac{1}{1 - \delta } (l_{c}^{H} + n_{d}^{H}) - \frac{1}{1 - \delta } (n_{c}^{H} + l_{d}^{H}),
\end{aligned}
$$
as
$$
\begin{aligned}
 & \hspace{-1cm} \sum_{i=c+1}^{d-1} \Big[ n_{i} + l_{i} + \frac{1}{2} \frac{1}{1 - \delta } (l_{i}^{H} + n_{i}^{H}) - \frac{1}{1 - \delta } (n_{i}^{H} + l_{i}^{H}) \Big] \\
 \geq \; & \sum_{i=c+1}^{d-1} \Big[ n_{i}^{H} + l_{i}^{H} + \frac{1}{2} \frac{1}{1 - \delta } (l_{i}^{H} + n_{i}^{H}) - \frac{1}{1 - \delta } (n_{i}^{H} + l_{i}^{H}) \Big] \\
 = \; & \Big( 1 - \frac{1}{2} \frac{1}{1 - \delta } \Big) \sum_{i=c+1}^{d-1} (l_{i}^{H} + n_{i}^{H}) \geq 0.
\end{aligned}
$$

Since $ c \in Y $, we have $ a_{c} + \alpha _{c} < b_{c+1} - \beta _{c+1} \leq u_{c+1} = b_{c} + \beta _{c} $. In particular, $ I_{\alpha _{c}}(a_{c}) \neq I_{\beta _{c}}(b_{c}) $. Since $ d \in Y $, we have $ a_{d+1} - \alpha _{d+1} = v_{d} \leq a_{d} + \alpha _{d} < b_{d+1} - \beta _{d+1} $. In particular, $ I_{\alpha _{d+1}}(a_{d+1}) \neq I_{\beta _{d+1}}(b_{d+1}) $. Using (e),
$$ k_{c}^{H} + l_{c}^{H} \geq (1 - 2\delta )(k_{c} + l_{c}), $$
$$ n_{d}^{H} + k_{d+1}^{H} \geq (1 - 2\delta )(n_{d} + k_{d+1}). $$
Now, we compute
\begin{eqnarray*}
\Theta & \geq & \Theta + 2\delta (k_{c} + l_{c}) - \frac{2\delta }{1 - 2\delta } (k_{c}^{H} + l_{c}^{H}) + 2\delta (n_{d} + k_{d+1}) - \frac{2\delta }{1 - 2\delta }(n_{d}^{H} + k_{d+1}^{H}) \\
 & \geq & 2\delta \kappa _{c} + 2\delta \kappa _{d+1} + \delta \chi _{c}^{+} + \delta \chi _{d+1}^{-} + \sum_{i=c+1}^{d} \Big[ \kappa _{i} + \chi _{i}^{-} + \chi _{i}^{+} \Big] \\
 & & + 2\delta (k_{c} + l_{c}) - \frac{2\delta }{1 - 2\delta } (k_{c}^{H} + l_{c}^{H}) + 2\delta (n_{d} + k_{d+1}) - \frac{2\delta }{1 - 2\delta }(n_{d}^{H} + k_{d+1}^{H})
\end{eqnarray*}
\begin{eqnarray*}
 & \geq & 2\delta \Big( \frac{1}{2}(m_{c} + n_{c}) - k_{c} + \frac{2}{1 - \delta } k_{c}^{H} \Big) + 2\delta \Big( \frac{1}{2}(l_{d} + m_{d}) - k_{d+1} + \frac{2}{1 - \delta } k_{d+1}^{H} \Big) \\
 & & + \delta \Big( \frac{1}{1 - \delta } (m_{c}^{H} + n_{c}^{H}) - (m_{c} + n_{c}) \Big) + \delta \Big( \frac{1}{1 - \delta } (l_{d}^{H} + m_{d}^{H}) - (l_{d} + m_{d}) \Big) \\
 & & + n_{c} + l_{d} + \frac{1}{2} \frac{1}{1 - \delta } (l_{c}^{H} + n_{d}^{H}) - \frac{1}{1 - \delta } (n_{c}^{H} + l_{d}^{H}) \\
 & & + 2\delta (k_{c} + l_{c}) - \frac{2\delta }{1 - 2\delta } (k_{c}^{H} + l_{c}^{H}) + 2\delta (n_{d} + k_{d+1}) - \frac{2\delta }{1 - 2\delta }(n_{d}^{H} + k_{d+1}^{H}) \\
 & = & 2\delta (l_{c} + n_{d}) - \Big( \frac{2\delta }{1 - 2\delta } - \frac{1}{2} \frac{1}{1 - \delta } \Big) (l_{c}^{H} + n_{d}^{H}) + n_{c} - n_{c}^{H} + l_{d} - l_{d}^{H} \\
 & & + 2\delta \Big( \frac{2}{1 - \delta } - \frac{1}{1 - 2\delta } \Big) (k_{c}^{H} + k_{d+1}^{H}) + \frac{\delta }{1 - \delta } (m_{c}^{H} + m_{d}^{H}) \\
 & \geq & 2\delta (l_{c}^{H} + n_{d}^{H}) - \Big( \frac{2\delta }{1 - 2\delta } - \frac{1}{2} \frac{1}{1 - \delta } \Big) (l_{c}^{H} + n_{d}^{H}).
\end{eqnarray*}
We obtain finally $ \Theta \geq 0 $ from the assumption $ \delta < \zeta _{6} $.
\end{proof}

\begin{claim} \label{clxy}
There are numbers $ \omega _{i}, 1 \leq i \leq n, $ with $ 0 \leq \omega _{i} \leq 2 $ such that
$$ \sum_{i=1}^{n} \kappa _{i} + \sum_{i=1}^{n-1} \Big[ \frac{1}{2\delta }(\psi _{i}^{+} + \psi _{i+1}^{-}) + \omega _{i} \chi _{i}^{+} + (2 - \omega _{i+1}) \chi _{i+1}^{-} \Big] \geq 0. $$
\end{claim}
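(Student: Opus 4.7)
The plan is to partition the index range $\{1, 2, \ldots, n-1\}$ into its maximal consecutive runs lying in $Y$, call them the $Y$-blocks, separated by indices in $X$. Writing these blocks as $B_1, B_2, \ldots, B_K$ with $B_j = \{c_j, c_j+1, \ldots, d_j\}$ and $c_1 \leq d_1 < c_2 \leq \ldots$, maximality forces $d_j + 1 \in X$ whenever $d_j + 1 \leq n - 1$, hence $d_j + 1 < c_{j+1}$ for consecutive blocks. In particular the extended index sets $\{c_j, c_j+1, \ldots, d_j+1\}$ are pairwise disjoint subsets of $\{1, 2, \ldots, n\}$.

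For each block $B_j$, let $\omega_{c_j}, \omega_{c_j+1}, \ldots, \omega_{d_j+1}$ be the values produced by Claim \ref{cly} applied with $c = c_j$ and $d = d_j$. For every index $i \in \{1, \ldots, n\}$ not covered in this way, set $\omega_i$ to an arbitrary value in $[0, 2]$ (say $\omega_i = 1$). The pairwise disjointness of the extended block index sets guarantees that this is an unambiguous assignment.

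The final step is to decompose the expression
$$\sum_{i=1}^{n} \kappa_i + \sum_{i=1}^{n-1} \Big[\frac{1}{2\delta}(\psi_i^+ + \psi_{i+1}^-) + \omega_i \chi_i^+ + (2 - \omega_{i+1}) \chi_{i+1}^-\Big]$$
into three non-negative pieces. First, for each $Y$-block $B_j$, group the terms $\kappa_i$ with $i \in \{c_j, \ldots, d_j+1\}$ together with the bracket terms at indices $i \in B_j$; Claim \ref{cly} delivers non-negativity of this grouped sum with exactly the $\omega$-values we assigned. Second, for each $i \in X$, the corresponding bracket term is non-negative by Claim \ref{clx}(2), and this holds regardless of what $\omega_i$ and $\omega_{i+1}$ happen to be. Third, every remaining $\kappa_i$ (those whose index fails to lie in any $\{c_j, \ldots, d_j+1\}$) is non-negative by Claim \ref{clx}(1). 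Summing the three non-negative contributions produces the desired inequality.

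There is no genuine obstacle beyond bookkeeping: the decomposition $\{1, \ldots, n-1\} = X \sqcup \bigsqcup_j B_j$ ensures that bracket terms are accounted for exactly once, while the separation $d_j + 1 < c_{j+1}$ between maximal $Y$-blocks ensures that the $\kappa$-indices claimed by different blocks never collide. These two consistency checks — both consequences of the maximality built into the definition of $B_j$ — are precisely what align each term with a claim whose hypothesis is satisfied.
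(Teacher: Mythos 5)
Your proposal is correct and follows essentially the same approach as the paper: decompose $\{1,\ldots,n-1\}$ into $X$ and the maximal runs in $Y$, apply Claim \ref{cly} to each $Y$-block to choose the $\omega_i$'s and obtain nonnegativity of the grouped block terms, and use Claim \ref{clx} for the leftover $\kappa_i$ and for the bracket terms at indices in $X$. The only difference is that you spell out the disjointness bookkeeping a bit more explicitly than the paper does.
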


\begin{proof}
We write
$$ Y = \bigcup_{j=1}^{k} \{ c_{j}, c_{j} + 1, \dots , d_{j} \} $$
where $ k $ and $ c_{j}, d_{j}, 1 \leq j \leq k, $ are the uniquely determined natural numbers such that $ c_{j} \leq d_{j}, 1 \leq j \leq k, $ and $ d_{j} + 1 < c_{j+1}, 1 \leq j \leq k - 1 $.
For every $ j $ with $ 1 \leq j \leq k $, we choose, using Claim \ref{cly}, numbers $ \omega _{c_{j}}, \omega _{c_{j}+1}, \dots , \omega _{d_{j}+1} $ with $ 0 \leq \omega _{i} \leq 2 $ such that
$$ \sum_{i=c_{j}}^{d_{j}+1} \kappa _{i} + \sum_{i=c_{j}}^{d_{j}} \Big[ \frac{1}{2\delta }(\psi _{i}^{+} + \psi _{i+1}^{-}) + \omega _{i} \chi _{i}^{+} + (2 - \omega _{i+1}) \chi _{i+1}^{-} \Big] \geq 0. $$
The numbers $ \omega _{i} $ may not be chosen for some $ i $'s yet. For every such an $ i $, we choose $ \omega _{i} $ to be an arbitrary number with $ 0 \leq \omega _{i} \leq 2 $.

Using Claim \ref{clx}, we compute
$$
\begin{aligned}
 & \hspace{-1cm} \sum_{i=1}^{n} \kappa _{i} + \sum_{i=1}^{n-1} \Big[ \frac{1}{2\delta }(\psi _{i}^{+} + \psi _{i+1}^{-}) + \omega _{i} \chi _{i}^{+} + (2 - \omega _{i+1}) \chi _{i+1}^{-} \Big] \\
\geq \; & \sum_{j=1}^{k} \Bigg\{ \sum_{i=c_{j}}^{d_{j}+1} \kappa _{i} + \sum_{i=c_{j}}^{d_{j}} \Big[ \frac{1}{2\delta }(\psi _{i}^{+} + \psi _{i+1}^{-}) + \omega _{i} \chi _{i}^{+} + (2 - \omega _{i+1}) \chi _{i+1}^{-} \Big] \Bigg\} \\
\geq \; & 0.
\end{aligned}
$$
\end{proof}

Now, combining Claims \ref{clc} and \ref{clxy}, we obtain
$$ \Big( \frac{1}{2\delta } + \frac{2}{1 - \delta } \Big) 2\lambda H \geq \Big( \frac{1}{2\delta } + 1 \Big) 2(q - p), $$
which, after a little calculation, completes the proof of Lemma \ref{lemmaxy}.

\begin{proposition} \label{goodsetbound}
Let $ 0 < \delta < \zeta _{2} $ where $ \zeta _{2} $ is as in Table \ref{roots}. If
$$ G = [0, \nu _{1}) \cup (\mu _{2}, \nu _{2}) \cup \dots \cup (\mu _{r-1}, \nu _{r-1}) \cup (\mu _{r}, 1] $$
is a $ \delta $-good set, then
$$ \lambda G \geq \frac{(1 - \delta )(1 + 2\delta )}{1 + 3\delta }. $$
\end{proposition}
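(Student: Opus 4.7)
The plan is to apply Lemma~\ref{lemmaxy} to a large truncation of $H = G + \mathbb{Z}$, verifying its hypotheses via Lemmas~\ref{lemmae} and~\ref{keyprop}, and then exploit the periodicity of $H$ to extract the bound on $\lambda G$ by letting the truncation grow.

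First I would dispose of the trivial case: since $(1-\delta)(1+2\delta)/(1+3\delta) \leq 1-\delta$, the desired inequality is immediate when $\lambda G \geq 1-\delta$, so assume $\lambda G < 1-\delta$. A short computation using periodicity then produces a constant $R < \infty$ such that every interval $I$ with $\lambda(H|I) \geq 1-\delta$ has $\lambda I \leq R$: indeed, $\lambda(H \cap I) \leq (\lambda I + 2)\lambda G$ forces $\lambda(H|I) \to \lambda G$ as $\lambda I \to \infty$, and the strict inequality $\lambda G < 1-\delta$ yields a uniform length bound.

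Next, for each integer $N \geq 1$, I would apply Lemma~\ref{lemmaxy} to the set $\tilde H_N := H \cap (-R, N+R)$ on the interval $(p,q) = (0, N)$. The covering hypothesis comes from Lemma~\ref{lemmae}(3): a finite subfamily of the locally finite cover covers $(0, N)$, and each of its members, being of length at most $R$ and meeting $(0, N)$, sits inside $(-R, N+R)$, so $\lambda(\tilde H_N | I) = \lambda(H|I) \geq 1-\delta$. For condition~(C), take $0 \leq w < v \leq N$ with $\lambda(\tilde H_N | (w,v)) \leq (1-\delta)/2$; since $(w,v)$ lies inside the truncation window, this reads $\lambda(H|(w,v)) \leq (1-\delta)/2$, and Lemma~\ref{keyprop} (whose hypothesis $\delta < \zeta_2$ is precisely ours) delivers the witness intervals $I_\alpha(a), I_\beta(b) \supset (w,v)$ for $H$. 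Each of these has length at most $R$ and contains $(w,v) \subset (0, N)$, so it fits in $(-R, N+R)$; therefore $\tilde H_N$ agrees with $H$ on $I_\alpha(a)$, $I_\beta(b)$ and on every concentric subinterval, which transfers both the $\unrhd\, 1-\delta$ minimality and the displacement inequality $b - a \geq v - u - \frac{2}{1-\delta}\lambda(H \cap (u,v))$ verbatim from $H$ to $\tilde H_N$.

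Lemma~\ref{lemmaxy} then yields $\lambda \tilde H_N \geq \frac{(1-\delta)(1+2\delta)}{1+3\delta}\,N$, while periodicity of $H$ gives $\lambda \tilde H_N \leq (N + 2R + 2)\lambda G$. Dividing and sending $N \to \infty$ produces the claim. The main obstacle, and the reason the a priori length bound $R$ is indispensable, is the transfer of condition~(C): without uniform control on the witness intervals produced by Lemma~\ref{keyprop}, they could escape $(-R, N+R)$ for $w, v$ near the boundary of $(0, N)$, and the $\unrhd 1-\delta$ relation would not descend to $\tilde H_N$.
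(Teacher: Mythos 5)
Your proposal is correct and follows essentially the same route as the paper: both pass to the periodic set $H = G + \mathbb{Z}$, observe (from $\lambda G < 1-\delta$ and periodicity) that intervals of large relative density have uniformly bounded length $R$, truncate $H$ to a window padded by $O(R)$ around a long integer interval, transfer the covering hypothesis from Lemma~\ref{lemmae}(3) and condition (C) from Lemma~\ref{keyprop} to the truncation, invoke Lemma~\ref{lemmaxy}, and then let the window length tend to infinity. Your write-up is somewhat more explicit than the paper's about why $R$ exists and why condition (C) descends to the truncated set, but the strategy and all essential steps coincide.
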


Note that the inequality cannot be improved. One may show that, if we take
$$ G = [0, \varphi /2) \cup (\varphi /2 + \psi, \varphi /2 + \psi + \alpha) \cup (1 - \varphi /2 - \psi - \alpha , 1 - \varphi /2 - \psi) \cup (1 - \varphi /2, 1], $$ 
(i.e., $ G = (S_{1} - \varphi /2 ) \cap [0, 1] $), where $ \alpha , \beta , \varphi , \psi $ and $ S_{1} $ are as in the proof of Proposition \ref{ubound}, then $ G $ is (under some assumptions on $ \delta $) a $ \delta $-good set and
$$ \lambda G = \varphi + 2\alpha = \frac{(1 - \delta )(1 + 2\delta )}{1 + 3\delta } . $$

\begin{proof}
Let us put
$$ H = G + \mathbb{Z}. $$
We may assume that $ \lambda G < 1 - \delta $. There is a constant $ R \in \mathbb{N} $ such that $ \varepsilon \leq R $ whenever $ I_{\varepsilon }(c) $ is an interval with $ \lambda (H | I_{\varepsilon }(c)) \geq 1 - \delta $. Let $ p, q \in \mathbb{Z} $ be an arbitrary couple such that $ p < q $. By Lemma \ref{lemmae}(3), $ (p, q) $ is covered by a finite number of intervals $ I $ with $ \lambda (H | I) \geq 1 - \delta $. By Lemma \ref{keyprop}, the condition (C) from Lemma \ref{lemmaxy} is satisfied for $ H $ and for $ p $ and $ q $. Moreover, the intervals $ I_{\alpha }(a) $ and $ I_{\beta }(b) $ intersect $ (p, q) $, and so they are subsets of $ (p - 2R, q + 2R) $. Actually, the condition (C) is satisfied for the set $ H \cap (p - 2R, q + 2R) $. We obtain from Lemma \ref{lemmaxy} that
$$ \lambda (H \cap (p - 2R, q + 2R)) \geq \frac{(1 - \delta)(1 + 2\delta)}{1 + 3\delta} (q - p), $$
i.e.,
$$ (q - p + 4R) \lambda G \geq \frac{(1 - \delta)(1 + 2\delta)}{1 + 3\delta} (q - p). $$
Therefore,
$$ \lambda G \geq \frac{(1 - \delta)(1 + 2\delta)}{1 + 3\delta} \frac{n}{n + 4R} $$
for every $ n \in \mathbb{N} $, and it is sufficient to realize that $ \lim _{n \rightarrow \infty } n/(n + 4R) = 1 $.
\end{proof}

\begin{proposition} \label{lbound}
We have $ \delta _{\mathcal{K}} \geq \zeta _{1} $ where $ \zeta _{1} $ is the only real root of the polynomial $ 8x^{3} + 8x^{2} + x - 1 $. 
\end{proposition}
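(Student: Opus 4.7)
The plan is to derive Proposition \ref{lbound} by chaining together the two main results already established in the preceding sections, namely Lemma \ref{goodset} (the upper bound on the measure of a $\delta$-good set obtained from a counterexample) and Proposition \ref{goodsetbound} (the lower bound on the measure of an arbitrary $\delta$-good set). The strategy is a clean contradiction argument: I would suppose that $\delta_{\mathcal{K}} < \zeta_1$, pick $\delta$ with $\delta_{\mathcal{K}} < \delta < \zeta_1$, and show that the two measure bounds on any $\delta$-good set produced from a counterexample to $\mathcal{K}(\delta)$ are incompatible.

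Concretely, first I would check that $\zeta_1 \leq \zeta_2$ and $\zeta_1 \leq \zeta_3$ (immediate from the numerical values in Table \ref{roots}: $\zeta_1 = 0{,}2684\ldots$, $\zeta_2 = 0{,}2687\ldots$, $\zeta_3 = 0{,}2707\ldots$), so the chosen $\delta$ satisfies the hypotheses $\delta < \zeta_2$ and $\delta < \zeta_3$ of both Lemma \ref{goodset} and Proposition \ref{goodsetbound}. Since $\delta > \delta_{\mathcal{K}}$, the statement $\mathcal{K}(\delta)$ fails, so a counterexample to $\mathcal{K}(\delta)$ exists. Lemma \ref{goodset} then produces a $\delta$-good set $G$ with
$$ \lambda G \leq \frac{4\delta^{2}}{1 - 2\delta}, $$
while Proposition \ref{goodsetbound} independently guarantees
$$ \lambda G \geq \frac{(1 - \delta)(1 + 2\delta)}{1 + 3\delta}. $$
Combining these yields
$$ \frac{4\delta^{2}}{1 - 2\delta} \geq \frac{(1 - \delta)(1 + 2\delta)}{1 + 3\delta}. $$

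But by the very definition of $\zeta_1$ recorded in Table \ref{roots}, the assumption $\delta < \zeta_1$ is precisely equivalent to the reverse strict inequality
$$ \frac{4\delta^{2}}{1 - 2\delta} < \frac{(1 - \delta)(1 + 2\delta)}{1 + 3\delta}, $$
which contradicts the above. Hence no such $\delta$ can exist, i.e.\ $\delta_{\mathcal{K}} \geq \zeta_1$, as claimed. There is essentially no obstacle in this final step: all of the difficulty has been absorbed into Lemma \ref{goodset} (the part (P1) of the strategy outlined in Section 3) and Proposition \ref{goodsetbound} (the part (P2)), and the last piece is just an algebraic confrontation of the two bounds together with a check that the numerical ordering of the constants $\zeta_i$ permits the simultaneous application of both results.
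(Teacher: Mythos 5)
Your proposal is correct and follows essentially the same route as the paper: both proofs apply Lemma~\ref{goodset} to obtain $\lambda G \leq \frac{4\delta^2}{1-2\delta}$, invoke Proposition~\ref{goodsetbound} for $\lambda G \geq \frac{(1-\delta)(1+2\delta)}{1+3\delta}$, and derive the bound by comparing these with the defining inequality of $\zeta_1$ from Table~\ref{roots}. The only cosmetic difference is that the paper fixes an arbitrary $\delta > \delta_{\mathcal{K}}$ and assumes without loss of generality that $\delta < \zeta_2$, whereas you frame it as a contradiction starting from $\delta_{\mathcal{K}} < \delta < \zeta_1$; both are equivalent and both correctly verify that the chosen $\delta$ satisfies the hypotheses $\delta < \zeta_2$ and $\delta < \zeta_3$.
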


\begin{proof}
We need to prove the implication
$$ \delta > \delta _{\mathcal{K}} \quad \Rightarrow \quad \delta \geq \zeta _{1}. $$
So, let $ \delta > \delta _{\mathcal{K}} $. We may assume that $ \delta < \zeta _{2} $ where $ \zeta _{2} $ is as in Table \ref{roots} (as $ \zeta _{2} > \zeta _{1} $). Since $ \delta < \zeta _{2} \leq \zeta _{3} $, there is, by Lemma \ref{goodset}, a $ \delta $-good set
$$ G = [0, \nu _{1}) \cup (\mu _{2}, \nu _{2}) \cup \dots \cup (\mu _{r-1}, \nu _{r-1}) \cup (\mu _{r}, 1] $$
such that
$$ \lambda G \leq \frac{4\delta^{2}}{1 - 2\delta}. $$
By Proposition \ref{goodsetbound}, we have
$$ \lambda G \geq \frac{(1 - \delta)(1 + 2\delta)}{1 + 3\delta}. $$
Combining the bounds, we obtain
$$ \frac{(1 - \delta)(1 + 2\delta)}{1 + 3\delta} \leq \lambda G \leq \frac{4\delta^{2}}{1 - 2\delta}. $$
Therefore, the inequality from Table \ref{roots} implied by $ \delta < \zeta _{1} $ is not fulfilled. It follows that $ \delta \geq \zeta _{1} $.
\end{proof}

The author thanks Professor Petr Holick\'y for valuable remarks on preliminary versions of this work.

\end{document}